\newtheorem{propo}{Proposition}[section]
\newtheorem{corol}[propo]{Corollary}
\newtheorem{theor}[propo]{Theorem}
\newtheorem{lemma}[propo]{Lemma}
\theoremstyle{definition}
\newtheorem{defin}[propo]{Definition}
\theoremstyle{remark}
\newtheorem{remar}[propo]{Remark}
\numberwithin{equation}{section}
\newcommand{\ad }{\mathrm{ad}}
\newcommand{\adR }[2]{#2 \triangleleft #1}
\newcommand{\al }{\alpha }
\newcommand{\Ar }[1]{\Hom (#1)}               
\newcommand{\Aut }{\mathrm{Aut}}              
\newcommand{\cC }{\mathcal{C}}
\newcommand{\cG }{\mathcal{G}}
\newcommand{\cha }{\mathrm{char}}
\newcommand{\cK }[1]{\mathcal{K}(#1)}
\newcommand{\Cm }{A}
\newcommand{\cm }{a}
\newcommand{\co }{\mathrm{co}\,}              
\newcommand{\coa }{\delta }                   
\newcommand{\copr }{\varDelta }               
\newcommand{\cou }{\varepsilon }              
\newcommand{\cV }{\mathcal{V}}
\newcommand{\cX }{\mathcal{X}}
\newcommand{\End }{\mathrm{End}}
\newcommand{\eqn}[1]{\overset{ \makebox{\tiny #1} }{=}}
\newcommand{\eqns}[1]{\overset{ \makebox[0pt]{\tiny #1} }{=}}
\newcommand{\fd }{finite-dimensional}
\newcommand{\ffdi }{\mathcal{F}_\theta }      
\newcommand{\fie }{\Bbbk }
\newcommand{\Fin }{admits all reflections}
\newcommand{\fiso }{\mathcal{X}_\theta }      
\newcommand{\gfrak }{\mathfrak{g}}
\newcommand{\Hilb}{\mathcal{H}}
\newcommand{\Hom }{\mathrm{Hom}}
\newcommand{\Homsto }[2][\Wg (M)]{\mathrm{Hom}(#1,#2)}
\newcommand{\Ib }{\mathbb{I}}
\newcommand{\id }{\mathrm{id}}
\newcommand{\lact }{\cdot }
\newcommand{\NA }{\mathcal{B}}
\newcommand{\ndN }{\mathbb{N}}
\newcommand{\ndZ }{\mathbb{Z}}
\newcommand{\Ob }{\mathrm{Ob}}
\newcommand{\op }{\mathrm{op}}
\newcommand{\ot }{\otimes }
\newcommand{\pr }{\mathrm{pr}}
\newcommand{\qfact }[2]{[#1]^!_{#2}}
\newcommand{\qnum }[2]{[#1]_{#2}}
\newcommand{\rcs }{E}
\newcommand{\rf }{r}
\newcommand{\Rf }{R}
\newcommand{\rsys }[1]{\boldsymbol{\Delta }{}^{#1}}
\newcommand{\rersys }[1]{\boldsymbol{\Delta }{}^{#1\,\mathrm{re}}}
\newcommand{\Rmat }{\mathfrak{R}}
\newcommand{\Rmatb }{\overline{\mathfrak{R}}}
\newcommand{\Rwg }{\mathcal{R}}         
\newcommand{\ls }{\mathrm{span}_\fie }  
\newcommand{\Wg }{\mathcal{W}}
\newcommand{\yd }{\mathcal{YD}}
\newcommand{\ydH }{ {}^H_H\mathcal{YD}}
\newcommand{\ydU }{ {}^{U^0}_{U^0}\mathcal{YD}}
\renewcommand{\_}[1]{_{(#1)}}
\renewcommand{\^}[1]{^{(#1)}}
\title[Right coideal subalgebras of Nichols algebras]{
Right coideal subalgebras of Nichols algebras and the Duflo order on the Weyl
groupoid}
\author{I. Heckenberger}
\address{Istv\'an Heckenberger,
Mathematisches Institut,
Universit\"at K\"oln,
Weyertal 86-90,
D-50931 K\"oln, Germany}
\email{i.heckenberger@googlemail.com}
\author{H.-J. Schneider}
\address{
Hans-J\"urgen Schneider,
Mathematisches Institut,
Universit\"at M\"unchen,
Theresienstr. 39,
D-80333 Munich, Germany}
\email{Hans-Juergen.Schneider@mathematik.uni-muenchen.de}
\thanks{The work of I.H. is supported by DFG within a Heisenberg fellowship}
\begin{document}

\begin{abstract}
  We study graded right coideal subalgebras
  of Nichols algebras of semisimple Yetter-Drinfeld modules.
  Assuming that the Yetter-Drinfeld module admits all reflections and
  the Nichols algebra is decomposable, we construct an injective order
  preserving and order reflecting map between
  morphisms of the Weyl groupoid and graded right coideal subalgebras of the
  Nichols algebra. Here morphisms are ordered with respect to right Duflo
  order and right coideal subalgebras are ordered with respect to
  inclusion. If the Weyl groupoid is finite, then we prove that the Nichols
  algebra is decomposable and the above map is bijective.
  In the special case of
  the Borel part of quantized enveloping algebras
  our result implies a conjecture of Kharchenko.
\end{abstract}

\keywords{Hopf algebra, quantum group, root system, Weyl group}
\subjclass[2000]{17B37,16W30;20F55}

\maketitle

\section*{Introduction}

It is well-known that quantum groups do not have ``enough'' Hopf subalgebras.
Instead the larger class of right (or left) coideal subalgebras should be
studied. A right coideal subalgebra $E \subset A$ of a Hopf algebra $A$ with
comultiplication $\copr $ is a subalgebra of $A$ with $\copr (E) \subset E
\ot A$.

\vspace{.3\baselineskip}

\textbf{1. Right coideal subalgebras of quantized enveloping algebras
$\boldsymbol{U^{\ge 0}}$.}
Let $\gfrak $ be a 
semisimple complex Lie algebra, 
$\Pi $ a basis of its root system with respect to a fixed Cartan
subalgebra, and
$U=U_q(\gfrak )$
the quantized enveloping algebra of $\gfrak $
in the sense of \cite[Ch.\,4]{b-Jantzen96}.
We assume that $q$ is not a root of unity.
Let $U^+$ and $U^{0}$
be the subalgebras of $U$
generated by the sets $\{E_\al \,|\,\al \in \Pi \}$ and $\{ K_\al ,K_\al
^{-1}\,|\,\al \in \Pi \}$, respectively, and let $U^{\ge 0}=U^+U^0$.
For any element $w$ of the Weyl group $W$ of $\gfrak $
let $U^+[w]\subset U^{+}$ be the subspace defined in \cite[8.24]{b-Jantzen96}
in terms of root vectors constructed via Lusztig's automorphisms.
We prove in Thm.~\ref{th:rcsU}, see also Cor.~\ref{details},
the following:

\vspace{.3\baselineskip}

\noindent{\em The map $w \mapsto U^+[w]U^0$ defines an order preserving bijection between $W$ and the set of all right coideal subalgebras of
  $U^{\ge 0}$ containing $U^0$, where right coideal subalgebras are ordered by inclusion and $W$ is ordered by the Duflo order.
If $E_1 \subset E_2$ are right coideal subalgebras of $U^{\ge 0}$ containing $U^0$, then $E_2$ is free over $E_1$ as a right module.}

\vspace{.3\baselineskip}

Recall that
if $w_1,w_2$ are elements in $W$, then $w_1 \leq_D w_2$
in the (right) Duflo order
if and only if any
reduced expression of $w_1$ can be extended to a reduced expression of $w_2$
beginning with $w_1$.

In particular, the number of right coideal subalgebras of $U^{\ge 0}$
containing $U^0$ is equal to the order of the Weyl group $W$. This last
statement was conjectured by Kharchenko in
\cite{p-Khar09} for
simple Lie algebras $\gfrak $. The conjecture was proven
for $\gfrak $ of type $A_n$ \cite{a-KharSaga08},
$B_n$ \cite{p-Khar09} and $G_2$ \cite{a-Pogor09}
by combinatorial calculations using Lyndon words. In these papers right
coideal subalgebras are classified in terms of certain
subsets of positive roots.

The subspaces $U^+[w]\subset U^+$ are familiar objects in quantum groups.
Among others, they are used by Lusztig
\cite{b-Lusztig93} to establish a PBW basis for $U^+$, by De Concini, Kac and
Procesi \cite{inp-dCKP95}
to introduce quantum Schubert calculus,
and are identified by Yakimov \cite{p-Yakim09}
as quotients of quantized Bruhat cell translates
\cite{b-Joseph,a-Gorel00}.
It was essentially well-known that $U^+[w]U^0$ is a right coideal subalgebra of
$U^{\ge 0}$: proofs and indications in this direction can be found in
\cite{a-LevSoi90}, \cite[2.2]{inp-dCKP95}, \cite[9.3]{inb-dCP92},
\cite{a-AJS94}. The arguments often use case by case considerations and
reduction to the rank two case, and sometimes they work only in the $h$-adic
setting.
The algebras $U^+[w]$ are known to depend only on $w$
and not on the chosen reduced expression of $w$,
see e.\,g.~\cite[40.2.1]{b-Lusztig93} and \cite[8.21]{b-Jantzen96}.
With our systematic approach to graded right coideal subalgebras
we offer a new way to study $U^+$
without the usual case by case considerations, and
intrinsically
characterize
the algebras
$U^+[w]$ and their ordering with respect to inclusion.
With the necessary modifications,
our results also apply to the small quantum groups
of semisimple Lie algebras where $q$ is a root of unity, and to multiparameter
versions of $U$, see Cor.~\ref{co:WKfin} and Rem.~\ref{re:small}.

\vspace{.3\baselineskip}

\textbf{2. Right coideal subalgebras of Nichols algebras.}
The paper is written in the very general context of Nichols algebras $\NA(M)$
of semisimple Yetter-Drinfeld modules $M \in \ydH$, where $H$ is an arbitrary
Hopf algebra with bijective antipode.
Nichols algebras, also called quantum symmetric algebras,
see \cite{a-Rosso98},
appear as fundamental objects
in the classification theory of
Hopf algebras \cite{a-AndrSchn98,inp-AndrSchn02,a-AndrSchn05p},
in particular of Hopf algebras which are generated by
group-like and skew-primitive elements.
For example, in the setting of quantized enveloping algebras,
$M = \oplus_{\al \in \Pi} \fie E_{\al}$ is a Yetter-Drinfeld
module over $U^0$, and $\NA(M) = U^+$.
Finite-dimensional Nichols algebras of diagonal type
are classified in \cite{a-Heck09a}.
Recently, much progress in the understanding of finiteness properties of
Nichols algebras of nonabelian group type
has been achieved, see e.\,g.~\cite{p-AFGV09,p-AFGV09b},
\cite{p-HeckSchn08a} and references therein.

In rather general situations
(if $M$ admits all reflections, see Sect.~\ref{sec:Constr})
one can associate a Weyl groupoid $\Wg (M)$ to $M$, see \cite{p-AHS08},
\cite{p-HeckSchn08a}.
In case of the Borel part of a quantized Kac-Moody algebra $\gfrak $,
$\Wg (M)$ is essentially the Weyl group of $\gfrak $.
Under the assumption that $\Wg (M)$ is finite,
we prove in Cor.~\ref{details} a PBW-theorem
for the Nichols algebra $\NA(M)$ and its right
coideal subalgebras, where the subalgebra generated by a root vector in the
quantum group case is replaced by the Nichols algebra of a finite-dimensional
irreducible Yetter-Drinfeld module. As a consequence we can show that the
real roots associated with $\Wg(M)$ satisfy the axioms of a root system
in the sense of \cite{a-HeckYam08}, see also \cite{p-HeckSchn08a}.
In Thm.~\ref{th:comm}
we provide generalizations of results of
Levendorskii and Soibelman \cite{a-LevSoi90,a-LevSoi91} on
coproducts and commutators of root vectors.
Our proofs are new even for $U^+$,
since they are free of case by case calculations, and do not 
use the braid relations for Lusztig's automorphisms.

We note that a PBW-theorem for right coideal subalgebras of character Hopf
algebras (where the braiding is diagonal)
is obtained by Kharchenko in terms of Lyndon words, see
\cite{a-Khar08}. For Nichols algebras of diagonal type a PBW theorem in the
spirit of Lusztig was proven by the first author and Yamane
\cite{p-HeckYam08}.

The main results in this paper rely
on the crucial coproduct formula in Thm.~\ref{th:Omega}. For
quantum groups this formula amounts to an explicit computation of the braided
coproduct of $U^+$ in the image of $T_{\al}(U^+)$ as a subalgebra of $U$.
Our formula has the advantage to involve only algebra maps, and hence it is
well-suited to study coideal subalgebras.

To provide more details, let $\theta \in \ndN$,
let $M_1 ,\dots, M_{\theta}$ be finite-dimensional irreducible objects in
$\ydH$, and $M = (M_1,\dots,M_{\theta})$. The goal is to understand the
Nichols algebra
$$\NA(M) = \NA(M_1 \oplus \cdots \oplus M_{\theta})$$
as a Hopf algebra in the braided category $\ydH$.
 Let $\ndZ ^\theta $ be the free abelian group of rank $\theta$ with
 standard basis $\al_1,\dots,\al_{\theta}$. The Nichols algebra $\NA(M)$ is
 $\ndZ ^\theta $-graded where $\deg(M_i) = \al_i$ for all $1 \leq i \leq
 \theta$.

First we define reflection operators $\Rf _i$,
$1 \leq i \leq \theta$. Assume that for all $j \neq i$,
$$a_{ij}^M = -\max \{m\,|\,(\ad \,M_i)^m(M_j)\not=0\} < \infty.$$
Define $a_{ii}^M =2$. Then $(a^M_{ij})_{i,j\in \{1,\dots,\theta \}}$
is a generalized Cartan matrix. Let $s_i^{M}\in \Aut (\ndZ ^\theta )$ be
the corresponding reflection.
Define $\Rf _i(M)_i=M_i^*$, and
  $$ \Rf _i(M)_j=(\ad\,M_i)^{-a^M_{ij}}(M_j) \text{ for all }j \neq i,$$
   and let
  $\Rf _i(M)= (\Rf _i(M)_1,\dots,\Rf _i(M)_{\theta})$.
Finally let $K_i^M=\NA (M)^{\co \NA (M_i)},$
 where the coinvariant elements are defined with respect to the projection of $\NA(M)$ onto $\NA(M_i)$.
By \cite[Thm.\,3.12]{p-AHS08} there is an algebra isomorphism
$$\Omega _i^M:K_i^M\#\NA (M_i^*)\to \NA (\Rf _i(M))$$
which is the identity on all $\Rf_j(M)_j\subset
K_i^M\#\NA (M_i^*)$.
By the coproduct formula in
Thm.~\ref{th:Omega}, $\Omega _i^M$ becomes an
isomorphism of $\ndZ ^\theta $-graded braided Hopf algebras.

In Prop.~\ref{pr:JantzenT} we show that in the quantum group case the
inverse of $\Omega _i^M$ can be identified with Lusztig's automorphism
$T_{\al_i}$ restricted to $U^+$.

Assume that all iterations of the construction $M \mapsto \Rf_j(M)$ are
well-defined. In \cite{p-AHS08}, \cite[Thm. 6.10]{p-HeckSchn08a} the Weyl
groupoid $\Wg(M)$ of $M$ is defined. The objects of $\Wg(M)$ are sequences of
isomorphism classes $[N]=([N_1],\dots,[N_{\theta}])$ where the sequence of
Yetter-Drinfeld modules $(N_1,\dots,N_{\theta})$ is obtained from $M$ by
iterating the operations $\Rf_j$. The morphisms are generated by elementary
reflections $s_i^N : \Rf_i(N) \to N$.
Then our main result
on right coideal subalgebras
in the general case,
see Thm.~\ref{th:finite} and Cor.~\ref{co:WKfin},
says the following.

\medskip

{\em Assume that the Weyl groupoid of $M$ is finite.
  Then there exists an order preserving bijection $\varkappa ^M$
  between the set of morphisms
  of $\Wg (M)$ with target $[M]$
  and the set of $\ndN _0 ^\theta $-graded right coideal subalgebras of $\NA
      (M)\#H$ containing $H$,
 where right coideal subalgebras are ordered with respect to inclusion and
 the morphisms are ordered by the Duflo order.}

\medskip

The map $\varkappa ^M$ also exists for non-finite $\Wg (M)$,
if we assume that $\NA (M)$ is decomposable, see Def.~\ref{de:decomp}.
Then $\varkappa ^M$
is always injective, order preserving and order reflecting by
Thm.~\ref{th:bijective}.

\medskip

\textbf{Acknowledgement.}
The first author would like to thank S.~Kolb for interesting discussions on
coideal subalgebras of $U_q(\gfrak)$.

\section{Weyl groupoids and the Duflo order}

Recall the definition of the Weyl groupoid of a root system from
\cite[Sect.~2]{a-CH09a}, see also \cite[Sect.~5]{p-HeckSchn08a}.

Let $I$ be a non-empty finite set and $(\al _i)_{i\in I}$
the standard basis of $\ndZ ^I$.
%
%
Let $\cX $ be a non-empty set, and for all $i\in I$ and $X\in \cX $ let
$\rf _i : \cX \to \cX $ be a map and $\Cm ^X=(\cm ^X_{jk})_{j,k \in I}$
a generalized Cartan matrix.
The quadruple
\[\cC = \cC (I,\cX ,(\rf _i)_{i \in I},
(\Cm ^X)_{X \in \cX }),\]
is called a \textit{Cartan scheme} if
\begin{enumerate}
\item[(C1)] $\rf _i^2 = \id$ for all $i \in I$,
\item[(C2)] $\cm ^X_{ij} = \cm ^{\rf _i(X)}_{ij}$ for all $X \in \cX $
  and $i,j\in I$.
\end{enumerate}

Let $\cC = \cC (I,\cX ,(\rf _i)_{i \in I}, (\Cm ^X)_{X \in \cX })$ be a
Cartan scheme. For all $i \in I$, $X\in \cX $ define $s_i^X \in
\Aut(\ndZ ^I)$ by
\[s_i^X(\al _j) = \al _j - \cm _{ij}^X \al _i \qquad \text{ for all } j \in
I.\]

Recall that a groupoid is a category where all morphisms are isomorphisms.
The {\em Weyl groupoid of $\cC $}  is the groupoid
$\Wg (\cC )$ with $\Ob (\Wg (\cC ))=\cX $, where the morphisms are
generated by all $s_i^X$ (considered as morphism in $\Hom (X,r_i(X))$
with $i \in I$, $X \in \cX $.
Then  $s_i^{r_i(X)}s_i^X=\id _X$ in $\Hom (X,X)$. We will write $s_i$ instead
of $s_i^X$ if $X$ is uniquely determined by the context.

For any groupoid $\cG $ and any $X\in \Ob (\cG )$ let
\begin{align*}
  \Homsto[\cG ]{X}=\mathop{\cup }_{Y\in \Ob (\cG )}\Hom (Y,X)\quad
  \text{(disjoint union).}
\end{align*}

Let $\cC $ be a Cartan scheme and let $X\in \cX $.
Following \cite[\S 5.1]{b-Kac90} we say that
\begin{align}
  \rersys X =\{w (\al _i)\,|\,i\in I, w\in \Homsto[\Wg (\cC )]X\}
  \label{eq:reroots}
\end{align}
is the set of \textit{real roots} (of $X$), where $w\in \Homsto[\Wg (\cC )]X$
is
interpreted as an element in $\Aut (\ndZ ^I)$.
A real root $\al \in \rersys X$ is called \textit{positive}, if $\al \in \ndN
_0^I$. The set of positive real roots is denoted by $\rersys X_+$.

\begin{remar}
  Weyl groupoids associated to Nichols algebras satisfy
  additional properties which do not follow from the axioms of Cartan schemes,
  see Thm.~\ref{th:finite}.
  For example,
  cf.~\cite[Pf.\,of\,Thm.\,6.1]{a-CH09a},
  let
  $\cX =\{X_1,X_2,X_3\}$, $I=\{1,2\}$, $r_1(X_i)=X_{\sigma (i)}$,
  $r_2(X_i)=X_{\tau (i)}$, where $\sigma =(1\,2)$, $\tau =(2\,3)$.
  Let
  \[ A^{X_1}=
  \begin{pmatrix}
    2 & -1 \\ -3 & 2
  \end{pmatrix}, \quad
  A^{X_2}=
  \begin{pmatrix}
    2 & -1 \\ -4 & 2
  \end{pmatrix}, \quad
  A^{X_3}=
  \begin{pmatrix}
    2 & -1 \\ -4 & 2
  \end{pmatrix}. \]
  Then $\cC (I,\cX ,(r_i)_{i\in I},(A^X)_{X\in \cX })$ is a Cartan scheme
  with finitely many real roots
  \begin{align*}
    \rersys {X_1}=&\{ \pm 1,\pm 2,\pm 12,\pm 12^2,\pm 12^3,\pm 1^22^3,\\
    &\quad \pm 1^32^4,\pm 1^32^5,\pm 1^42^5,\pm 1^42^7,\pm 1^52^7,\pm 1^52^8 \},\\
    \rersys {X_2}=&\{ \pm 1,\pm 2,\pm 12,\pm 12^2,\pm 12^3,\pm 1^22^3,\\
    &\quad \pm 12^4,\pm 12^5,\pm 1^22^5,\pm 1^22^7,\pm 1^32^7,\pm 1^32^8 \},\\
    \rersys {X_3}=&\{ \pm 12^{-1},\pm 1,\pm 2,\pm 12,\pm 1^22,\pm 12^2,\\
    &\quad \pm 12^3,\pm 1^22^3,\pm 12^4,\pm 1^32^4,\pm 1^22^5,\pm 1^32^5\},
  \end{align*}
  where $k\al _1+l\al _2$ is abbreviated by $1^k2^l$ for all $k,l\in \ndZ $.
  Observe that $\rersys {X_3}$ contains the real root $\al _1-\al _2$, and
  hence
  $\cC (I,\cX ,(r_i)_{i\in I},(A^X)_{X\in \cX })$
  does not admit a root system in the sense of the following
  definition.
\end{remar}

We say that
\[\Rwg = \Rwg (\cC , (\rsys X)_{X \in \cX })\]
is a {\em root system of type} $\cC $
if $\cC = \cC (I,\cX ,(\rf _i)_{i \in I}, (\Cm ^X)_{X \in \cX })$
is a Cartan scheme and $\rsys X \subset \ndZ ^I$, where $X \in \cX $,
are subsets such that
\begin{enumerate}
\item[(R1)] $\rsys X=(\rsys X\cap \ndN _0^I)\cup -(\rsys X\cap \ndN _0^I)$
  for all $X \in \cX $,
\item[(R2)] $\rsys X\cap \ndZ \al _i=\{\al _i,-\al _i\}$ for all $i\in I$,
  $X\in \cX $,
\item[(R3)] $s_i^X(\rsys X)= \rsys {r_i(X)}$ for all $i \in I$, $X\in \cX $,
\item[(R4)] $(r_i r_j)^{m_{i,j}^X}(X) = X$ for all $i,j \in I$ and
  $X\in \cX $ such that $i\not=j$ and
  $m_{i,j}^X:= \# (\rsys X\cap (\ndN _0\al _i + \ndN _0 \al_j))$ is finite.
\end{enumerate}

If $\Rwg (\cC , (\rsys X)_{X \in \cX })$ is a root system of type $\cC $,
then $\Wg (\Rwg ) := \Wg (\cC )$ is called the {\em
Weyl groupoid of} $\Rwg $.
The elements of $\rsys X_+:=\rsys X\cap \ndN _0^I$ and
$\rsys X_-:=-\rsys X_+$ are called \textit{positive} and
\textit{negative roots}, respectively.
Note that (R3) implies that $w(\rsys Y)=\rsys X$
for all $X,Y\in \cX $ and $w\in \Hom (Y,X)$.

Recall that a groupoid $\cG $ is \textit{connected},
if for all $X,Y\in \Ob (\cG )$
the set $\Hom (Y,X)$ is non-empty. It is \textit{finite},
if $\Ar{\cG }$ is finite.

The following claim was proven in \cite[Lemma\,2.11]{a-CH09a}.

\begin{lemma} \label{le:finrs}
  Let $\cC $ be a Cartan scheme and $\Rwg $ a root system of type $\cC $.
  Assume that $\Wg (\Rwg )$ is connected.
  Then the following are equivalent.
  \begin{enumerate}
    \item $\rsys X$ is finite for at least one
      $X\in \Ob (\Wg (\Rwg ))$.
    \item $\rersys X$ is finite for at least one
      $X\in \Ob (\Wg (\Rwg ))$.
    \item $\Homsto [\Wg (\Rwg )]X$ is finite for at least one
      $X\in \Ob (\Wg (\Rwg ))$.
    \item The groupoid $\Wg (\Rwg )$ is finite.
  \end{enumerate}
  Further,
  (1)--(3) hold for one $X\in \Ob (\Wg (\Rwg))$
  if and only if they hold for all $X\in \Ob (\Wg (\Rwg))$.
\end{lemma}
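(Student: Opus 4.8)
The plan is to prove, for each fixed $X\in \Ob (\Wg (\Rwg ))$, the equivalences $(1)\Leftrightarrow (2)\Leftrightarrow (3)$, then to show that $(3)$ holds for one object if and only if it holds for all, and finally to deduce $(3)\Leftrightarrow (4)$; the ``for one iff for all'' assertion for $(1)$ and $(2)$, and their equivalence with $(4)$, then follow formally.

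First I would record the inclusion $\rersys X\subseteq \rsys X$: if $w\in \Homsto [\Wg (\Rwg )]X$, say $w\in \Hom (Y,X)$, then $\al _i\in \rsys Y$ by (R2), and $w(\rsys Y)=\rsys X$ by the remark following the definition of a root system, so $w(\al _i)\in \rsys X$; by \eqref{eq:reroots} this gives $\rersys X\subseteq \rsys X$, hence $(1)\Rightarrow (2)$. The implication $(3)\Rightarrow (2)$ is immediate from \eqref{eq:reroots}, since under $(3)$ the set $\rersys X$ is a finite union, over the finite sets $I$ and $\Homsto [\Wg (\Rwg )]X$, of singletons. The converse $(2)\Rightarrow (3)$ is the combinatorial heart: for $w\in \Hom (Y,X)$ with a reduced expression $w=s_{i_1}\cdots s_{i_\ell }$ one invokes the standard description (see \cite{a-CH09a}) of the inversion set $N(w):=\{\beta \in \rsys Y_+\mid w(\beta )\in \rsys X_-\}$ as a set of exactly $\ell =\ell (w)$ distinct positive real roots of $Y$; since $w$ is a bijection $\rsys Y\to \rsys X$, the elements $-w(\beta )$ with $\beta \in N(w)$ are $\ell $ distinct positive real roots of $X$, so $\ell (w)\le |\rersys X_+|$. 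Thus, under $(2)$, every morphism with target $X$ has length at most $N:=|\rersys X_+|<\infty $; as $I$ is finite there are at most $\sum _{k=0}^{N}|I|^{k}$ words of length $\le N$ in the generators $s_i$, and such a word determines a unique source by successively applying the corresponding $r_i$ to $X$, whence $\Homsto [\Wg (\Rwg )]X$ is finite.

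Next I would pass from ``$(3)$ for one object'' to ``$(3)$ for all objects'' and to $(4)$. Assume $\Homsto [\Wg (\Rwg )]X$ is finite. Connectedness gives $\Hom (X,Z)\neq \emptyset $ for all $Z\in \cX $; since only finitely many objects admit a morphism to $X$, connectedness forces $\cX $ to be finite. For arbitrary $Z$, fixing $g\in \Hom (X,Z)$ yields $\Homsto [\Wg (\Rwg )]Z=g\circ \Homsto [\Wg (\Rwg )]X$, again finite, so $(3)$ holds for all objects; then $\Ar{\Wg (\Rwg )}=\bigcup _{Z\in \cX }\Homsto [\Wg (\Rwg )]Z$ is finite, i.e.\ $(4)$ holds, while $(4)\Rightarrow (3)$ is trivial. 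To close the circle I would prove $(4)\Rightarrow (1)$: choose $w\in \Hom (Y,X)$ of maximal length; maximality forces $\ell (w\circ s_j^{r_j(Y)})=\ell (w)-1$ for every $j\in I$, and the length/exchange formula for Weyl groupoids of root systems of type $\cC$ (see \cite{a-CH09a}) then gives $w(\beta )\in \rsys X_-$ for all $\beta \in \rsys Y_+$, i.e.\ $\rsys Y_+=N(w)$, which by the description used above is the finite set $\{\beta _1,\dots ,\beta _\ell \}$ of positive real roots. Hence $\rsys Y$ is finite, and for any $Z\in \cX $ and $v\in \Hom (Y,Z)$ the set $\rsys Z=v(\rsys Y)$ is finite by (R3); in particular $(1)$ holds for every object. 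Combining $(1)\Rightarrow (2)\Rightarrow (3)\Rightarrow (4)\Rightarrow (1)$ with the object-independence just established gives all assertions of the lemma.

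The step I expect to be the main obstacle is the input from the structure theory of root systems of type $\cC$: the precise description of the inversion set of a reduced word, and the length/exchange formula characterizing when $\ell (ws_j)<\ell (w)$. These genuinely use the root-system axioms (R1)--(R4) and not merely the Cartan scheme axioms (the Remark shows that $\rersys X$ can be finite without these holding), and they require careful bookkeeping because the reflections $s_i^X$ depend on the object $X$ and the composition in the groupoid must be tracked throughout. Once these standard facts are granted, the remaining arguments are elementary counting together with connectedness.
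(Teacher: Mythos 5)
Your argument is correct, but it takes a different (more self-contained) route than the paper, whose proof is essentially by citation: the equivalence of (1), (2) and (4) is quoted from \cite[Lemma\,2.11]{a-CH09a}, the implication (4)$\Rightarrow$(3) is immediate, and the authors only observe that the proof of (4)$\Rightarrow$(1) given there uses nothing more than (3). You instead reconstruct the full cycle (1)$\Rightarrow$(2)$\Rightarrow$(3)$\Rightarrow$(4)$\Rightarrow$(1): the inclusion $\rersys X\subset \rsys X$ via (R2) and (R3); the bound $\ell (w)\le \#\rersys X_+$ — which, incidentally, follows directly from Prop.~\ref{pr:posroots}, since the roots $\beta _k$ there are real by \eqref{eq:reroots}, so your detour through inversion sets is not even needed for (2)$\Rightarrow$(3) — together with counting words over the finite set $I$; a connectedness count for (3)$\Rightarrow$(4) and for the ``one object iff all objects'' statement; and the maximal-length/inversion-set argument for (4)$\Rightarrow$(1), which is essentially the argument of the cited lemma. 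What your version buys is transparency about exactly which root-system facts are used and where connectedness enters; the price is that the two inputs you flag yourself — that $\ell (ws_j)$ and $\ell (w)$ differ by exactly one, with the sign governed by whether $w(\al _j)$ is positive or negative, and that the inversion set of a morphism consists of exactly $\ell (w)$ positive (real) roots — are genuinely nontrivial and must still be imported from \cite{a-CH09a} or \cite{a-HeckYam08}, so the net reliance on the literature is comparable to the paper's. One cosmetic point: your opening plan announces (1)$\Leftrightarrow$(2)$\Leftrightarrow$(3) for each fixed $X$, whereas you actually prove (1)$\Rightarrow$(2)$\Leftrightarrow$(3) for fixed $X$; this is harmless, since (2)$\Rightarrow$(1) is recovered through (3)$\Rightarrow$(4)$\Rightarrow$(1) and the object-independence you establish, but the statement of the plan should match what is proved.
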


\begin{remar}
  The equivalence of (1), (2), and (4) was stated and proven in
  \cite[Lemma\,2.11]{a-CH09a}. Clearly, (4) implies (3).
  For the proof of the implication (4)$\Rightarrow $(1) in
  \cite[Lemma\,2.11]{a-CH09a} one needs only to assume (3), and hence
  all claims of Lemma~\ref{le:finrs} are equivalent.
\end{remar}

Let $\cC $ be a Cartan scheme and $\Rwg $ a root system of type $\cC $.
Then $\Rwg $ is called \textit{finite}, see \cite[Def.\,2.20]{a-CH09a},
if $\rsys X$ is finite for all $X\in
\Ob (\Wg (\Rwg ))$. If $\Wg (\Rwg )$ is connected, then this is equivalent
to the conditions in Lemma~\ref{le:finrs}.

Let $\ell $ denote the
length function on Weyl groupoids of root systems: for each $X\in \cX $
and each $w\in \Homsto[\Wg (\Rwg )] X$ let
\[ \ell (w)=
  \min \{m\in \ndN _0\,|\,\text{there exist $i_1,\dots,i_m\in I$
  with $w=\id _X s_{i_1}\cdots s_{i_m}$}\}.
\]
\begin{propo} \cite[Prop.\,2.12]{a-CH09a} \label{pr:posroots}
  Let $\cC $ be a Cartan scheme and $\Rwg $ a root system of type $\cC $.
  Let $X\in \cX $, $m\in \ndN _0$, and $i_1,\dots,i_m\in I$ such that
  $\ell (w)=m$ for $w=s_{i_1}\cdots s_{i_m}\in \Homsto [\Wg (\Rwg )] X$.
  Then the roots
  \[ \beta _k=\id _X s _{i_1}\cdots s_{i_{k-1}}(\al _{i_k}),
  \qquad k\in \{1,2,\dots,m\}, \]
  are positive and pairwise distinct. If
  $\Rwg $ is finite
  and $w\in \Homsto [\Wg (\Rwg )]X$ is the unique longest element,
  then $\{\beta _k\,|\,1\le k\le \ell (w)\}=\rsys X_+$.
\end{propo}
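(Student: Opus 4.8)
These are the groupoid analogues of standard facts about reduced expressions in Coxeter groups, and I would mimic the Coxeter-theoretic proof, working in $\ndZ ^I$ and keeping track of the objects. Write $w_k=s_{i_1}\cdots s_{i_k}$ for $0\le k\le m$; then $w_0=\id _X$, each $w_k\in\Homsto [\Wg (\Rwg )]{X}$, one has $\beta _k=w_{k-1}(\al _{i_k})$, and, since $s_{i_1}\cdots s_{i_m}$ is reduced, $\ell (w_k)=k$ for all $k$. I will use three elementary remarks: by (R2) and (R3), $v(\rsys Y)=\rsys X$ for every $v\in\Hom (Y,X)$; by (R1) a root is positive or negative but not both; and for each object $X$ and $i\in I$ the automorphism $s_i^X$ sends $\al _i$ to $-\al _i$ and restricts to a bijection of $\rsys X_+\setminus\{\al _i\}$ onto $\rsys {r_i(X)}_+\setminus\{\al _i\}$ (if $\beta =\sum _k b_k\al _k\in\rsys X_+$ with $\beta \neq \al _i$, then $b_k>0$ for some $k\neq i$, and $s_i^X$ leaves this coefficient unchanged, so $s_i^X(\beta )$ is a root of $r_i(X)$ lying in $\ndN _0^I$).

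The first ingredient is a length lemma. In the basis $(\al _k)_{k\in I}$ the matrix of $s_i^X$ has $i$-th column $-\al _i$ and $j$-th column $\al _j-\cm ^X_{ij}\al _i$ for $j\neq i$, hence determinant $-1$; so $\det w=(-1)^{\ell (w)}$ for every morphism $w$ of $\Wg (\Rwg )$, and therefore $\ell (w)$ and $\ell (w')$ have opposite parity whenever $w'$ is obtained from $w$ by composing with one reflection. Combined with the trivial bound $|\ell (w')-\ell (w)|\le 1$ this gives $\ell (w')=\ell (w)\pm 1$. The decisive statement — and the main obstacle — is the groupoid exchange condition: for $w\in\Hom (Y,X)$ and $i\in I$, if $w'=w\circ s_i^{r_i(Y)}$, then $\ell (w')=\ell (w)+1$ when $w(\al _i)\in\rsys X_+$ and $\ell (w')=\ell (w)-1$ when $w(\al _i)\in\rsys X_-$. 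I would prove this by induction on $\ell (w)$, simultaneously with the statement $|N(w)|=\ell (w)$, where $N(w)=\{\al \in\rsys X_+\mid w^{-1}(\al )\in\rsys Y_-\}$ for $w\in\Hom (Y,X)$: the permutation property of $s_i$ recorded above yields a cocycle formula for $N(w')$ in terms of $N(w)$ and $\al _i$, and the inductive step is the usual exchange/deletion argument of Coxeter theory, now using the rank-two axiom (R4) in place of the dihedral relations. The same induction shows $\{\beta _1,\dots ,\beta _m\}=N(w)$ for every reduced expression of every $w$. This is the technically heaviest step; in the source it is carried out in lemmas preceding Prop.~\ref{pr:posroots}, and I would reproduce it here.

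Granting the length lemma, the first assertion is quick. Positivity: $w_k$ is $w_{k-1}$ composed on the right with an elementary reflection of direction $i_k$ (precisely $w_k=w_{k-1}\circ s_{i_k}^{r_{i_k}(Y')}$ with $Y'=\mathrm{src}\,w_{k-1}$), and $\ell (w_k)=k=\ell (w_{k-1})+1$, so the exchange condition gives $\beta _k=w_{k-1}(\al _{i_k})\in\rsys X_+$. Distinctness then follows from $\{\beta _1,\dots ,\beta _m\}=N(w)$, a set of cardinality $\ell (w)=m$. (One can also argue directly: $\beta _j=\beta _k$ with $j<k$ would give $u^{-1}(\al _{i_j})=\al _{i_k}\in\ndN _0^I$ for the reduced contiguous subword $u=s_{i_j}\cdots s_{i_{k-1}}$, while deleting the first letter of $u$ lowers its length, so the exchange condition applied to $u^{-1}$ forces $u^{-1}(\al _{i_j})$ to be a negative root — a contradiction.)

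For the second assertion, assume $\Rwg $ is finite and let $w\in\Homsto [\Wg (\Rwg )]{X}$ be the unique longest element, $w\in\Hom (Y,X)$; then $\rsys X_+$ is finite. For each $i\in I$ the morphism $w\circ s_i^{r_i(Y)}$ again has target $X$, so by maximality it has length $\ell (w)-1$; by the exchange condition, $w(\al _i)\in\rsys X_-$. Thus $w$ carries every simple root of $Y$ to a negative root, and then, by induction on the height $\sum _k b_k$ of $\beta =\sum _k b_k\al _k\in\rsys Y_+$ — using the standard property of root systems of type $\cC $ that a non-simple positive root has the form $\beta '+\al _i$ with $\beta '\in\rsys Y_+$, and that a root lying in $-\ndN _0^I$ is negative — one obtains $w(\rsys Y_+)\subseteq\rsys X_-$. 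Hence for $\al \in\rsys X_+$ one cannot have $w^{-1}(\al )\in\rsys Y_+$ (otherwise $\al =w(w^{-1}(\al ))\in\rsys X_-$), so $\al \in N(w)$; therefore $N(w)=\rsys X_+$. Since $|N(w)|=\ell (w)$ and $\{\beta _1,\dots ,\beta _{\ell (w)}\}=N(w)$, we conclude $\{\beta _k\mid 1\le k\le \ell (w)\}=\rsys X_+$, and in particular $\ell (w)=|\rsys X_+|$.
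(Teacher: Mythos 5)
The paper contains no proof of Prop.~\ref{pr:posroots} for you to be measured against: the statement is quoted from \cite[Prop.\,2.12]{a-CH09a} and used as a black box, so the only ``in-paper proof'' is that citation. Your outline does follow the route of the cited sources --- inversion sets $N(w)=\{\al \in \rsys X_+\,|\,w^{-1}(\al )\in \rsys Y_-\}$, the cocycle formula coming from the fact that $s_i^X$ maps $\rsys X_+\setminus \{\al _i\}$ bijectively onto $\rsys {r_i(X)}_+\setminus \{\al _i\}$, and the exchange condition --- so strategically it is the right one. But the decisive ingredient, the groupoid exchange condition (equivalently: reduced expressions produce positive roots, or $|N(w)|=\ell (w)$), is precisely what you do not prove. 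You assert that the classical exchange/deletion induction goes through ``using (R4) in place of the dihedral relations'' and then explicitly defer the details to lemmas of the source. In the groupoid setting, where the Cartan matrices vary from object to object, that adaptation is the actual mathematical content of the proposition; it is what \cite{a-HeckYam08} (the Coxeter-groupoid/Matsumoto machinery that enters the present paper as Thm.~\ref{th:Cox}) and \cite{a-CH09a} establish. Read as a self-contained argument, your proposal therefore has a genuine gap exactly at its core; read as a reduction to the literature, it reproduces the citation the paper already makes.

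A smaller point: in the longest-element part you deduce $w(\rsys Y_+)\subset \rsys X_-$ by induction on height, invoking the claim that every non-simple positive root can be written as $\beta '+\al _i$ with $\beta '\in \rsys Y_+$. That claim is not among the axioms (R1)--(R4) and would itself need justification in this generality; fortunately it is unnecessary. Since $w(\al _i)\in -\ndN _0^I$ for all $i\in I$ and every $\beta \in \rsys Y_+$ is a nonnegative integer combination of the $\al _i$, linearity already gives $w(\beta )\in -\ndN _0^I$; as $w(\beta )\in \rsys X$ by (R3), axiom (R1) forces it to be a negative root. With that repair, and granting the exchange condition, the remainder of your argument (positivity, distinctness via $N(w)$ or via the contiguous-subword argument, and $N(w)=\rsys X_+$ for the longest element) is correct; your preliminary determinant/parity observation is also fine.
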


Prop.~\ref{pr:posroots} implies in particular that the set of roots of a
finite root system is uniquely determined by its Cartan scheme and coincides
with the set of real roots.

%

\begin{defin}\label{de:Cox}
  Let $\cX $ and $I$ be non-empty sets, $(r_i)_{i\in I}$ a family of
  maps $r_i:\cX \to \cX $, and $(m_{i,j}^X)_{i,j\in I,X\in \cX}$
  a family of numbers in $\ndN \cup \{\infty \}$
  such that $m_{i,i}^X=1$ and
  $(r_ir_j)^{m_{i,j}^X}(X)=X$ for all $X\in \cX $ and $i,j\in I$ with $m_{i,j}
  ^X<\infty $.
  \footnote{This slight extension of notation is compatible with (R4) and
  (C1).}
  Let $\cG $ be a groupoid with $\Ob (\cG )=\cX $, and let
  $(s_i^X)_{i\in I,X\in \cX }$ be a family of morphisms $s_i^X\in \Hom
  (X,r_i(X))$. We say that $(\cG ,(s_i^X)_{i\in I,X\in \cX })$
  \textit{satisfies the Coxeter relations} if
  \begin{align} \label{eq:Coxrel}
    \underbrace{s_i^{r_j(r_ir_j)^{m_{i,j}^X -1}(X)}
    s_j^{(r_ir_j)^{m_{i,j}^X -1}(X)}
    \dots
    s_j^{r_ir_j(X)}s_i^{r_j(X)}s_j^X} _{\text{$2m_{i,j}^X$ factors}}=\id _X
  \end{align}
  for all $X\in \cX $ and $i,j\in I$ with $m_{i,j}^X<\infty $.
  In particular, Eq.~\eqref{eq:Coxrel}
  means for $i=j$ that $s_i^{r_i(X)}s_i^X=\id _X$
  for all $X\in \cX $ and $i\in I$.

  Let $\Wg $ be a groupoid and $(s_i^X)_{i\in I,X\in \cX }$ a family of
  morphisms as above. We say that $(\Wg ,(s_i^X)_{i\in I,X\in \cX })$
  is a \textit{Coxeter groupoid}, if
  \begin{enumerate}
    \item $(\Wg ,(s_i^X)_{i\in I,X\in \cX })$ satisfies the Coxeter relations,
      and
    \item
      for each pair $(\cG ,(t_i^X)_{i\in
      I,X\in \cX })$ satisfying the Coxeter relations (with the same $\cX $,
      $I$, $(r_i)$ and $(m_{i,j}^X)$ as for $\Wg $) there is a unique functor
      $F:\Wg \to \cG $ such that $F$ is the identity on
      $\cX =\Ob (\Wg )=\Ob (\cG )$ and
      $F(s_i^X)=t_i^X$ for all $i\in I$, $X\in \cX $.
  \end{enumerate}
\end{defin}

The universal property of a Coxeter groupoid
$(\Wg ,(s_i^X)_{i\in I,X\in \cX })$
implies that $\Ar{\Wg }$ is generated by the morphisms $s_i^X\in \Hom
(X,r_i(X))$, where $i\in I$ and $X\in \cX $.

For the rest of this section
let $\cC =\cC (I,\cX ,(r_i)_{i \in I},
(\Cm ^X)_{X \in \cX })$ be a Cartan scheme and
let $\Rwg =\Rwg (\cC ,(\rsys X)_{X\in \cX })$ be a root system of type $\cC
$.

\begin{theor} \cite[Thm.~1]{a-HeckYam08} \label{th:Cox}
  For all $i,j\in I$ and $X\in \cX $ let
  \[ m_{i,j}^X= \# (\rsys X\cap (\ndN _0\al _i + \ndN _0 \al_j)). \]
  Then $(\Wg (\Rwg ), (s_i^X)_{i\in I,X\in \cX })$ is a Coxeter groupoid
  with respect to $(m_{i,j}^X)$.
\end{theor}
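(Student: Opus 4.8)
The plan is to realize $(\Wg(\Rwg),(s_i^X))$ as the universal object for the Coxeter relations, so the work is to check the two defining properties in Definition~\ref{de:Cox}: that the Coxeter relations hold in $\Wg(\Rwg)$, and that $\Wg(\Rwg)$ maps uniquely onto any other pair satisfying them. First I would set $m_{i,j}^X = \#(\rsys X\cap(\ndN_0\al_i+\ndN_0\al_j))$ as prescribed. When $i=j$ this number is $2$ by (R2), and the relation \eqref{eq:Coxrel} reduces to $s_i^{r_i(X)}s_i^X=\id_X$, which is already recorded after Definition~\ref{de:Cox} (it follows from $s_i^{r_i(X)}s_i^X$ acting as the identity on $\ndZ^I$ together with the fact that morphisms of $\Wg(\cC)$ are determined by their action on $\ndZ^I$, or directly from the groupoid construction). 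So the substance is the case $i\neq j$ with $m:=m_{i,j}^X<\infty$.

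For that case, I would first verify the compatibility condition $(r_ir_j)^m(X)=X$, which is exactly axiom (R4), so the left-hand side of \eqref{eq:Coxrel} is a well-defined morphism in $\Hom(X,X)$. To show it equals $\id_X$, I would pass to the rank-two sub-Cartan-scheme obtained by restricting to the index set $\{i,j\}$ and the $r_i,r_j$-orbit of $X$; the set $\rsys X\cap(\ndN_0\al_i+\ndN_0\al_j)$ together with its $s_i,s_j$-images gives a finite root system of rank two, whose Weyl groupoid is classified — it behaves like the Weyl group of a finite rank-two Coxeter system (dihedral of order $2m$). The key point is that in a rank-two root system a morphism is determined by its action on $\ndZ^2$ (injectivity of the representation on the root lattice), and the alternating product of $2m$ reflections $s_i,s_j,\dots$ acts on $\ndZ^2$ as the identity because it is the standard dihedral relation; hence it is $\id_X$ in the groupoid. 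This reduction to rank two is where I expect the only real subtlety: one must be careful that the restricted data genuinely forms a root system of type a rank-two Cartan scheme (checking (R1)--(R4) for the restriction, where (R4) for the pair $(i,j)$ is the given hypothesis), and that finiteness of $m_{i,j}^X$ propagates correctly around the orbit using (C2).

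For the universal property, given any pair $(\cG,(t_i^X))$ satisfying the Coxeter relations with the same combinatorial data, I would define $F$ on objects as the identity on $\cX$ and on the generating morphisms by $F(s_i^X)=t_i^X$, then extend multiplicatively. Well-definedness amounts to showing that every relation among the $s_i^X$ in $\Wg(\Rwg)$ is a consequence of the Coxeter relations \eqref{eq:Coxrel}. Here I would invoke the standard Tits-type argument adapted to groupoids: using the length function $\ell$ from Proposition~\ref{pr:posroots} and the exchange/deletion property for real roots (which follows from the fact that the $\beta_k$ in Proposition~\ref{pr:posroots} are positive and pairwise distinct), any two reduced expressions for the same morphism are connected by a sequence of braid moves, each of which is an instance of \eqref{eq:Coxrel}; and any non-reduced expression can be shortened using a relation $s_i^{r_i(\cdot)}s_i^{\cdot}=\id$. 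Uniqueness of $F$ is then immediate since the $s_i^X$ generate $\Ar{\Wg(\Rwg)}$. I would cite \cite[Thm.~1]{a-HeckYam08} for the full argument, presenting only the reduction to rank two and the Tits-deletion step in outline, as the detailed combinatorics is routine once the rank-two classification is in hand.
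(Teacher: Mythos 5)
First, note that the paper does not prove this statement at all: Thm.~\ref{th:Cox} is imported verbatim from \cite[Thm.~1]{a-HeckYam08}, so your final move of deferring to that citation is exactly what the paper does, and to that extent your proposal is consistent with it. Judged as an argument, however, your sketch has a genuine gap at its central step, the verification of the Coxeter relations for $i\neq j$. Morphisms of $\Wg (\Rwg )$ are (identified with) elements of $\Aut (\ndZ ^I)$, not of $\Aut (\ndZ ^2)$: each generator acts by $s_i^X(\al _k)=\al _k-\cm ^X_{ik}\al _i$ on \emph{every} basis vector $\al _k$, with Cartan entries that change along the $r_i,r_j$-orbit of $X$. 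The restriction of the $2m_{i,j}^X$-fold alternating product to $\ndZ \al _i+\ndZ \al _j$ is the easy part; the actual content of the theorem is that this product also fixes every $\al _k$ with $k\notin \{i,j\}$. In the classical Weyl group case this is dispatched by the invariant bilinear form (all the reflections fix the orthogonal complement of the plane pointwise), but here no such form is available, the groupoid is not a group, and the reflection matrices differ from object to object, so ``the standard dihedral relation'' cannot simply be invoked. This is precisely why the root system axioms (R1)--(R4) are needed and why the statement is a theorem of \cite{a-HeckYam08} rather than rank-two linear algebra; the rank-two counting input you would need is \cite[Lemma~6]{a-HeckYam08}, the same fact the present paper uses in Eq.~\eqref{eq:Lambda+Nii}. (A small slip besides: $m_{i,i}^X=1$, not $2$, since by (R2) $\rsys X\cap \ndN _0\al _i=\{\al _i\}$; Def.~\ref{de:Cox} indeed requires $m_{i,i}^X=1$.)

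For the universal property your outline is the right general shape (a Matsumoto/Tits-type argument), but be careful about what you may quote: the exchange-type statement you want does not simply ``follow from'' Prop.~\ref{pr:posroots}. Inside this paper the corresponding facts (Prop.~\ref{pr:Lambda2}, Cor.~\ref{co:aliinL}, Thm.~\ref{th:Dorder}) are all proved \emph{using} Thm.~\ref{th:Cox}, so borrowing them would be circular; the exchange condition and the reduction of arbitrary relations to braid and quadratic moves have to be established from the root system axioms directly, which is the substance of \cite{a-HeckYam08}. In short: citing \cite[Thm.~1]{a-HeckYam08}, as the paper does, is legitimate, but the sketched ``reduction to rank two'' and the claimed provenance of the exchange property would not survive as a self-contained proof.
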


\begin{defin}\label{de:Lambda}
  For all $X\in \cX $,
  $m\in \ndN $ and $(i_1,\dots,i_m)\in I^m$ let
  $\Lambda _+^X()=\emptyset $ and
  \begin{align*}
    &\beta _k=s_{i_1}^{r_{i_1}(X)}s_{i_2}^{r_{i_2}r_{i_1}(X)}
    \cdots s_{i_{k-1}}^{r_{i_{k-1}}\cdots r_{i_2}r_{i_1}(X)}(\al _{i_k})\in
    \rsys X, \quad k\in \{1,2,\dots,m\},\\
    &\Lambda ^X(i_1,\dots,i_m)=(\beta _k)_{k\in \{1,2,\dots ,m\}},\\
    &\Lambda _+^X(i_1,\dots,i_m)=\big\{ \lambda \in \rsys X_+ \,|\,
    \#\{ k\in \ndZ \,|\,1\le k\le m,\lambda =\pm \beta _k\}
    \text{ is odd}\big\}.
  \end{align*}
\end{defin}

\begin{lemma}
  \label{le:Lambda1}
  Let $m\in \ndN $, $i_1,\dots,i_m\in I$, $X\in \cX $, and $Y=r_{i_1}(X)$.
  Then
  \begin{align*}
    \Lambda _+^X(i_1,\dots,i_m)=
    \begin{cases}
      s_{i_1}^Y(\Lambda _+^{Y}(i_2,\dots,i_m))
      \cup \{\al _{i_1}\}
      & \text{if $\al _{i_1}\notin \Lambda _+^Y(i_2,\dots,i_m)$,}\\
      s_{i_1}^Y\big(\Lambda _+^Y(i_2,\dots,i_m)\setminus
      \{\al _{i_1}\}\big)
      & \text{if $\al _{i_1}\in \Lambda _+^Y(i_2,\dots,i_m)$.}
    \end{cases}
  \end{align*}
\end{lemma}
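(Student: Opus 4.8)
The plan is to compare the sequences $\Lambda^X(i_1,\dots,i_m)$ and $\Lambda^Y(i_2,\dots,i_m)$ term by term, using the relation $Y=r_{i_1}(X)$ and $s_{i_1}^Y\colon Y\to X$, and then read off what happens to the multiset multiplicities modulo $2$. First I would unwind Definition~\ref{de:Lambda}: for $k\ge 2$ the root $\beta_k$ appearing in $\Lambda^X(i_1,\dots,i_m)$ is
\[
\beta_k = s_{i_1}^{r_{i_1}(X)}\,s_{i_2}^{r_{i_2}r_{i_1}(X)}\cdots s_{i_{k-1}}^{r_{i_{k-1}}\cdots r_{i_1}(X)}(\al_{i_k}),
\]
and since $r_{i_1}(X)=Y$ this is exactly $s_{i_1}^Y(\beta'_{k-1})$, where $\beta'_{k-1}$ is the $(k-1)$-st root in $\Lambda^Y(i_2,\dots,i_m)$. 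The first term is $\beta_1=\al_{i_1}$. So the multiset $\{\pm\beta_1,\dots,\pm\beta_m\}$ (with signs) equals $\{\pm\al_{i_1}\}\cup s_{i_1}^Y\bigl(\{\pm\beta'_1,\dots,\pm\beta'_{m-1}\}\bigr)$.

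Next I would use the fundamental fact about $s_i^Y$ acting on roots: by (R2) and (R3), $s_{i_1}^Y$ maps $\rsys Y$ bijectively to $\rsys X$, it sends $\al_{i_1}\mapsto -\al_{i_1}$, and it maps $\rsys Y_+\setminus\{\al_{i_1}\}$ into $\rsys X_+\setminus\{\al_{i_1}\}$ (the standard ``only $\al_i$ changes sign'' property, which for Cartan schemes follows from Prop.~\ref{pr:posroots} or directly from (R1)--(R3)). Hence for a positive root $\lambda\in\rsys X_+$ with $\lambda\ne\al_{i_1}$, counting the $k\in\{1,\dots,m\}$ with $\lambda=\pm\beta_k$ is the same as counting the $k'\in\{1,\dots,m-1\}$ with $(s_{i_1}^Y)^{-1}(\lambda)=\pm\beta'_{k'}$, because $\beta_1=\al_{i_1}\ne\pm\lambda$ contributes nothing and $(s_{i_1}^Y)^{-1}(\lambda)$ is again a root $\ne\pm\al_{i_1}$, in fact positive. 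Therefore $\lambda\in\Lambda_+^X(i_1,\dots,i_m)$ iff $(s_{i_1}^Y)^{-1}(\lambda)\in\Lambda_+^Y(i_2,\dots,i_m)$, i.e. iff $\lambda\in s_{i_1}^Y\bigl(\Lambda_+^Y(i_2,\dots,i_m)\bigr)$ — and in this last set we must of course exclude whether $\al_{i_1}$ itself lies in $\Lambda_+^Y(i_2,\dots,i_m)$, since $s_{i_1}^Y(\al_{i_1})=-\al_{i_1}\notin\rsys X_+$.

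It remains to treat $\lambda=\al_{i_1}$ separately. The number of $k\in\{1,\dots,m\}$ with $\al_{i_1}=\pm\beta_k$ equals $1$ (coming from $\beta_1$) plus the number of $k\ge 2$ with $\al_{i_1}=\pm\beta_k=\pm s_{i_1}^Y(\beta'_{k-1})$, i.e. plus the number of $k'\in\{1,\dots,m-1\}$ with $s_{i_1}^Y(\al_{i_1})=-\al_{i_1}=\pm\beta'_{k'}$, which is the number of $k'$ with $\al_{i_1}=\pm\beta'_{k'}$. So the parity of the $\al_{i_1}$-count for $X$ is $1$ plus the parity of the $\al_{i_1}$-count for $Y$: thus $\al_{i_1}\in\Lambda_+^X(i_1,\dots,i_m)$ iff $\al_{i_1}\notin\Lambda_+^Y(i_2,\dots,i_m)$. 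Assembling the two cases gives exactly the claimed formula: when $\al_{i_1}\notin\Lambda_+^Y(i_2,\dots,i_m)$ we add $\al_{i_1}$ and take the full image $s_{i_1}^Y(\Lambda_+^Y(i_2,\dots,i_m))$ (which contains no $\al_{i_1}$, since $\al_{i_1}\notin\Lambda_+^Y$); when $\al_{i_1}\in\Lambda_+^Y(i_2,\dots,i_m)$ we do not add $\al_{i_1}$, and we must remove $\al_{i_1}$ from $\Lambda_+^Y$ before applying $s_{i_1}^Y$ so that $-\al_{i_1}$ does not appear. The only point requiring a little care — the part I would single out as the main technical obstacle — is the sign-change property of $s_{i_1}^Y$ on positive roots: one must confirm that it holds at the level of Cartan schemes (not just classical root systems), which is precisely what Prop.~\ref{pr:posroots} delivers, applied to a reduced expression of $w$ whose first letter is $s_{i_1}$.
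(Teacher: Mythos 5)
Your argument is correct and is exactly the intended one: the paper's proof is the single line ``follows from Def.~\ref{de:Lambda} and basic properties of the map $s_{i_1}^Y$,'' and your term-by-term identification $\beta_k=s_{i_1}^Y(\beta'_{k-1})$ together with the parity bookkeeping at $\lambda=\al_{i_1}$ and $\lambda\neq\al_{i_1}$ fills in precisely those details. The only small imprecision is attributing the fact that $s_{i_1}$ flips only $\al_{i_1}$ to Prop.~\ref{pr:posroots}; the direct argument from (R1)--(R3) (only the $i_1$-th coordinate changes, so any positive root $\neq\al_{i_1}$ stays positive), which you also mention, is the right justification.
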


\begin{proof}
  The claim follows from Def.~\ref{de:Lambda} and basic properties of
  the map $s_{i_1}^Y$.
\end{proof}

The sets $\Lambda _+^X(i_1,\dots,i_m)$ ultimately describe the
elements of $\Homsto[\Wg (\Rwg )]X$.

\begin{propo}
  \label{pr:Lambda2}
  Let $X\in \cX $,
  $m,m'\in \ndN _0$ and $i_1,\dots,i_m,i'_1,\dots,i'_{m'}\in I$. The
  following are equivalent.
  \begin{enumerate}
    \item $\Lambda _+^X(i_1,\dots,i_m)=\Lambda _+^X(i'_1,\dots,{i'}_{m'})$,
    \item $s_{i_1}\cdots s_{i_m}=s_{i'_1}\cdots s_{i'_{m'}}$
      in $\Homsto [\Wg (\Rwg )]X$.
  \end{enumerate}
  Moreover, $\#\Lambda _+^X(i_1,\dots,i_m)=
  \ell (\id _Xs_{i_1}\cdots s_{i_m})$.
\end{propo}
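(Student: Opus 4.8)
\textbf{Strategy.} The plan is to prove the chain of implications $(2)\Rightarrow(1)$ and $(1)\Rightarrow(2)$ by induction on $m+m'$, using Lemma~\ref{le:Lambda1} as the inductive engine, together with Theorem~\ref{th:Cox} (which identifies $\Wg(\Rwg)$ with a Coxeter groupoid) and Proposition~\ref{pr:posroots} (which controls the length function). The auxiliary claim that drives everything is the statement: \emph{$\ell(s_{i_1}\cdots s_{i_m})<m$ if and only if $\al_{i_1}\in\Lambda_+^{Y}(i_2,\dots,i_m)$}, where $Y=r_{i_1}(X)$; equivalently, the word $(i_1,\dots,i_m)$ is non-reduced with a "descent at the front" exactly when $\al_{i_1}$ already appears in the $\Lambda_+$-set of the tail. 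Granting this, the identity $\#\Lambda_+^X(i_1,\dots,i_m)=\ell(\id_X s_{i_1}\cdots s_{i_m})$ follows immediately by induction: Lemma~\ref{le:Lambda1} shows the cardinality goes up by one (reduced case) or down by one (non-reduced case) exactly as $\ell$ does, since $s_{i_1}^Y$ is a bijection on roots. So I would prove the cardinality formula first.

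\textbf{Step 1: the cardinality formula and the descent lemma.} First I would establish, by induction on $m$, that $\#\Lambda_+^X(i_1,\dots,i_m)=\ell(s_{i_1}\cdots s_{i_m})$. The base case $m=0$ is $\Lambda_+^X()=\emptyset$. For the inductive step, set $Y=r_{i_1}(X)$ and $w'=s_{i_2}\cdots s_{i_m}\in\Homsto[\Wg(\Rwg)]Y$. If $\ell(s_{i_1}w')=\ell(w')+1$ then by Proposition~\ref{pr:posroots} applied to a reduced expression of $s_{i_1}w'$ starting with $s_{i_1}$, the root $\al_{i_1}$ is one of the $\beta_k$ and the displayed roots are distinct; chasing Definition~\ref{de:Lambda} one checks $\al_{i_1}\notin\Lambda_+^Y(i_2,\dots,i_m)$, so Lemma~\ref{le:Lambda1} gives $\#\Lambda_+^X(i_1,\dots,i_m)=\#\Lambda_+^Y(i_2,\dots,i_m)+1$. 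If instead $\ell(s_{i_1}w')=\ell(w')-1$, then $w'$ has a reduced expression starting with $s_{i_1}$; again via Proposition~\ref{pr:posroots} this forces $\al_{i_1}\in\Lambda_+^Y(i_2,\dots,i_m)$, and Lemma~\ref{le:Lambda1} gives the cardinality dropping by one. Either way the formula propagates. (The only real content here is the two-way implication "$\al_{i_1}\in\Lambda_+^Y(\dots)$ iff $w'$ has a reduced word starting with $s_{i_1}$", which I would isolate as a sublemma; it is the crux and I expect it to be the main obstacle, see below.)

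\textbf{Step 2: $(1)\Leftrightarrow(2)$.} For $(2)\Rightarrow(1)$: suppose $s_{i_1}\cdots s_{i_m}=s_{i'_1}\cdots s_{i'_{m'}}$. Induct on $m+m'$. If both words are reduced, one can peel off $s_{i_1}$ from the front using the exchange condition for Coxeter groupoids (Theorem~\ref{th:Cox}, via the standard Matsumoto/Tits argument for Coxeter systems transported to the groupoid setting), reducing to a shorter equality and applying the inductive hypothesis together with Lemma~\ref{le:Lambda1}. If, say, the first word is non-reduced, use Step~1 to delete a letter from it without changing the element and with a controlled change of $\Lambda_+$ (by Lemma~\ref{le:Lambda1} deleting a front-descent letter does not change $\Lambda_+^X$), again dropping to a smaller case. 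For $(1)\Rightarrow(2)$: I would argue contrapositively — if $w:=s_{i_1}\cdots s_{i_m}\neq w':=s_{i'_1}\cdots s_{i'_{m'}}$ in $\Homsto[\Wg(\Rwg)]X$, then in particular $w^{-1}w'\neq\id$, so there is some $j\in I$ with $\ell(w^{-1}w' s_j)<\ell(w^{-1}w')$, i.e.\ the two Weyl-groupoid elements separate some positive root; unravelling Definition~\ref{de:Lambda} this exhibits a root lying in exactly one of $\Lambda_+^X(i_1,\dots,i_m)$, $\Lambda_+^X(i'_1,\dots,i'_{m'})$. Alternatively, and more cleanly, one shows directly that $\Lambda_+^X(i_1,\dots,i_m)$ together with $X$ \emph{determines} the morphism $s_{i_1}\cdots s_{i_m}$ — e.g.\ by reconstructing a reduced expression greedily: pick $i_1'$ with $\al_{i_1'}\in\Lambda_+^X(\dots)$ "innermost", strip it off via Lemma~\ref{le:Lambda1}, recurse — which yields both the well-definedness of the reconstruction and the implication.

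\textbf{Main obstacle.} The delicate point is the equivalence used in Step~1: \emph{$\al_{i_1}\in\Lambda_+^{r_{i_1}(X)}(i_2,\dots,i_m)$ if and only if $\ell(s_{i_2}\cdots s_{i_m})>\ell(s_{i_1}s_{i_2}\cdots s_{i_m})$}, i.e.\ the interaction between membership in the $\Lambda_+$-set and length decrease. In the classical Coxeter-group setting this is the statement "$\ell(sw)<\ell(w)$ iff $w^{-1}\al_s<0$", proved via the geometric action on roots; here one must redo it in the groupoid language, being careful that roots live in different $\ndZ^I$-copies indexed by objects of $\cX$ and that the $s_i^X$ permute $\rsys X_+\setminus\{\al_i\}$ (the groupoid analogue of "$s_i$ permutes the positive roots other than $\al_i$", which follows from (R2), (R3) and Proposition~\ref{pr:posroots}). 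Once that permutation property and the length-via-$\Lambda_+$ bookkeeping are in place, everything else is a bookkeeping induction resting on Lemma~\ref{le:Lambda1}.
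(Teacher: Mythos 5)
There is a genuine gap, and it sits exactly where you suspected. Your Step~1 rests on the sublemma ``$\al _{i_1}\in \Lambda _+^{Y}(i_2,\dots,i_m)$ if and only if $\ell (s_{i_1}s_{i_2}\cdots s_{i_m})<\ell (s_{i_2}\cdots s_{i_m})$'' for an \emph{arbitrary} word $(i_2,\dots,i_m)$. But for a non-reduced word the set $\Lambda _+^{Y}(i_2,\dots,i_m)$ is a priori word-dependent, while the right-hand side depends only on the element; so this sublemma already contains the word-independence of $\Lambda _+$, i.e.\ essentially the implication (2)$\Rightarrow $(1) you are trying to prove. Your sketch of its proof does not close this loop: Prop.~\ref{pr:posroots} only tells you that the $\beta _k$ attached to a \emph{reduced} expression are positive and distinct, hence it controls $\Lambda _+$ of that reduced word, not of the given word $(i_2,\dots,i_m)$ — passing from one to the other is precisely the statement at issue. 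The same circularity affects your (1)$\Rightarrow $(2) sketch (``the two elements separate some positive root, hence some root lies in exactly one $\Lambda _+$-set''), which again identifies $\Lambda _+$ of a word with an inversion set of the element before that identification has been established. A second, smaller problem is your appeal to ``the exchange condition for Coxeter groupoids'' as a consequence of Thm.~\ref{th:Cox}: that theorem is only the presentation/universal property (a Matsumoto-type statement), and the exchange condition is neither stated in this paper nor an immediate consequence of it.

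The argument can be repaired, but by a different mechanism than the one you wrote. One honest route is to prove directly, by induction on the word length using Lemma~\ref{le:Lambda1} and the fact that $s_i^X$ maps $\rsys X_+\setminus \{\al _i\}$ bijectively onto $\rsys {r_i(X)}_+\setminus \{\al _i\}$ (a consequence of (R1)--(R3)), that $\Lambda _+^X(i_1,\dots,i_m)$ equals the inversion set $\{\beta \in \rsys X_+\,|\,w^{-1}(\beta )\notin \ndN _0^I\}$ of the element $w=\id _Xs_{i_1}\cdots s_{i_m}$; this gives word-independence at once, and then reduced expressions plus Prop.~\ref{pr:posroots} give the cardinality formula and (1)$\Rightarrow $(2). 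The paper instead avoids inversion sets altogether: it forms the auxiliary groupoid $\cV $ whose morphisms are pairs $(\Lambda ^Z_+(j_1,\dots,j_n),\id _Zs_{j_1}\cdots s_{j_n})$, verifies the Coxeter relations there — the key input being that the alternating word of length $2m_{i,j}^Z$ has empty $\Lambda _+$, because its $\beta _k$ run exactly once through the rank-two roots by \cite[Lemma\,6]{a-HeckYam08} — and then invokes the universal property in Thm.~\ref{th:Cox} to obtain a functor $\Wg (\Rwg )\to \cV $, which is precisely (2)$\Rightarrow $(1); the converse is reduced to a reduced word and settled by Prop.~\ref{pr:posroots}. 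Note also that in the paper the descent statement you wanted as an ingredient appears only afterwards, as Cor.~\ref{co:aliinL}, deduced from the proposition rather than used to prove it.
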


\begin{proof}
  Let $\cV $ denote the category with $\Ob (\cV )=\cX $ and morphisms
  \begin{align*}
    \Hom (Y,Z)=\big\{&\big(\Lambda ^Z_+(j_1,\dots,j_n),
    \id _Z s_{j_1}\cdots s_{j_n}\big)\,|\\
    & n\in \ndN _0,\, j_1,\dots,j_n\in I,
    r_{j_1}\cdots r_{j_n}(Y)=Z \big\}
  \end{align*}
  for all $Y,Z\in \cX $. Composition of morphisms
  is defined via concatenation:
  \begin{align*}
    \big(\Lambda ^Z_+(j_1,\dots,j_n), \id _Z s_{j_1}\cdots s_{j_n}\big)
    \circ
    \big(\Lambda ^Y_+(k_1,\dots,k_p), \id _Y s_{k_1}\cdots s_{k_p}\big)
    \qquad &\\
    =\big(\Lambda ^Z_+(j_1,\dots,j_n,k_1,\dots ,k_p),
    \id _Z s_{j_1}\cdots s_{j_n}s_{k_1}\cdots s_{k_p}\big)&
  \end{align*}
  for all $Y,Z\in \cX $,
  $n,p\in \ndN _0$ and $j_1,\dots,j_n,k_1,\dots,k_p\in I$
  with $Z=r_{j_1}\cdots r_{j_n}(Y)$. First we prove that $\cV $ is indeed a
  category.

  Let  $n,n',p,p'\in \ndN _0$, $j_1,\dots,j_n$, $j'_1,\dots,j'_{n'}$,
  $k_1,\dots, k_p$, $k'_1,\dots ,k'_{p'}\in I$
  and $Z\in \cX $ such that
  \[ \Lambda ^Z_+(j_1,\dots ,j_n)=\Lambda ^Z_+(j'_1,\dots ,j'_{n'}),\quad
  \Lambda ^Y_+(k_1,\dots ,k_p)=\Lambda ^Y_+(k'_1,\dots ,k'_{p'}),
  \]
  and $\id _Z s_{j_1}\cdots s_{j_n}=\id _Z s_{j'_1}\cdots s_{j'_{n'}}$,
  where $Y=r_{j_n}\cdots r_{j_1}(Z)$.
  The definition of $\Lambda ^Z_+$ implies that
  \begin{align}
    \Lambda ^Z_+(j_1,\dots,j_n,k_1,\dots ,k_p)=&
    \Lambda ^Z_+(j_1,\dots,j_n,k'_1,\dots ,k'_{p'}),\\
    \Lambda ^Z_+(j_1,\dots,j_n,k_1,\dots ,k_p)=&
    \Lambda ^Z_+(j'_1,\dots,j'_{n'},k_1,\dots ,k_p),
  \end{align}
  and hence the composition is independent of the choice of representatives.
  Clearly, the composition is associative, and $(\Lambda ^Z_+(),\id _Z)$ is the
  identity for any $Z\in \cX $, and hence $\cV $ is a category.
  Moreover, $\Ar{\cV }$ is generated by the morphisms $(\Lambda
  ^Z_+(i),s_i^{r_i(Z)})$ with $i\in I$ and $Z\in \cX $,
  which are invertible since $\Lambda ^Z_+(i,i)=\emptyset $.
  Thus $\cV $ is a groupoid.

  Let $Z\in \cX $ and $i,j\in I$. Assume that
  $m_{i,j}^Z<\infty $. Then
  \begin{align}\label{eq:Lambda+Nii}
    \Lambda _+^Z(\underbrace{i,j,i,j,\dots ,i,j}_{2m_{i,j}^Z \text{ entries}})
    =& \emptyset ,
  \end{align}
  since the entries $\beta _k$ of $\Lambda ^Z(i,j,i,j,\dots ,i,j)$
  are the elements of
  $(\ndZ \al _i+\ndZ \al _j)\cap \rsys Z$, each appearing
  with multiplicity one, see \cite[Lemma\,6]{a-HeckYam08}.
  In the special case, where $i=j$, we have $\Lambda ^Z(i,i)=(\al _i,-\al _i)$.
  Hence the pair
  \[ (\cV , ((\Lambda ^Z_+(i),s_i^{r_i(Z)}))_{i\in I,Z\in \cX }) \]
  satisfies the Coxeter relations in the sense of Def.~\ref{de:Cox}.
  By Thm.~\ref{th:Cox}, there is a functor from $\Wg (\Rwg )$ to $\cV $
  which maps $s_i^{r_i(Z)}$ to $(\Lambda ^Z_+(i),s_i^{r_i(Z)})$
  for each $Z\in \cX $ and
  $i\in I$. This proves the implication
  (2)$\Rightarrow $(1) in the claim of the lemma.

  We prove (1)$\Rightarrow $(2). Assume that
  $\Lambda _+^X(i_1,\dots,i_m)=\Lambda _+^X(i'_1,\dots,{i'}_{m'})$.
  Using that
  \[ \Lambda ^Z_+(i,i,j_1,\dots,j_n)=\Lambda ^Z_+(j_1,\dots,j_n),
  \quad \id _Z s_is_is_{j_1}\cdots s_{j_n}=\id _Z s_{j_1}\cdots s_{j_n} \]
  for all $Z\in \cX $ and $n\in \ndN _0$, $i,j_1,\dots,j_n\in I$,
  we may assume that $m'=0$.
  By the first part of the lemma we may restrict to
  the case when $\id _Xs_{i_1}\cdots s_{i_m}$ is a reduced expression, that is,
  $\ell (\id _Xs_{i_1}\cdots s_{i_m})=m$.
  Then we have to show that $m=0$.
  The roots
  \[ s_{i_1}^{r_{i_1}(X)}\cdots s_{i_{k-1}}^{r_{i_{k-1}}\cdots r_{i_1}(X)}
  (\al _{i_k})\in \rsys X, \]
  where $1\le k\le m$,
  are pairwise distinct and positive by
  Prop.~\ref{pr:posroots}. Hence the assumption
  $\Lambda _+^X(i_1,\dots,i_m)=\emptyset $ implies the desired claim $m=0$.
  The last assertion of the lemma follows from the first one and from
  Prop.~\ref{pr:posroots}.
\end{proof}

Let $X,Y\in \cX $, $w\in \Hom (Y,X)$, $m\in \ndN _0$, and
$i_1,\dots,i_m\in I$ such that $w=s_{i_1}\cdots s_{i_m}$.
Let $\Lambda ^X_+(w)=\Lambda ^X_+(i_1,\dots,i_m)$. This notation is
justified by Prop.~\ref{pr:Lambda2}.

\begin{corol} \label{co:aliinL}
  Let $X,Y\in \cX $, $w\in \Hom (Y,X)$, and $i\in I$. Then
  \begin{align*}
    \ell (s_i^Xw)=
    \begin{cases}
      \ell (w)+1 & \text{if $\al _i\notin \Lambda ^X_+(w)$,}\\
      \ell (w)-1 & \text{if $\al _i\in \Lambda ^X_+(w)$.}
    \end{cases}
  \end{align*}
\end{corol}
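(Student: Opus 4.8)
The plan is to reduce Corollary~\ref{co:aliinL} to the combinatorial description of length provided by Proposition~\ref{pr:Lambda2} together with the recursion for $\Lambda _+$ in Lemma~\ref{le:Lambda1}. Write $w=s_{i_1}\cdots s_{i_m}\in \Hom (Y,X)$ with $m=\ell (w)$, so $w$ is a reduced expression. Then $s_i^Xw=s_i^Xs_{i_1}\cdots s_{i_m}$ is an expression for the morphism $s_i^Xw\in \Hom (Y,r_i(X))$ using $m+1$ factors, and by the last assertion of Proposition~\ref{pr:Lambda2} we have $\ell (s_i^Xw)=\#\Lambda _+^{r_i(X)}(i,i_1,\dots,i_m)$. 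So everything comes down to comparing $\#\Lambda _+^{r_i(X)}(i,i_1,\dots,i_m)$ with $\#\Lambda _+^X(i_1,\dots,i_m)=\ell (w)=m$.

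The key step is to apply Lemma~\ref{le:Lambda1} with the index sequence $(i,i_1,\dots,i_m)$, so that the distinguished first index is $i$ and the groupoid object in the role of ``$X$'' is $r_i(X)$; then ``$Y$'' in the lemma is $r_i(r_i(X))=X$ by (C1). The lemma yields
\begin{align*}
  \Lambda _+^{r_i(X)}(i,i_1,\dots,i_m)=
  \begin{cases}
    s_i^{X}\big(\Lambda _+^{X}(i_1,\dots,i_m)\big)\cup \{\al _i\}
      & \text{if $\al _i\notin \Lambda _+^X(i_1,\dots,i_m)$,}\\
    s_i^{X}\big(\Lambda _+^{X}(i_1,\dots,i_m)\setminus \{\al _i\}\big)
      & \text{if $\al _i\in \Lambda _+^X(i_1,\dots,i_m)$.}
  \end{cases}
\end{align*}
Since $s_i^X\in \Aut (\ndZ ^I)$ is a bijection, in the first case the cardinality increases by one, giving $\ell (s_i^Xw)=m+1=\ell (w)+1$, and in the second case it decreases by one, giving $\ell (s_i^Xw)=m-1=\ell (w)-1$. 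Finally, by Proposition~\ref{pr:Lambda2} the set $\Lambda _+^X(w)=\Lambda _+^X(i_1,\dots,i_m)$ depends only on $w$ and not on the chosen reduced expression, so the case distinction ``$\al _i\in \Lambda _+^X(w)$ or not'' is well posed, and the statement of the corollary follows.

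The only point requiring a little care is bookkeeping of which object plays which role when invoking Lemma~\ref{le:Lambda1}: the lemma is stated with the first reflection $r_{i_1}$ carrying $X$ to $Y=r_{i_1}(X)$, and here one must set its ``$X$'' equal to our $r_i(X)$ and use $r_i^2=\id$ (axiom (C1)) to identify its ``$Y$'' with our $X$ and its ``$s_{i_1}^Y$'' with our $s_i^X\in \Hom (X,r_i(X))$. I do not expect a genuine obstacle; the substance of the result is already contained in Lemma~\ref{le:Lambda1} and Proposition~\ref{pr:Lambda2}, and the corollary is essentially a cardinality count after a bijective transformation.
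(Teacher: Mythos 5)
Your argument is correct and is essentially the paper's own proof: the paper likewise combines the length formula $\ell(\id_X s_{i_1}\cdots s_{i_m})=\#\Lambda_+^X(i_1,\dots,i_m)$ from Prop.~\ref{pr:Lambda2} with the recursion of Lemma~\ref{le:Lambda1} applied to $s_i^Xw$, only stated more tersely. Your extra bookkeeping (using (C1) to identify the lemma's ``$Y$'' with $X$, and the bijectivity of $s_i^X$ for the cardinality count) just makes explicit what the paper leaves implicit, so there is nothing genuinely different to report.
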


\begin{proof}
  By Prop.~\ref{pr:Lambda2}, $\ell (s_i^Xw)=\ell (w)+1$ if and only if
  $\#\Lambda ^{r_i(X)}_+(s_i^X w)=\# \Lambda ^X_+(w)+1$. Then the claim holds
  by Lemma~\ref{le:Lambda1}.
\end{proof}

\begin{defin}\label{de:Duflo}
  Let $X\in \cX $.
  For all $Y,Z\in \cX $ and $x\in \Hom (Y,X)$, $y\in \Hom (Z,Y)$
  we write $x\le _D xy$ (in $\Homsto [\Wg (\Rwg )]X)$ if and only
  if $\ell (xy)=\ell (x)+\ell(y)$. Then $\le _D$ is a partial order on
  $\Homsto [\Wg (\Rwg )]X$ which is called the \textit{(right) Duflo order}.
\end{defin}

The definition stems from the corresponding notion for Weyl groups of
semisimple Lie algebras, see \cite{a-Melni04}, \cite[A\,1.2]{b-Joseph}.
The Duflo order is also known as the \textit{weak order} \cite{b-BjoeBre05}.

\begin{remar} \label{re:Duflo}
  As for the right Duflo order for Weyl groups, $\le _D$ is the weakest
  partial order on $\Wg (\Rwg )$ such that
  $x\le _Dxs_i$ for all $x\in \Ar{\Wg (\Rwg )}$ and $i\in I$ with
  $\ell (x)<\ell (xs_i)$.
\end{remar}

The following theorem gives a characterization of the right Duflo order.

\begin{theor} \label{th:Dorder}
  Let $X\in \cX $ and let $w_1,w_2\in \Homsto [\Wg (\Rwg )]X$.
  Then $w_1\le _D w_2$ if and only if
  $\Lambda ^X_+(w_1)\subset \Lambda ^X_+(w_2)$,
\end{theor}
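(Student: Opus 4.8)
The plan is to prove the two implications separately: the forward direction directly from reduced expressions, and the converse by induction on $\ell(w_1)$.

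Suppose first that $w_1\le _D w_2$. By Def.~\ref{de:Duflo} write $w_2=w_1y$ in $\Homsto[\Wg (\Rwg )]X$ with $\ell(w_2)=\ell(w_1)+\ell(y)$, and choose reduced expressions $w_1=s_{i_1}\cdots s_{i_k}$, $y=s_{i_{k+1}}\cdots s_{i_m}$ with $k=\ell(w_1)$ and $m=\ell(w_2)$. Then $w_2=s_{i_1}\cdots s_{i_m}$ is itself reduced, so by Prop.~\ref{pr:posroots} the roots $\beta_j=\id _X s_{i_1}\cdots s_{i_{j-1}}(\al _{i_j})$, $1\le j\le m$, are positive and pairwise distinct; the same statement holds for $1\le j\le k$, since $\beta_j$ depends only on $i_1,\dots,i_j$. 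Inserting this into Def.~\ref{de:Lambda} (each $\beta_j$ occurs with multiplicity one, and no $-\beta_j$ occurs) yields $\Lambda ^X_+(w_1)=\{\beta_1,\dots,\beta_k\}\subseteq\{\beta_1,\dots,\beta_m\}=\Lambda ^X_+(w_2)$.

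For the converse I would induct on $\ell(w_1)$. If $\ell(w_1)=0$, then $w_1=\id _X$ and $\ell(\id _X w_2)=\ell(\id _X)+\ell(w_2)$, hence $w_1\le _D w_2$. Assume now $\ell(w_1)=k\ge 1$ and $\Lambda ^X_+(w_1)\subseteq\Lambda ^X_+(w_2)$. Fix a reduced expression $w_1=s_{i_1}\cdots s_{i_k}$, put $i=i_1$ and $Y=r_i(X)$, and set $w_1':=s_i^X w_1$, $w_2':=s_i^X w_2$, viewed as elements of $\Homsto[\Wg (\Rwg )]Y$. Since $\al _i=\beta_1\in\Lambda ^X_+(w_1)\subseteq\Lambda ^X_+(w_2)$, Cor.~\ref{co:aliinL} gives $\ell(w_j')=\ell(w_j)-1$ for $j=1,2$; moreover $w_j=s_i^Y w_j'$ because of the relation $s_i^Y s_i^X=\id _X$, and, applying Cor.~\ref{co:aliinL} again with $\ell(s_i^Y w_j')=\ell(w_j)=\ell(w_j')+1$, we get $\al _i\notin\Lambda ^Y_+(w_j')$. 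Hence Lemma~\ref{le:Lambda1}, applied to reduced expressions of $w_1$ and $w_2$ beginning with $s_i$ and landing in the branch that adjoins $\al _i$ (which is forced by the length count), yields $\Lambda ^X_+(w_j)=s_i^Y(\Lambda ^Y_+(w_j'))\cup\{\al _i\}$ for $j=1,2$. Applying the linear automorphism $s_i^X=(s_i^Y)^{-1}$ to the inclusion $\Lambda ^X_+(w_1)\subseteq\Lambda ^X_+(w_2)$ and using $s_i^X(\al _i)=-\al _i$, we obtain $\Lambda ^Y_+(w_1')\cup\{-\al _i\}\subseteq\Lambda ^Y_+(w_2')\cup\{-\al _i\}$; since $\Lambda ^Y_+(w_1')\subseteq\rsys Y_+\subseteq\ndN _0^I$ does not contain $-\al _i$, this forces $\Lambda ^Y_+(w_1')\subseteq\Lambda ^Y_+(w_2')$. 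As $\ell(w_1')=k-1$, the induction hypothesis gives $w_1'\le _D w_2'$, say $w_2'=w_1'y$ with $\ell(w_2')=\ell(w_1')+\ell(y)$. Then $w_2=s_i^Y w_2'=s_i^Y w_1'y=w_1y$ and $\ell(w_2)=\ell(w_2')+1=\ell(w_1)+\ell(y)$, so $w_1\le _D w_2$.

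The step I expect to be the main obstacle is the inductive step of the converse, namely the transport of the sets $\Lambda ^X_+$ between the base objects $X$ and $r_i(X)$: one must identify the correct case of Lemma~\ref{le:Lambda1}, which hinges on the length identities provided by Cor.~\ref{co:aliinL}, and then cancel the extra root $\al _i$ after applying $s_i^X$, which is legitimate only because $s_i^X(\al _i)=-\al _i$ is negative whereas the inversion sets $\Lambda ^Y_+(\cdot)$ consist of positive roots. The remaining bookkeeping — matching sources and targets of the groupoid morphisms and adding up lengths — is routine.
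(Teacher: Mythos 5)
Your proof is correct. For the implication $\Lambda ^X_+(w_1)\subset \Lambda ^X_+(w_2)\Rightarrow w_1\le _D w_2$ you follow essentially the paper's route: induction on $\ell (w_1)$, using Cor.~\ref{co:aliinL} to produce an index $i$ with $\al _i$ in the inversion set and Lemma~\ref{le:Lambda1} to transport the inclusion from $X$ to $r_i(X)$ --- you merely carry out that transport by hand, computing $\Lambda ^X_+(w_j)=s_i^{r_i(X)}(\Lambda ^{r_i(X)}_+(s_i^Xw_j))\cup \{\al _i\}$, applying $s_i^X$ and discarding $-\al _i$ because inversion sets consist of positive roots, which is exactly the case distinction of that lemma. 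The genuine difference lies in the other implication: you prove $w_1\le _D w_2\Rightarrow \Lambda ^X_+(w_1)\subset \Lambda ^X_+(w_2)$ directly, by extending a reduced expression of $w_1$ to one of $w_2$ and invoking Prop.~\ref{pr:posroots} (together with Prop.~\ref{pr:Lambda2} for well-definedness), so that the two inversion sets are read off as $\{\beta _1,\dots ,\beta _k\}\subset \{\beta _1,\dots ,\beta _m\}$. The paper instead folds this direction into the same induction, deducing it from the length estimate $\ell (s_i^Xw_2)\le \ell (s_i^Xw_1)+\ell (w_1^{-1}w_2)=\ell (w_2)-1$. Your split yields a shorter, non-inductive argument for the easy direction at the price of treating the two directions asymmetrically, while the paper's single induction handles both uniformly; the key ingredients (Cor.~\ref{co:aliinL}, Lemma~\ref{le:Lambda1}) are the same in both treatments.
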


\begin{proof}
  We proceed by induction on $\ell (w_1)$. If $w_1=\id _X$, then
  $\Lambda ^X_+(w_1)=\emptyset $ and $w_1\le _Dw_2$,
  and hence the claim holds.
  If $\ell (w_1)=1$, then $w_1=s_i^{r_i(X)}$ for
  some $i\in I$, and hence $\Lambda ^X_+(w_1)=\al _i$.
  By definition, $w_1\le _Dw_2$ if and only if
  $\ell (w_2)=1+\ell (s_i^Xw_2)$. Hence
  the claim holds by Cor.~\ref{co:aliinL}.

  Assume now that $\ell (w_1)>1$. Let $i\in I$
  with $\ell (w_1)=\ell (w)+1$ for $w=s_i^Xw_1$.
  Then
  \begin{align}
    \al _i\in \Lambda ^X_+(w_1)
    \label{eq:aliinLw1a}
  \end{align}
  by Cor.~\ref{co:aliinL}. Thus Lemma~\ref{le:Lambda1} implies that
  $\Lambda ^X_+(w_1)\subset \Lambda ^X_+(w_2)$ if and only if
  \begin{align} \label{eq:istep2a}
    \al _i\in \Lambda ^X_+(w_2) \quad \text{and} \quad
    \Lambda ^{r_i(X)}_+(s_i^Xw_1)\subset \Lambda ^{r_i(X)}_+(s_i^Xw_2).
  \end{align}
  Induction hypothesis, \eqref{eq:aliinLw1a} and Cor.~\ref{co:aliinL}
  imply that the relations in
  \eqref{eq:istep2a} are equivalent to
  \begin{align} \label{eq:istep3a}
    \ell (s_i^Xw_2)=\ell (w_2)-1,\quad
    \ell (s_i^Xw_2)=\ell (s_i^Xw_1)+\ell (w_1^{-1}w_2).
  \end{align}
  Since \eqref{eq:istep3a}
  implies that $w_1\le _D w_2$, the if part of the claim holds.
  Further, $w_1\le _Dw_2$ implies that
  \[ \ell (s_i^Xw_2)\le \ell (s_i^Xw_1)+\ell (w_1^{-1}w_2)=\ell (w_1)-1
  +\ell (w_1^{-1}w_2) =\ell (w_2)-1, \]
  that is, that \eqref{eq:istep3a} holds.
  Therefore the only if part of the claim holds as well.
\end{proof}

\section{Braided Hopf algebras and Nichols algebras}
\label{sec:NA}

Let $\fie $ be a field and let $H$ be a Hopf algebra over $\fie $ with
bijective antipode. Let $\ydH $ denote the category of Yetter-Drinfeld modules
over $H$.

Let $R$ be a bialgebra in $\ydH $. We use the Sweedler notation for the
coaction $\coa: R\to H\ot R$ and the coproduct $\copr _R:R\to R\ot R$ in the
following form: $\coa (r)=r\_{-1}\ot r\_0$, $\copr _R(r)=r\^1\ot r\^2$ for all
$r\in R$.

Let $R$ be a Hopf algebra in $\ydH $ with antipode $S_R$.
Let $R\#H$ be the bosonization of $R$,
see e.\,g.~\cite[Sect.~1.4]{p-AHS08}.
Recall that $R\#H$ is a Hopf algebra with projection $\pi _H:R\#H\to H$, and
\begin{gather}
  \label{eq:bosalg}
  (r\#h)(r'\#h')=r(h\_1\lact r')\#h\_2h',\\
  r\_{-1}\ot r\_0=\pi _H(r\_1)\ot r\_2,\quad
  r\_1\ot r\_2=r\^1 r\^2{}\_{-1}\ot r\^2{}\_0
  \label{eq:coal}
\end{gather}
for all $r,r'\in R$, $h,h'\in H$,
where $\copr (a)=a\_1\ot a\_2$ for all $a\in R\#H$.

Let $S$ denote the antipode of the Hopf algebra $R\#H$. Then
\begin{align}
  S_R(r)=r\_{-1}S(r\_0),\quad S(r)=S(r\_{-1})S_R(r\_0)\quad \text{for all
  $r\in R$,}
  \label{eq:antipode}
\end{align}
and $S_R\in \End (R)$ is a morphism in $\ydH $ satisfying
\begin{align}
  \label{eq:SRprop1}
  S_R(rs)=&S_R(r\_{-1}\lact s)S_R(r\_0),\\
  \copr _R(S_R(r))=&S_R(r\^1{}\_{-1}\lact r\^2)\ot S_R(r\^1{}\_0)
  \label{eq:SRprop2}
\end{align}
for all $r,s\in R$.
If $S$ is bijective, then the map $S_R^{-1}:R\to R$,
\begin{align}
  \label{eq:SRi}
  S_R^{-1}(r)=S^{-1}&(r\_0)r\_{-1} \quad \text{for all $r\in R$}
\end{align}
is a morphism in $\ydH $ and is inverse to $S_R$. Moreover,
\begin{align}
  \label{eq:SRi1}
  S^{-1}(r)=S_R^{-1}(r\_0)S^{-1}(r\_{-1}) \quad \text{for all $r\in R$.}
\end{align}
In this case, Eqs.~\eqref{eq:SRprop1}, \eqref{eq:SRprop2} are equivalent to
\begin{align}
  \label{eq:SRiprop1}
  S_R^{-1}(rs)=&S_R^{-1}(s\_0)S_R^{-1}(S^{-1}(s\_{-1})\lact r),\\
  \copr _R(S_R^{-1}(r))=&S_R^{-1}(r\^2{}\_0)\ot S_R^{-1}(S^{-1}(r\^2{}\_{-1})
  \lact r\^1)
  \label{eq:SRiprop2}
\end{align}
for all $r,s\in R$.


\begin{remar}\label{re:bijant}
  (i) Let $A$ be a Hopf algebra. Then $A^\op $ is a Hopf algebra if and
  only if the antipode $S$ of $A$ is bijective. In this case $S^{-1}$ is the
  antipode of $A^\op $.

  (ii) Let $B$ be a bialgebra in $\ydH $ with a coalgebra filtration
  \begin{align*}
    \fie 1=B_0\subset B_1\subset B_2\subset \cdots \subset B,
    \quad \cup _{n=0}^\infty B_n=B.
  \end{align*}
  Then $B$ is a Hopf algebra in $\ydH $, and the antipodes of $B$ and $B\#H$
  are bijective. Indeed,
  \begin{align*}
    H\subset B_1\#H \subset B_2\#H\subset \cdots \subset B\#H
  \end{align*}
  is a coalgebra filtration of $B\#H$ and
  \begin{align*}
    H^\op \subset (B_1\#H)^\op \subset (B_2\#H)^\op \subset \cdots
    \subset (B\#H)^\op
  \end{align*}
  is a coalgebra filtration of $(B\#H)^\op $. Since $H$ and $H^\op $ are Hopf
  algebras, $B\#H$ and $(B\#H)^\op $ are Hopf algebras by
  \cite[Lemma~5.2.10]{b-Montg93}. By Part~(i) the antipode of $B\#H$ is
  bijective. Hence $S_B$ is bijective with inverse given in
  Eq.~\eqref{eq:SRi} (for $R=B$).
  \qed
\end{remar}

Let $\rcs\subset R$ be a subspace. We say that $E$ is a \textit{right coideal
subalgebra of $R$ in} $\ydH $ if $\rcs \subset R$ is a subobject in $\ydH $
and a subalgebra (containing $1$)
with $\copr _R(\rcs )\subset \rcs \ot R$. If $H$ is the trivial $1$-dimensional
Hopf algebra, we follow the traditional terminology and call
$\rcs $ a right coideal subalgebra of $R$.

Let $G$ be a group.
We say that a right coideal subalgebra $E$ of a (braided) Hopf algebra $R$
is $G$-\textit{graded}, if $R=\oplus _{g\in G}R_g$ is a $G$-graded algebra
and $E=\oplus _{g\in G}(E\cap R_g)$.
For any $G$-graded algebras $A,B$, and any algebra map $f:A\to B$ we say
that $f$ is a \textit{homomorphism of $G$-graded algebras}, if
$f(A_g)\subset B_g$ for all $g\in G$.

\begin{lemma}\label{le:EcapR}
  Let $B\subset R$ be a Hopf subalgebra in $\ydH $. Assume that there exists
  a morphism $\pi :R\to B$ of Hopf algebras in $\ydH $
  such that $\pi |_B=\id _B$.
  Let $R^{\co B}=\{r\in R\,|\,r\^1\ot \pi (r\^2)=r\ot 1\}$.
  Let $\rcs \subset R$ be a right coideal subalgebra in $\ydH $
  such that $B\subset \rcs $.  Then the multiplication map
  $(R^{\co B}\cap \rcs )\ot B\to \rcs $ is an isomorphism.
\end{lemma}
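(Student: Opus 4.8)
The plan is to use the standard theory of Hopf--Galois / cleft extensions adapted to the setting of Hopf algebras in $\ydH$, together with the projection $\pi : R \to B$. First I would record that the situation $B \subset R$ with a Hopf algebra retraction $\pi$ means $R$ is, by the (braided version of the) Radford biproduct theorem, isomorphic as an algebra and coalgebra in $\ydH$ to $R^{\co B} \# B$ (a bosonization inside $\ydH$), and in particular the multiplication map $R^{\co B} \ot B \to R$ is an isomorphism in $\ydH$; the projection onto the second factor is recovered from $\pi$, and the projection onto the first factor is given by the ``$\Pi$-map'' $r \mapsto r\^1 S_B(\pi(r\^2))$. The only new point is to restrict this isomorphism to $\rcs$, so the heart of the argument is to show that under the identification $R \cong R^{\co B}\# B$ the subalgebra $\rcs$ corresponds to $(R^{\co B}\cap \rcs)\# B$.

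The key steps, in order. \emph{Step 1: $\rcs$ is a $B$-submodule.} Since $B \subset \rcs$ and $\rcs$ is a subalgebra, $\rcs B \subset \rcs$ (right multiplication); this is what makes ``$\rcs$ free/faithfully flat over $B$'' meaningful later, but here we only need $\rcs B = \rcs$. \emph{Step 2: the $\Pi$-map preserves $\rcs$.} For $r \in \rcs$, the right coideal property $\copr_R(r) \in \rcs \ot R$ gives $r\^1 \ot r\^2 \in \rcs \ot R$; applying $\id \ot (S_B\circ \pi)$ — note $\pi(R) = B \subset \rcs$ and $S_B$ preserves $B$ — and multiplying, we get $\Pi(r) := r\^1 S_B(\pi(r\^2)) \in \rcs$. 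A direct computation (the usual one showing the image of $\Pi$ lands in $R^{\co B}$) shows $\Pi(r) \in R^{\co B}$, hence $\Pi(r) \in R^{\co B}\cap \rcs$. \emph{Step 3: reconstruct elements of $\rcs$.} For $r \in \rcs$ write, using $\copr_R(r) \in \rcs \ot R$ again and the counit axiom, $r = r\^1 \cou(r\^2) = r\^1 S_B(\pi(r\^2\^1))\,\pi(r\^2\^2) = \Pi(r\^1)\,\pi(r\^2)$, where I use coassociativity and $\pi$ being a coalgebra map; each $r\^1$ appearing here lies in $\rcs$ and each $\pi(r\^2\^2)\in B \subset \rcs$, so this exhibits $r$ as lying in the image of the multiplication map $(R^{\co B}\cap \rcs)\ot B \to \rcs$, i.e. the map is surjective. \emph{Step 4: injectivity.} Injectivity of $(R^{\co B}\cap \rcs)\ot B \to R$ is inherited from injectivity of $R^{\co B}\ot B \to R$ (Step 0), since $R^{\co B}\cap \rcs \subset R^{\co B}$; and of course the image lies in $\rcs$. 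Combining Steps 3 and 4 gives the isomorphism claimed.

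A few technical remarks on what needs care. I must make sure the braided Radford biproduct applies, i.e. that $R^{\co B}$ is a subalgebra of $R$ (a braided Hopf algebra in ${}^B_B\ydH$, equivalently in $\ydH$ with the extra $B$-(co)action); this is exactly \cite[Sect.~1.4]{p-AHS08} read with $H$ replaced by $B$ and everything internal to $\ydH$, and the antipode of $B$ exists and is bijective by Remark~\ref{re:bijant}(ii) applied to $B\#H$. I also need the bijectivity of $S_R$ (equivalently $S$) only insofar as it is needed to invoke these structural facts — which it is in \cite{p-AHS08}. The main obstacle, such as it is, is purely bookkeeping: verifying that the map $\Pi$ is simultaneously (a) valued in $R^{\co B}$ — the classical computation using $\pi$ a Hopf map and $S_B$ its antipode — and (b) valued in $\rcs$ — which is where the right-coideal hypothesis $\copr_R(\rcs)\subset \rcs\ot R$ enters, as opposed to the stronger ``left coideal'' or ``two-sided'' conditions, so one must be careful that $\Pi$ acts on the \emph{first} tensor leg, matching the side on which $\rcs$ is a coideal. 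Everything else is the standard ``every Hopf algebra with a projection is a biproduct'' argument, restricted to a coideal subalgebra.
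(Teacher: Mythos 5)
Your proposal is correct and follows essentially the same route as the paper: the paper's entire proof consists of exhibiting the inverse map $r\mapsto r\^1 S_R(\pi (r\^2))\ot \pi (r\^3)$, which is exactly your $r\mapsto \Pi(r\^1)\ot\pi(r\^2)$, and your Steps 2--4 just spell out why this lands in $(R^{\co B}\cap \rcs)\ot B$ and inverts multiplication. The appeal to the braided Radford biproduct is an optional packaging of the same verification, not a different argument.
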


\begin{proof}
  The inverse of the multiplication map is given by
  \[ E\to (R^{\co B}\cap \rcs )\ot B, \quad
  r\mapsto r\^1 S_R(\pi (r\^2))\ot \pi (r\^3) \]
  for all $r\in E$.
\end{proof}

\begin{propo} \label{pr:rcs}
  Let $R$ be a Hopf algebra in $\ydH $.

  (i) Let $E\subset R$.
  If $E$ is a right coideal subalgebra of $R$ in $\ydH $, then
  $E\#H$ is a right coideal subalgebra of $R\#H$.

  (ii) Let $E'$ be a right coideal subalgebra of $R\#H$ with $H\subset E'$.
  Then
  $E={E'}^{\,\co H}$ is a right coideal subalgebra of $R$ in $\ydH $
  with $E'=E\#H$.

  (iii) Let $G$ be a group, and assume that the algebra
  $R=\oplus _{g\in G}R_g$ is $G$-graded and $R_g\in \ydH $ for all $g\in G$.
  Then $R\#H=\oplus _{g\in G}(R\#H)_g$ is $G$-graded
  with $(R\#H)_g=R_g\#H$. Let $E\subset R$ be a subobject in $\ydH $.
  Then
  $E$ is a $G$-graded subalgebra of $R$ if and only if $E\#H$ is a
  $G$-graded subalgebra of $R\#H$.
\end{propo}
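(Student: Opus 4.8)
The plan is to prove the three parts in order, since (ii) uses (i) and the general philosophy that bosonization turns objects of $\ydH$ into ordinary modules/comodules. For part~(i), assume $E$ is a right coideal subalgebra of $R$ in $\ydH$. Then $E\#H$ is a subalgebra of $R\#H$ by the product formula \eqref{eq:bosalg} (closure uses that $E$ is an $H$-submodule of $R$, so $h\_1\lact e'\in E$). It contains $1\#1$. To see $\copr(E\#H)\subset (E\#H)\ot (R\#H)$, apply the coalgebra formula \eqref{eq:coal}: for $e\in E$ one has $e\_1\ot e\_2=e\^1 e\^2{}\_{-1}\ot e\^2{}\_0$, and since $\copr_R(e)\in E\ot R$ and $E\in\ydH$ is a subobject, the left leg lands in $E\cdot H=E\#H$ while the right leg lands in $R\#H$; for $h\in H$ the coproduct is $h\_1\ot h\_2\in H\ot H\subset (E\#H)\ot(R\#H)$. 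Multiplying these two computations (using that $\copr$ is an algebra map) gives the claim.

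For part~(ii), let $E'$ be a right coideal subalgebra of $R\#H$ with $H\subset E'$. I would first note that $\pi_H:R\#H\to H$ restricts to a Hopf algebra projection splitting the inclusion $H\hookrightarrow R\#H$, and that the standard Radford-type argument identifies $R\#H$ as a bosonization over $H$ with coinvariants $R=(R\#H)^{\co H}$. Set $E={E'}^{\,\co H}=E'\cap R$ (the intersection identity holds because $E'\supset H$). Then $E$ is a subalgebra of $R$ containing~$1$; it is a subobject of $\ydH$ because the $H$-action on $R$ inside $R\#H$ is the adjoint action $h\triangleright r=h\_1 r S(h\_2)$ and the $H$-coaction is $\pi_H$ applied to the first coproduct leg, and $E'$ is stable under both (being a subalgebra containing $H$ and a right coideal). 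The right-coideal property $\copr_R(E)\subset E\ot R$ follows from $\copr(E')\subset E'\ot(R\#H)$ together with \eqref{eq:coal}, applying $\id\ot\pi_H$ on the appropriate legs to pass from the bosonized coproduct to $\copr_R$. Finally $E'=E\#H$: the multiplication map $E\ot H\to E'$ is injective (it is the restriction of the bijection $R\ot H\to R\#H$) and surjective because for $x\in E'$ one has $x\_1 S_H(\pi_H(x\_2))\in E'\cap R=E$ (using $H\subset E'$ and that $E'$ is a right coideal), so $x=(x\_1 S_H(\pi_H(x\_2)))\#\pi_H(x\_3)\in E\#H$. This surjectivity computation is essentially the inverse map appearing in Lemma~\ref{le:EcapR}, applied with $B=H$ inside $R\#H$.

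For part~(iii), the $G$-grading on $R\#H$ with $(R\#H)_g=R_g\#H$ is immediate: the product \eqref{eq:bosalg} respects it since $H$ sits in degree $0$ and $H\lact R_g\subset R_g$ (the $H$-action is a morphism in $\ydH$, hence degree preserving in the relevant module structure; in the applications $R_g$ is assumed to lie in $\ydH$). Then for a subobject $E\subset R$ in $\ydH$, the equivalence ``$E$ is $G$-graded iff $E\#H$ is $G$-graded'' holds because $E\#H=\bigoplus_g (E\#H)\cap(R_g\#H)=\bigoplus_g (E\cap R_g)\#H$ exactly when $E=\bigoplus_g E\cap R_g$, using that $R\#H=\bigoplus_g R_g\#H$ is a free $H$-module decomposition so intersecting with $E\#H$ commutes with the direct sum.

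The main obstacle I anticipate is part~(ii), specifically checking carefully that $E={E'}^{\,\co H}$ is a subobject of $\ydH$ and that the bosonized coproduct of $E'$ really restricts to $\copr_R$ on $E$; this requires bookkeeping with the two formulas in \eqref{eq:coal} and the precise form of the Yetter-Drinfeld structure on $R$ recovered from $R\#H$. Parts (i) and (iii) are essentially formal once the bosonization formulas are in hand.
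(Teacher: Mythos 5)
Parts (i) and (iii), and the decomposition $E'=E\#H$ in (ii), are correct and run along the same lines as the paper, which settles (i) and (iii) by Eqs.~\eqref{eq:bosalg}, \eqref{eq:coal} and obtains $E'\simeq E\ot H$ by specializing Lemma~\ref{le:EcapR} (trivial base Hopf algebra, $B=H$); your surjectivity computation is exactly the inverse map from that lemma.

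The gap is in your treatment of the claim that $E={E'}^{\,\co H}$ is a subobject of $R$ in $\ydH $, i.e.\ that $\coa (E)\subset H\ot E$, where $\coa (e)=\pi _H(e\_1)\ot e\_2$ for $e\in R$. Your justification (``$E'$ is stable under both, being a subalgebra containing $H$ and a right coideal'') proves nothing here: the hypothesis $\copr (E')\subset E'\ot (R\#H)$ controls only the \emph{first} tensor factor of $\copr (e)$, while the coaction retains the \emph{second} factor, about which nothing is known (it lies in $R$, not a priori in $E$). The same first-leg/second-leg slip occurs in your derivation of $\copr _R(E)\subset E\ot R$: applying $\id \ot \pi _H$ to $\copr (e)$ with $e\in E$ just returns $e\ot 1$, by coinvariance; the correct route is $\copr _R(e)=(\vartheta \ot \id )\copr (e)$ with $\vartheta (a)=a\_1 S(\pi _H(a\_2))$ and $\vartheta (E')\subset E'\cap R=E$ (this uses $H\subset E'$ and the coideal property, and is precisely the map from Lemma~\ref{le:EcapR}), which repairs that step. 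The comodule stability, however, cannot be obtained by formal manipulations of this kind: for $H=\fie [g^{\pm 1}]$ and $V=\fie x\oplus \fie y\in \ydH $ with trivial action, $\coa (x)=g\ot x$, $\coa (y)=1\ot y$, one has $\NA (V)=\fie [x,y]$, and $E'=\fie [x+y,g^{\pm 1}]$ is a right coideal subalgebra of $\NA (V)\#H$ containing $H$ whose coinvariants ${E'}^{\,\co H}=\fie [x+y]$ are not an $H$-subcomodule of $\fie [x,y]$. So the $\ydH $-subobject part of (ii) is exactly where a genuine argument, or an additional hypothesis (such as the $\ndN _0^\theta $-gradings present in all applications in this paper), is required; your proposal does not supply one, and note that the paper's own proof of (ii), via Lemma~\ref{le:EcapR}, only addresses the isomorphism $E\ot H\to E'$.
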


\begin{proof}
  (i) and (iii) follow from Eqs.~\eqref{eq:bosalg}, \eqref{eq:coal}.
  (ii) is a special case of Lemma~\ref{le:EcapR} with $H=\fie 1$ and $B=H$.
\end{proof}

Let $V\in \ydH $. Assume that $\dim _\fie V<\infty $.
Then $V^*\in \ydH $ with the following properties:
\begin{align}
  \langle h\lact f,v\rangle =&\langle f, S(h)\lact v\rangle,
  \label{eq:dual1}\\
  f\_{-1}\langle f\_0,v\rangle =&S^{-1}(v\_{-1})\langle f,v\_0\rangle,
  \label{eq:dual2}
\end{align}
for all $h\in H$, $v\in V$, $f\in V^*$, see e.g. \cite[Sect.~1.2]{p-AHS08}.
Let $\NA (V)$ and $\NA (V^*)$ denote the Nichols algebra of $V$ and $V^*$,
respectively. These are $\ndN _0$-graded braided Hopf algebras in $\ydH $ with
degree $1$ parts $\NA ^1(V)\simeq V$, $\NA ^1(V^*)\simeq V^*$ and with $\fie $
as degree $0$ part.
Since the antipode of $H$ is bijective, the antipodes of
$\NA (V)$, $\NA (V^*)$, $\NA (V)\#H$ and $\NA (V^*)\#H$ are bijective by
Remark~\ref{re:bijant}(ii).

The evaluation map between $V^*$ and $V$ induces a bilinear form
\begin{align}
  \langle\cdot,\cdot\rangle :\NA (V^*)\times \NA (V)\to \fie ,
  \label{eq:pairing}
\end{align}
see \cite[Sect.~1.5]{p-AHS08} for the origins.
This pairing is non-degenerate, and it satisfies the equations
\begin{align}
  \label{eq:pairing1}
  \langle 1,1\rangle =1, \quad
  \langle f,v\rangle =&0 \quad \text{for all $f\in \NA ^k(V^*)$, $v\in \NA
  ^l(V)$, $k\not=l$, and}
\end{align}
\begin{align}
  \label{eq:pairing2}
  \langle h\lact f,v\rangle =&\langle f,S(h)\lact v\rangle ,\\
  \label{eq:pairing3}
  f\_{-1}\langle f\_0,v\rangle =&S^{-1}(v\_{-1})\langle f,v\_0\rangle ,\\
  \label{eq:pairing4}
  \langle f,vw\rangle =&\langle f\^1,w\rangle\langle f\^2,v\rangle ,\\
  \label{eq:pairing5}
  \langle fg,v\rangle =&\langle g,v\^2\rangle\langle f,v\^1\rangle
\end{align}
for all $f,g\in \NA (V^*)$, $v,w\in \NA (V)$, $h\in H$.

Let $\{b^\al \}$ and $\{b_\al \}$ be $\ndN _0$-graded
dual bases of $\NA (V^*)$ and $\NA (V)$,
respectively. Eqs.~\eqref{eq:pairing2}-\eqref{eq:pairing5} imply the
following for all $h\in H$, $v\in \NA (V)$, $f\in \NA (V^*)$.
{\allowdisplaybreaks
\begin{align}
  \label{eq:bb1}
  \cou (h)\sum _\al b_\al \ot b^\al
  =&\sum _\al h\_1\lact b_\al \ot h\_2\lact b^\al ,\\
  \label{eq:bb1a}
  \sum _\al S(h)\lact b_\al \ot b^\al =&\sum _\al b_\al \ot h\lact b^\al ,\\
  \label{eq:bb2}
  \sum _\al 1\ot b_\al \ot b^\al =&
  \sum _\al b_\al {}\_{-1}b^\al {}\_{-1}\ot b_\al {}\_0\ot b^\al {}\_0,\\
  \label{eq:bb2a}
  \sum _\al b_\al {}\_{-1}\ot b_\al {}\_0 \ot b^\al =&
  \sum _\al S(b^\al {}\_{-1})\ot b_\al \ot b^\al {}\_0,\\
  \label{eq:bb3}
  \sum _\al vb_\al \ot b^\al =&
  \sum _\al b_\al \ot \langle b^\al {}\^2,v\rangle b^\al {}\^1,\\
  \label{eq:bb4}
  \sum _\al b_\al v\ot b^\al =&
  \sum _\al b_\al \ot \langle b^\al {}\^1,v\rangle b^\al {}\^2,\\
  \label{eq:bb5}
  \sum _\al b_\al \ot b^\al f=&
  \sum _\al \langle f,b_\al {}\^1\rangle b_\al {}\^2 \ot b^\al ,\\
  \label{eq:bb6}
  \sum _\al b_\al \ot fb^\al =&
  \sum _\al \langle f,b_\al {}\^2\rangle b_\al {}\^1 \ot b^\al ,\\
  \label{eq:bb7}
  \sum _{\al ,\beta } b_\al \ot b_\beta \ot b^\beta b^\al =&
  \sum _\gamma \copr _{\NA (V)}b_\gamma \ot b^\gamma .
\end{align}
}

Let $K\in {}_{\NA (V)\#H}^{\NA (V)\#H}\yd $. We write
\begin{align} \label{eq:coaBVH}
  \coa _{\NA (V)\#H}(x)=x_{[-1]}\ot x_{[0]}
\end{align}
for the left coaction of $\NA (V)\#H$ on $x\in K$.
Then $K\in \ydH $, where the action is the restriction of the action
of $\NA (V)\#H$ to $H$, and the coaction is
\[ \coa =(\pi _H\ot \id )\coa _{\NA (V)\#H}, \]
and $\pi _H:\NA (V)\#H\to H$ is the canonical projection.
Let $\coa _{\NA (V)}:K\to \NA (V)\ot K$,
\begin{align}
  \coa _{\NA (V)}(x)=x_{[-1]}S(x_{[0]}{}\_{-1})\ot x_{[0]}{}\_0 \quad
  \text{for all $x\in K$.}
  \label{eq:BVcoaK}
\end{align}
We use modified Sweedler notation for $\coa _{\NA (V)}$ in the form
\begin{align} \label{eq:coaBV}
  \coa _{\NA (V)}(x)=x\^{-1}\ot x\^0 \quad \text{for all $x\in K$.}
\end{align}
The map $\coa _{\NA (V)}$ is $H$-linear and $H$-colinear
via diagonal action and coaction.
We are going to study the right action of $\NA (V)\#H$ on $K$ defined by
\begin{align}
  \adR{v}{x}=S^{-1}(v)\lact x \quad
  \text{for all $v\in \NA (V)\#H$, $x\in K$.}
  \label{eq:adR}
\end{align}

\begin{lemma}
  \label{le:adRprop}
  Let $K\in {}_{\NA (V)\#H}^{\NA (V)\#H}\yd $.

  (i)
  Let $x\in K$, $v,w\in \NA (V)$ and $h\in H$. Then
  \begin{align}
    h\lact (\adR{v}{x})=&\adR{(h\_2\lact v)}{(h\_1\lact x)},
    \label{eq:adR1}
    \\
    \coa (\adR{v}{x})=&x\_{-1}v\_{-1}\ot \adR{v\_0}{x\_0},
    \label{eq:adR2}
    \\
    \adR{w}{(\adR{v}{x})}=&\adR{vw}{x}.
    \label{eq:adR3}
  \end{align}

  (ii) For all $f\in V^*$ and $x\in K$ let
  $\partial ^L_f(x)=\langle f,x\^{-1}\rangle x\^{0}$.
  Then $\partial ^L_f(x)\in K$ and
  \begin{align}
    \partial ^L_f(\adR{v}{x}) =\adR{v}{\partial ^L_f(x)}
    +\langle S^{-1}(x\_{-1})\lact f,v\^1\rangle \adR{v\^2}{x\_0}
    -\langle f,v\_2\rangle \adR{v\_1}{x}
    \label{eq:derfxb}
  \end{align}
  for all $f\in V^*$, $x\in K$, $v\in \NA (V)$.

  (iii) Suppose that $K$ is a $\NA (V)\#H$-module algebra. Then
  \begin{align}
    \adR{v}{(xy)}=(\adR{v\^2{}\_0}{x})(S^{-1}(v\^2{}\_{-1})\lact
    (\adR{v\^1}{y}))
    \label{eq:adR4}
  \end{align}
  for all $v\in \NA (V)$ and $x,y\in K$.
\end{lemma}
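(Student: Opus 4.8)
The plan is to establish the three assertions in the order (i), (iii), (ii), since the bulk of the work is part~(ii). For~(i), formula~\eqref{eq:adR3} is immediate: as $S^{-1}$ is an algebra anti-homomorphism of $A:=\NA (V)\#H$ and $vw\in \NA (V)$ for $v,w\in \NA (V)$,
\[
  \adR{w}{(\adR{v}{x})}=S^{-1}(w)\lact (S^{-1}(v)\lact x)
  =\bigl(S^{-1}(w)S^{-1}(v)\bigr)\lact x=S^{-1}(vw)\lact x=\adR{vw}{x}.
\]
For \eqref{eq:adR1} and \eqref{eq:adR2} I would rewrite $\adR{v}{x}=S_R^{-1}(v\_0)\lact (S^{-1}(v\_{-1})\lact x)$ by means of \eqref{eq:SRi1}, with $v\_{-1}\in H$ and $S_R^{-1}(v\_0)\in \NA (V)$. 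The two identities then say exactly that the map $K\ot \NA (V)\to K$, $x\ot v\mapsto \adR{v}{x}$, is a morphism in $\ydH$ for the tensor-product Yetter--Drinfeld structure, so that, together with \eqref{eq:adR3}, they make $K$ a right module over the algebra $\NA (V)$ in the braided category $\ydH$. This is a short computation from the bosonization relations \eqref{eq:bosalg}--\eqref{eq:coal}, the Yetter--Drinfeld axioms of $K$ over $A$, and the fact that $S_R^{-1}$ is a morphism in $\ydH$.

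For~(iii), $K$ being a module algebra over the Hopf algebra $A$ means $a\lact (xy)=(a\_1\lact x)(a\_2\lact y)$ for all $a\in A$. Taking $a=S^{-1}(v)$ and using the identity $\copr (S^{-1}(v))=S^{-1}(v\_2)\ot S^{-1}(v\_1)$ in $A$ gives $\adR{v}{(xy)}=(\adR{v\_2}{x})(\adR{v\_1}{y})$ with $v\_1\ot v\_2=\copr (v)$. For $v\in \NA (V)$, the second identity in \eqref{eq:coal} reads $v\_1\ot v\_2=v\^1v\^2{}\_{-1}\ot v\^2{}\_0$, hence
\[
  \adR{v}{(xy)}=(\adR{v\^2{}\_0}{x})\,(\adR{v\^1v\^2{}\_{-1}}{y}).
\]
Since the embedding $H\hookrightarrow A$ is a Hopf algebra map, $S^{-1}$ restricts to the inverse antipode of $H$ on the copy of $H$ inside $A$, so $\adR{v\^1v\^2{}\_{-1}}{y}=\bigl(S^{-1}(v\^2{}\_{-1})S^{-1}(v\^1)\bigr)\lact y=S^{-1}(v\^2{}\_{-1})\lact (\adR{v\^1}{y})$, and substituting this yields \eqref{eq:adR4}.

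For~(ii), that $\partial ^L_f(x)\in K$ is clear from $\coa _{\NA (V)}(x)\in \NA (V)\ot K$ and $f\in \NA (V^*)$. The core step is an explicit evaluation of $\coa _{\NA (V)}(\adR{v}{x})=\coa _{\NA (V)}(S^{-1}(v)\lact x)$: one inserts the definition \eqref{eq:BVcoaK}, applies the Yetter--Drinfeld compatibility of $K$ over $A$ to carry $S^{-1}(v)$ past $\coa _{\NA (V)\#H}$, and simplifies using $\copr (S^{-1}(v))=S^{-1}(v\_2)\ot S^{-1}(v\_1)$, the antipode formulas \eqref{eq:antipode}--\eqref{eq:SRi1}, and \eqref{eq:coal}. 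This yields $\coa _{\NA (V)}(\adR{v}{x})$ as an explicit element of $\NA (V)\ot K$ built from $v\^1,v\^2$, $x\^{-1},x\^0$ and $x\_{-1}$; pairing its $\NA (V)$-leg with $f\in V^*$, using \eqref{eq:pairing1} to select the degree-one part and \eqref{eq:pairing2}--\eqref{eq:pairing3} to move the surviving $H$-action onto $f$, one reads off the three summands of \eqref{eq:derfxb}. I expect this final bookkeeping to be the main obstacle, since four sorts of Sweedler data --- the braided coproduct of $\NA (V)$, the coproduct of $\NA (V)\#H$, the $H$-coactions on $\NA (V)$ and on $K$, and the antipode $S$ --- must be kept straight at once and index or sign slips come easily. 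One could also view \eqref{eq:derfxb} as a Yetter--Drinfeld-type compatibility between the right action $\adR{}{}$ and the coaction $\coa _{\NA (V)}$ over the braided Hopf algebra $\NA (V)$, but the direct computation seems most transparent. As a sanity check, specialising to $v=1$ and to $v\in V$ makes \eqref{eq:derfxb} collapse to identities that follow at once from the primitivity of $V$ and the degree-one vanishing in \eqref{eq:pairing1}.
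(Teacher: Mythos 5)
Your route coincides with the paper's throughout. Part (iii) is exactly the paper's computation (module--algebra property for $a=S^{-1}(v)$, then the second identity in \eqref{eq:coal} and anti-multiplicativity of $S^{-1}$), \eqref{eq:adR3} likewise, and for \eqref{eq:adR1}--\eqref{eq:adR2} your ingredients are the right ones; the paper in fact gets \eqref{eq:adR1} even more directly from $hS^{-1}(v)=S^{-1}\big(S(h\_1)(h\_2\lact v)\big)=S^{-1}(h\_2\lact v)\,h\_1$ and \eqref{eq:adR2} from the Yetter--Drinfeld axiom for $K$ over $\NA (V)\#H$ followed by $\pi _H$, with no need to invoke \eqref{eq:SRi1}, so your reformulation ``the two identities say that $K\ot \NA (V)\to K$ is a morphism in $\ydH $'' is a correct but slightly roundabout packaging of the same short calculation.

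The one place where you stop short of a proof is precisely the identity \eqref{eq:derfxb} in (ii), which is the substance of the lemma (it is what drives the proof of Thm.~\ref{th:Omega}). You describe the correct strategy --- expand $\coa _{\NA (V)}(\adR{v}{x})$ via \eqref{eq:BVcoaK} and the Yetter--Drinfeld axiom, pair the $\NA (V)$-leg with $f\in V^*$, keep only degree-one contributions, and move the residual $H$-action onto $f$ --- but you explicitly defer the bookkeeping, and that bookkeeping is where the three terms and the sign are produced. In the paper the computation is organized by the projection $\vartheta :\NA (V)\#H\to \NA (V)$, $w\#h\mapsto w\cou (h)$, applied to $S^{-1}(v\_3)x_{[-1]}v\_1$; since $f$ has degree one, exactly three summands survive according to which of the three factors contributes degree one, and they are then identified: the $x_{[-1]}$-summand gives $\adR{v}{\partial ^L_f(x)}$ (using $(\id \ot \pi _H)\copr (v)=v\ot 1$), the $v\_1$-summand gives $\langle S^{-1}(x\_{-1})\lact f,v\^1\rangle \adR{v\^2}{x\_0}$ (via \eqref{eq:coal} and \eqref{eq:pairing2}), and the $S^{-1}(v\_3)$-summand gives $-\langle f,v\_2\rangle \adR{v\_1}{x}$, the minus sign coming from $\langle f,S_{\NA (V)}^{-1}(v\_2)\rangle =-\langle f,v\_2\rangle $ because $S_{\NA (V)}^{-1}$ is $-\id $ on the primitive degree-one part (Eq.~\eqref{eq:SRi}). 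Nothing in your outline is wrong, and your sanity checks at $v=1$ and $v\in V$ are consistent with the formula, but until this calculation (or an equivalent one) is actually carried out, \eqref{eq:derfxb} is asserted rather than proved; the gap is one of execution, not of idea.
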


\begin{proof}
  (i) By the relations of $\NA (V)\#H$,
  \begin{align*}
    h\lact (\adR{v}{x})=&hS^{-1}(v)\lact x=S^{-1}(vS(h))\lact x
    =S^{-1}(S(h\_1)(h\_2\lact v))\lact x\\
    =&S^{-1}(h\_2\lact v)\lact (h\_1 \lact x)
    =\adR{(h\_2\lact v)}{(h\_1\lact x)}.
  \end{align*}
  Using the Yetter-Drinfeld structure of $K$ we obtain that
  \begin{align*}
    \coa (\adR{v}{x})=&\coa (S^{-1}(v)\lact x)
    =\pi _H
    (S^{-1}(v)\_1x_{[-1]}S(S^{-1}(v)\_3))\ot S^{-1}(v)\_2\lact x_{[0]}\\
    =&\pi _H(S^{-1}(v\_3)x_{[-1]}v\_1)\ot S^{-1}(v\_2)\lact x_{[0]}\\
    =&\pi _H(x_{[-1]}v\_1)\ot S^{-1}(v\_2)\lact x_{[0]}
    =x\_{-1}v\_{-1}\ot \adR{v\_0}{x\_0},
  \end{align*}
  where the fourth relation follows from $v\in \NA (V)$, and the last one from
  the definitions of $\coa $ and $\adR {}{}$. Eq.~\eqref{eq:adR3} holds since
  $S^{-1}$ is an algebra antiautomorphism of $\NA (V)\#H$.

  (ii) Let $f\in V^*$, $v\in \NA (V)$, and $x\in K$.
  Let $\vartheta :\NA (V^*)\#H \to \NA (V^*)$
  be the linear map with $\vartheta (w\#h)=w\cou (h)$ for all
  $w\in \NA (V^*)$, $h\in H$. It is well-known that $\vartheta (w)=w\_1 \pi
  _H(S(w\_2))$ for all $w\in \NA (V^*)\#H$. We get
  \begin{align*}
    &\partial ^L_f(\adR{v}{x})=\langle f, (\adR{v}{x})\^{-1}\rangle
    (\adR{v}{x})\^0 \eqn{\eqref{eq:BVcoaK}}
    \langle f,\vartheta (S^{-1}(v\_3)x_{[-1]}v\_1)\rangle
    \adR{v\_2}{x_{[0]}}\\
    &\quad =\langle f,\vartheta (S^{-1}(v\_3)\pi _H(x_{[-1]}v\_1))\rangle
    \adR{v\_2}{x_{[0]}}\\
    &\qquad
    +\langle f,\vartheta (\pi _H(S^{-1}(v\_3))x_{[-1]}\pi _H(v\_1))\rangle
    \adR{v\_2}{x_{[0]}}\\
    &\qquad
    +\langle f,\vartheta (\pi _H(S^{-1}(v\_3)x_{[-1]})v\_1)\rangle
    \adR{v\_2}{x_{[0]}}\\
    &\quad =\langle f,\vartheta (S^{-1}(v\_2))\rangle \adR{v\_1}{x}
    +\langle f,\vartheta (x_{[-1]})\rangle
    \adR{v}{x_{[0]}}+\langle f,\vartheta (x\_{-1}v\_1)\rangle
    \adR{v\_2}{x\_{0}}\\
    &\quad =\langle f,S^{-1}(v\_3)\pi _H(v\_2)\rangle \adR{v\_1}{x}
    +\adR{v}{\partial ^L_f(x)}+\langle f,x\_{-1}\lact \vartheta (v\_1)\rangle
    \adR{v\_2}{x\_{0}}\\
    &\quad \eqns{\eqref{eq:SRi}}
    \langle f,S_{\NA (V)}^{-1}(v\_2)\rangle \adR{v\_1}{x}
    +\adR{v}{\partial ^L_f(x)}+\langle f,x\_{-1}\lact v\^1\rangle
    \adR{v\^2}{x\_{0}}\\
    &\quad =-\langle f,v\_2\rangle \adR{v\_1}{x}
    +\adR{v}{\partial ^L_f(x)}+\langle S^{-1}(x\_{-1})\lact f,v\^1\rangle
    \adR{v\^2}{x\_{0}}
  \end{align*}
  which proves (ii). In the latter transformations,
  the third equation follows from the fact that $f$ is of degree $1$ in
  $\NA (V^*)$, and hence the pairing of a homogeneous product with $f$
  vanishes if the sum
  of the degrees of the factors is not $1$. Further, the fourth equation
  is obtained from $(\id \ot \pi _H)\copr (v)=v\ot 1$ and from the definition
  of $\vartheta $. In the fifth equation, the definition of $\vartheta $
  and of $\partial ^L_f(x)$ is used. The last equation comes from
  Eq.~\eqref{eq:pairing2} and the facts that $f$ has degree $1$ and each
  element of $V$ is primitive.

  (iii) Let $v\in \NA (V)$ and $x,y\in K$. Then
  \begin{align*}
    \adR{v}{(xy)}=&S^{-1}(v)\lact (xy)
    =(S^{-1}(v\_2)\lact x)(S^{-1}(v\_1)\lact y)\\
    =&(S^{-1}(v\^2{}\_0)\lact x)(S^{-1}(v\^1v\^2{}\_{-1})\lact y)\\
    =&(\adR{v\^2{}\_0}{x})(S^{-1}(v\^2{}\_{-1})\lact (\adR{v\^1}{y}))
  \end{align*}
  since $K$ is a $\NA (V)\#H$-module algebra.
\end{proof}

We extend the definition of $\partial ^L_f$ given in Lemma~\ref{le:adRprop}.
Note that $\NA (V)$ is a coalgebra.
For any left $\NA (V)$-comodule $L$ and any $f\in V^*$ let $\partial
^L_f:L\to L$ be the map defined by
\begin{align}
  \label{eq:partialL}
  \partial ^L_f(x)=\langle f,x\^{-1}\rangle x\^0 \quad \text{for all $x\in
  L$,}
\end{align}
where $\coa _{\NA (V)}(x)=x\^{-1}\ot x\^0$.

\begin{lemma}
  Let $L$ be a left $\NA (V)$-comodule.
  Let $L_0\subset L$ and let $\langle L_0\rangle $ be the smallest left
  $\NA (V)$-subcomodule of $L$ containing
  $L_0$. Then $\langle L_0\rangle $ is the smallest subspace of $L$ which
  contains $L_0$ and is stable under the maps $\partial ^L_f$, $f\in V^*$.
  \label{le:NAcomod}
\end{lemma}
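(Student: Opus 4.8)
The plan is to show that the two subspaces in question coincide by proving each contains the other. Let $W\subset L$ denote the smallest subspace containing $L_0$ and stable under all $\partial^L_f$, $f\in V^*$. First I would show $W\subset\langle L_0\rangle$: since $\langle L_0\rangle$ is a left $\NA(V)$-subcomodule containing $L_0$, it is automatically stable under every $\partial^L_f$ by the very definition in Eq.~\eqref{eq:partialL} (the coaction of an element of $\langle L_0\rangle$ lies in $\NA(V)\ot\langle L_0\rangle$, so pairing the first leg against $f\in V^*$ and keeping the second leg lands in $\langle L_0\rangle$). Hence $W$, being the smallest such subspace, is contained in $\langle L_0\rangle$.

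The substantive direction is $\langle L_0\rangle\subset W$, for which it suffices to prove that $W$ is itself a left $\NA(V)$-subcomodule of $L$ (it then contains $L_0$, so it contains the smallest such subcomodule $\langle L_0\rangle$). The key step is to analyze the coaction $\coa_{\NA(V)}:L\to\NA(V)\ot L$ using the $\ndN_0$-grading of $\NA(V)$ together with non-degeneracy of the pairing in Eq.~\eqref{eq:pairing}. Write $\coa_{\NA(V)}(x)=\sum_n c_n(x)$ with $c_n(x)\in\NA^n(V)\ot L$; I would like to recover each graded component, and indeed each tensor factor, from iterated applications of the operators $\partial^L_f$. Concretely, choosing $\ndN_0$-graded dual bases $\{b^\al\}$ of $\NA(V^*)$ and $\{b_\al\}$ of $\NA(V)$ as in the excerpt, one has for $x\in L$
\[
\coa_{\NA(V)}(x)=\sum_\al b_\al\ot \langle b^\al, x\^{-1}\rangle x\^0,
\]
and the point is that $\langle b^\al,x\^{-1}\rangle x\^0$ is obtained from $x$ by applying a composite of the maps $\partial^L_f$ corresponding to a factorization of $b^\al$ into degree-one elements of $V^*$. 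This uses the coassociativity/comodule axiom for $\coa_{\NA(V)}$ to see that $\partial^L_{f_1}\cdots\partial^L_{f_k}(x)$ computes $\langle f_1\cdots f_k, x\^{-1}\rangle x\^0$ up to lower-order comodule corrections, and then that since $\NA(V^*)$ is generated in degree one, the elements $b^\al$ of each degree $n$ can be written as linear combinations of products $f_1\cdots f_n$ with $f_i\in V^*$. By non-degeneracy of the pairing, as $f$ ranges over $V^*$ (equivalently over a basis $\{b^\al\}_{|\al|=1}$) the maps $\partial^L_f$ separate the degree-one part of the coaction, and iterating separates all graded components; hence $\coa_{\NA(V)}(x)\in\NA(V)\ot W$ whenever $x\in W$, so $W$ is a subcomodule.

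The main obstacle I anticipate is the bookkeeping in the previous paragraph: showing cleanly that $\partial^L_{f_1}\cdots\partial^L_{f_k}$, applied to $x\in L$, suffices to extract the full list of "partial derivatives" $\langle b^\al,x\^{-1}\rangle x\^0$ needed to reconstruct $\coa_{\NA(V)}(x)$, given that a single $\partial^L_f$ only sees the bottom graded piece and the iterated operators mix degrees. The way I would handle this is by induction on the degree $n$: the degree-$0$ component of $\coa_{\NA(V)}(x)$ is just $1\ot x$; assuming $W$-stability of all components of degree $<n$, one writes the degree-$n$ component of $\coa_{\NA(V)}(x)$ using the comodule axiom applied to $\partial^L_f(x)$ (whose lower-degree components lie in $\NA(V)\ot W$ by induction, since $\partial^L_f(x)\in W$) and solves for the degree-$n$ term, observing the pairing identities \eqref{eq:pairing4}--\eqref{eq:pairing5} make the bookkeeping consistent with the algebra structure of $\NA(V^*)$. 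Everything else is routine, using only that $\NA(V)$ and $\NA(V^*)$ are graded with dual graded bases and that the pairing is non-degenerate.
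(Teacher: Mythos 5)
Your argument is correct and rests on the same core computation as the paper's proof: expand the coaction through the $\ndN_0$-graded dual bases and identify $\langle f_1\cdots f_k,x\^{-1}\rangle x\^0$ with iterated applications of the maps $\partial^L_f$ via comodule coassociativity and Eq.~\eqref{eq:pairing5}; the paper phrases this as rewriting $\langle L_0\rangle =\ls \{\langle g,x\^{-1}\rangle x\^0\,|\,x\in L_0,\ g\in \NA (V^*)\}$ directly as the span of iterated derivatives, while you package it as showing the $\partial^L_f$-stable subspace is a subcomodule. The only remark worth making is that the obstacle you anticipate does not arise: $\partial^L_{f_1}\cdots \partial^L_{f_k}(x)=\langle f_k\cdots f_1,x\^{-1}\rangle x\^0$ holds exactly (no lower-order corrections), so your degree-by-degree induction is unnecessary.
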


\begin{proof}
  By assumption,
  \[ \langle L_0\rangle =\ls \{ g(x\^{-1})x\^0 \,|\,x\in L_0,g\in \NA
  (V)^*\}. \]
  The non-degeneracy of the pairing $\langle \cdot, \cdot \rangle $ between
  $\NA (V^*)$ and $\NA (V)$, see \eqref{eq:pairing},
  implies that
  \[ \langle L_0\rangle =\ls \{ \langle g,x\^{-1}\rangle
  x\^0 \,|\,x\in L_0,g\in \NA (V^*)\}. \]
  Eq.~\eqref{eq:pairing5} further implies that
  \begin{align*}
    \langle L_0\rangle =&\ls \{ \langle f_1\cdots f_k,x\^{-1}\rangle
    x\^0 \,|\,x\in L_0,k\in \ndN _0, f_1,\dots ,f_k\in V^*\}\\
    =&\ls \{ \partial ^L_{f_1}\cdots \partial ^L_{f_k}(x)\,|\,
    x\in L_0,k\in \ndN _0, f_1,\dots ,f_k\in V^*\}.
  \end{align*}
  This proves the lemma.
\end{proof}

\section{The algebra map $\Rmat $}
\label{sec:Rmat}

Let $V,V'\in \ydH $ and $W=V\oplus V'$ with $\dim _\fie W<\infty $.
Let $\pi :\NA (W)\#H \to \NA (V)\#H$ denote the Hopf algebra
projection corresponding to the decomposition $W=V\oplus V'$, and let
\begin{align} \label{eq:K}
  K=&\NA (W)^{\co \NA (V)\# H}=\{x\in \NA (W)\,|\,(\id \ot \pi )\copr
  _{\NA (W)\#H}(x)=x\ot 1\}
\end{align}
be the algebra of right coinvariants with respect to the right coaction
of $\NA (V)\#H$.
Then $K$ is a braided Hopf
algebra in ${}_{\NA (V)\#H}^{\NA (V)\#H}\yd $, and hence the results in the
previous section apply.

Recall from \cite[Def.~2.5, Remark 2.7]{p-AHS08}
that $K\#\NA (V^*)$ is an algebra
in $\ydH $ such that the multiplication map $K\ot \NA (V^*)\to K\# \NA (V^*)$
is an isomorphism of left $K$-modules and right $\NA (V^*)$-modules,
and
\begin{align}
  (1\#f)(x\#1)=\langle f\^2,\pi (x\^1)\rangle x\^2{}\_0 \#
  S^{-1}(x\^2{}\_{-1})\lact f\^1
  \label{eq:KB(V*)}
\end{align}
for all $f\in \NA (V^*)$, $x\in K$. For brevity we will often write $xf$
instead of
$x\#f$ for all $x\in K$ and $f\in \NA (V^*)$.
Note that if $f\in V^*$ and $x\in K$ then
\begin{equation}
  \label{eq:fx}
  \begin{aligned}
    (1\#f)(x\#1)=&\langle f,x\^1\rangle x\^2\#1 + x\_0\#S^{-1}(x\_{-1})\lact f
    \\
    =&\partial ^L_f(x)\#1 + x\_0\#S^{-1}(x\_{-1})\lact f
  \end{aligned}
\end{equation}
by Eqs.~\eqref{eq:KB(V*)}, \eqref{eq:partialL}.

Let $\ad $ denote the left adjoint action of $\NA (W)\#H$ on itself.
Assume that $\dim (\ad \,\NA (V)\#H)(x)<\infty $ for all $x\in V'$.
Since $K$ is generated as an algebra by $(\ad \,\NA (V)\#H )(V')$,
see \cite[Prop.\,3.6]{p-AHS08},
the left adjoint action of $\NA (V)\#H$ on $K$ is locally finite.

Let $\Rmat: K\ot K\to K\ot (K\#\NA (V^*))$ be the linear map with
\begin{align}
  \Rmat (x\ot y)=\sum _{\al }\adR{b_\al }{x}\ot b^\al y
  \label{eq:Rmat}
\end{align}
for all $x,y\in K$, where $\{b^\al \}$ and $\{b_\al \}$ are dual bases of the
$\ndN _0$-graded algebras $\NA (V^*)$ and $\NA (V)$,
respectively,
and $\adR{}{}$ is defined in Eq.~\eqref{eq:adR}.
By the local finiteness of the left adjoint action of $\NA (V)\#H$ on $K$, the
sum in Eq.~\eqref{eq:Rmat} is finite for all $x,y\in K$.
Since $\adR{v}{1}=\cou (v)1$
for all $v\in \NA (V)\#H$, we conclude that
\begin{align}
  \Rmat (1\ot y)=1\ot y \quad \text{for all $y\in K$.}
  \label{eq:Rmat1}
\end{align}

Let $\Rmatb: K\ot K\#\NA (V^*)\to K\ot K\#\NA (V^*)$ be the linear map with
\begin{align}
  \Rmatb (x\ot y)=\sum _{\al }\adR{S_{\NA (W)}(b_\al )}{x}\ot b^\al y
  \label{eq:Rmatb}
\end{align}
for all $x\in K$ and $y\in K\#\NA (V^*)$. Then
\begin{align} \label{eq:RmatbRmat}
  \Rmatb \Rmat (x\ot y)=x\ot y \quad \text{for all $x,y\in K$.}
\end{align}
Indeed,
\begin{align*}
  \Rmatb \, \Rmat (x\ot y)=&\Rmatb \Big(\sum _{\al }\adR{b_\al }{x}\ot b^\al
  y\Big)
  =\sum _{\al ,\beta }\adR{b_\al S_{\NA (W)}(b_\beta )}{x}\ot b^\beta b^\al
  y\\
  =&\sum _{\gamma }\adR{b_\gamma \^1 S_{\NA (W)}(b_\gamma \^2)}{x}
  \ot b^\gamma y =\sum _\gamma x\ot \cou (b_\gamma )b^\gamma y=x\ot y,
\end{align*}
where the third equation follows from Eq.~\eqref{eq:bb7}.

\begin{lemma}
  \label{le:RisHlin}
  For all $x,y\in K$ and $h\in H$,
  \begin{align}
    h\lact \Rmat (x\ot y)=&\Rmat (h\_1\lact x \ot h\_2\lact y),
    \label{eq:Rlin}\\
    \coa \Rmat (x\ot y)=&x\_{-1}y\_{-1}\ot \Rmat (x\_0\ot y\_0).
    \label{eq:Rcolin}
  \end{align}
\end{lemma}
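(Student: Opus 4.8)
The plan is to verify Eqs.~\eqref{eq:Rlin} and \eqref{eq:Rcolin} by direct computation, propagating the $H$-linearity and $H$-colinearity of the building blocks through the definition~\eqref{eq:Rmat} of $\Rmat $. The key structural inputs are: the behaviour of the dual bases $\{b_\al \}$, $\{b^\al \}$ under the $H$-action and $H$-coaction, recorded in Eqs.~\eqref{eq:bb1}--\eqref{eq:bb2a}; the compatibility of $\adR {}{}$ with the Yetter--Drinfeld structure of $K$ from Lemma~\ref{le:adRprop}(i), namely Eqs.~\eqref{eq:adR1} and~\eqref{eq:adR2}; and the fact that the multiplication of $\NA (V^*)$ (hence the map $b^\al \ot y \mapsto b^\al y$) is a morphism in $\ydH $.

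For Eq.~\eqref{eq:Rlin}, I would start from $h\lact \Rmat (x\ot y)=\sum _\al h\_1\lact (\adR{b_\al }{x})\ot h\_2\lact (b^\al y)$, use that $\NA (V^*)$ is an $H$-module algebra to write $h\_2\lact (b^\al y)=(h\_2\lact b^\al )(h\_3\lact y)$, and apply Eq.~\eqref{eq:adR1} to get $h\_1\lact (\adR{b_\al }{x})=\adR{(h\_2\lact b_\al )}{(h\_1\lact x)}$. After relabelling the comultiplication of $h$, the sum becomes $\sum _\al \adR{(h\_3\lact b_\al )}{(h\_1\lact x)}\ot (h\_2\lact b^\al )(h\_4\lact y)$; now the pair $\sum _\al (h\_3\lact b_\al )\ot (h\_2\lact b^\al )$ can be rewritten via Eq.~\eqref{eq:bb1a} (equivalently Eq.~\eqref{eq:bb1}) so that the $H$-action is moved entirely onto the $b^\al $ side, i.e.\ it equals $\sum _\al b_\al \ot (h\_{2}S(h\_{3})\cdot)\lact b^\al$-type collapse, leaving $\sum _\al \adR{b_\al }{(h\_1\lact x)}\ot b^\al (h\_2\lact y)=\Rmat (h\_1\lact x\ot h\_2\lact y)$. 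The colinearity~\eqref{eq:Rcolin} is handled symmetrically: apply $\coa $ to each factor, use Eq.~\eqref{eq:adR2} for $\coa (\adR{b_\al }{x})=x\_{-1}b_\al {}\_{-1}\ot \adR{b_\al {}\_0}{x\_0}$ and the $H$-comodule-algebra property of $\NA (V^*)$ for $\coa (b^\al y)=b^\al {}\_{-1}y\_{-1}\ot b^\al {}\_0 y\_0$, then use Eq.~\eqref{eq:bb2} (resp.~\eqref{eq:bb2a}) to collapse the product $b_\al {}\_{-1}b^\al {}\_{-1}$ of the coaction degrees against the dual-basis sum, yielding $x\_{-1}y\_{-1}\ot \sum _\al \adR{b_\al }{x\_0}\ot b^\al y\_0$, which is the right-hand side.

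The main obstacle I anticipate is purely bookkeeping: one must be careful that $\{b_\al \}$ and $\{b^\al \}$ are only dual bases of $\NA (V^*)$ and $\NA (V)$ as graded vector spaces, so every manipulation that moves the $H$-action or $H$-coaction across the tensor factor must be justified by one of the identities~\eqref{eq:bb1}--\eqref{eq:bb2a} rather than by treating the $b_\al $ individually; in particular one should not assume the individual $b_\al $ are $H$-stable. A secondary subtlety is the well-definedness of the sum: the formula for $\Rmat $ already relies on local finiteness of the adjoint action of $\NA (V)\#H$ on $K$ (noted after Eq.~\eqref{eq:Rmat}), and one should observe that the rewritten expressions involve the same finite truncations, so no convergence issue arises. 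Once the correct substitutions~\eqref{eq:adR1}, \eqref{eq:adR2}, \eqref{eq:bb1a}, \eqref{eq:bb2}, \eqref{eq:bb2a} are identified, both identities follow by a routine, if slightly lengthy, Sweedler-notation calculation, so I would present the argument compactly, citing these equations at each step.
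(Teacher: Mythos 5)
Your strategy is exactly the paper's: expand $\Rmat$ via \eqref{eq:Rmat}, push the action and coaction through using Lemma~\ref{le:adRprop}(i) together with the (co)module algebra structure of $K\#\NA (V^*)$, and then collapse the dual-basis pair with \eqref{eq:bb1} resp.\ \eqref{eq:bb2}; your colinearity argument is the paper's computation verbatim, and your cautionary remarks (only the summed identities \eqref{eq:bb1}--\eqref{eq:bb2a} may be used, and the sums stay finite) are correct.

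The one step that fails as written is the Sweedler bookkeeping in the linearity half. After applying \eqref{eq:adR1} to $h\_1\lact (\adR{b_\al }{x})$ and the module-algebra rule to $h\_2\lact (b^\al y)$, the correct relabelling is $\sum _\al \adR{(h\_2\lact b_\al )}{(h\_1\lact x)}\ot (h\_3\lact b^\al )(h\_4\lact y)$, not the transposed $\sum _\al \adR{(h\_3\lact b_\al )}{(h\_1\lact x)}\ot (h\_2\lact b^\al )(h\_4\lact y)$ that you wrote. From your mis-ordered expression the collapse you describe does not go through: moving the action across with \eqref{eq:bb1a} gives $\sum _\al b_\al \ot \big(h\_2S^{-1}(h\_3)\big)\lact b^\al $, and $h\_2S^{-1}(h\_3)$ is not $\cou (h\_2)1$ (the collapsing combinations are $h\_2S(h\_3)$ and $h\_3S^{-1}(h\_2)$), so your ``$h\_2S(h\_3)$-type collapse'' is not what \eqref{eq:bb1a} produces there. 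With the correct ordering the two middle legs sit adjacently in exactly the pattern of the right-hand side of \eqref{eq:bb1}, so the pair collapses to $\cou $ at once and one obtains $\Rmat (h\_1\lact x\ot h\_2\lact y)$ --- which is precisely how the paper concludes. So the approach is sound and coincides with the paper's proof; only this index slip needs repairing.
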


\begin{proof}
  By Eqs.~\eqref{eq:Rmat}, \eqref{eq:bb1} and \eqref{eq:adR1},
  \begin{align*}
    h\lact \Rmat (x\ot & y)
    =\sum _{\al }h\_1\lact (\adR{b_\al }{x})
    \ot (h\_2\lact b^\al )(h\_3\lact y)\\
    =&\sum _{\al }\adR{(h\_2 \lact b_\al )}{(h\_1\lact x)}
    \ot (h\_3\lact b^\al )(h\_4\lact y)\\
    =&\sum _{\al }\adR{b_\al }{(h\_1\lact x)}
    \ot b^\al (h\_2\lact y)
    =\Rmat (h\_1\lact x \ot h\_2\lact y)
  \end{align*}
  for all $h\in H$ and $x,y\in K$.
  Similarly, Eqs.~\eqref{eq:Rmat}, \eqref{eq:adR2} and \eqref{eq:bb2}
  imply that
  \begin{align*}
    \coa \Rmat (x\ot & y)
    =\coa \Big( \sum _\al \adR{b_\al }{x}\ot b^\al y\Big)\\
    =&\sum _\al x\_{-1}b_\al {}\_{-1}b^\al{}\_{-1}y\_{-1} \ot
    \adR{b_\al {}\_0}{x\_0}\ot b^\al {}\_0y\_0\\
    =&\sum _\al x\_{-1}y\_{-1} \ot
    \adR{b_\al {}}{x\_0}\ot b^\al y\_0
    =x\_{-1}y\_{-1}\ot \Rmat (x\_0 \ot y\_0)
  \end{align*}
for all $x,y\in K$.
\end{proof}

The vector space $K\ot K$ is an algebra in $\ydH $ with product
\begin{align}
  (x\ot y)(z\ot w)=x(\ad \,\pi (y\_1))(z)\ot y\_2 w
  \label{eq:KKprod}
\end{align}
for all $x,y,z,w\in K$. Note that this is the usual product of $K\ot K$ in
${}_{\NA (V)\#H}^{\NA (V)\#H}\yd $. Similarly, $K\#B(V^*) \ot K\#\NA (V^*)$
is an algebra in $\ydH $ with product
\begin{align}
  (x\ot y)(z\ot w)=x(y\_{-1}\lact z)\ot y\_0 w
  \label{eq:KBKBprod}
\end{align}
for all $x,y,z,w\in K\#\NA (V^*)$.

\begin{theor} \label{th:Ralg}
  The map $\Rmat :K\ot K\to K\#\NA (V^*)\ot K\#\NA (V^*)$
  is an algebra map in $\ydH $.
\end{theor}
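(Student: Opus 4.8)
The statement is that $\Rmat\colon K\ot K\to K\#\NA(V^*)\ot K\#\NA(V^*)$ is an algebra map in $\ydH$. The $\ydH$-linearity part (that $\Rmat$ is a morphism in $\ydH$) is already secured by Lemma~\ref{le:RisHlin}, so what remains is the multiplicativity: $\Rmat((x\ot y)(z\ot w)) = \Rmat(x\ot y)\Rmat(z\ot w)$ for all $x,y,z,w\in K$, together with $\Rmat(1\ot 1)=1\ot 1$, which is immediate from Eq.~\eqref{eq:Rmat1}. The product on the source is given by Eq.~\eqref{eq:KKprod} and on the target by Eq.~\eqref{eq:KBKBprod}.

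The plan is to reduce the general identity to two special cases and then combine them. First I would check multiplicativity on the two "sub-tensor-factors" separately: (a) $\Rmat$ restricted to elements of the form $x\ot 1$, i.e. show $\Rmat((x\ot 1)(z\ot 1)) = \Rmat(x\ot 1)\Rmat(z\ot 1)$; and (b) the identity $\Rmat((1\ot y)(1\ot w)) = \Rmat(1\ot y)\Rmat(1\ot w)$, which by Eq.~\eqref{eq:Rmat1} is trivial since both sides equal $1\ot yw$. For case (a), unwinding definitions, $(x\ot 1)(z\ot 1) = x(\ad\pi(1))(z)\ot 1 = xz\ot 1$ in $K\ot K$, so I must show $\Rmat(xz\ot 1) = \Rmat(x\ot 1)\Rmat(z\ot 1)$. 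Writing out the left side via Eq.~\eqref{eq:Rmat} gives $\sum_\al \adR{b_\al}{(xz)}\ot b^\al$, and here the key input is Eq.~\eqref{eq:adR4} of Lemma~\ref{le:adRprop}(iii): $\adR{v}{(xz)} = (\adR{v\^2{}\_0}{x})(S^{-1}(v\^2{}\_{-1})\lact(\adR{v\^1}{z}))$. Expanding $\copr_{\NA(V)}(b_\al)$ via dual bases (Eq.~\eqref{eq:bb7}) and matching against the target product Eq.~\eqref{eq:KBKBprod} together with the relation Eq.~\eqref{eq:KB(V*)} describing how $\NA(V^*)$ moves past $K$ should produce exactly $\Rmat(x\ot 1)\Rmat(z\ot 1)$. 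The coproduct-splitting identity \eqref{eq:bb7} is what lets the single sum over $\al$ factor into a double sum over two independent dual-basis indices.

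For the general product I would then combine: $(x\ot y)(z\ot w) = x(\ad\pi(y\_1))(z)\ot y\_2 w$. Using that $\Rmat$ is determined by its values and is $\ydH$-colinear, and that $\ad\pi(y\_1)$ acting on $z$ together with $y\_2$ can be reorganized through the coaction $\coa_{\NA(V)}$ on $K$ (Eq.~\eqref{eq:coaBV}), I would reduce to case (a) applied to $x$ and $(\ad\pi(y\_1))(z)$, plus the right-multiplication-by-$y$ structure on the $\NA(V^*)$-side. Concretely, one shows first $\Rmat(x\ot y) = (x\ot 1)\cdot(\text{something built from }y)$ inside the target algebra — in fact I expect $\Rmat(x\ot y) = \Rmat(x\ot 1)\cdot(1\ot y)$ to hold, reducing everything to case (a) plus the trivial identity $\Rmat((x\ot 1)(1\ot y)) = \Rmat(x\ot 1)\Rmat(1\ot y)$. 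Checking $\Rmat(x\ot y) = \Rmat(x\ot 1)(1\ot y)$ amounts to verifying $\sum_\al \adR{b_\al}{x}\ot b^\al y = \big(\sum_\al \adR{b_\al}{x}\ot b^\al\big)(1\ot y)$, and here the target product \eqref{eq:KBKBprod} makes the right side $\sum_\al \adR{b_\al}{x}\,(b^\al{}\_{-1}\lact 1)\ot b^\al{}\_0 y$; since $b^\al{}\_{-1}\lact 1 = \cou(b^\al{}\_{-1})1$ this collapses to $\sum_\al \adR{b_\al}{x}\ot b^\al y$ as wanted. Wait — one must be careful: in $K\#\NA(V^*)$ the coaction of $K\#\NA(V^*)$ on the second tensor factor is not just the $H$-coaction, so this step needs checking that the "$y\_{-1}\lact z$" in \eqref{eq:KBKBprod} with $z=1$ genuinely acts trivially, which it does.

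The main obstacle I anticipate is case (a): the honest bookkeeping identity $\Rmat(xz\ot 1)=\Rmat(x\ot 1)\Rmat(z\ot 1)$. One must carefully track how the $\NA(V^*)$-factor $b^\al$ produced from $\adR{b_\al}{x}$ interacts — via Eq.~\eqref{eq:KB(V*)}, which involves $\pi$, the coproduct of $K$, the coaction, and $S^{-1}$ — with the second copy of $K$ carrying $z$, and to see that after applying \eqref{eq:bb7}, \eqref{eq:bb5}, \eqref{eq:bb6} and the defining relations this matches the target product \eqref{eq:KBKBprod}. The combinatorics of the three Sweedler legs of $v\in\NA(V)$ in \eqref{eq:adR4}, paired against the two dual-basis legs coming from \eqref{eq:bb7}, is the delicate point; everything else is formal. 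I would organize the computation so that the pairing axioms \eqref{eq:pairing4}--\eqref{eq:pairing5} and the dual-basis identities \eqref{eq:bb1}--\eqref{eq:bb7} are invoked one at a time, in the order: split $\copr_{\NA(V)}(b_\al)$ via \eqref{eq:bb7}, apply \eqref{eq:adR4}, then reabsorb the $H$-actions using \eqref{eq:bb1a} and \eqref{eq:bb2a}, and finally recognize the result as the target product.
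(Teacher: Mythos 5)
There is a genuine gap in your reduction step. You claim that general multiplicativity follows from your case (a) (that is, $\Rmat (xz\ot 1)=\Rmat (x\ot 1)\Rmat (z\ot 1)$), the trivial case (b), and the identity $\Rmat (x\ot y)=\Rmat (x\ot 1)(1\ot y)$. It does not: writing $\Rmat (x\ot y)\Rmat (z\ot w)=\Rmat (x\ot 1)(1\ot y)\Rmat (z\ot 1)(1\ot w)$, you still have to move the factor $1\ot y$ past $\Rmat (z\ot 1)$, i.e.\ you need the cross relation
\begin{align*}
  \Rmat \big((1\ot y)(z\ot 1)\big)=\Rmat (1\ot y)\Rmat (z\ot 1),
  \qquad (1\ot y)(z\ot 1)=(\ad \,\pi (y\_1))(z)\ot y\_2
\end{align*}
by \eqref{eq:KKprod}. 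Indeed, if you run your own reduction on the general product $(x\ot y)(z\ot w)=x(\ad \,\pi (y\_1))(z)\ot y\_2w$ and specialize $x=w=1$, the identity you must verify is exactly this one; it cannot be derived from (a), (b) and the "$(1\ot y)$ on the right" identity, because $K\ot K$ with the product \eqref{eq:KKprod} is not a tensor product of commuting subalgebras --- the commutation of the two tensor legs is governed by the $\NA (V)\#H$-coaction on $K$. This cross relation is precisely the paper's Eq.~\eqref{eq:Rmathom2}, and it is the main computational content of the theorem: its proof is the long chain using the Yetter--Drinfeld coaction $x\mapsto x_{[-1]}\ot x_{[0]}$, the straightening rule \eqref{eq:KB(V*)}, and the identities \eqref{eq:bb3}, \eqref{eq:bb1a}, \eqref{eq:bb2a}. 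Your plan neither proves it nor flags it as a separate obligation.

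Your case (a) is the paper's Eq.~\eqref{eq:Rmathom1}, and your sketch of it is essentially correct, but it is the shorter half: it needs only \eqref{eq:bb2a}, \eqref{eq:bb7} and \eqref{eq:adR4}, not \eqref{eq:KB(V*)}, so you have misplaced where the real difficulty lies. The easy pieces you do prove (the unit, case (b), and $\Rmat ((x\ot y)(1\ot z))=\Rmat (x\ot y)\Rmat (1\ot z)$, which is the paper's first display) are fine and match the paper; to complete the argument you must add and prove \eqref{eq:Rmathom2}, after which the paper's bookkeeping (first display plus \eqref{eq:Rmathom1} and \eqref{eq:Rmathom2}) does yield multiplicativity on all of $K\ot K$.
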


\begin{proof}
  The map $\Rmat $ is a morphism in $\ydH $ by Lemma~\ref{le:RisHlin}.
  Further,
  \begin{align*}
    &\Rmat ( (x\ot y)(1\ot z))=\Rmat (x\ot yz)\\
    &\quad =\sum _\al \adR{b_\al }{x}\ot b^\al yz
    =\Rmat (x\ot y)(1\ot z)=\Rmat (x\ot y)\Rmat(1\ot z)
  \end{align*}
  for all $x,y,z\in K$ by Eq.~\eqref{eq:Rmat1}. Hence it suffices to show that
  \begin{align}
    \Rmat ((x\ot 1)(y\ot 1))=&\Rmat (x\ot 1)\Rmat (y\ot 1),
    \label{eq:Rmathom1}\\
    \Rmat ((1\ot x)(y\ot 1))=&\Rmat (1\ot x)\Rmat (y\ot 1)
    \label{eq:Rmathom2}
  \end{align}
  for all $x,y\in K$. Let $x,y\in K$. Then
  \begin{align*}
    \Rmat (x\ot 1)&\Rmat (y\ot 1)
    \eqn{\eqref{eq:Rmat}} \Big(\sum _\al \adR{b_\al }{x}\ot b^\al
    \Big) \Big(\sum _\beta \adR{b_\beta}{y}\ot b^\beta \Big)\\
    \eqn{\eqref{eq:KBKBprod}}&
    \sum _{\al ,\beta }(\adR{b_\al }{x})(b^\al {}\_{-1}\lact
    (\adR{b_\beta }{y}))\ot b^\al {}\_0b^\beta \\
    \eqn{\eqref{eq:bb2a}}&
    \sum _{\al ,\beta }(\adR{b_\al {}\_0}{x})(S^{-1}(b_\al {}\_{-1})\lact
    (\adR{b_\beta }{y}))\ot b^\al b^\beta \\
    \eqn{\eqref{eq:bb7}}&
    \sum _\gamma (\adR{b_\gamma \^2 {}\_0}{x})(S^{-1}
    (b_\gamma \^2{}\_{-1})\lact
    (\adR{b_\gamma \^1}{y}))\ot b^\gamma \\
    \eqn{\eqref{eq:adR4}}&
    \sum _\gamma \adR{b_\gamma }{(xy)}\ot b^\gamma
    \eqn{\eqref{eq:Rmat}}
    \Rmat (xy\ot 1).
  \end{align*}
  This proves Eq.~\eqref{eq:Rmathom1}. Further,
  \begin{align*}
    \Rmat( (1\ot &x)(y\ot 1))
    \eqn{\eqref{eq:KKprod}}
    \Rmat ( (x_{[-1]}\lact y)\ot x_{[0]})\\
    \eqn{\eqref{eq:Rmat}}&
    \sum _\al \adR{b_\al }{(x_{[-1]}\lact y)}\ot b^\al x_{[0]}
    \eqn{\eqref{eq:adR}}
    \sum _\al \adR{(S(x_{[-1]})b_\al )}{y}\ot b^\al x_{[0]}\\
    \eqn{\eqref{eq:KB(V*)}}&
    \sum _\al \adR{(S(x_{[-1]})b_\al )}{y}\ot
    \langle b^\al {}\^2,
    \pi (x_{[0]}{}\^1)\rangle x_{[0]}{}\^2{}\_0
    (S^{-1}(x_{[0]}{}\^2{}\_{-1})\lact b^\al {}\^1)\\
    \eqn{\eqref{eq:bb3}}&
    \sum _\al \adR{(S(x_{[-1]})\pi (x_{[0]}{}\^1)b_\al )}{y}\ot
    x_{[0]}{}\^2{}\_0 (S^{-1}(x_{[0]}{}\^2{}\_{-1})\lact b^\al )\\
    =&\sum _\al \adR{(S\pi (x\_1)\pi (x\_2)S\pi _H(x\_3)b_\al )}{y}\ot
    x\_5(S^{-1}\pi _H(x\_4)\lact b^\al )\\
    \eqn{\eqref{eq:bb1a}}&
    \sum _\al \adR{(S\pi _H(x\_1)(\pi _H(x\_2)\lact b_\al ))}{y}\ot
    x\_3b^\al \\
    =&\sum _\al \adR{(b_\al S(x\_{-1}))}{y}\ot x\_0b^\al
    \eqn{\eqref{eq:adR}}
    \sum _\al x\_{-1}\lact (\adR{b_\al }{y})\ot x\_0b^\al \\
    \eqn{\eqref{eq:KBKBprod}}&
    (1\ot x)\Big(\sum _\al \adR{b_\al }{y}\ot b^\al \Big)
    \eqn{\eqref{eq:Rmat},\eqref{eq:Rmat1}}
    \Rmat(1\ot x)\Rmat(y\ot 1).
  \end{align*}
  This proves Eq.~\eqref{eq:Rmathom2}, and the proof of the theorem is
  completed.
\end{proof}

\section{Reflections of Nichols algebras}
\label{sec:refl}

Let $\theta \in \ndN $ and $\Ib =\{1,2,\dots\theta \}$.
Let $\ffdi $ denote the set of $\theta $-tuples of finite-dimensional
irreducible objects in $\ydH $,
and let
$\fiso $ denote the set of $\theta $-tuples of
isomorphism classes of finite-dimensional irreducible objects in $\ydH $.
For each $M=(M_1,\dots,M_\theta )\in \ffdi $
let $[M]=([M_1],\dots,[M_\theta ])\in \fiso $ denote the corresponding
$\theta $-tuple of isomorphism classes.

Let $\{\al _1,\dots,\al _\theta \}$ be the standard basis of $\ndZ ^\theta $.
For all $M=(M_1,\dots,M_\theta )\in \ffdi $
define an algebra grading by $\ndN _0^\theta $
on the Nichols algebra $\NA (M):=\NA (M_1\oplus \cdots \oplus M_\theta )$
by $\deg M_j=\al _j$ for all $j\in \Ib $, see \cite[Rem.~2.8]{p-AHS08}.
We call this the \textit{standard $\ndN _0^\theta $-grading of} $\NA (M)$.

Let $i\in \Ib $ and $M\in \ffdi $. Let
\begin{align}
  K_i^M=&\NA (M)^{\co \NA (M_i)\# H}.
\end{align}
Clearly, $M_j\subset K_i^M$ for all $j\in \Ib \setminus \{i\}$.
As in \cite[Def.\,6.4]{p-HeckSchn08a}
we say that $M$ is \textit{$i$-finite} if
$(\ad\,\NA (M_i)\#H)(M_j)$ is finite-dimensional for all
$j\in \Ib \setminus \{i\}$. Note that if $N\in \ffdi $ with $[M]=[N]$ then
$M$ is $i$-finite if and only if $N$ is $i$-finite.

\begin{propo} \cite[Prop.\,3.6]{p-AHS08}
  Let $i\in \Ib $ and $M\in \ffdi $. Assume that $M$ is $i$-finite.

  (i) The algebra $K_i^M$ is generated by $\oplus _{j\in \Ib \setminus \{i\}}
  (\ad \,\NA (M_i))(M_j)$.

  (ii) The left adjoint action of $\NA (M_i)$ on $K_i^M$ is locally finite.
  \label{pr:Klocfin}
\end{propo}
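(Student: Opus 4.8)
The plan is to reprove Proposition~\ref{pr:Klocfin} following the strategy of \cite[Prop.\,3.6]{p-AHS08}, but using the tools developed in the previous sections, in particular the algebra map $\Rmat $ from Section~\ref{sec:Rmat} applied to $W=M_i\oplus (M_1\oplus \cdots \oplus \widehat{M_i}\oplus \cdots \oplus M_\theta )$, i.e. with $V=M_i$ and $V'$ the direct sum of the remaining $M_j$. First I would fix this decomposition, so that $K=K_i^M=\NA (M)^{\co \NA (M_i)\# H}$ is precisely the braided Hopf algebra $K$ studied in Section~\ref{sec:Rmat}, and record that by the $i$-finiteness assumption $\dim (\ad \,\NA (M_i)\#H)(x)<\infty $ for all $x\in M_j$, $j\neq i$, which is exactly the hypothesis needed for the local finiteness statement at the end of Section~\ref{sec:Rmat} and for $\Rmat $ to be well-defined.

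For part~(i), the key point is that $\NA (M)$ is generated by $M_1\oplus \cdots \oplus M_\theta $, hence $\NA (M)\#H$ is generated by $H$ together with the $M_j$; since $\pi :\NA (M)\#H\to \NA (M_i)\#H$ is a Hopf algebra projection with a Hopf algebra section, the standard theory of coinvariants (cf.\ the decomposition $\NA (M)\#H\cong K\#(\NA (M_i)\#H)$ coming from Lemma~\ref{le:EcapR} with $B=\NA (M_i)\#H$) shows that $K$ is generated as an algebra by the image of the $M_j$, $j\neq i$, under the projection $\NA (M)\#H\to K$, $a\mapsto a\_1 S(\pi (a\_2))$. Evaluating this projection on $M_j\subset \NA (M)$ and on products, and using that $H\subset \NA (M_i)\#H$ acts by the adjoint action, one identifies the generating set with $\bigoplus _{j\neq i}(\ad \,\NA (M_i)\#H)(M_j)$; finally, since each $M_j$ is an $H$-submodule and the $H$-action is already absorbed, this coincides with $\bigoplus _{j\neq i}(\ad \,\NA (M_i))(M_j)$. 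This is essentially a bookkeeping argument with the formulas \eqref{eq:bosalg}, \eqref{eq:coal} for the bosonization.

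For part~(ii), the local finiteness of $\ad \,\NA (M_i)$ on $K$, I would argue as follows. By part~(i), $K$ is generated by the finite-dimensional subspaces $V_j:=(\ad \,\NA (M_i)\#H)(M_j)$, each of which is $\ad \,\NA (M_i)$-stable by construction. Since $\ad $ is an algebra action and $K$ is an $\NA (M)\#H$-module algebra, for $v\in \NA (M_i)$ one has a Leibniz-type rule $(\ad \,v)(xy)=\sum (\ad \,v\_1)(x)\,(\ad \,v\_2)(y)$ using the braided coproduct; iterating, $(\ad \,\NA (M_i))(x_1\cdots x_r)$ lies in the span of products of terms $(\ad \,\NA (M_i))(x_s)$, and if each $x_s$ lies in one of the finite-dimensional stable subspaces $V_{j_s}$, then $(\ad \,\NA (M_i))(x_1\cdots x_r)$ is contained in $V_{j_1}\cdots V_{j_r}$, which is finite-dimensional. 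Hence $(\ad \,\NA (M_i))(x)$ is finite-dimensional for every $x\in K$, which is the assertion.

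The main obstacle, and the place requiring the most care, is the Leibniz rule for $\ad $ in the braided setting: the comultiplication of $\NA (M_i)$ and the braiding both intervene, so the correct statement is $(\ad \,v)(xy)=\bigl((\ad \,v\^1{}\_0)(x)\bigr)\bigl(S^{-1}(v\^1{}\_{-1})\lact (\ad \,v\^2)(y)\bigr)$ or its appropriate variant — compare the dual formula \eqref{eq:adR4} for $\adR{}{}$, which one can transport via $S^{-1}$ — and one must check that the resulting products of stable finite-dimensional subspaces are again finite-dimensional, i.e. that only finitely many such product terms are nonzero. This is guaranteed because $\NA (M_i)$ acts locally finitely on each generator and the $\ndN _0^\theta $-grading forces the relevant sums to be finite in each degree. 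Everything else is routine manipulation with the identities collected in Sections~\ref{sec:NA}–\ref{sec:Rmat}.
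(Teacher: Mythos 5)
There is nothing to compare against inside the paper: Prop.~\ref{pr:Klocfin} is quoted from \cite[Prop.\,3.6]{p-AHS08} and not proven here, so your attempt has to stand on its own. Your part~(ii) is fine, and it is in fact exactly the deduction the paper itself performs in Sect.~\ref{sec:Rmat} (the paragraph before Eq.~\eqref{eq:Rmat}): once $K_i^M$ is known to be generated by the subspaces $V_j=(\ad\,\NA (M_i)\#H)(M_j)$, which are finite-dimensional by $i$-finiteness and stable under $\ad (\NA (M_i)\#H)$ by construction, the rule $(\ad\,b)(xy)=(\ad\,b\_1)(x)(\ad\,b\_2)(y)$ (coproduct taken in $\NA (M_i)\#H$, which is the clean way to state the ``braided Leibniz rule'' you hedge about) puts $(\ad\,\NA (M_i))(x_1\cdots x_r)$ inside the finite-dimensional space $V_{j_1}\cdots V_{j_r}$; your closing appeal to the $\ndN _0^\theta $-grading is unnecessary. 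The reference to $\Rmat $ is a red herring: you never use it, and you should not, since Sect.~\ref{sec:Rmat} already invokes the generation statement of this very proposition to control $\Rmat $, so leaning on it here would be circular.

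The thin spot is part~(i), which is where the actual content of \cite[Prop.\,3.6]{p-AHS08} lies. There is no ``standard theory of coinvariants'' yielding that $K_i^M$ is generated by the images of the $M_j$ under $\Theta :a\mapsto a\_1 S\pi (a\_2)$; indeed $\Theta (M_j)=M_j$ for $j\neq i$, and $K_i^M$ is in general not generated by $\oplus _{j\neq i}M_j$ alone. What the projection gives is $\Theta (ab)=a\_1\Theta (b)S\pi (a\_2)$, hence on letters $\Theta (ya)=y\,\Theta (a)$ for $y\in M_j$, $j\neq i$ (because $\pi $ kills $M_j$), while $\Theta (ya)=(\ad\,y)(\Theta (a))$ for $y\in M_i$ or $y\in H$. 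Since $\Theta |_{K_i^M}=\id $ and $\NA (M)\#H$ is spanned by words in the $M_j$ and $H$, this exhibits $K_i^M$ as the span of iterated expressions $(\ad\,b_1)\bigl(x_1(\ad\,b_2)(x_2\cdots )\bigr)$; to land these in the subalgebra $E$ generated by the $(\ad\,\NA (M_i)\#H)(M_j)$ you still must prove that $E$ is stable under $\ad (M_i)$ and $\ad (H)$ --- again via the Leibniz rule for the skew-primitive elements of $M_i$, using that each generating subspace is $\ad (\NA (M_i)\#H)$-stable --- plus the easy converse inclusion $E\subset K_i^M$. None of this is deep, but it is an induction over word length, not mere bookkeeping with \eqref{eq:bosalg}, \eqref{eq:coal}, and as written your argument asserts rather than proves precisely this step. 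With that induction supplied your proof closes up (and, incidentally, your argument for (i) nowhere uses $i$-finiteness, which is harmless: only (ii) needs it).
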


Let us recall some crucial definitions introduced
in \cite[Sect.\,3.4]{p-AHS08}.
Let $i\in \Ib $ and $M\in \ffdi $.
If $M$ is not $i$-finite, let $\Rf _i(M)=M$.
Otherwise let
$a^M_{ij}\in \ndZ $, where $j\in \Ib $,
and $M'=(M'_1,\dots,M'_\theta )\in (\ydH )^\theta $ be given by
\begin{gather}
  a^M_{ij}=\begin{cases}
    2 & \text{if $j=i$},\\
    -\max \{m\,|\,(\ad \,M_i)^m(M_j)\not=0\} & \text{if $j\not=i$,}
  \end{cases}
  \label{eq:aij}\\
  M'_i=M_i^*,\quad
  M'_j=(\ad\,M_i)^{-a^M_{ij}}(M_j)\,
  \text{for all $j\in \Ib \setminus \{i\}$}
  \label{eq:M'_j}.
\end{gather}
and let
\begin{align}
  s_i^{M}\in \Aut (\ndZ ^\theta ),\quad
  s_i^{M}(\al _j)=\al _j-a^{M}_{ij}\al _i
  \text{ for all $j\in \Ib $.}
  \label{eq:siM}
\end{align}
By \cite[Thm.\,3.8]{p-AHS08},
$M'_j$ is finite-dimensional and irreducible for all $j\in \Ib $,
and hence $M'\in \ffdi $.
Let $\Rf _i(M)=M'$ in this case. Note that $[\Rf _i(M)]=[\Rf _i(N)]$ in
$\fiso $ for all
$N\in \ffdi $ with $[N]=[M]$. Thus we may define
\[ \rf _i([M])=[\Rf _i(M)], \]
and these definitions provide us with maps $\Rf _j:\ffdi \to \ffdi $, $\rf
_j:\fiso \to \fiso $ for all $j\in \Ib $. Further, if $N\in \ffdi $ with
$[M]=[N]$ and $M$ is $i$-finite, then $a_{ij}^M=a_{ij}^N$ for all $j\in \Ib $,
and $s_i^M=s_i^N$. Thus we may write $a_{ij}^{[M]}$ and $s_i^{[M]}$ instead of
$a_{ij}^M$ and $s_i^M$ if needed.




Let $i\in \Ib $ and $M\in \ffdi $. Assume that $M$ is $i$-finite.
Let
\begin{align}
  \Omega _i^M:K_i^M\#\NA (M_i^*)\to \NA (\Rf _i(M))
\end{align}
be the unique algebra map which is the identity on all $M'_j\subset
K_i^M\#\NA (M_i^*)$, where
$j\in \Ib $ --- see \cite[Prop.\,3.14]{p-AHS08}.
Then $\Omega _i^M$ is a map in $\ydH $ and
\begin{align}
  \Omega _i^M(x_\al )\in \NA (\Rf _i(M))_{s_i^M(\al )}\quad
  \label{eq:Omgrad}
\end{align}
for all $x_\al \in K_i^M\#\NA (M_i^*)$ of degree $\al \in \ndZ ^\theta $,
where $K_i^M\subset \NA (M)$ is graded by the standard grading of $\NA (M)$
and $\deg M_i^*=-\al _i$. Further, $\Omega _i^M$ is bijective
and
\begin{align}
  S_{\NA (\Rf _i(M))}\Omega _i^M(K_i^M)=K_i^{\Rf _i(M)}
  \label{eq:OmK}
\end{align}
by the last paragraph of the proof of \cite[Thm.~3.12]{p-AHS08}.
By \cite[Eq.~(3.37)]{p-AHS08} the restriction of
$S_{\NA (\Rf _i(M))}\Omega _i^M$ to $M_j$
defines an isomorphism
\[ \varphi _{ij}^M:M_j\to \Rf _i^2(M)_j,\quad j\in \Ib \setminus \{j\}, \]
in $\ydH $, and there is a canonical isomorphism $\varphi _{ii}^M:
M_i\to \Rf _i^2(M)_i=M_i^{**}$ in $\ydH $, see \cite[Rem.\,1.4]{p-AHS08}. Let
\begin{align} \label{eq:varphi}
  \varphi _i^M=(\varphi _{ij}^M)_{j\in \Ib }:M\to \Rf ^2_i(M)
\end{align}
be the family of these isomorphisms.

The following property of $\Omega _i^M $  will be one of the main ingredients to
characterize right coideal subalgebras of Nichols algebras.

\begin{theor}\label{th:Omega}
  Let $M\in \ffdi $ and $i\in \Ib $. Assume that $M$ is $i$-finite. Then
  the following diagram is commutative:
  \begin{equation}
    \begin{CD}
      K_i^M & @>\copr _{K_i^M}>> {K_i^M}\ot {K_i^M}
      \overset{\Rmat }{\longrightarrow }
      &{K_i^M}\ot ( {K_i^M}\#\NA (M_i^*)) \\
      @V\Omega _i^M VV & & @VV\Omega _i^M \ot \Omega _i^M V\\
      \NA (\Rf _i(M)) & @>\copr _{\NA (\Rf _i(M))}>>
      &\NA (\Rf _i(M))\ot \NA (\Rf _i(M))
    \end{CD}
    \label{eq:Ompropdiag}
  \end{equation}
  that is,
  \begin{align}
    \copr _{\NA (\Rf _i(M))}\Omega _i^M (x) =
    (\Omega _i^M \ot \Omega _i^M )\Rmat \copr _{K_i^M}(x)
    \label{eq:Omprop}
  \end{align}
  for all $x\in {K_i^M}$.
\end{theor}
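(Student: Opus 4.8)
The plan is to reduce the commutativity of the diagram to a statement that can be checked on algebra generators of $K_i^M$, exploiting that all maps involved are algebra maps into suitable algebras in $\ydH$. First I would observe that $\copr_{\NA(\Rf_i(M))}$ is an algebra map (Nichols algebras are braided bialgebras), that $\Omega_i^M$ is an algebra map by construction, that $\Rmat$ is an algebra map by Theorem~\ref{th:Ralg}, and that $\copr_{K_i^M}$ is an algebra map since $K_i^M$ is a braided Hopf algebra in ${}_{\NA(M_i)\#H}^{\NA(M_i)\#H}\yd$. Hence both paths around the diagram in Eq.~\eqref{eq:Ompropdiag} are algebra maps $K_i^M \to \NA(\Rf_i(M))\ot\NA(\Rf_i(M))$ --- with the caveat that one must track the correct algebra structures: $K_i^M\ot K_i^M$ carries the product \eqref{eq:KKprod}, $K_i^M\#\NA(M_i^*)\ot K_i^M\#\NA(M_i^*)$ the product \eqref{eq:KBKBprod}, and $\Omega_i^M\ot\Omega_i^M$ must be checked to be an algebra map between these; this compatibility is exactly the content that makes $\Rmat$ land in the correct codomain. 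By Proposition~\ref{pr:Klocfin}(i), $K_i^M$ is generated as an algebra by $\oplus_{j\neq i}(\ad\,\NA(M_i))(M_j)$, and in fact it suffices to test Eq.~\eqref{eq:Omprop} on a generating set; combined with $H$-linearity (all maps are morphisms in $\ydH$ by Lemma~\ref{le:RisHlin} and the cited properties of $\Omega_i^M$), it is enough to check the identity on the elements $M_j \subset K_i^M$ for $j\neq i$, i.e.\ on elements that $\Omega_i^M$ sends to $M_j' = (\ad\,M_i)^{-a_{ij}^M}(M_j)\subset \NA(\Rf_i(M))$, and more generally on $(\ad\,v)(m)$ for $v\in\NA(M_i)$, $m\in M_j$.

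Next I would compute both sides explicitly on $m\in M_j$, $j\neq i$. Since $m\in K_i^M$ has degree $\al_j$ and $M_j\subset K_i^M$ consists of $\NA(M_i)$-coinvariants, the coaction $\coa_{\NA(M_i)}$ is trivial on $m$ (it is grouplike-like: $\coa_{\NA(M_i)}(m) = 1\ot m$), so $\copr_{K_i^M}(m) = m\ot 1 + 1\ot m$ up to the braided-Hopf structure; applying $\Rmat$ via Eq.~\eqref{eq:Rmat} and using $\adR{b_\al}{1}=\cou(b_\al)1$ and $\adR{b_\al}{m}$ which is nonzero only for $b_\al$ of degree $0$ or higher, one unwinds the right-hand side using formula \eqref{eq:fx} for the interaction of $\NA(M_i^*)$ with $K_i^M$. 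The left-hand side is $\copr_{\NA(\Rf_i(M))}$ applied to $\Omega_i^M(m)$, and since $\Omega_i^M$ is the identity on $M_j'$ one needs the known coproduct of the primitive-like elements $(\ad\,M_i)^{-a_{ij}^M}(M_j)$ in $\NA(\Rf_i(M))$. Then I would bootstrap from degree-$\al_j$ generators to all of $K_i^M$ using that both sides are algebra maps: if Eq.~\eqref{eq:Omprop} holds for $x$ and $y$, it holds for $xy$, because $\copr_{\NA(\Rf_i(M))}\Omega_i^M$ is multiplicative and $(\Omega_i^M\ot\Omega_i^M)\Rmat\copr_{K_i^M}$ is the composite of three algebra maps (this is where the correct identification of the three algebra structures is essential).

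The main obstacle I anticipate is precisely the bookkeeping of algebra structures: verifying that $\Omega_i^M\ot\Omega_i^M$ is an algebra map from $(K_i^M\#\NA(M_i^*))\ot(K_i^M\#\NA(M_i^*))$ with product \eqref{eq:KBKBprod} to $\NA(\Rf_i(M))\ot\NA(\Rf_i(M))$ with its braided tensor product, and that $\Rmat$ composed with $\copr_{K_i^M}$ actually lands in the domain where this makes sense. One must show that the left $\NA(V)\#H$-module structure on $K_i^M\#\NA(M_i^*)$ used to define \eqref{eq:KBKBprod} corresponds under $\Omega_i^M$ to the Yetter-Drinfeld coaction of $\NA(\Rf_i(M))_{\al_i}$-part governing the braided tensor product of $\NA(\Rf_i(M))$; this is a compatibility between the coinvariant/bosonization picture and the reflected Nichols algebra, and it relies on the grading statement \eqref{eq:Omgrad} together with the explicit form \eqref{eq:KB(V*)} of the smash product. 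A secondary difficulty is handling the coaction terms carefully: the elements of $K_i^M$ in higher degree are \emph{not} coinvariant for the $H$-coaction, so when reducing to generators via multiplicativity one must make sure the twists by $S^{-1}(x\_{-1})$ appearing in \eqref{eq:adR4} and \eqref{eq:fx} are exactly matched on both sides --- which they will be, since all maps are morphisms in $\ydH$, but the verification requires care. Once these structural compatibilities are in place, checking Eq.~\eqref{eq:Omprop} on the generators $M_j\subset K_i^M$ is a finite, essentially formal computation using Lemma~\ref{le:adRprop} and the pairing identities \eqref{eq:bb1}--\eqref{eq:bb7}.
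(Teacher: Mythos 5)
Your first step agrees with the paper: all maps in the diagram are algebra maps (for the products \eqref{eq:KKprod}, \eqref{eq:KBKBprod} and the braided tensor product on $\NA (\Rf _i(M))\ot \NA (\Rf _i(M))$), so by Prop.~\ref{pr:Klocfin}(i) it suffices to verify \eqref{eq:Omprop} on the algebra generators of $K_i^M$, namely on $L_j=(\ad \,\NA (M_i))(M_j)$, $j\not=i$. The compatibility of $\Omega _i^M\ot \Omega _i^M$ with the two tensor--product algebra structures, which you single out as the main obstacle, is in fact routine ($\Omega _i^M$ is an algebra map and a morphism in $\ydH $, and both products are the braided ones in $\ydH $); the real difficulty lies after this reduction, and there your plan has a genuine gap.

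The gap is twofold. First, multiplicativity only reduces the problem to \emph{all} of $L_j$, not to $M_j$: an element $(\ad \,v)(m)$ with $v\in \NA (M_i)$, $m\in M_j$ is not a product of elements of $M_j$ inside $K_i^M$, so you cannot bootstrap from $M_j$ to $L_j$ by the algebra-map argument, and you propose no mechanism for this passage. Second, your intended base case is the wrong end of $L_j$: for $m\in M_j$ the left-hand side $\copr _{\NA (\Rf _i(M))}\Omega _i^M(m)$ is not known --- $\Omega _i^M$ is the identity on $M'_j=(\ad \,M_i)^{-a_{ij}^M}(M_j)=L_j^{1-a_{ij}^M}$ (the top component), not on $M_j$, so $\Omega _i^M(m)$ is a complicated element of degree $s_i^M(\al _j)$ whose coproduct is essentially what the theorem computes; your appeal to ``the known coproduct of the primitive-like elements'' conflates $M_j$ with $M'_j$. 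The paper's proof supplies exactly the missing idea: since $M$ is $i$-finite, $L_j$ is generated as an $\NA (M_i)$-comodule by $L_j^{1-a_{ij}^M}$, so by Lemma~\ref{le:NAcomod} it suffices to check \eqref{eq:Omprop} on $L_j^{1-a_{ij}^M}$ --- where it is immediate because there $\Omega _i^M=\id $ and the images lie in the primitive part $\Rf _i(M)_j$ --- and then to prove that the solution set is stable under the skew derivations $\partial ^L_f$, $f\in M_i^*$. That stability is the computational heart of the proof (a descending induction on the components $L_j^k$ using \eqref{eq:fx}, the dual-basis identities and Lemma~\ref{le:adRprop}(ii)), and it is not replaced by anything in your outline; ``a finite, essentially formal computation on $M_j$'' does not reach it.
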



\begin{proof}
  For all $j\in \Ib \setminus \{i\}$ and $k\in \ndN $
  let
  \begin{align}
    L_j=(\ad \,\NA (M_i))(M_j),\quad L_j^k=(\ad \,M_i)^{k-1}(M_j).
    \label{eq:Lj}
  \end{align}
  Then
  \begin{align}
    \copr _{\NA (M)}(x)-x\ot 1 \in &\NA (M_i)\ot L_j
    \label{eq:coprL}
  \end{align}
  for all $j\in \Ib \setminus \{i\}$ and $x\in L_j$
  by \cite[Eq.~(3.11)]{p-AHS08}.

  By \cite[Prop.\,3.14]{p-AHS08},
  $\Omega _i^M :{K_i^M}\#\NA (M_i^*)\to \NA (\Rf _i(M))$ is an algebra map.
  By Thm.~\ref{th:Ralg}, $\Rmat $ is an algebra map.
  By Prop.~\ref{pr:Klocfin}(i), ${K_i^M}$ is generated as an algebra
  by $\cup _{j\not=i}L_j$.
  Hence it suffices to prove that Eq.~\eqref{eq:Omprop} holds
for all $x\in L_j$, $j\in \Ib \setminus \{i\}$.
  Let $j\in \Ib \setminus \{i\}$ and let $x\in L_j$.
  Eq.~\eqref{eq:coprL} implies that $\copr _{K_i^M}(x)=1\ot x+x\ot 1$.
  Hence
  \begin{align}
    (\Omega _i^M \ot \Omega _i^M )\Rmat \copr _{K_i^M}(x)=1\ot \Omega _i^M (x)
    +\sum _\al \Omega _i^M (\adR{b_\al }{x})\ot b^\al
    \label{eq:Rx}
  \end{align}
  by Eqs.~\eqref{eq:Rmat}, \eqref{eq:Rmat1} and since $\Omega _i^M |_{\NA
  (M_i^*)}=\id $.

  Since $M$ is $i$-finite, Prop.~\ref{pr:Klocfin}(ii) tells that
  the left adjoint action of $\NA (M_i)$ on ${K_i^M}$ is locally
  finite.  Then \cite[Thm.\,3.8]{p-AHS08} can be applied, that is,
  $L_j^{1-a_{ij}^M}$ generates
  $L_j$ as a $\NA (M_i)$-comodule. By Lemma~\ref{le:NAcomod}
  it suffices to show that the set of solutions of
  Eq.~\eqref{eq:Omprop} contains $L_j^{1-a_{ij}^M}$ and is stable under the
  maps $\partial ^L_f$ for all $f\in M_i^*$.

  Suppose first that $x\in L_j^{1-a_{ij}^M}$. Then
  \begin{align*}
    (\Omega _i^M \ot \Omega _i^M )\Rmat \copr _{K_i^M}(x)=
    1\ot \Omega _i^M (x)+\Omega _i^M (x)\ot 1
    =\copr _{\NA (\Rf _i(M))}(\Omega _i^M (x))
  \end{align*}
  by Eq.~\eqref{eq:Rx} and since $\Omega _i^M (x)\in \Rf _i(M)_j$.
  Thus the set of solutions of
  Eq.~\eqref{eq:Omprop} contains $L_j^{1-a_{ij}^M}$.

  Let $k\in \ndN $, $k\le 1-a_{ij}^M$. Assume that
  Eq.~\eqref{eq:Omprop} holds for all $x\in L_j^k$. That is,
  \begin{align}
    \copr _{\NA (\Rf _i(M))}\Omega _i^M (x)=1\ot \Omega _i^M (x)
    +\sum _\al \Omega _i^M (\adR{b_\al }{x})\ot b^\al  \quad \text{for all $x\in
    L_j^k$.}
    \label{eq:indhyp}
  \end{align}
  Let $y\in L_j^k$ and $f\in M_i^*$.
  We have to prove that Eq.~\eqref{eq:indhyp} holds for $x=\partial ^L_f(y)$.
  Since $\partial ^L_f(y)=fy-y\_0(S^{-1}(y\_{-1})\lact f)$
  in ${K_i^M}\#\NA (M_i^*)$ by Eq.~\eqref{eq:fx},
  and since $\Omega _i^M $ is an algebra map in $\ydH $
  with $\Omega _i^M |_{\NA (M_i^*)}=\id $, we obtain that
  \begin{align*}
    &\copr _{\NA (\Rf _i(M))}\Omega _i^M (\partial ^L_f(y))
    =\copr _{\NA (\Rf _i(M))}(f \Omega _i^M (y)
    -\Omega _i^M (y\_0)(S^{-1}(y\_{-1})\lact f))\\
    &\quad =(f\ot 1+1\ot f)\Big(1\ot \Omega _i^M (y)+\sum _\al
    \Omega _i^M (\adR{b_\al }{y})\ot b^\al \Big)-\Big(1\ot \Omega _i^M (y\_0)
     \\
    &\quad \quad +\sum _\al
    \Omega _i^M (\adR{b_\al }{y\_0})\ot b^\al \Big)(1\ot S^{-1}(y\_{-1})\lact f
    +S^{-1}(y\_{-1})\lact f\ot 1).
  \end{align*}
  By applying the product rule in $\NA (\Rf _i(M))\ot \NA (\Rf _i(M))$
  this becomes
  \begin{align*}
    &\copr _{\NA (\Rf _i(M))}\Omega _i^M (\partial ^L_f(y))
    =1\ot (f \Omega _i^M (y)-\Omega _i^M (y\_0)(S^{-1}(y\_{-1})\lact f ))
     \\
    &\quad \quad +f\ot \Omega _i^M (y)
    -y\_{-1}S^{-1}(y\_{-2})\lact f\ot \Omega _i^M (y\_0) \\
    &\quad \quad +\sum _\al (f\Omega _i^M (\adR{b_\al }{y})\ot b^\al
    +f\_{-1}\lact \Omega _i^M (\adR{b_\al }{y})\ot f\_0b^\al ) \\
    &\quad \quad -\sum _\al \Omega _i^M (\adR{b_\al }{y\_0})\ot b^\al
    (S^{-1}(y\_{-1})\lact f) \\
    &\quad \quad -\sum _\al \Omega _i^M (\adR{b_\al }{y\_0})
    (b^\al{}\_{-1}S^{-1}(y\_{-1})\lact f)\ot b^\al {}\_0.
  \end{align*}
  In the last expression, the first line equals $1\ot \Omega _i^M (\partial
  ^L_f(y))$ and the second is zero. We rewrite all other terms such that
  the second tensor factors contain only $b^\al $.

  Eqs.~\eqref{eq:bb6} and \eqref{eq:pairing3} and the definition of
  $\adR{}{}$ yield that
  \begin{align*}
    \sum _\al & f\_{-1}\lact \Omega _i^M (\adR{b_\al }{y})\ot f\_0b^\al
    =\sum _\al f\_{-1}\lact (\langle f\_0 ,b_\al {}\^2\rangle
    \Omega _i^M (\adR{b_\al{}\^1 }{y}))\ot b^\al \\
    =&\sum _\al \langle f ,b_\al {}\^2{}\_0\rangle
    S^{-1}(b_\al {}\^2{}\_{-1})\lact \Omega _i^M (\adR{b_\al{}\^1 }{y})\ot b^\al \\
    =&\sum _\al \langle f ,b_\al {}\^2{}\_0\rangle
    \Omega _i^M (\adR{b_\al{}\^1 b_\al {}\^2{}\_{-1}}{y})\ot b^\al
    =\sum _\al \langle f ,b_\al {}\_2\rangle
    \Omega _i^M (\adR{b_\al{}\_1}{y})\ot b^\al ,
  \end{align*}
  Eq.~\eqref{eq:bb5} implies that
  \begin{align*}
    \sum _\al \Omega _i^M (\adR{b_\al }{y\_0}){\ot }b^\al
    (S^{-1}(y\_{-1})\lact f)
    =&\sum _\al \langle S^{-1}(y\_{-1})\lact f,b_\al {}\^1\rangle
    \Omega _i^M (\adR{b_\al {}\^2}{y\_0}) {\ot }b^\al ,
  \end{align*}
  and from Eq.~\eqref{eq:bb2a} we conclude that
  \begin{align*}
    &\sum _\al \Omega _i^M (\adR{b_\al }{y\_0})
    (b^\al{}\_{-1}S^{-1}(y\_{-1})\lact f)\ot b^\al {}\_0 \\
    & \quad =\sum _\al \Omega _i^M (\adR{b_\al {}\_0}{y\_0})
    (S^{-1}(y\_{-1}b_\al{}\_{-1})\lact f)\ot b^\al .
  \end{align*}
  On the other hand,
  \begin{align*}
    &(\Omega _i^M \ot \Omega _i^M )\Rmat \copr _{K_i^M}(\partial ^L_f(y))=
    (\Omega _i^M \ot \Omega _i^M )\Rmat (1\ot \partial ^L_f(y)+\partial ^L_f(y)\ot 1)\\
    &\quad =1\ot \Omega _i^M (\partial ^L_f(y))
    +\sum _\al \Omega _i^M (\adR{b_\al }{\partial ^L_f(y)})\ot b^\al .
  \end{align*}
  Comparing coefficients in front of $b^\al $ we conclude that
  Eq.~\eqref{eq:indhyp}  holds for $x=\partial ^L_f(y)$ if equation
  \begin{equation}
  \begin{aligned}
    f(\adR{b}{y}) +\langle f,b\_2\rangle (\adR{b\_1}{y})
    -\langle S^{-1}(y\_{-1})\lact f,b\^1\rangle \adR{b\^2}{y\_0}& \\
    -(\adR{b\_0}{y\_0})(S^{-1}(y\_{-1}b\_{-1})\lact f)
    =&\adR{b}{\partial ^L_f(y)}
  \end{aligned}
    \label{eq:Omproppart}
  \end{equation}
  holds in ${K_i^M}\#\NA (M_i^*)$ for all $b\in \NA (M_i)$.
  Using Eq.~\eqref{eq:KB(V*)} with $f\in M_i^*$,
  Eq.~\eqref{eq:Omproppart} becomes equivalent to
  \begin{align*}
    \partial ^L_f(\adR{b}{y}) =-\langle f,b\_2\rangle \adR{b\_1}{y}
    +\langle S^{-1}(y\_{-1})\lact f,b\^1\rangle \adR{b\^2}{y\_0}
    +\adR{b}{\partial ^L_f(y)}.
  \end{align*}
  The latter is true by Lemma~\ref{le:adRprop}(ii). Thus
  Eq.~\eqref{eq:indhyp} (and hence Eq.~\eqref{eq:Omprop})
  holds for $x=\partial ^L_f(y)$ and hence for all $x\in
  L_j$. This finishes the proof of the theorem.
\end{proof}

\section{Right coideal subalgebras of Nichols algebras}
\label{sec:Rcs}


Let $\theta \in \ndN $ and $M=(M_1,\dots, M_\theta )\in \ffdi $.
Let
$\cK M $ denote the set of all $\ndN _0^\theta $-graded
right coideal subalgebras of $\NA (M)$ in $\ydH $, where $\NA (M)$ is graded
by the standard $\ndN _0^\theta $-grading, see Sect.~\ref{sec:refl}.

For all $\al \in
\ndZ ^\theta $ let $t^\al =t_1^{n_1}\cdots t_\theta ^{n_\theta }\in \ndN
_0[t_1^{\pm 1},\dots,t_\theta ^{\pm 1}]$,
where $\al =\sum _{i\in \Ib }n_i\al _i$.
For any $N\in \ffdi $ and any $\ndN _0^\theta $-graded
object $X=\oplus _{\al \in \ndN _0^\theta }X_\al \subset \NA (N)$
in $\ydH $ let
\begin{align}
  \Hilb _X(t)
  =\sum _{\al \in \ndN _0^\theta }(\dim X_\al )t^\al
  \in \ndN _0[ [ t_1,\dots, t_\theta ]]
  \label{eq:HS}
\end{align}
be the \textit{(multivariate) Hilbert series} of $X$.

There is a $\ndZ $-linear action
of $\mathrm{GL}(\theta ,\ndZ )$ on $\ndZ [ t_1^{\pm 1},
\dots ,t_\theta ^{\pm 1} ]$ via $gt^\al =t^{g(\al )}$ for all $g\in
\mathrm{GL}(\theta ,\ndZ )$, $\al \in \ndZ ^\theta $.
This extends to a partially defined $\ndZ $-linear action
of $\mathrm{GL}(\theta ,\ndZ )$ on $\ndZ [ [t_1,\dots,t_\theta ] ]$:
the action of each
$g\in \mathrm{GL}(\theta ,\ndZ )$ is well-defined on the subring of
$\ndZ [ [t_1,\dots,t_\theta ]]$ consisting of those formal power series
$\sum _{\al \in \ndN _0^\theta }a_\al t^\al $, where $a_\al \in \ndZ $ for
all $\al $ and $a_\al =0$ if $g(\al )\notin \ndN _0^\theta $.

We start our considerations of right coideal subalgebras
with general lemmata.

\begin{lemma}
  Let $M\in \ffdi $ and let $\rcs $ be an $\ndN _0^\theta $-graded
  right coideal of $\NA (M)$ in $\ydH $. If $\rcs \not=\fie 1$,
  then there exists $i\in \Ib $
  such that $M_i\subset \rcs $.
  \label{le:rcshasMi}
\end{lemma}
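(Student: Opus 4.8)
The plan is to analyze a lowest-degree nonzero homogeneous component of $\rcs $ that lies outside $\fie 1$ and use the right coideal property together with the grading to force one of the generators $M_i$ into $\rcs $. First I would choose $\al \in \ndN _0^\theta \setminus \{0\}$ minimal (say, of minimal total degree $|\al |=\sum _i n_i$ where $\al =\sum _i n_i\al _i$) such that $\rcs _\al :=\rcs \cap \NA (M)_\al \neq 0$; such an $\al $ exists since $\rcs \neq \fie 1$ and $\rcs $ is $\ndN _0^\theta $-graded. Pick $0\neq x\in \rcs _\al $. Since $\rcs $ is a right coideal, $\copr _{\NA (M)}(x)\in \rcs \ot \NA (M)$, and by $\ndN _0^\theta $-graded-ness of the coproduct on $\NA (M)$ we may write $\copr _{\NA (M)}(x)=\sum _{\beta +\gamma =\al } x\^1{}_\beta \ot x\^2{}_\gamma $ with $x\^1{}_\beta \in \rcs _\beta $.

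Next I would exploit minimality of $\al $. For $0\neq \beta \neq \al $ we have $x\^1{}_\beta \in \rcs _\beta =0$ by minimality, so the only surviving terms in the coproduct are the $\beta =0$ term $1\ot x$ and the $\beta =\al $ term $x\ot 1$; that is, $\copr _{\NA (M)}(x)=x\ot 1+1\ot x$, so $x$ is a primitive element of $\NA (M)$ of degree $\al $. It is a standard and fundamental property of Nichols algebras that the space of primitives of $\NA (M)$ equals its degree-one part $\NA ^1(M)\cong M_1\oplus \cdots \oplus M_\theta $; hence $\al =\al _i$ for some $i\in \Ib $ and $x\in M_i$. This already gives a nonzero element of $M_i$ inside $\rcs $.

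Finally I would upgrade ``$x\in M_i\cap \rcs $, $x\neq 0$'' to ``$M_i\subset \rcs $''. Since $\rcs $ is a subobject in $\ydH $ and $M_i$ is an irreducible object of $\ydH $, the intersection $M_i\cap \rcs $ is a nonzero subobject of $M_i$, hence equals $M_i$ by irreducibility; therefore $M_i\subset \rcs $, as desired. The main obstacle is the appeal to the description of the primitives of a Nichols algebra: one must be careful that the relevant statement is ``primitives are concentrated in degree one'', which holds because $\NA (M)$ is the Nichols algebra (so its degree-one part generates it and it has no primitives in higher degree), and that the grading used here is the standard $\ndN _0^\theta $-grading so that degree $\al _i$ forces membership in the single summand $M_i$; once that is in hand, the coideal/minimality argument and the Yetter--Drinfeld irreducibility argument are routine.
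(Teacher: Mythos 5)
Your proof is correct, but it runs through a slightly different standard property of Nichols algebras than the paper does. The paper applies the injectivity of $(\pr _1\ot \id )\copr _{\NA (M)}$ on $\oplus _{n\ge 1}\NA ^n(M)$ directly to all of $\rcs $, which yields $\rcs \cap (M_1\oplus \cdots \oplus M_\theta )\neq 0$ in one stroke, without choosing any element; you instead extract a homogeneous element of minimal positive total degree, use the gradedness of $\rcs $ and the coideal property to see that it is (braided) primitive, and then invoke $P(\NA (M))=\NA ^1(M)=M_1\oplus \cdots \oplus M_\theta $. These two facts are closely related characterizations of Nichols algebras (a primitive of degree $\ge 2$ lies in the kernel of $(\pr _1\ot \id )\copr _{\NA (M)}$, so the paper's injectivity statement subsumes yours), and both are legitimate starting points; your minimal-degree reduction makes the role of the $\ndN _0^\theta $-grading of $\rcs $ more explicit, while the paper's version is shorter because it never needs to isolate a primitive. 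The concluding step --- passing from a nonzero element of $\rcs \cap M_i$ to $M_i\subset \rcs $ via irreducibility of $M_i$ in $\ydH $ and the fact that $\rcs \cap M_i$ is a subobject --- is identical to the paper's.
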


\begin{proof}
  Let $\pr _1:\NA (M)\to M_1\oplus \cdots \oplus M_\theta $
  be the $\ndN _0$-graded projection
  to the homogeneous elements of degree $1$. Recall that the map
  \[ (\pr _1\ot \id )\copr _{\NA (M)}: \oplus _{n=1}^\infty \NA ^n(M)
  \to (M_1\oplus \cdots \oplus M_\theta )\ot \NA (M) \]
  is injective.
  By assumption, $\copr _{\NA (M)}(E)\subset E\ot \NA (M)$,
  and $E\not=\fie 1$. Thus
  \[ 0\not=(\pr _1\ot \id )\copr _{\NA (M)}(E)
  \subset \big( E\cap (M_1\oplus \cdots \oplus M_\theta )\big)\ot \NA (M) \]
  and hence $E\cap (M_1\oplus \cdots \oplus M_\theta )\not=0$.
  Since $E$ is $\ndN _0^\theta $-graded, there exists $i\in\Ib $
  such that $E\cap M_i\not=0$. Since $E\in \ydH $ and $M_i\in \ydH $ is
  irreducible, this implies that $M_i\subset E$.
\end{proof}

\begin{lemma}
  Let $M\in \ffdi $, $i\in \Ib $, and $E\in \cK M$.
  Then $M_i\not \subset E$ if and only if $S_{\NA (M)}(\rcs )\subset
  K_i^M$.
  \label{le:MnotinE}
\end{lemma}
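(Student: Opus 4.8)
The plan is to characterize $S_{\NA(M)}(E)\subset K_i^M$ using the coinvariant description of $K_i^M$ together with the behaviour of the antipode on coideal subalgebras. Recall that $K_i^M=\NA(M)^{\co\NA(M_i)\#H}=\{x\in\NA(M)\mid (\id\ot\pi_i)\copr_{\NA(M)\#H}(x)=x\ot 1\}$, where $\pi_i:\NA(M)\#H\to\NA(M_i)\#H$ is the Hopf algebra projection. Equivalently, writing $\rho_i:\NA(M)\to\NA(M_i)$ for the induced braided Hopf algebra projection, $K_i^M=\{x\in\NA(M)\mid x\^1\ot\rho_i(x\^2)=x\ot 1\}$. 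So I want to show: $M_i\not\subset E$ if and only if $\rho_i(S_{\NA(M)}(E))\subset\fie 1$, i.e. $\rho_i\circ S_{\NA(M)}$ annihilates $E$ in positive degrees.

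First I would prove the easy direction. If $S_{\NA(M)}(E)\subset K_i^M$, then in particular $S_{\NA(M)}(E)$ contains no nonzero element of degree $\al_i$ (since $K_i^M$, being coinvariant with respect to the projection onto $\NA(M_i)$, has trivial intersection with $M_i$: any $x\in M_i$ has $\copr(x)=1\ot x+x\ot 1$, so $x\^1\ot\rho_i(x\^2)=1\ot x+x\ot 1\ne x\ot 1$). Because $S_{\NA(M)}$ is a graded bijection of $\NA(M)$ sending degree $\al_i$ to degree $\al_i$ (the standard $\ndZ^\theta$-grading is preserved, and on $M_i\cong\NA^1(M_i)$ the antipode acts by $-\id$), we get $E\cap M_i=0$, hence $M_i\not\subset E$ by irreducibility of $M_i$.

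For the converse, suppose $M_i\not\subset E$. Since $E$ is $\ndN_0^\theta$-graded and $M_i\in\ydH$ is irreducible, $E\cap M_i=0$. The key point is to show that the image of $E$ under the projection $\NA(M)\to\NA(M_i)$ (not yet twisted by the antipode) is trivial: I claim $\rho_i(E)\subset\fie 1$. This should follow from the right-coideal property: $\copr_{\NA(M)}(E)\subset E\ot\NA(M)$, so applying $\rho_i$ to the second leg and using that $\rho_i$ is a coalgebra map, one finds that $\rho_i(E)$ is a right coideal subalgebra of $\NA(M_i)$; but $\NA(M_i)$ is a Nichols algebra of an irreducible module, generated in degree one, and its only graded right coideal subalgebras are $\fie 1$ and $\NA(M_i)$ itself (cf.\ Lemma~\ref{le:rcshasMi} applied with $\theta=1$). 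If $\rho_i(E)=\NA(M_i)$ then, comparing degree-one parts, $E$ would have to meet $M_i$ nontrivially, contradiction; so $\rho_i(E)=\fie 1$. Then for $x\in E$ with $\copr_{\NA(M)}(x)=\sum x\^1\ot x\^2$ and $x\^1\in E$, we compute $x\^1\ot\rho_i(x\^2)$, apply the antipode $S_{\NA(M)}$ appropriately, and use the relation $\rho_i S_{\NA(M)}=S_{\NA(M_i)}\rho_i$ (intertwining of antipodes under a Hopf algebra map) together with $(\id\ot\rho_i S_{\NA(M_i)}?)$-manipulations to conclude $S_{\NA(M)}(x)$ is $\NA(M_i)\#H$-coinvariant, i.e.\ lies in $K_i^M$.

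The main obstacle I anticipate is the bookkeeping in this last step: turning ``$\rho_i(E)=\fie 1$'' into ``$S_{\NA(M)}(E)\subset K_i^M$'' requires care with the correct coinvariance condition, since $K_i^M$ is defined via the \emph{right} coaction (coinvariants for $(\id\ot\pi_i)\copr$) whereas the coideal property of $E$ also lives on the \emph{right} leg, so one has to insert the antipode in the right spot. The cleanest route is probably to use the isomorphism of Lemma~\ref{le:EcapR}: since $\NA(M_i)\subset\NA(M)$ is a Hopf subalgebra (in the bosonized picture $\NA(M_i)\#H\subset\NA(M)\#H$) with a splitting projection, any right coideal subalgebra containing $\NA(M_i)\#H$ factors as $(K_i^M\cap\cdot)\#(\NA(M_i)\#H)$; dualizing/antipoding the statement ``$E$ does not contain $\NA(M_i)\#H$'' against this, or rather observing directly that $E\#H$ avoids $M_i$ forces $S(E\#H)$ into the coinvariants, should give the result without heavy computation. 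I would double-check the degree argument (that $S_{\NA(M)}$ preserves the $\ndZ^\theta$-grading, which holds since it is a morphism of $\ndZ^\theta$-graded Hopf algebras) as the one genuinely load-bearing elementary fact.
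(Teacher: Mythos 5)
Your reduction to $\pi (E)=\fie 1$, where $\pi :\NA (M)\to \NA (M_i)$ is the canonical projection, follows the paper exactly: the easy direction via the $\ndZ ^\theta $-grading of $S_{\NA (M)}$ and $(K_i^M)_{\al _i}=0$, and the first half of the converse via Lemma~\ref{le:rcshasMi} applied to the graded right coideal $\pi (E)\subset \NA (M_i)$, using homogeneity of $E$. The genuine gap is the final step, which you yourself flag as the ``main obstacle'' and never carry out: nothing in your text explains why $\pi (E)=\fie 1$ forces $S_{\NA (M)}(E)\subset K_i^M$. Moreover, the fallback you suggest, Lemma~\ref{le:EcapR}, does not apply: that lemma is about right coideal subalgebras \emph{containing} the Hopf subalgebra $B$ (i.e.\ the case $M_i\subset E$), and there is no evident way to ``dualize/antipode'' it into the present situation, so as written the proof is incomplete at its crux.

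The missing idea is short. By the braided antipode formula \eqref{eq:SRprop2}, and since $E$ is a right coideal and a subobject in $\ydH $, the subspace $S_{\NA (M)}(E)$ is a \emph{left} coideal of $\NA (M)$; also $\pi S_{\NA (M)}=S_{\NA (M_i)}\pi $ gives $\pi (S_{\NA (M)}(E))=S_{\NA (M_i)}(\pi (E))=\fie 1$. Now any left coideal $L$ with $\pi (L)\subset \fie 1$ lies in $K_i^M$: for $x\in L$ one has $(\id \ot \pi )\copr _{\NA (M)}(x)\in \NA (M)\ot \fie 1$, and applying $\id \ot \cou $ shows $(\id \ot \pi )\copr _{\NA (M)}(x)=x\ot 1$, i.e.\ $L\subset \NA (M)^{\co \NA (M_i)}=K_i^M$. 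Equivalently, the Sweedler computation you gesture at does close up once one uses \eqref{eq:SRprop2}: for $x\in E$, writing $S=S_{\NA (M)}$, one gets $(\id \ot \pi )\copr _{\NA (M)}(S(x))=S(x\^1{}\_{-1}\lact x\^2)\ot \pi S(x\^1{}\_0) =S(x\^1{}\_{-1}\lact x\^2)\cou (x\^1{}\_0)\ot 1=S(x)\ot 1$, using that $x\^1{}\_0\in E$ and $\pi S(y)=\cou (y)1$ for $y\in E$. It is precisely this interaction of the \emph{left} coideal property of $S_{\NA (M)}(E)$ with the \emph{right} coinvariance defining $K_i^M$ that your proposal leaves unresolved.
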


\begin{proof}
  The homogeneous subspace of $K ^M_i$ of degree $\al _i$ is $0$,
  hence $S_{\NA (M)}(\rcs )\subset K_i^M$ implies that $M_i\not \subset
  E$. Conversely, assume that $M_i\not \subset E$ and let $\pi : \NA (M)\to
  \NA (M_i)$ be the canonical map.
  Then $M_i\not \subset \pi (E)$ since $E$ is $\ndN _0^\theta $-homogeneous.
  Since $\pi (E)$ is a right coideal of $\NA (M_i)$, we conclude that
  $\pi (E)=\fie 1$ by Lemma~\ref{le:rcshasMi}
  and hence
  \begin{align}
    \pi (S_{\NA (M)}(E))=\fie 1.
    \label{eq:piSE}
  \end{align}
  On the other hand, $S_{\NA (M)}(E)$ is a left coideal subalgebra of $\NA
  (M)$, and by Eq.~\eqref{eq:piSE} it is contained in $\NA (M)^{\co \NA
  (M_i)}=K_i^M$ which proves the lemma.
\end{proof}

\begin{corol} \label{co:MnotinE}
  Let $M\in \ffdi $, $i\in \Ib $, and $E\in \cK {\Rf _i(M)}$. Assume that $M$
  is $i$-finite and $\Rf _i(M)_i\not \subset E$.
  Then $(\Omega _i^M)^{-1}(E) \subset K_i^M$.
\end{corol}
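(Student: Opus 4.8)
The plan is to read the statement off from Lemma~\ref{le:MnotinE} and the identity \eqref{eq:OmK}; everything else is formal. First I would apply Lemma~\ref{le:MnotinE} with $\Rf _i(M)$ in the role of $M$ and with the same index $i$. Since $E\in \cK{\Rf _i(M)}$ is an $\ndN _0^\theta $-graded right coideal subalgebra of $\NA (\Rf _i(M))$ in $\ydH $ and, by hypothesis, $\Rf _i(M)_i\not\subset E$, the lemma gives $S_{\NA (\Rf _i(M))}(E)\subset K_i^{\Rf _i(M)}$.

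Next I would use \eqref{eq:OmK}. As $M$ is $i$-finite, the map $\Omega _i^M:K_i^M\#\NA (M_i^*)\to \NA (\Rf _i(M))$ is a well-defined algebra isomorphism, and \eqref{eq:OmK} states exactly that $K_i^{\Rf _i(M)}=S_{\NA (\Rf _i(M))}\Omega _i^M(K_i^M)$. Combining the two inclusions yields $S_{\NA (\Rf _i(M))}(E)\subset S_{\NA (\Rf _i(M))}\Omega _i^M(K_i^M)$. The antipode $S_{\NA (\Rf _i(M))}$ is bijective, by Remark~\ref{re:bijant}(ii) and the bijectivity of the antipode of $H$, so applying its inverse gives $E\subset \Omega _i^M(K_i^M)$. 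Since $\Omega _i^M$ is injective, applying $(\Omega _i^M)^{-1}$ to both sides yields $(\Omega _i^M)^{-1}(E)\subset K_i^M$, as claimed.

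I do not anticipate a genuine obstacle: the corollary merely repackages Lemma~\ref{le:MnotinE} and \eqref{eq:OmK}. The only things to verify carefully are that the hypotheses of Lemma~\ref{le:MnotinE} hold for $E$ --- i.e.\ that $E$ is a \emph{graded} right coideal subalgebra, which is precisely the meaning of $E\in \cK{\Rf _i(M)}$ --- and that $\Omega _i^M$ together with $S_{\NA (\Rf _i(M))}$ are bijective, which follows from the $i$-finiteness of $M$ and from Remark~\ref{re:bijant}(ii) respectively.
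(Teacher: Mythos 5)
Your proposal is correct and follows essentially the same route as the paper: the paper's proof likewise applies Lemma~\ref{le:MnotinE} to $\Rf_i(M)$ to get $S_{\NA (\Rf _i(M))}(E)\subset K_i^{\Rf _i(M)}$, identifies the right-hand side with $S_{\NA (\Rf _i(M))}\Omega _i^M(K_i^M)$ via \eqref{eq:OmK}, and concludes using the bijectivity of $\Omega_i^M$ (from $i$-finiteness) and of the antipode. Your explicit cancellation of $S_{\NA (\Rf _i(M))}$ just spells out the step the paper leaves implicit.
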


\begin{proof}
  Since $M$ is $i$-finite
  the map $\Omega _i^M$ is bijective, see Sect.~\ref{sec:refl}.
  Since $\Rf _i(M)_i\not \subset E$, Lemma~\ref{le:MnotinE} yields that
  \begin{align} \label{eq:SBEinSOK}
    S_{\NA (\Rf _i(M))}(E)\subset K_i^{\Rf _i(M)}
    =S_{\NA (\Rf _i(M))}\Omega _i^M(K_i^M),
  \end{align}
  where the last equation holds by Eq.~\eqref{eq:OmK}. The relation
  \eqref{eq:SBEinSOK} gives the claim.
\end{proof}

\begin{lemma}
  \label{le:notIfin}
  Let $M\in \ffdi $ and $i\in \Ib $. Suppose that $M$ is not $i$-finite.
  Then there exist infinitely many $\ndN _0 ^\theta $-graded
  right coideal subalgebras of $\NA (M)$ in $\ydH $ which do not contain
  any $M_j$ with $j\in \Ib \setminus \{i\}$.
\end{lemma}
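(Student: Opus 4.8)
The plan is to produce, for a module $M$ that is not $i$-finite, infinitely many $\ndN_0^\theta$-graded right coideal subalgebras of $\NA(M)$ in $\ydH$ containing no $M_j$ with $j\neq i$. Since $M$ is not $i$-finite there is some $j\in\Ib\setminus\{i\}$ with $\dim(\ad\,\NA(M_i))(M_j)=\infty$; fix such a $j$. The key observation is that the subalgebra $\NA(M_i)\subset\NA(M)$ is a right coideal subalgebra (indeed $\copr_{\NA(M)}(M_i)\subset \NA(M_i)\otimes\NA(M_i)$ since $M_i$ is primitive and $\NA(M_i)$ is a sub-Hopf-algebra in $\ydH$), it is $\ndN_0^\theta$-graded (concentrated in degrees $\ndN_0\al_i$), and it obviously contains no $M_j$ for $j\neq i$. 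So $\NA(M_i)$ itself is one such right coideal subalgebra; the task is to manufacture infinitely many.

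First I would enlarge $\NA(M_i)$ in a controlled way by adjoining pieces of the locally-nilpotent-failing adjoint orbit. For each $m\in\ndN$ set $x_m\in (\ad\,M_i)^{m-1}(M_j)$ and consider the subspace $L_j^m=(\ad\,M_i)^{m-1}(M_j)$ as in \eqref{eq:Lj}. Because $M$ is not $i$-finite, all the $L_j^m$ are nonzero, they lie in distinct $\ndZ^\theta$-degrees $\al_j+(m-1)\al_i$, and by \cite[Eq.~(3.11)]{p-AHS08} (quoted as \eqref{eq:coprL} in the proof of Thm.~\ref{th:Omega}) one has $\copr_{\NA(M)}(x)-x\otimes 1\in\NA(M_i)\otimes L_j$ for $x\in L_j$. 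For each $n\in\ndN$ let $E_n$ be the subalgebra of $\NA(M)$ generated by $M_i$ together with $L_j^{n}$ — or, to be safe about the comodule closure, the subalgebra generated by $M_i$ and the $\NA(M_i)$-subcomodule of $L_j$ generated by $L_j^{n}$, which by Lemma~\ref{le:NAcomod} is the span of the $\partial^L_f$-iterates of $L_j^n$. Each $E_n$ is a subobject in $\ydH$ (the generating sets are $H$-subobjects and $H$ acts by algebra automorphisms), is $\ndN_0^\theta$-graded (all generators are homogeneous), contains no $M_{j'}$ with $j'\neq i$ for degree reasons — the only degree-one component sitting inside $E_n$ is $M_i$, since $L_j^n$ and all its $\partial^L_f$-descendants have $\al_i$-degree at least $n-1\ge 0$ but $\al_{j}$-degree equal to $1$, landing them outside degree $\al_{j'}$ for all $j'\neq i$ once $n\ge 2$ — and is a right coideal subalgebra because the coproduct of each generator lands in $E_n\otimes\NA(M)$ by \eqref{eq:coprL} and being a right coideal is preserved under products and under the $\partial^L_f$-closure (apply \eqref{eq:fx} or directly the comodule description of Lemma~\ref{le:NAcomod}).

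Next I would check that infinitely many of the $E_n$ are pairwise distinct. Here one uses that the degrees of the generators escape to infinity: $L_j^n$ lives in $\ndZ^\theta$-degree $\al_j+(n-1)\al_i$, and I claim $L_j^n\not\subset E_{n+1}$. Indeed the homogeneous component of $E_{n+1}$ of degree $\al_j+(n-1)\al_i$ is built only from $M_i$ (contributing degrees in $\ndN_0\al_i$) and $\partial^L_f$-descendants of $L_j^{n+1}$ (contributing degrees $\al_j+(m-1)\al_i$ with $m\ge n+1$, after the $\partial^L_f$'s lower the $\al_i$-degree — but note the comodule generated by $L_j^{n+1}$ inside $L_j$ is spanned by $L_j^{n+1},L_j^{n+2},\dots$, all of $\al_i$-degree $\ge n$); hence no product of generators of $E_{n+1}$ can have total degree $\al_j+(n-1)\al_i$ unless it already comes from $L_j^{n}$ — but $L_j^{n}$ is not among the generators of $E_{n+1}$, and $L_j^{n}$ cannot be obtained as a product of $M_i$ with $L_j^{n+1}$ since that product has $\al_i$-degree $\ge n$ while $L_j^{n}$ has $\al_i$-degree $n-1$. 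This forces $E_n\supsetneq E_{n+1}$ (or at least $E_n\neq E_{n+1}$), so the chain $E_2\supsetneq E_3\supsetneq\cdots$ — equivalently an infinite strictly decreasing (or just infinite pairwise-distinct) family — yields infinitely many members of $\cK{M}$ with the required property.

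The main obstacle I anticipate is the precise degree bookkeeping in the last step: one must ensure that forming the $\NA(M_i)$-comodule closure of $L_j^n$ (i.e.\ applying all $\partial^L_f$) does not accidentally recreate lower pieces $L_j^{m}$ with $m<n$, and that the multiplication in $\NA(M)$ does not produce, inside $E_{n+1}$, an element of degree $\al_j+(n-1)\al_i$ lying in $L_j^n$. Both are controlled by the two gradings (the $\ndN_0^\theta$-grading and, within it, tracking separately the $\al_i$- and $\al_j$-exponents): the $\partial^L_f$ operators strictly decrease the $\al_i$-degree by $1$ but the comodule generated by $L_j^n$ is still spanned by the $L_j^m$ with $m\ge n$ because $\partial^L_f$ of $(\ad\,M_i)^{m-1}(M_j)$ lies in $(\ad\,M_i)^{m-2}(M_j)=L_j^{m-1}$ only when $m-1\ge n$; to make this airtight I would invoke \cite[Thm.~3.8]{p-AHS08} (used already in the proof of Thm.~\ref{th:Omega}), which says that $L_j^{1-a_{ij}^M}$ generates $L_j$ as an $\NA(M_i)$-comodule in the $i$-finite case — in the non-$i$-finite case the analogous statement is that $L_j^n$ generates $L_j^{\ge n}:=\sum_{m\ge n}L_j^m$, which is exactly what the degree argument needs. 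Once that structural fact is in hand, the distinctness of the $E_n$ and hence the infinitude of the family follows immediately.
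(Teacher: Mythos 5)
Your primary construction is exactly the paper's: since $\ad$ is multiplicative, $(\ad\,\NA^{n-1}(M_i))(M_j)=(\ad\,M_i)^{n-1}(M_j)=L_j^n$, and the paper takes $E_n$ to be the subalgebra generated by $M_i$ and such a piece. With that choice (your first variant, without any comodule closure) the argument goes through as you sketch: all $L_j^n$ are nonzero because vanishing of $(\ad\,M_i)^{n-1}(M_j)$ propagates upward while each piece is finite-dimensional; each $E_n$ is an $\ndN_0^\theta$-graded $H$-stable subalgebra; it is a right coideal because for a generator $x\in L_j^n$ one has $\copr_{\NA(M)}(x)-x\ot 1\in \NA(M_i)\ot L_j\subset E_n\ot \NA(M)$ by \eqref{eq:coprL}; and since every homogeneous element of $E_n$ with $\al_j$-coefficient $1$ has $\al_i$-coefficient at least $n-1$, no $M_{j'}$ with $j'\ne i$ is contained in $E_n$ and the $E_n$ are pairwise distinct.

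However, the step you single out as what would "make this airtight" is false, and if you actually used your "safe" variant it would destroy the proof. The operators $\partial^L_f$, $f\in M_i^*$, lower the $\al_i$-degree by one (you note this yourself), so the $\NA(M_i)$-subcomodule of $L_j$ generated by $L_j^n$ is contained in $\sum_{m\le n}L_j^m$, not in $\sum_{m\ge n}L_j^m$; the analogue of \cite[Thm.\,3.8]{p-AHS08} generates $L_j$ \emph{downwards} from the top piece, and in particular the closure of $L_j^n$ will in general reach $L_j^1=M_j$. Hence the algebra generated by $M_i$ and that closure would typically contain $M_j$ and the family need not be strictly decreasing, i.e.\ exactly the properties you need would fail. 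The observation you are missing is that no comodule closure is needed at all: the right coideal condition $\copr_{\NA(M)}(E_n)\subset E_n\ot\NA(M)$ constrains only the left tensor factor, and \eqref{eq:coprL} places that factor in $\NA(M_i)\subset E_n$ no matter where the right factor (which may well involve the lower pieces $L_j^m$, $m<n$) lands. With that correction your first variant is complete and coincides with the paper's proof.
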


\begin{proof}
  Let $k\in \Ib \setminus \{i\}$ such that $\dim (\ad \,\NA (M_i))(M_k)=\infty
  $. For all $n\in \ndN $ let
  $\rcs _n$ be the subalgebra of $\NA (M)$ generated by
  $(\ad \,\NA ^n(M_i))(M_k)$ and $M_i$. By assumption, $\rcs _n\not=0$ for all
  $n\in \ndN $. By construction, $E_n\subset E_m$ for all $m\le n$ and
  \begin{align}
    (\rcs _n)_{\al _k+m\al _i}=
    \begin{cases}
      0 & \text{if $m<n$,}\\
      (\ad \,\NA ^n(M_i))(M_k) & \text{if $m=n$.}
    \end{cases}
    \label{eq:degrscm}
  \end{align}
  Hence $E_1\supset E_2\supset \cdots $ is a strictly decreasing sequence of
  nontrivial $\ndN _0 ^\theta $-graded subalgebras of $\NA (M)$ in $\ydH $
  with $E_n\cap M=M_i$ for all $n\in \ndN $.
  It remains to prove that each $E_n$ is a right
  coideal of $\NA (M)$. But this is true since $\copr _{\NA (M)}(x)\in
  E_n\ot \NA (M)$ for each generator $x$ of $E_n$ by Eq.~\eqref{eq:coprL}.
\end{proof}

Recall that $K_i^M$ is a Hopf algebra in the braided category
${}_{\NA (M_i)\#H}^{\NA (M_i)\#H}\yd $. Its comultiplication is denoted by
$\Delta_{K_i^M}$.
Assume that $M$ is $i$-finite. Regard $K_i^M \# \NA (M_i^*)$
as a Hopf algebra in $\ydH$,
such that the algebra map
$\Omega_i^M : K_i^M \# \NA (M_i^*) \to \NA(\Rf _i(M))$ is an isomorphism of
Hopf algebras in $\ydH $.

\begin{lemma}\label{le:F}
  Let $M\in \ffdi $ and $i\in \Ib $.
  Assume that $M$ is $i$-finite. Let $F \subset K_i^M$ be a subalgebra in
  $\ydH$. Then the following are equivalent:
  \begin{enumerate}
  \item $F \NA (M_i)$ is a right coideal subalgebra of $\NA (M)$ in $\ydH $.
  \item $F$ is a right coideal subalgebra of $K_i^M$ in
    ${}_{\NA (M_i)\#H}^{\NA (M_i)\#H}\yd $, and $(\ad \,M_i)(F)\subset F$.
  \item $F$ is a right coideal subalgebra of $K_i^M \# \NA (M_i^*)$ in $\ydH $.
  \end{enumerate}
\end{lemma}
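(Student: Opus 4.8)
The plan is to prove $(1)\Leftrightarrow(2)$ via the Radford biproduct decomposition $\NA(M)=K_i^M\#\NA(M_i)$ in $\ydH$ (equivalently $\NA(M)\#H=K_i^M\#(\NA(M_i)\#H)$), and $(2)\Leftrightarrow(3)$ via the coproduct formula of Theorem~\ref{th:Omega}. Observe first that, by the very definition of the Hopf structure on $K_i^M\#\NA(M_i^*)$ in $\ydH$ (transport along $\Omega_i^M$), Theorem~\ref{th:Omega} says precisely that $\copr_{K_i^M\#\NA(M_i^*)}$ restricted to $K_i^M$ equals $\Rmat\circ\copr_{K_i^M}$: for $x\in K_i^M$ with $\copr_{K_i^M}(x)=x\^1\ot x\^2$ one has $\copr_{K_i^M\#\NA(M_i^*)}(x)=\sum_\al(\adR{b_\al}{x\^1})\ot b^\al x\^2$, while $\copr_{K_i^M\#\NA(M_i^*)}$ restricts to the usual coproduct on the sub-Hopf-algebra $\NA(M_i^*)$ (as $\Omega_i^M$ is the identity on $M_i^*$ and a Hopf isomorphism from $\NA(M_i^*)$ onto $\NA(\Rf_i(M)_i)$). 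This intrinsic reformulation, together with its inverse $\Rmatb$ (Eq.~\eqref{eq:RmatbRmat}), will be the main tool.

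For $(1)\Leftrightarrow(2)$: since $F\subset K_i^M$, the multiplication map identifies $F\NA(M_i)$ with $F\#\NA(M_i)$ inside $\NA(M)=K_i^M\#\NA(M_i)$, and $F\NA(M_i)\supset\NA(M_i)$. By Proposition~\ref{pr:rcs} and Lemma~\ref{le:EcapR}, applied with $R=\NA(M)$, $B=\NA(M_i)$ and the canonical Hopf projection $\pi$ (see \cite[Sect.~1.4]{p-AHS08}), a subspace $E\supset\NA(M_i)$ is a right coideal subalgebra of $\NA(M)$ in $\ydH$ iff $E=(E\cap K_i^M)\NA(M_i)$, $E\cap K_i^M$ is a subobject of $K_i^M$ in $\ydH$, and the remaining subalgebra and coideal conditions hold. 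The subalgebra condition for $E=F\NA(M_i)$ is $\NA(M_i)F\subset F\NA(M_i)$, which—since $\NA(M_i)$ is generated by $M_i$, $F$ is a subalgebra, and $bx\in(\ad\,b\_1)(x)\,\NA(M_i)$ for $b\in\NA(M_i)$—is equivalent to $(\ad\,M_i)(F)\subset F$. The coideal condition $\copr_{\NA(M)}(E)\subset E\ot\NA(M)$ is equivalent to $\copr_{K_i^M}(F)\subset F\ot K_i^M$, obtained by reading off the $K_i^M$-component of the first tensor leg of $\copr_{\NA(M)}(x)=x\^1(x\^2)\_{-1}\ot(x\^2)\_0$ for $x\in F$ (with $\_{-1}$ the $\NA(M_i)$-coaction). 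The remaining Yetter--Drinfeld subobject requirements match up directly, giving $(1)\Leftrightarrow(2)$.

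For $(2)\Leftrightarrow(3)$: Assume $(2)$. Then for $x\in F$ one has $x\^1\in F$ and $\adR{b_\al}{x\^1}=S^{-1}(b_\al)\lact x\^1\in F$ for all $\al$, the latter because $S^{-1}(b_\al)\in\NA(M_i)\#H$ and $F$ is stable under the adjoint action of $\NA(M_i)\#H$ (which follows from $(\ad\,M_i)(F)\subset F$ and $F$ being an $H$-stable subalgebra); hence $\copr_{K_i^M\#\NA(M_i^*)}(F)\subset F\ot(K_i^M\#\NA(M_i^*))$, and $F$ is visibly a subalgebra and $\ydH$-subobject of $K_i^M\#\NA(M_i^*)$, so $(3)$ holds. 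Conversely, assume $(3)$. First apply $\id\ot q$, where $q:K_i^M\#\NA(M_i^*)\to\NA(M_i^*)$ is the canonical Hopf projection (pulled back along $\Omega_i^M$ from the biproduct projection of $\NA(\Rf_i(M))$), which is an algebra map with $q|_{\NA(M_i^*)}=\id$ and $q|_{K_i^M}=\cou$; this gives $(\id\ot q)\copr_{K_i^M\#\NA(M_i^*)}(x)=\sum_\al(\adR{b_\al}{x})\ot b^\al$ for $x\in K_i^M$, so $(3)$ forces $\adR{b_\al}{x}\in F$ for all $\al$ and all $x\in F$, i.e.\ $\adR{\NA(M_i)}{F}\subset F$, whence (with $H$-stability) $(\ad\,M_i)(F)\subset F$ and full $\ad(\NA(M_i)\#H)$-stability of $F$. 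Having this, apply $\Rmatb$: $\copr_{K_i^M}(x)=\Rmatb(\copr_{K_i^M\#\NA(M_i^*)}(x))\in\Rmatb(F\ot(K_i^M\#\NA(M_i^*)))\subset F\ot(K_i^M\#\NA(M_i^*))$, and since $\copr_{K_i^M}(x)\in K_i^M\ot K_i^M$ with $F\subset K_i^M$, in fact $\copr_{K_i^M}(F)\subset F\ot K_i^M$; the remaining subobject conditions for $F\subset K_i^M$ in ${}_{\NA(M_i)\#H}^{\NA(M_i)\#H}\yd$ follow from the stabilities already derived. Hence $(2)\Leftrightarrow(3)$.

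The step I expect to be most delicate is $(3)\Rightarrow(2)$: one must extract the adjoint stability of $F$ first (via the projection $q$), and only then invert $\Rmat$ by $\Rmatb$ to recover the braided right-coideal condition—keeping careful track of the $\NA(M_i^*)$-grading of $K_i^M\#\NA(M_i^*)$ and of how $\adR{}{}$ interacts with the $\NA(M_i)$-coaction on $K_i^M$. A secondary technical point is justifying, in the fully braided setting over $\ydH$, the biproduct coproduct and commutation formulas used for $(1)\Leftrightarrow(2)$.
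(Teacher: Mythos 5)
Your proposal follows essentially the same route as the paper's proof: the equivalence (1)$\Leftrightarrow$(2) via the braided biproduct decomposition $\NA (M)\simeq K_i^M\#\NA (M_i)$ and the intersection $F=E\cap K_i^M$, and (2)$\Leftrightarrow$(3) via Thm.~\ref{th:Omega} (so that the coproduct of $K_i^M\#\NA (M_i^*)$ restricts on $K_i^M$ to $\Rmat \copr _{K_i^M}$), the projection $\cou \ot \id$ onto $\NA (M_i^*)$ to extract the adjoint stability, and the inverse map $\Rmatb $ from Eq.~\eqref{eq:RmatbRmat}. The points you leave brief (the biproduct coproduct formulas and the remaining subobject conditions in (2)) are treated with the same brevity in the paper, so there is no substantive difference.
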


\begin{proof}
  Assume (1) and let $E= F \NA (M_i)$. Then $F= E \cap K_i^M$, and
  $(\ad \,M_i)(F)\subset F$ since $M_i \subset E$. Let $\pi :\NA (M)\to \NA
  (M_i)$ be the canonical map.  Since
  \[ \copr _{K_i^M}(x)=x\^1S_{\NA (M_i)}\pi (x\^2)\ot x\^3 \quad
  \text{for all $x\in K_i^M$,} \]
  we obtain that $\copr _{K_i^M}(E\cap K_i^M)\subset E\ot K_i^M$. This proves
  (2). Similarly, (2) implies (1).

  Assume (2). By definition and Thm.~\ref{th:Omega}
  the restriction of the comultiplication of $K_i^M \# \NA (M_i^*)$ to $K_i^M$
  is given by the map
  \[ K_i^M \xrightarrow{\copr _{K_i^M}} {K_i^M}\ot {K_i^M}
     \overset{\Rmat }{\longrightarrow }
     K_i^M\ot ( K_i^M\#\NA (M_i^*)). \]
  Hence $F$ is a right coideal subalgebra of $K_i^M\#\NA (M_i^*)$ in $\ydH $.

  Conversely, assume (3). Let $\pi' = \cou \otimes
  \id_{\NA(M_i^*)}:K_i^M \# \NA (M_i^*) \to \NA (M_i^*)$.  Then $\pi'$ is an
  algebra map by Eq.~\eqref{eq:KB(V*)}. Let $x \in F$. Since $F$ is a right
  coideal subalgebra of $K_i^M \# \NA (M_i^*)$ and $(\id \ot \pi ')\copr
  _{K_i^M}(x)=x\ot 1$ for all $x\in K_i^M$, it follows that
  \[
    F \otimes \NA (M_i^*)\ni
    (\id_{K_i^M} \otimes \pi') \Rmat \copr _{K_i^M}(x) =
     \sum_{\al } S_{\NA(M_i)\#H}^{-1}(b_{\al }) \cdot x \otimes b^{\al }.
  \]
  Since $F$ is also an $H$-module,
  $F$ is stable under the adjoint action $\cdot$ of $\NA (M_i)$.
  Since $\Rmat \copr _{K_i^M}(F)\subset F\ot (K_i^M\#\NA (M_i^*))$
  by assumption,
  it follows from \eqref{eq:RmatbRmat} that $F$ is a right coideal subalgebra
  of $K_i^M$ in  ${}_{\NA (M_i)\#H}^{\NA (M_i)\#H}\yd $.
\end{proof}

Recall from Eq.~\eqref{eq:varphi} that for all $i\in \Ib $ and $M\in \ffdi $,
$\varphi _i^M:M\to \Rf _i^2(M)$ is a
family of isomorphisms of objects in $\ydH $. Let $\NA (\varphi _i^M):\NA
(M)\to \NA (\Rf _i^2(M))$ be the induced isomorphism of braided Hopf algebras
in $\ydH $.
The following theorem is the key result in the proof of
Thms.~\ref{th:bijective},
\ref{th:finite}. When $M$ is $i$-finite we will use the isomorphisms
\begin{align*}
 K_i^M \# \NA(M_i^*) &\xrightarrow{\Omega_i^M} \NA(\Rf_i(M)),\\
K_i^{\Rf_i(M)} \# \NA(\Rf_i(M)_i^*) &\xrightarrow{\Omega_i^{\Rf_i(M)}} \NA(\Rf_i^2(M)) \xleftarrow{\NA(\varphi_i^M)} \NA(M)
\end{align*}
to define bijections between the right coideal subalgebras of $\NA(M)$ and of $\NA(\Rf_i(M))$.

\begin{theor}\label{th:sigmai}
  Let $M\in \ffdi $ and $i\in \Ib $. Assume that $M$ is
  $i$-finite. Then the maps
  $\sigma ^M_i:\cK M \to \cK {\Rf _i(M)}$
  defined for all $E \in \cK M$ by
  \begin{align*}
    \sigma ^M_i(E)=&
    \begin{cases}
      \Omega ^M_i(E\cap K^M_i) & \text{if $M_i\subset E$,}\\
      (\Omega _i^{\Rf _i(M)})^{-1}( \NA (\varphi _i^M)(E))\NA (\Rf _i(M)_i) &
      \text{if $M_i\not \subset E$,}
    \end{cases}\\
 \end{align*}
 and  $\bar{\sigma }_i^{\Rf_i(M)}:\cK {\Rf _i(M)}\to \cK M $
  defined for all $E \in \cK {\Rf_i(M)}$ by
  \begin{align*}
  \bar{\sigma }_i^{\Rf_i(M)}(E)=&
    \begin{cases}
      \NA (\varphi _i^M)^{-1}(\Omega ^{\Rf _i(M)}_i(E\cap K^{\Rf _i(M)}_i)) &
      \text{if $\Rf _i(M)_i\subset E$,}\\
      (\Omega _i^M)^{-1}(E)\NA (M_i) &
      \text{if $\Rf _i(M)_i\not \subset E$,}
    \end{cases}
  \end{align*}
  are bijective. More precisely, the following hold.

  (1) For all $E\in \cK M$,
  $M_i\subset E$ if and only if $\Rf _i(M)_i\not \subset \sigma _i^M(E)$.
  For all $E\in \cK {\Rf _i(M)}$,
  $\Rf _i(M)_i\subset E$ if and only if
  $M_i\not \subset \bar{\sigma }_i^{\Rf_i(M)}(E)$.

  (2) $\bar{\sigma }_i^{\Rf_i(M)}\sigma ^M_i=\id _{\cK M}$,
  $\sigma ^M_i\bar{\sigma }_i^{\Rf_i(M)}=\id _{\cK {\Rf _i(M)}}$.
\end{theor}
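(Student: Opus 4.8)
The plan is to verify the three assertions in turn, using Lemma~\ref{le:F} and Corollary~\ref{co:MnotinE} together with the isomorphisms $\Omega_i^M$, $\Omega_i^{\Rf_i(M)}$ and $\NA(\varphi_i^M)$ as the essential tools, and Theorem~\ref{th:Omega} to control comultiplications. First I would check that the two maps $\sigma_i^M$ and $\bar{\sigma}_i^{\Rf_i(M)}$ are well-defined, i.e.\ land in $\cK{\Rf_i(M)}$ and $\cK M$ respectively. In the case $M_i\subset E$: by Lemma~\ref{le:F} applied to $F=E\cap K_i^M$, the subalgebra $F$ is a right coideal subalgebra of $K_i^M\#\NA(M_i^*)$ in $\ydH$, hence its image $\Omega_i^M(F)$ is a right coideal subalgebra of $\NA(\Rf_i(M))$; since $\Omega_i^M$ respects the $\ndZ^\theta$-grading by \eqref{eq:Omgrad}, $\sigma_i^M(E)$ is $\ndN_0^\theta$-graded. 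In the case $M_i\not\subset E$: the isomorphism $\NA(\varphi_i^M):\NA(M)\to\NA(\Rf_i^2(M))$ carries $E$ to a graded right coideal subalgebra not containing $\Rf_i^2(M)_i$, so by Corollary~\ref{co:MnotinE} (applied with $\Rf_i(M)$ in place of $M$) we get $(\Omega_i^{\Rf_i(M)})^{-1}(\NA(\varphi_i^M)(E))\subset K_i^{\Rf_i(M)}$, and by the other implication of Lemma~\ref{le:F} adjoining $\NA(\Rf_i(M)_i)$ produces a right coideal subalgebra of $\NA(\Rf_i(M))$. The argument for $\bar{\sigma}_i^{\Rf_i(M)}$ is symmetric, using that $\Rf_i(\Rf_i(M))$ is identified with $M$ via $\varphi_i^M$.

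Next I would prove part (1). The statement $M_i\subset E\Leftrightarrow \Rf_i(M)_i\not\subset\sigma_i^M(E)$ splits according to the two cases in the definition of $\sigma_i^M$. If $M_i\subset E$, then $\sigma_i^M(E)=\Omega_i^M(E\cap K_i^M)\subset\Omega_i^M(K_i^M)$, and the degree-$\al_i$ component of $\NA(\Rf_i(M))$ meets $\Omega_i^M(K_i^M)$ trivially: indeed $K_i^M$ has no component in degree $\al_i$, and $\Omega_i^M$ maps degree $\al$ to degree $s_i^M(\al)$, so the only way to reach degree $\al_i$ would be from degree $s_i^M(\al_i)=-\al_i$, which is not in $\ndN_0^\theta$; hence $\Rf_i(M)_i\not\subset\sigma_i^M(E)$. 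Conversely if $M_i\not\subset E$ then $\sigma_i^M(E)$ by construction contains $\NA(\Rf_i(M)_i)$, so in particular $\Rf_i(M)_i\subset\sigma_i^M(E)$. The statement for $\bar{\sigma}_i^{\Rf_i(M)}$ follows the same pattern.

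Then part (2): I would show $\bar{\sigma}_i^{\Rf_i(M)}\sigma_i^M=\id_{\cK M}$, the other composite being symmetric (after interchanging the roles of $M$ and $\Rf_i(M)$ and using $\varphi_i^M$). Again split by cases. If $M_i\subset E$, then $\Rf_i(M)_i\not\subset\sigma_i^M(E)$ by part (1), so $\bar{\sigma}_i^{\Rf_i(M)}$ is given by its second branch: $\bar{\sigma}_i^{\Rf_i(M)}(\sigma_i^M(E))=(\Omega_i^M)^{-1}(\Omega_i^M(E\cap K_i^M))\,\NA(M_i)=(E\cap K_i^M)\NA(M_i)$, and by Lemma~\ref{le:EcapR} (with $B=\NA(M_i)\#H$, or rather its braided analogue, and the projection $\pi$ onto $\NA(M_i)$) the multiplication map $(E\cap K_i^M)\ot\NA(M_i)\to E$ is an isomorphism since $\NA(M_i)\subset E$; hence $(E\cap K_i^M)\NA(M_i)=E$. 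If $M_i\not\subset E$, then $\Rf_i(M)_i\subset\sigma_i^M(E)$ by part (1), so $\bar{\sigma}_i^{\Rf_i(M)}$ is given by its first branch, and the nontrivial point is that the two operations of ``intersect with $K_i^{\Rf_i(M)}$ then apply $\Omega_i^{\Rf_i(M)}$'' and ``apply $(\Omega_i^{\Rf_i(M)})^{-1}$ then adjoin $\NA(\Rf_i(M)_i)$'' are mutually inverse; this is again an instance of Lemma~\ref{le:EcapR}, now applied to the Nichols algebra $\NA(\Rf_i^2(M))$ with the subalgebra $\NA(\Rf_i(M)_i)$, combined with the identification $\NA(\varphi_i^M)$. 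Unwinding gives $\bar{\sigma}_i^{\Rf_i(M)}(\sigma_i^M(E))=\NA(\varphi_i^M)^{-1}\NA(\varphi_i^M)(E)=E$.

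\textbf{Main obstacle.} The routine bookkeeping (grading, well-definedness) is straightforward once Lemma~\ref{le:F} and Corollary~\ref{co:MnotinE} are in hand; the genuinely delicate point is the case $M_i\not\subset E$ in part~(2), where one must be careful that $\varphi_i^M$ identifies $\Rf_i^2(M)$ with $M$ \emph{compatibly} with the whole tower of maps $\Omega_i^M$, $\Omega_i^{\Rf_i(M)}$ — i.e.\ that no extra twist appears when one returns from $\Rf_i(M)$ to $M$. This is exactly what the relation \eqref{eq:OmK}, $S_{\NA(\Rf_i(M))}\Omega_i^M(K_i^M)=K_i^{\Rf_i(M)}$, and the definition of $\varphi_i^M$ as the restriction of $S_{\NA(\Rf_i(M))}\Omega_i^M$ to $M_j$ are designed to ensure, so I expect the verification to reduce to citing these facts from Section~\ref{sec:refl} plus one more application of Lemma~\ref{le:EcapR}.
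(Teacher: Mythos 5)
Your proposal is correct and takes essentially the same route as the paper's proof: well-definedness via Cor.~\ref{co:MnotinE}, the right coideal subalgebra property via both implications of Lemma~\ref{le:F} combined with the fact that $\Omega _i^M$ is a graded Hopf algebra isomorphism (Thm.~\ref{th:Omega}), part (1) by the grading argument \eqref{eq:Omgrad}, and part (2) from Lemma~\ref{le:EcapR}. The only (cosmetic) slip is in the case $M_i\not\subset E$ of part (2): the relevant application of Lemma~\ref{le:EcapR} takes place inside $\NA (\Rf _i(M))$ with $B=\NA (\Rf _i(M)_i)$ and coinvariants $K_i^{\Rf _i(M)}$, not inside $\NA (\Rf _i^2(M))$.
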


\begin{proof}
  By Cor.~\ref{co:MnotinE} the maps $\sigma ^M_i$ and $\bar{\sigma }_i^{\Rf_i(M)}$
  are well-defined in the sense that $\sigma ^M_i(E)\subset \NA (\Rf _i(M))$
  for all $E\in \cK M$ and
  $\bar{\sigma }_i^{\Rf_i(M)}(E)\subset \NA (M)$ for all $E\in \cK {\Rf _i(M)}$.
  It remains to prove (1) and that
  $\sigma ^M_i$ maps $\cK M $ to $\cK {\Rf _i(M)}$ and
  $\bar{\sigma }_i^{\Rf_i(M)}$ maps $\cK {\Rf _i(M)}$ to $\cK M $.
  Then the equations in (2) follow from Lemma~\ref{le:EcapR}.

  We prove that $\sigma ^M_i(E)\in \cK {\Rf _i(M)}$ for all $E\in \cK M$, and
  that the part of (1) regarding $\sigma _i^M$ holds. The
  analogous claims for $\bar{\sigma }_i^{\Rf_i(M)}$ can be shown similarly.

  Let $E\in \cK M$. Assume first that $M_i\subset E$,
  and let $F=E\cap K_i^M$.
  Since $K^M_i$ is an $\ndN _0^\theta $-graded algebra
  in $\ydH $, $F$ is an $\ndN _0^\theta $-graded
  subalgebra of $\NA (M)$ in $\ydH $. Further,
  $E= F \NA(M_i)$ by Lemma \ref{le:EcapR}.
  By Lemma~\ref{le:F} (1)$\Rightarrow $(3), and
  since $\Omega ^M_i: K^M_i \# \NA (M_i^*)\to \NA (\Rf _i(M))$ is
  an isomorphism of $\ndN _0^\theta $-graded Hopf algebras in $\ydH $,
  we conclude that $\Omega ^M_i(F) \in \cK {\Rf _i(M)}$.
  Further, $\Rf _i(M)_i\not \subset \Omega ^M_i(F)$
  by \eqref{eq:Omgrad} and
  since $(E\cap K^M_i)_{-\al _i}=0$.

  Assume now that $M_i\not \subset E$.
  Since $\NA (\varphi _i^N):\NA (N)\to \NA (\Rf _i^2(N))$ is an
  isomorphism of $\ndN _0^\theta $-graded braided Hopf algebras,
  we conclude that
  $\Rf _i^2(M)_i\not\subset E$ and
  $\NA (\varphi _i^M)(E)\in \cK {\Rf _i^2(M)}$.
  Further, $\Rf _i^2(M)$ is $i$-finite.
  Let
  $F=(\Omega _i^{\Rf _i(M)})^{-1}( \NA (\varphi _i^M)(E))$.
  Then $F\subset K_i^{\Rf _i(M)}$ by Cor.~\ref{co:MnotinE}. Further, $F$ is an
  $\ndN _0^\theta $-graded subalgebra of $K_i^{\Rf _i(M)}$ and a right coideal
  subalgebra of $K_i^{\Rf _i(M)} \# \NA(\Rf _i(M)_i^*)$
  in $\ydH $ by the definition of the
  braided Hopf algebra structure of $K_i^{\Rf _i(M)} \# \NA(\Rf _i(M)_i^*)$.
  By Lemma~\ref{le:F} (3)$\Rightarrow $(1),
  $\sigma _i^M(E) = F \NA(\Rf _i(M)_i) \in \cK{\Rf _i(M)}$.
  Clearly, $\Rf _i(M)_i\subset \sigma _i^M(E)$, and hence we are done.
\end{proof}

\begin{corol}\label{co:HE=HE}
  Let $M\in \ffdi$, $i\in \Ib $, and $E_1,E_2\in \cK M$.
  Assume that $M$ is $i$-finite and that $\Hilb _{E_1}=\Hilb _{E_2}$.
  Then $\Hilb _{\sigma _i^M(E_1)}=\Hilb _{\sigma _i^M(E_2)}$.
\end{corol}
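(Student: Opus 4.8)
The plan is to express $\Hilb _{\sigma ^M_i(E)}$ as an explicit function of $\Hilb _E$ in each of the two cases occurring in the definition of $\sigma ^M_i$, and to check that the hypothesis $\Hilb _{E_1}=\Hilb _{E_2}$ already forces $E_1$ and $E_2$ into the same case. For the latter point I would first record the elementary observation that, for any $E\in \cK M$, one has $M_i\subset E$ if and only if the coefficient of $t^{\al _i}$ in $\Hilb _E$ is nonzero: the degree $\al _i$ component of $\NA (M)$ equals $M_i$, so $E_{\al _i}=E\cap M_i$, which is $0$ or all of $M_i$ since $E$ is a subobject of the irreducible object $M_i$ in $\ydH $ (as in Lemma~\ref{le:rcshasMi}). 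Hence $\Hilb _{E_1}=\Hilb _{E_2}$ implies $M_i\subset E_1\Leftrightarrow M_i\subset E_2$, and it suffices to treat the cases $M_i\subset E_1,E_2$ and $M_i\not\subset E_1,E_2$ separately.

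In the case $M_i\subset E_j$ I would put $F_j=E_j\cap K^M_i$, so that $\sigma ^M_i(E_j)=\Omega ^M_i(F_j)$. By Lemma~\ref{le:EcapR} the multiplication map $F_j\ot \NA (M_i)\to E_j$ is a grading-preserving isomorphism of vector spaces, so $\Hilb _{E_j}=\Hilb _{F_j}\,\Hilb _{\NA (M_i)}$; since $\Hilb _{\NA (M_i)}$ has constant term $1$ and is therefore invertible in $\ndZ [[t_1,\dots ,t_\theta ]]$, $\Hilb _{F_j}$ is determined by $\Hilb _{E_j}$. Now Eq.~\eqref{eq:Omgrad} tells us that $\Omega ^M_i$ maps the degree $\al $ component of $K^M_i$ onto the degree $s^M_i(\al )$ component of $\NA (\Rf _i(M))$, and since $s^M_i$ is a bijection of $\ndZ ^\theta $ this yields $\Hilb _{\sigma ^M_i(E_j)}=\sum _\al (\dim (F_j)_\al )\,t^{s^M_i(\al )}$, the (partially defined) $s^M_i$-action applied to $\Hilb _{F_j}$. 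In particular $\Hilb _{\sigma ^M_i(E_j)}$ depends only on $\Hilb _{E_j}$, so equality of the Hilbert series of $E_1$ and $E_2$ propagates to $\sigma ^M_i(E_1)$ and $\sigma ^M_i(E_2)$.

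In the case $M_i\not\subset E_j$ one has $\sigma ^M_i(E_j)=F_j\,\NA (\Rf _i(M)_i)$ with $F_j=(\Omega ^{\Rf _i(M)}_i)^{-1}(\NA (\varphi ^M_i)(E_j))$. From the proof of Thm.~\ref{th:sigmai} we know that $F_j\subset K^{\Rf _i(M)}_i$ and that $\sigma ^M_i(E_j)$ is a right coideal subalgebra of $\NA (\Rf _i(M))$ containing $\NA (\Rf _i(M)_i)$, so Lemma~\ref{le:EcapR} again gives $\Hilb _{\sigma ^M_i(E_j)}=\Hilb _{F_j}\,\Hilb _{\NA (\Rf _i(M)_i)}$. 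Since $\NA (\varphi ^M_i)$ preserves the $\ndN _0^\theta $-grading and $\Omega ^{\Rf _i(M)}_i$ rescales degrees by $s^{\Rf _i(M)}_i$ by Eq.~\eqref{eq:Omgrad}, $\Hilb _{F_j}$ is obtained from $\Hilb _{E_j}$ by the $(s^{\Rf _i(M)}_i)^{-1}$-action on power series, hence is a function of $\Hilb _{E_j}$; again equality propagates. The whole argument is thus essentially bookkeeping of $\ndN _0^\theta $-gradings, and the only point requiring slight care is that the $\mathrm{GL}(\theta ,\ndZ )$-action on $\ndZ [[t_1,\dots ,t_\theta ]]$ used above is only partially defined --- but every power series to which it is applied here is the Hilbert series of an $\ndN _0^\theta $-graded subspace of a Nichols algebra, so the relevant action is always defined on it, and I expect no genuine obstacle.
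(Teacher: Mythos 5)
Your proposal is correct and follows essentially the same route as the paper: detect which case of $\sigma_i^M$ applies from the coefficient of $t^{\al_i}$ (irreducibility of $M_i=\NA(M)_{\al_i}$), then use the graded tensor decomposition from Lemma~\ref{le:EcapR} together with the degree shift \eqref{eq:Omgrad} to see that $\Hilb_{\sigma_i^M(E)}$ is a function of $\Hilb_E$. The paper only writes out the case $M_i\subset E$ and declares the other case similar; your write-up supplies that second case (and the invertibility of $\Hilb_{\NA(M_i)}$) explicitly, which is fine.
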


\begin{proof}
  Note that $M_i=\NA (M)_{\al _i}$ is irreducible in $\ydH $.
  Hence $M_i\subset E_1$ if and only if $M_i\subset E_2$,
  since $\Hilb _{E_1}=\Hilb _{E_2}$.
  Assume first that $M_i\subset E_1$. By Lemma~\ref{le:EcapR},
  $E_l\simeq (E_l\cap K_i^M)\ot \NA
  (M_i)$ as $\ndN _0^\theta $-graded objects in $\ydH $ for $l\in \{1,2\}$.
  Hence the claim follows from Thm.~\ref{th:sigmai} and \eqref{eq:Omgrad}.
  The case $M_i\not \subset E_1$ is treated similarly.
\end{proof}

\begin{corol}\label{co:finK}
  Let $M\in \ffdi$ and let $i\in \Ib $. Then $\cK M$ is finite if and
  only if $\cK{\Rf _i(M)}$ is finite. In this case $M$ is $i$-finite, and
  $\cK M$ and $\cK{\Rf _i(M)}$ have the same cardinality.
\end{corol}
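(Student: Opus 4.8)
The plan is to split the argument according to whether or not $M$ is $i$-finite. If $M$ is $i$-finite, then Theorem~\ref{th:sigmai} already does all the work: it provides mutually inverse bijections $\sigma ^M_i:\cK M\to \cK{\Rf _i(M)}$ and $\bar{\sigma }_i^{\Rf _i(M)}:\cK{\Rf _i(M)}\to \cK M$, so $\cK M$ and $\cK{\Rf _i(M)}$ have the same cardinality, and in particular one of them is finite exactly when the other is.

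If $M$ is not $i$-finite, I would invoke the convention fixed in Section~\ref{sec:refl}, namely $\Rf _i(M)=M$ in that case, so that $\cK{\Rf _i(M)}=\cK M$ as sets. Lemma~\ref{le:notIfin} then produces a strictly decreasing infinite chain $E_1\supsetneq E_2\supsetneq \cdots $ of $\ndN _0^\theta $-graded right coideal subalgebras of $\NA (M)$ in $\ydH $, whence $\cK M$, and therefore also $\cK{\Rf _i(M)}$, is infinite.

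Combining these two observations gives the corollary: $\cK M$ is finite if and only if $\cK{\Rf _i(M)}$ is finite, since in the only situation where the two could conceivably differ ($M$ not $i$-finite) both sets are infinite. Moreover, if $\cK M$ (equivalently $\cK{\Rf _i(M)}$) is finite, then $M$ cannot fail to be $i$-finite by the previous paragraph, so $M$ is $i$-finite and the bijection $\sigma _i^M$ of Theorem~\ref{th:sigmai} yields $\#\cK M=\#\cK{\Rf _i(M)}$. I do not expect any genuine obstacle here: the substance is entirely contained in Theorem~\ref{th:sigmai} and Lemma~\ref{le:notIfin}, and the only point requiring a little care is to remember the convention $\Rf _i(M)=M$ for non-$i$-finite $M$, which is precisely what makes Lemma~\ref{le:notIfin} apply to both $\cK M$ and $\cK{\Rf _i(M)}$ simultaneously.
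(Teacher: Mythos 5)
Your argument is correct and coincides with the paper's own proof: the case where $M$ is $i$-finite is handled by the bijectivity of $\sigma _i^M$ from Thm.~\ref{th:sigmai}, and the non-$i$-finite case uses the convention $\Rf _i(M)=M$ together with Lemma~\ref{le:notIfin} to conclude that $\cK M=\cK{\Rf _i(M)}$ is infinite. Nothing is missing.
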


\begin{proof}
  If $M$ is not $i$-finite, then $\Rf _i(M)=M$, and hence
  $\cK M=\cK{\Rf _i(M)}$ is infinite by Lemma~\ref{le:notIfin}.
  If $M$ is $i$-finite, then
  the claim follows from the bijectivity of $\sigma _i^M$ in
  Thm.~\ref{th:sigmai}.
\end{proof}

\section{Construction of right coideal subalgebras}
\label{sec:Constr}

Let $\theta \in \ndN $ and $M=(M_1,\dots, M_\theta )\in \ffdi$.
Let
\begin{align*}
\ffdi(M)&=\{\Rf_{i_1}\cdots \Rf_{i_n}(M)\,|\,n\in \ndN _0,i_1,\dots,i_n\in \Ib \},\\
\fiso(M)&= \{r_{i_1}\cdots r_{i_n}([M])\,|\,n\in \ndN _0,i_1,\dots,i_n\in \Ib \}
\end{align*}
where $\Rf_i$ and $r_i$, $i\in \Ib $, are defined in Sect.~\ref{sec:refl}.
We say that $M$ \textit{\Fin}, if $N$ is $i$-finite for all $N\in \ffdi (M)$ and
$i\in \Ib $.
This is for example the case if $(M_1\oplus \cdots \oplus M_\theta )^{\ot m}$
is semisimple in $\ydH $ for all $m\ge 1$ and the Gelfand-Kirillov
dimension of $\NA (M)$ is finite, see \cite[Thms.\,I,\,III]{p-HeckSchn08a}.
Also, Cor.~\ref{co:finK} and the definition of $\ffdi(M)$ yield the following.
\begin{propo}\label{criterion}
  Let $M\in \ffdi $. Assume that $\cK M$ is finite.
  Then $M$ \Fin.
\end{propo}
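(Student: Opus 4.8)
The plan is to bootstrap the single-reflection result Corollary~\ref{co:finK} into a statement about all iterated reflections, by an induction on the length of a reflection word.

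First I would show that $\cK {N}$ is finite for every $N\in\ffdi(M)$. By the definition of $\ffdi(M)$, such an $N$ has the form $N=\Rf_{i_1}\cdots\Rf_{i_n}(M)$ for some $n\in\ndN_0$ and $i_1,\dots,i_n\in\Ib$, and I induct on $n$. The case $n=0$ is exactly the hypothesis that $\cK M$ is finite. For $n\ge 1$, set $N'=\Rf_{i_2}\cdots\Rf_{i_n}(M)\in\ffdi(M)$, so that $N=\Rf_{i_1}(N')$ and $N'$ is obtained from $M$ by a word of length $n-1$; by the inductive hypothesis $\cK {N'}$ is finite, and Corollary~\ref{co:finK} applied to the pair $(N',i_1)$ yields that $\cK {\Rf_{i_1}(N')}=\cK {N}$ is finite as well.

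Next, let $N\in\ffdi(M)$ and $i\in\Ib$ be arbitrary. By the previous step $\cK {N}$ is finite, so Corollary~\ref{co:finK} applied to the pair $(N,i)$ forces $N$ to be $i$-finite. Since $N$ and $i$ were arbitrary, this is precisely the assertion that $M$ \Fin.

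I do not expect a genuine obstacle here: the substance is already packaged into Corollary~\ref{co:finK}, which in turn rests on Lemma~\ref{le:notIfin} and the bijectivity of $\sigma_i^M$ from Theorem~\ref{th:sigmai}. The only point that needs a little care is to ensure the induction is well-founded, i.e.\ that every element of $\ffdi(M)$ is genuinely reached from $M$ by a finite sequence of the operators $\Rf_j$ --- but this is immediate from the very definition of $\ffdi(M)$.
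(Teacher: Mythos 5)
Your argument is correct and is exactly the route the paper takes: the paper's proof is the one-line observation that Corollary~\ref{co:finK} together with the definition of $\ffdi(M)$ gives the claim, and your induction on the length of the reflection word simply spells this out. The final step (finiteness of $\cK N$ forces $i$-finiteness via Corollary~\ref{co:finK}, i.e.\ via Lemma~\ref{le:notIfin}) is also as in the paper.
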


Recall from \cite{p-AHS08} the following crucial result.

\begin{theor} \cite{p-AHS08}, \cite[Thm.~6.10]{p-HeckSchn08a}
  Let $M\in \ffdi$.
  If $M$ \Fin, then $\cC (M)=(\Ib ,\fiso(M),(r_i|_{\fiso(M)})_{i\in \Ib },
  (A^X)_{X\in \fiso(M)})$ is a Cartan scheme.
\end{theor}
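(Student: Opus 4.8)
This is essentially \cite[Thm.\,3.8]{p-AHS08} recast in the language of Cartan schemes (see also \cite[Thm.~6.10]{p-HeckSchn08a}); the plan is to verify the axioms (C1), (C2) directly after recording that the data of $\cC(M)$ are well posed. The index set $\Ib$ is finite and non-empty, and $\fiso(M)$ is non-empty (it contains $[M]$) and closed under each $r_i$ by its very definition. Since $M$ \Fin, every $N\in\ffdi(M)$ is $k$-finite for all $k\in\Ib$; hence the integers $a^N_{jk}$ of \eqref{eq:aij} are all defined, they depend only on the class $[N]$ (as recorded in Section~\ref{sec:refl}), and $A^{[N]}=(a^N_{jk})_{j,k}$ is a generalized Cartan matrix by the reflection theory of \cite{p-AHS08}. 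Thus $\cC(M)$ is a well-defined quadruple of the required shape, and it remains to prove $r_i^2=\id$ and $a^X_{ij}=a^{r_i(X)}_{ij}$ for all $i,j\in\Ib$ and $X\in\fiso(M)$.

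For (C1) I would argue as follows. Given $[N]\in\fiso(M)$ and $i\in\Ib$, the tuples $N$ and $\Rf_i(N)$ both lie in $\ffdi(M)$, hence are $i$-finite, so $\Rf_i^2(N)=\Rf_i(\Rf_i(N))\in\ffdi(M)$ is defined. By \cite[Rem.\,1.4 and Eq.~(3.37)]{p-AHS08} the family $\varphi_i^N : N\to\Rf_i^2(N)$ of \eqref{eq:varphi} consists of isomorphisms in $\ydH$; therefore $[\Rf_i^2(N)]=[N]$, i.e.\ $r_i^2([N])=[N]$. The only point to watch is that every tuple arising along the way is $i$-finite, which is exactly the hypothesis $M$ \Fin.

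For (C2), fix $[N]\in\fiso(M)$ and $i,j\in\Ib$. If $j=i$ then $a^{[N]}_{ii}=2=a^{r_i([N])}_{ii}$ by the convention $a^X_{ii}=2$. For $j\neq i$ the idea is to compare two expressions for the $\ndZ^\theta$-degree of $\Rf_i^2(N)_j$, viewed as a homogeneous subspace of $\NA(\Rf_i(N))$ with its standard $\ndN_0^\theta$-grading. On the one hand, \eqref{eq:M'_j} applied to the tuple $\Rf_i(N)$ gives $\Rf_i^2(N)_j=(\ad\,\Rf_i(N)_i)^{-a^{\Rf_i(N)}_{ij}}(\Rf_i(N)_j)$, which is homogeneous of degree $\al_j-a^{\Rf_i(N)}_{ij}\al_i$ since $\deg\Rf_i(N)_i=\al_i$ and $\deg\Rf_i(N)_j=\al_j$. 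On the other hand, $\Rf_i^2(N)_j=\varphi^N_{ij}(N_j)$ with $\varphi^N_{ij}$ the restriction to $N_j\subset K^N_i$ of $S_{\NA(\Rf_i(N))}\Omega^N_i$; since $N_j$ is homogeneous of degree $\al_j$, since $\Omega^N_i(N_j)\subset\NA(\Rf_i(N))_{s^N_i(\al_j)}$ by \eqref{eq:Omgrad}, and since the antipode of the connected $\ndN_0^\theta$-graded braided Hopf algebra $\NA(\Rf_i(N))$ preserves the grading, this subspace is homogeneous of degree $s^N_i(\al_j)=\al_j-a^N_{ij}\al_i$. Equating the two degrees yields $a^N_{ij}=a^{\Rf_i(N)}_{ij}$, which is (C2).

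There is no deep obstruction: the substance is the construction and properties of the reflection isomorphisms $\Omega_i^M$ and $\varphi_i^M$ from \cite{p-AHS08}, packaged at groupoid level in \cite[Thm.~6.10]{p-HeckSchn08a}, which I would use as black boxes. The one point that genuinely needs care is the degree bookkeeping in (C2): one must keep the three a priori distinct standard $\ndN_0^\theta$-gradings --- on $\NA(N)$, on $\NA(\Rf_i(N))$, on $\NA(\Rf_i^2(N))$ --- apart and apply \eqref{eq:Omgrad} to the correct one. If one prefers to avoid invoking that the antipode is grading preserving, the same comparison can be routed through $\Omega_i^{\Rf_i(N)}$ together with the identity $S_{\NA(\Rf_i(N))}\Omega^N_i(K^N_i)=K^{\Rf_i(N)}_i$ of \eqref{eq:OmK}.
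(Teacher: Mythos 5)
Your argument is correct. Note, however, that the paper itself does not prove this statement: it is recalled verbatim from \cite{p-AHS08} and \cite[Thm.~6.10]{p-HeckSchn08a}, so the ``proof'' in the paper is a citation, and what you have written is essentially a reconstruction of the argument of those sources using exactly the facts the paper recalls from them. Your verification of (C1) via the isomorphisms $\varphi_i^N:N\to\Rf_i^2(N)$ of \eqref{eq:varphi} (i.e.\ \cite[Rem.\,1.4, Eq.~(3.37)]{p-AHS08}) is the standard one, and your (C2) argument --- computing the $\ndZ^\theta$-degree of the nonzero subspace $\Rf_i^2(N)_j\subset\NA(\Rf_i(N))$ once from the defining formula \eqref{eq:M'_j} applied to $\Rf_i(N)$, giving $\al_j-a^{\Rf_i(N)}_{ij}\al_i$, and once as $S_{\NA(\Rf_i(N))}\Omega_i^N(N_j)$ using \eqref{eq:Omgrad} and the degree-preservation of the antipode of a connected graded braided Hopf algebra, giving $\al_j-a^{N}_{ij}\al_i$ --- is sound; the hypothesis that $M$ admits all reflections is used, as you say, precisely to guarantee every tuple encountered is $i$-finite so that $\Rf_i$, $\Omega_i$, and $\varphi_i$ are defined. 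The remaining ingredient, that each $A^{[N]}$ is a genuine generalized Cartan matrix (in particular $a^N_{ij}=0\Leftrightarrow a^N_{ji}=0$), is not something your degree bookkeeping yields and must indeed be taken from \cite[Thm.\,3.12]{p-AHS08}, as you do; as long as that is acknowledged as an external input, nothing is missing. Your closing remark that the comparison could instead be routed through $\Omega_i^{\Rf_i(N)}$ and \eqref{eq:OmK} is a legitimate variant.
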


Therefore, if $M\in \ffdi(M)$ and $M$ \Fin, then we may attach
the Weyl groupoid $\Wg (M):=\Wg (\cC (M))$ to $M$. Later on, for brevity we will write
$r_i$ instead of $r_i|_{\fiso(M)}$.

In this section we associate a right coideal subalgebra
$E^N(w)$ of $\NA(N)$ to any $N\in \ffdi(M)$ and $w\in \Homsto {[N]}$.

Recall that $\fie 1 \in \cK N$ for all $N\in \ffdi $.
By Thm.~\ref{th:sigmai}, $\bar{\sigma }_i^{\Rf _i(N)}:
\cK{\Rf_i(N)}\to \cK N$ is a
bijection for all $N\in \ffdi$ and $i\in \Ib $, where $N$ is $i$-finite.

\begin{defin}\label{de:E}
  Let $M\in \ffdi$. Assume that $M$ \Fin.
  For all $N\in \ffdi(M)$,
  $m\in \ndN _0$, $i_1,\dots,i_m\in \Ib $, let
  $E^N()=\fie 1$ and
  \begin{align*}
    E^N(i_1,\dots,i_m)= \bar{\sigma} _{i_1}^{\Rf_{i_1}(N)}
    \bar{\sigma} _{i_2}^{\Rf_{i_2}\Rf_{i_1}(N)}\cdots
    \bar{\sigma} _{i_m}^{\Rf_{i_m}\cdots \Rf_{i_1}(N)}(\fie 1) \in \cK N.
  \end{align*}
  \end{defin}

\begin{lemma}\label{le:Ewunique}
  Let $M\in \ffdi$. Assume that $M$ \Fin.
  Let $N\in \ffdi(M)$,
  $m\in \ndN _0$, and $i_1,\dots,i_m \in \Ib $.
  Then
  $E^N(i_1,\dots,i_m)$ is the unique element $E\in \cK N$
  with $\Hilb _E=\Hilb _{E^N(i_1,\dots,i_m)}$.
\end{lemma}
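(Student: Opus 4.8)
The plan is to prove the uniqueness by induction on $m$, letting $N$ range over all of $\ffdi (M)$ simultaneously, and to reduce the length-$m$ statement over $N$ to the length-$(m-1)$ statement over the reflected module $\Rf _{i_1}(N)$ via the bijections of Thm.~\ref{th:sigmai} together with Cor.~\ref{co:HE=HE}. For the base case $m=0$ one simply notes that $E^N()=\fie 1$, whose Hilbert series is the constant $1$; since every $E\in \cK N$ is $\ndN _0^\theta $-graded with one-dimensional degree-zero component, $\Hilb _E=1$ forces $E=\fie 1=E^N()$.

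For the inductive step, assume the claim holds for length $m-1$ over every member of $\ffdi (M)$, and let $E\in \cK N$ satisfy $\Hilb _E=\Hilb _{E^N(i_1,\dots,i_m)}$. Put $N'=\Rf _{i_1}(N)$. Since $\ffdi (M)$ is stable under the operators $\Rf _j$ we have $N'\in \ffdi (M)$, and since $M$ \Fin, $N$ is $i_1$-finite; hence by Thm.~\ref{th:sigmai} the maps $\sigma _{i_1}^N:\cK N\to \cK {N'}$ and $\bar{\sigma }_{i_1}^{N'}:\cK {N'}\to \cK N$ are mutually inverse bijections. From Def.~\ref{de:E} we have $E^N(i_1,\dots,i_m)=\bar{\sigma }_{i_1}^{N'}\bigl(E^{N'}(i_2,\dots,i_m)\bigr)$, so $\sigma _{i_1}^N\bigl(E^N(i_1,\dots,i_m)\bigr)=E^{N'}(i_2,\dots,i_m)$. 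Applying Cor.~\ref{co:HE=HE} to the hypothesis $\Hilb _E=\Hilb _{E^N(i_1,\dots,i_m)}$ gives $\Hilb _{\sigma _{i_1}^N(E)}=\Hilb _{E^{N'}(i_2,\dots,i_m)}$; since $\sigma _{i_1}^N(E)\in \cK {N'}$, the induction hypothesis for the sequence $i_2,\dots,i_m$ over $N'$ then forces $\sigma _{i_1}^N(E)=E^{N'}(i_2,\dots,i_m)$. Applying $\bar{\sigma }_{i_1}^{N'}$ and using $\bar{\sigma }_{i_1}^{N'}\sigma _{i_1}^N=\id _{\cK N}$ we conclude $E=\bar{\sigma }_{i_1}^{N'}\bigl(E^{N'}(i_2,\dots,i_m)\bigr)=E^N(i_1,\dots,i_m)$, which closes the induction.

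I do not expect a genuine obstacle: the real content has already been established, namely in Thm.~\ref{th:sigmai} (that $\sigma _i^N$ and $\bar{\sigma }_i^{\Rf _i(N)}$ are mutually inverse bijections between $\cK N$ and $\cK {\Rf _i(N)}$) and in Cor.~\ref{co:HE=HE} (that $\sigma _i^N$ preserves equality of Hilbert series). The points requiring care are purely organizational: tracking over which member of $\ffdi (M)$ each right coideal subalgebra is considered, checking that $\ffdi (M)$ is closed under reflections so that the induction hypothesis applies to $N'$, and invoking the assumption that $M$ \Fin exactly where the $i_1$-finiteness of $N$ is needed.
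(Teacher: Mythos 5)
Your proof is correct and follows essentially the same route as the paper: the paper transports $E$ through the full composite $\sigma_{i_m}^{\Rf_{i_{m-1}}\cdots \Rf_{i_1}(N)}\cdots\sigma_{i_1}^N$ at once, uses Cor.~\ref{co:HE=HE} to see the image has Hilbert series $1$, hence equals $\fie 1$, and then applies the inverse maps $\bar{\sigma}$ --- which is precisely your induction on $m$ unrolled. The ingredients (Thm.~\ref{th:sigmai}, Cor.~\ref{co:HE=HE}, and the observation that $\Hilb_E=1$ forces $E=\fie 1$) and their roles are identical.
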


\begin{proof}
  By Thm.~\ref{th:sigmai},
  \begin{align*}
    &{\sigma} _{i_m}^{\Rf_{i_{m-1}}\cdots \Rf_{i_1}(N)}\cdots {\sigma} _{i_1}^N
    (E^N(i_1,\dots,i_m))\\
    &\quad =
    {\sigma} _{i_m}^{\Rf_{i_{m-1}}\cdots \Rf_{i_1}(N)}\cdots \sigma _{i_1}^N
    \bar{\sigma} _{i_1}^{\Rf_{i_1}(N)}\cdots \bar{\sigma} _{i_m}^{\Rf_{i_m}\cdots \Rf_{i_1}(N)}
    (\fie 1)=\fie 1.
  \end{align*}
  Let $E\in \cK N$ with $\Hilb _E=\Hilb _{E^N(i_1,\dots,i_m)}$, and let
  $E'=\sigma _{i_m}^{\Rf_{i_{m-1}}\cdots \Rf_{i_1}(N)}\cdots \sigma _{i_1}^N(E)$.
  By Cor.~\ref{co:HE=HE} the Hilbert series of $E'$ and of
  $\sigma _{i_m}^{\Rf_{i_{m-1}}\cdots \Rf_{i_1}(N)}\cdots \sigma _{i_1}^N
  (E^N(i_1,\dots,i_m))$ coincide. Hence $\Hilb _{E'}=1$, and therefore
  $E'=\fie 1$.
  Thus
  \[ E=\bar{\sigma} _{i_1}^{\Rf_{i_1}(N)}\cdots \bar{\sigma} _{i_m}^{\Rf_{i_m}\cdots
  \Rf_{i_1}(N)}(E')=
    \bar{\sigma} _{i_1}^{\Rf_{i_1}(N)}\cdots \bar{\sigma} _{i_m}^{\Rf_{i_m}\cdots
  \Rf_{i_1}(N)}(\fie 1)=E^N(i_1,\dots ,i_m).
  \]
  This proves the lemma.
\end{proof}

\begin{defin}\label{de:admissible}
Let $M\in \ffdi$. Assume that $M$ \Fin .
Let $N\in \ffdi (M)$, $m\in \ndN _0$ and $i_1,\dots,i_m,j\in \Ib $.
Let $T_j^{\Rf_j(N)} : \NA (\Rf_j(N)) \to \NA (N)$ be the composition of the linear maps
$$\NA (\Rf_j(N)) \xrightarrow{(\Omega_j^N)^{-1}} K_j^N \# \NA (N_j^*)
\xrightarrow{\id \otimes \varepsilon} K_j^N \subset \NA (N).$$
For all $1 \leq l \leq m$ define $\beta_l^{[N]}(i_1, \dots ,i_m)  \in \rersys
{[N]}$ and $N_l(i_1, \dots ,i_m) \in \ydH$ by
\begin{align*}
  \beta_l^{[N]}(i_1, \dots ,i_m)&= s_{i_1}^{r_{i_1}([N])}
  s_{i_2}^{r_{i_2}r_{i_1}([N])}
  \cdots s_{i_{l-1}}^{r_{i_{l-1}} \cdots r_{i_1}([N])}(\al _{i_l}),\\
  N_l(i_1, \dots ,i_m)& = T_{i_1}^{\Rf_{i_1}(N)}T_{i_2}^{\Rf_{i_2}\Rf_{i_1}(N)}
  \cdots T_{i_{l-1}}^{\Rf_{i_{l-1}} \cdots \Rf_{i_2}\Rf_{i_1}(N)} (\Rf_{i_{l-1}}
  \cdots \Rf_{i_2}\Rf_{i_1}(N)_{i_l})
\end{align*}
(where $\beta_1^N(i_1,\dots,i_m) = \al _{i_1}$, and $N_1(i_1,\dots,i_m) = N_{i_1}$).

We say that $(i_1, \dots,i_m)$ is $N$-\textit{admissible} if
for all $1 \leq k \leq m-1$ and $1 \leq l \leq m-k$,
$$\al _{i_k} \neq \beta_l^{r_{i_k} \cdots r_{i_2}r_{i_1}([N])}(i_{k+1},
\dots,i_m).$$
Equivalently, $(i_1,\dots,i_m)$ is $N$-admissible if and only if
\begin{align}
  \beta _l^{[N]}(i_1,\dots,i_m)\not=-\beta_k^{[N]}(i_1,\dots,i_m)
  \quad \text{for all $1\le k<l\le m$.}
  \label{eq:admiss}
\end{align}
\end{defin}

\begin{lemma}
  \label{admissible1}
  Let $M\in \ffdi $, and assume that $M$ \Fin .
  Let $m\in \ndN _0$, $i_1,\dots,i_m\in \Ib $ and $N\in \ffdi(M)$.
  Assume that $(i_1,\dots,i_m)$ is $N$-admissible. For all $1 \leq l \leq m$ let
  $\beta_l = \beta_l^{[N]}(i_1,\dots,i_m)$, and
  $N_{\beta_l} = N_l(i_1, \dots ,i_m).$
  Then
  \begin{enumerate}
  \item $\beta_1, \dots,\beta_m$ are pairwise distinct elements in $\ndN _0^{\theta}$.
  \item For all $1 \leq l \leq m$, $N_{\beta_l} \subset E^N(i_1,\dots,i_m)
    $ is a finite-dimensional irreducible 
    subobject in $\ydH$
    of degree $\beta_l$, and $N_{\beta _l}\simeq 
    \Rf_{i_{l-1}} \cdots \Rf_{i_2}\Rf_{i_1}(N)_{i_l}$ in $\ydH $.
  \item  For all $1 \leq l \leq m$, the subalgebra $\fie \langle N_{\beta_l} \rangle$ of $\NA (N)$ generated by $N_{\beta_l}$ is isomorphic to $\NA (N_{\beta_l})$ as an algebra and as an $\ndN _0^{\theta}$-graded object in $\ydH$, where $N_{\beta_l}$ has degree $\beta_l$.
  \item The multiplication map   $\fie \langle N_{\beta _m}\rangle \otimes
      \cdots \otimes \fie \langle N_{\beta _1}\rangle \to E^N(i_1,\dots,i_m)$
      is an isomorphism of $\ndN _0^\theta $-graded objects in $\ydH $.
  \end{enumerate}
\end{lemma}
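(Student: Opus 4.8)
The plan is to argue by induction on $m$, the case $m=0$ being trivial. Let $m\ge 1$ and set $N'=\Rf _{i_1}(N)$, so that $E^N(i_1,\dots,i_m)=\bar{\sigma }_{i_1}^{N'}\big(E^{N'}(i_2,\dots,i_m)\big)$ by Definition~\ref{de:E}. First I would transport the hypotheses to $N'$: Definition~\ref{de:admissible} yields the recursion $\beta _{l+1}^{[N]}(i_1,\dots,i_m)=s_{i_1}^{[N]}\big(\beta _l^{[N']}(i_2,\dots,i_m)\big)$ for $1\le l\le m-1$, and $s_{i_1}^{[N]}\in \Aut (\ndZ ^\theta )$ is an involution with $s_{i_1}^{[N]}(\al _{i_1})=-\al _{i_1}$. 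Combining this with the $N$-admissibility condition \eqref{eq:admiss} for $(i_1,\dots,i_m)$ and $\beta _1^{[N]}=\al _{i_1}$, one checks that $(i_2,\dots,i_m)$ is $N'$-admissible and that $\al _{i_1}\notin \{\beta _l^{[N']}(i_2,\dots,i_m)\mid 1\le l\le m-1\}$ --- otherwise some $\beta _{l+1}^{[N]}$ would equal $-\beta _1^{[N]}$. Applying the induction hypothesis to $E^{N'}(i_2,\dots,i_m)$, I obtain pairwise distinct $\beta '_l:=\beta _l^{[N']}(i_2,\dots,i_m)\in \ndN _0^\theta $, modules $N'_{\beta '_l}$ (the objects of Definition~\ref{de:admissible} for $N'$) with $\fie\langle N'_{\beta '_l}\rangle \cong \NA (N'_{\beta '_l})$, and a multiplication isomorphism $\fie\langle N'_{\beta '_{m-1}}\rangle \ot \dots \ot \fie\langle N'_{\beta '_1}\rangle \xrightarrow{\sim}E^{N'}(i_2,\dots,i_m)$ of $\ndN _0^\theta $-graded objects in $\ydH $.

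Since the $\beta '_l$ are nonzero elements of $\ndN _0^\theta $ and $\al _{i_1}$ is not among them, the degree-$\al _{i_1}$ component of $E^{N'}(i_2,\dots,i_m)$ vanishes; hence $\Rf _{i_1}(N)_{i_1}\not\subset E^{N'}(i_2,\dots,i_m)$, the second branch of $\bar{\sigma }_{i_1}^{N'}$ in Theorem~\ref{th:sigmai} applies, and, by Corollary~\ref{co:MnotinE}, the restriction $\Theta :=(\Omega _{i_1}^N)^{-1}|_{E^{N'}(i_2,\dots,i_m)}$ takes values in $K_{i_1}^N\subset \NA (N)$, where it coincides with $T_{i_1}^{N'}$. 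Thus $E^N(i_1,\dots,i_m)=\Theta \big(E^{N'}(i_2,\dots,i_m)\big)\,\NA (N_{i_1})$, with $\Theta $ an injective homomorphism of algebras in $\ydH $ that, by \eqref{eq:Omgrad}, sends the degree-$\gamma $ part of $\NA (N')$ into the degree-$s_{i_1}^{[N]}(\gamma )$ part of $\NA (N)$. Because $N'_{\beta '_{l-1}}\subset E^{N'}(i_2,\dots,i_m)$ and the maps $T_{i_j}$ in Definition~\ref{de:admissible} are merely concatenated, $\Theta (N'_{\beta '_{l-1}})=T_{i_1}^{N'}(N'_{\beta '_{l-1}})=N_l(i_1,\dots,i_m)=N_{\beta _l}$ for $2\le l\le m$; since $\Theta $ is injective, $\ydH $-linear and shifts degrees by $s_{i_1}^{[N]}$, it follows that $N_{\beta _l}\subset \Theta \big(E^{N'}(i_2,\dots,i_m)\big)\subset E^N(i_1,\dots,i_m)$ is a finite-dimensional irreducible object of $\ydH $ of $\ndN _0^\theta $-degree $s_{i_1}^{[N]}(\beta '_{l-1})=\beta _l$ (so $\beta _l\in \ndN _0^\theta $, being the degree of a nonzero element of $\NA (N)$), with $N_{\beta _l}\simeq N'_{\beta '_{l-1}}\simeq \Rf _{i_{l-1}}\cdots \Rf _{i_1}(N)_{i_l}$ in $\ydH $, and $\fie\langle N_{\beta _l}\rangle =\Theta (\fie\langle N'_{\beta '_{l-1}}\rangle )\cong \NA (N'_{\beta '_{l-1}})\cong \NA (N_{\beta _l})$. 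For $l=1$ the same statements hold directly, with $N_{\beta _1}=N_{i_1}$, $\beta _1=\al _{i_1}$, and $\fie\langle N_{\beta _1}\rangle =\NA (N_{i_1})$, the Nichols subalgebra of $\NA (N)$ generated by $N_{i_1}$. This establishes (2) and (3), and also (1): $\beta _2,\dots,\beta _m$ are pairwise distinct because $s_{i_1}^{[N]}$ is injective, and $\beta _l=\al _{i_1}$ for some $l\ge 2$ would force $\beta '_{l-1}=s_{i_1}^{[N]}(\al _{i_1})=-\al _{i_1}\notin \ndN _0^\theta $.

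For (4), Lemma~\ref{le:EcapR} gives that the multiplication map $K_{i_1}^N\ot \NA (N_{i_1})\to \NA (N)$ is an isomorphism; restricting it along $\Theta \big(E^{N'}(i_2,\dots,i_m)\big)\subset K_{i_1}^N$ shows that $\Theta \big(E^{N'}(i_2,\dots,i_m)\big)\ot \NA (N_{i_1})\to E^N(i_1,\dots,i_m)$ is an isomorphism of $\ndN _0^\theta $-graded objects in $\ydH $. Applying the injective algebra map $\Theta $ to the induction-hypothesis decomposition of $E^{N'}(i_2,\dots,i_m)$ turns it into an isomorphism $\fie\langle N_{\beta _m}\rangle \ot \dots \ot \fie\langle N_{\beta _2}\rangle \xrightarrow{\sim}\Theta \big(E^{N'}(i_2,\dots,i_m)\big)$; composing with the previous one and using $\NA (N_{i_1})=\fie\langle N_{\beta _1}\rangle $ yields the desired isomorphism $\fie\langle N_{\beta _m}\rangle \ot \dots \ot \fie\langle N_{\beta _1}\rangle \xrightarrow{\sim}E^N(i_1,\dots,i_m)$.

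The main obstacle is organizational rather than conceptual: one must keep the three recursively defined families --- the roots $\beta _l$ (built from the reflections $s_{i_1}^{[N]}$), the modules $N_{\beta _l}$ (built from the maps $T_{i_j}$), and the $\ndN _0^\theta $-grading (which $\Omega _{i_1}^N$ preserves only up to the shift $s_{i_1}^{[N]}$) --- synchronized, and one must notice that $T_{i_1}^{N'}$ is an algebra map only on the subalgebra $E^{N'}(i_2,\dots,i_m)$, where it agrees with the isomorphism $(\Omega _{i_1}^N)^{-1}$. This is exactly where $N$-admissibility enters: it forces the degree-$\al _{i_1}$ component of $E^{N'}(i_2,\dots,i_m)$ to vanish, which is what makes Corollary~\ref{co:MnotinE} applicable.
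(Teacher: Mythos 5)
Your proposal is correct and follows essentially the same route as the paper: induction on $m$, peeling off $i_1$, using admissibility to show the degree-$\al_{i_1}$ component of $E^{\Rf_{i_1}(N)}(i_2,\dots,i_m)$ vanishes so that the second branch of $\bar{\sigma}_{i_1}$ applies, identifying $T_{i_1}^{\Rf_{i_1}(N)}$ with $(\Omega_{i_1}^N)^{-1}$ on that subalgebra, and transporting the inductive decomposition with the degree shift \eqref{eq:Omgrad}. Your argument is even slightly more explicit than the paper's on transferring admissibility to $(i_2,\dots,i_m)$ and on the pairwise distinctness in (1), but conceptually it is the same proof.
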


\begin{proof}
  The cases $m=0,1$ are
  clear since $N_{\al_{i_1}} = N_{i_1}$, and $E^N(i_1) = \NA(N_{i_1})$ by
  Thm.~\ref{th:sigmai}. Let $m > 1$ and assume that $(i_1,\dots,i_m)$ is
  $N$-admissible. Then $(i_2.\dots,i_m)$ is $\Rf_{i_1}(N)$-admissible. To prove
  the Lemma for $(i_1,\dots,i_m)$ we may assume by induction that (1)--(4)
  hold for $(i_2,\dots,i_m)$, that is, if we define
  \begin{align*}
  \gamma _l &= s_{i_2}^{r_{i_2}r_{i_1}([N])} s_{i_3}^{r_{i_3}r_{i_2}r_{i_1}([N])} \cdots s_{i_l}^{r_{i_l}\cdots r_{i_2}r_{i_1}([N])}(\al _{i_{l+1}}),\\
  \Rf_{i_1}(N)_{\gamma_l}&=T_{i_2}^{\Rf_{i_2}\Rf_{i_1}(N)}T_{i_3}^{\Rf_{i_3}\Rf_{i_2}\Rf_{i_1}(N)} \cdots T_{i_{l}}^{\Rf_{i_{l}} \cdots \Rf_{i_2}\Rf_{i_1}(N)} (\Rf_{i_{l}} \cdots \Rf_{i_2}\Rf_{i_1}(N)_{i_{l+1}})
  \end{align*}
  for all $1 \leq l \leq m-1$, then
  \begin{enumerate}
  \item [(a)] $\gamma_1, \dots,\gamma_{m-1}$ are pairwise distinct elements in $\ndN _0^{\theta}$.
  \item [(b)] For all $1 \leq l \leq m-1$, $\Rf_{i_1}(N)_{\gamma_l} \subset E^{\Rf_{i_1}(N)}(i_2,\dots,i_m) \subset \NA (\Rf_{i_1}(N))$ is an irreducible finite-dimensional subobject in $\ydH$ of degree $\gamma_l$.
  \item  [(c)] For all $1 \leq l \leq m-1$, the subalgebra $\fie \langle \Rf_{i_1}(N)_{\gamma_l} \rangle$ of $\NA (\Rf_{i_1}(N))$ generated by $\Rf_{i_1}(N)_{\gamma_l}$ is isomorphic to $\NA (\Rf_{i_1}(N)_{\gamma_l})$ as an algebra and as an $\ndN _0^{\theta}$-graded object in $\ydH$, where $\Rf_{i_1}(N)_{\gamma_l}$ has degree $\gamma_l$.
  \item [(d)] The multiplication map   $$\fie \langle \Rf_{i_1}(N)_{\gamma_{m-1}}\rangle \otimes
      \cdots \otimes \fie \langle \Rf_{i_1}(N)_{\gamma_{1}}\rangle \to E^{\Rf_{i_1}(N)}(i_2,\dots,i_m)$$
      is an isomorphism of $\ndN _0^\theta $-graded objects in $\ydH $.
  \end{enumerate}
  By Definition \ref{de:E}
  $$\bar{\sigma}_{i_1}^{\Rf_{i_1}(N)}(E^{\Rf_{i_1}(N)}(i_2,\dots,i_m)) =
  E^N(i_1,\dots,i_m).$$
  By assumption, $\al _{i_1} \neq \gamma_l$ for all $1 \leq l \leq m-1$.
  Hence by degree reasons it follows from (d) that
  $\Rf_{i_1}(N)_{i_1} \not\subset
  E^{\Rf_{i_1}(N)}(i_2\dots,i_m)$, since by (b) $\Rf_{i_1}(N)_{\gamma_l}$ has
  degree $\gamma_l$ for all $1 \leq l \leq m-1$, and $\gamma _1, \dots ,\gamma
  _{m-1}\in \ndN _0^\theta $ by (a).
  Then $$\bar{\sigma}_{i_1}^{\Rf_{i_1}(N)}(E^{\Rf_{i_1}(N)}(i_2,\dots,i_m)) =
  (\Omega_{i_1}^N)^{-1}(E^{\Rf_{i_1}(N)}(i_2,\dots,i_m)) \NA (N_{i_1}),$$ and
  $(\Omega_{i_1}^N)^{-1}(E^{\Rf_{i_1}(N)}(i_2,\dots,i_m)) \subset K_{i_1}^N$ by
  Thm.~\ref{th:sigmai}.
  Thus the multiplication map
  \begin{equation}\label{iso}
  (\Omega_{i_1}^N)^{-1}(E^{\Rf_{i_1}(N)}(i_2,\dots,i_m))\otimes \NA (N_{i_1}) \to E^N(i_1,\dots,i_m)
  \end{equation}
  is bijective. Moreover the restriction of the map $T_{i_1}^{\Rf_{i_1}(N)}$ to
  $E^{\Rf_{i_1}(N)}(i_2,\dots,i_m)$ is the restriction of the algebra
  isomorphism $(\Omega_{i_1}^N)^{-1}$. Therefore we obtain from (d) that the
  multiplication map
  \begin{align*}
  \fie \langle T_{i_1}^{\Rf_{i_1}(N)}(\Rf_{i_1}(N)_{\gamma_{m-1}})\rangle \otimes
      \cdots \otimes \fie \langle
      T_{i_1}^{\Rf_{i_1}(N)}(&\Rf_{i_1}(N)_{\gamma_{1}})\rangle \to\\
      &T_{i_1}^{\Rf_{i_1}(N)}(E^{\Rf_{i_1}(N)}(i_2,\dots,i_m))
 \end{align*}
      is bijective. Since
  $T_{i_1}^{\Rf_{i_1}(N)}(\Rf_{i_1}(N)_{\gamma_l}) = N_{\beta_{l+1}}$
  for all $1 \leq l \leq m-1$,
  (4) follows from the bijectivity of the map in Eq.~\eqref{iso}.

  Note that $s_{i_1}^{r_{i_1}([N])}(\gamma_l) = \beta_{l+1}$ for all $1 \leq l
  \leq m-1$. By \eqref{eq:Omgrad} the subspace
  $T_{i_1}^{\Rf_{i_1}(N)}(\Rf_{i_1}(N)_{\gamma_l}) =
  (\Omega_{i_1}^N)^{-1}(\Rf_{i_1}(N)_{\gamma_l}) = N_{\beta_{l+1}}$ of $\NA (N)$
  has degree
  $\beta_{l+1}$ for all $1 \leq l \leq m-1$. By definition $N_{\beta_1} =
  N_{i_1}$ has degree $\beta_1 = \al _{i_1}$.
  It now follows from (4) that $\beta_l
  \in \ndN _0^{\theta}$ for all $ 1 \leq l \leq m$. Thus (1) holds by
  the characterization of $N$-admissibility via Eq.~\eqref{eq:admiss}.
  Finally,
  (2) and (3) follow from (b) and (c) since $(\Omega_{i_1}^N)^{-1}$ is an
  algebra isomorphism in $\ydH$, and the change of grading is
  given by \eqref{eq:Omgrad}.
\end{proof}

\begin{lemma}\label{admissible2}
Let $M\in \ffdi$, and assume that $M$ \Fin .
Let $m\in \ndN _0$, $i_1,\dots,i_m\in \Ib $ and $N\in \ffdi(M)$. For all
$1\leq l \leq m$ let
$\beta_l = \beta_l^{[N]}(i_1,\dots,i_m)$.
\begin{enumerate}
\item Let $j \in \Ib$ and assume that $\al _j \notin \{\beta_1,\dots,\beta_m\}$ and that $(i_1,\dots,i_m)$ is $N$-admissible. Then $(j,i_1,\dots,i_m)$ is $\Rf_j(N)$-admissible.
\item Assume that $\al _j \in \{\beta_1,\dots,\beta_m\}$ for all $j \in \Ib$ and that $(i_1,\dots,i_m)$ is $N$-admissible. Then $E^N(i_1,\dots,i_m)= \NA(N)$.
\item Assume that $\cC(M)$ is the Cartan scheme of a root system. Then
  $(i_1,\dots,i_m)$ is $N$-admissible if and only if $\id_{[N]} s_{i_1} \cdots
  s_{i_m}$ is a reduced expression.
\end{enumerate}
\end{lemma}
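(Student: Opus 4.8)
The plan is to treat the three statements separately: (1) and (3) are purely combinatorial assertions about the real roots $\beta_l^{[N]}(i_1,\dots,i_m)$ of a word, while (2) follows almost at once from Lemma~\ref{admissible1}.

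For (1), I would first record how these roots transform when a letter is prepended. Since $[\Rf_j(N)]=r_j([N])$, and hence $r_j([\Rf_j(N)])=[N]$ by (C1), unwinding Def.~\ref{de:admissible} gives $\beta_1^{[\Rf_j(N)]}(j,i_1,\dots,i_m)=\al_j$ and $\beta_l^{[\Rf_j(N)]}(j,i_1,\dots,i_m)=s_j^{[N]}\bigl(\beta_{l-1}^{[N]}(i_1,\dots,i_m)\bigr)$ for $2\le l\le m+1$. Now $s_j^{[N]}\in\Aut(\ndZ^\theta)$ is bijective with $s_j^{[N]}(\al_j)=-\al_j$ because $a_{jj}^{[N]}=2$. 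Applying $(s_j^{[N]})^{-1}$, the inequality $\beta_l^{[\Rf_j(N)]}\ne-\beta_k^{[\Rf_j(N)]}$ for $1\le k<l\le m+1$ becomes, for $k=1$, the relation $\beta_{l-1}^{[N]}(i_1,\dots,i_m)\ne\al_j$, which is the hypothesis $\al_j\notin\{\beta_1,\dots,\beta_m\}$; and for $2\le k<l$, the relation $\beta_{l-1}^{[N]}(i_1,\dots,i_m)\ne-\beta_{k-1}^{[N]}(i_1,\dots,i_m)$, which is the $N$-admissibility of $(i_1,\dots,i_m)$. Hence $(j,i_1,\dots,i_m)$ is $\Rf_j(N)$-admissible.

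For (2), Lemma~\ref{admissible1}(2) shows that for each $l$ the module $N_{\beta_l}$ is a nonzero finite-dimensional irreducible subobject of $\NA(N)$ in $\ydH$ of degree $\beta_l$ contained in $E^N(i_1,\dots,i_m)$. Given $j\in\Ib$, choose $l$ with $\beta_l=\al_j$; then $N_{\beta_l}$ is a nonzero subobject of $\NA(N)_{\al_j}=N_j$, and since $N_j$ is irreducible we get $N_j=N_{\beta_l}\subset E^N(i_1,\dots,i_m)$. As $\NA(N)$ is generated as an algebra by $N_1\oplus\cdots\oplus N_\theta$ and $E^N(i_1,\dots,i_m)$ is a subalgebra, this forces $E^N(i_1,\dots,i_m)=\NA(N)$.

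For (3), which I expect to be the main point, I would induct on $m$, with $m\le1$ trivial. The observation to make is that the inequalities defining $N$-admissibility of $(i_1,\dots,i_m)$ split into those indexed by $k\ge2$ --- which, because $[\Rf_{i_1}(N)]=r_{i_1}([N])$, are precisely those defining $\Rf_{i_1}(N)$-admissibility of $(i_2,\dots,i_m)$ --- together with the family indexed by $k=1$, namely $\al_{i_1}\ne\beta_l^{r_{i_1}([N])}(i_2,\dots,i_m)$ for $1\le l\le m-1$. Put $w=\id_{r_{i_1}([N])}s_{i_2}\cdots s_{i_m}$, so that $\id_{[N]}s_{i_1}\cdots s_{i_m}=s_{i_1}^{r_{i_1}([N])}w$. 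By the split and the induction hypothesis, $(i_1,\dots,i_m)$ is $N$-admissible if and only if $w$ is reduced (that is, $\ell(w)=m-1$) and $\al_{i_1}\notin\{\beta_l^{r_{i_1}([N])}(i_2,\dots,i_m)\mid 1\le l\le m-1\}$. When $\ell(w)=m-1$, Prop.~\ref{pr:posroots} shows the roots $\beta_l^{r_{i_1}([N])}(i_2,\dots,i_m)$, $1\le l\le m-1$, are positive and pairwise distinct, so this last set equals $\Lambda_+^{r_{i_1}([N])}(w)$; by Cor.~\ref{co:aliinL} the condition $\al_{i_1}\notin\Lambda_+^{r_{i_1}([N])}(w)$ is equivalent to $\ell(s_{i_1}^{r_{i_1}([N])}w)=\ell(w)+1=m$, that is, to $\id_{[N]}s_{i_1}\cdots s_{i_m}$ being reduced. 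Conversely, if $\id_{[N]}s_{i_1}\cdots s_{i_m}$ is reduced then $\ell(w)=m-1$ by subadditivity of $\ell$, and the implications reverse. Hence $(i_1,\dots,i_m)$ is $N$-admissible if and only if $\id_{[N]}s_{i_1}\cdots s_{i_m}$ is reduced. The hard part is bookkeeping rather than ideas: keeping the objects of $\Wg(M)$ straight under $r_{i_1}$, matching the two equivalent formulations of admissibility in Def.~\ref{de:admissible}, and applying Prop.~\ref{pr:posroots} and Cor.~\ref{co:aliinL} to $w$ with the correct base object.
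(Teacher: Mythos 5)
Your proposal is correct. Parts (1) and (2) coincide with the paper's argument: the paper disposes of (1) with ``holds by definition'' (the conditions for $(j,i_1,\dots,i_m)$ with $k=1$ in the first formulation of Def.~\ref{de:admissible} are literally the hypothesis $\al_j\notin\{\beta_1,\dots,\beta_m\}$, and those with $k\ge 2$ are the admissibility of $(i_1,\dots,i_m)$), whereas you verify the same thing through the equivalent criterion \eqref{eq:admiss} after transporting the roots by $s_j$ --- a slightly longer but equally valid bookkeeping; and (2) is in both cases the observation that $N_j\subset E^N(i_1,\dots,i_m)$ for all $j$ by Lemma~\ref{admissible1}(2) forces equality with $\NA(N)$. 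The real divergence is in (3). The paper proves the forward direction in one line by combining Lemma~\ref{admissible1}(1) (the $\beta_l$ are pairwise distinct elements of $\ndN_0^\theta$, a fact whose proof uses the Nichols-algebra/PBW structure) with Prop.~\ref{pr:Lambda2}, and the converse from Prop.~\ref{pr:posroots}. You instead run an induction on $m$, splitting the admissibility conditions into those for the tail $(i_2,\dots,i_m)$ at $\Rf_{i_1}(N)$ plus the $k=1$ conditions, and translating the latter via Prop.~\ref{pr:posroots} and Cor.~\ref{co:aliinL} into the statement $\ell(s_{i_1}^{r_{i_1}([N])}w)=\ell(w)+1$; this is a purely combinatorial argument inside the Weyl groupoid, independent of Lemma~\ref{admissible1}. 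What the paper's route buys is brevity, reusing the algebraic lemma already at hand; what yours buys is that the equivalence in (3), which is intrinsically a statement about the Cartan scheme and its root system, gets a proof that does not pass through the Nichols algebra at all (essentially re-deriving the relevant consequence of Lemma~\ref{le:Lambda1} and Cor.~\ref{co:aliinL} by induction). Both are complete and correct.
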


\begin{proof} (1) holds by definition, and (2) follows from Lemma
  \ref{admissible1} (2) since $N_j \subset E^N(i_1,\dots,i_m)$ for all $j \in
  \Ib$ implies that $E^N(i_1,\dots,i_m)= \NA(N)$.

Suppose in (3) that $(i_1,\dots,i_m)$ is $N$-admissible. Then $\id_{[N]} s_{i_1}
\cdots s_{i_m}$ is a reduced expression
by Lemma \ref{admissible1} (1) and Prop.~\ref{pr:Lambda2}. Conversely,
suppose that $\id_{[N]} s_{i_1} \cdots s_{i_m}$ is a reduced expression. Then
$\beta_1,\dots,\beta_m$ are  pairwise distinct elements in
$\mathbb{N}_0^{\theta}$ by Prop.~\ref{pr:posroots}.
Hence $(i_1,\dots,i_m)$ is $N$-admissible.
\end{proof}

We recall a notion from \cite{p-HeckSchn08a}.

\begin{defin} \label{de:decomp}
Let $N \in \ffdi$. Then the
Nichols algebra $\NA(N)$ of $N$ is called {\em decomposable}
if there exist a totally ordered index set $(L,\le)$ and a family
$(W_l)_{l\in L}$ of \fd{} irreducible $\ndN _0^\theta $-graded objects in
$\ydH $
such that
\begin{equation}\label{decomposable}
  \NA (N)\simeq
  \ot _{l\in L}\NA (W_l)
\end{equation}
as $\ndN _0^\theta $-graded objects in $\ydH $, where $\deg N_i=\al _i$
for $1\le i\le \theta$.
\end{defin}

In such a decomposition the isomorphism classes of the
Yetter-Drinfeld modules $W_l$ and
their degrees in $\mathbb{N}_0 ^{\theta} $ are uniquely determined  by
\cite[Lemma 4.7]{p-HeckSchn08a}, and we define the positive roots
${\boldsymbol{\Delta}}_+^{[N]}$
and the roots ${\boldsymbol{\Delta}}^{[N]}$ of $[N]$ by
\begin{align*}
{\boldsymbol{\Delta}}_+^{[N]} &= \{ \deg(W_l) \mid l \in L \},\\
{\boldsymbol{\Delta}}^{[N]} &={\boldsymbol{\Delta}}_+^{[N]} \cup -{\boldsymbol{\Delta}}_+^{[N]}.
\end{align*}
In \cite{p-HeckSchn08a} we showed

\begin{theor}\cite[Thm.\,6.11]{p-HeckSchn08a}\label{cite}
Let $M \in \ffdi$ and assume that $M$ admits all reflections and that
$\NA(M)$ is decomposable. Then
$\NA(N)$ is decomposable for all $N \in \ffdi(M)$, and
$\Rwg (M)=(\cC (M), (\rsys X)_{X\in \fiso (M)})$
is a root system of type $\cC (M)$.
\end{theor}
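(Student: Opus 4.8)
The plan is to prove the two assertions simultaneously by induction on the smallest $n\in \ndN _0$ for which $N=\Rf _{i_n}\cdots \Rf _{i_1}(M)$ for some $i_1,\dots,i_n\in \Ib $. Since $M$ \Fin, the set $\ffdi(M)$ is stable under each $\Rf _j$ and every $N\in \ffdi(M)$ is $j$-finite for all $j\in \Ib $; in particular $\cC (M)$ is a Cartan scheme and all the isomorphisms $\Omega _j^N:K_j^N\#\NA (N_j^*)\to \NA (\Rf _j(N))$ and the bijections $\sigma _j^N$, $\bar{\sigma }_j^{\Rf _j(N)}$ of Sections~\ref{sec:refl}--\ref{sec:Rcs} are available. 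It therefore suffices to prove: (i) if $\NA (N)$ is decomposable and $i\in \Ib $, then $\NA (\Rf _i(N))$ is decomposable; and (ii) if $\NA (N)$ is decomposable for every $N\in \ffdi(M)$, then $(\rsys {[N]})_{[N]\in \fiso(M)}$ satisfies (R1)--(R4).

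For (i) I would combine two structural facts. First, by Lemma~\ref{le:EcapR} (with $B=\NA (N_i)$ and $\rcs =\NA (N)$) the multiplication map $K_i^N\ot \NA (N_i)\to \NA (N)$ is an isomorphism of $\ndN _0^\theta $-graded objects in $\ydH $. Second, by Thm.~\ref{th:Omega} the map $\Omega _i^N$ is an isomorphism of braided Hopf algebras sending degree $\gamma $ to degree $s_i^N(\gamma )$ by \eqref{eq:Omgrad}; composing with the isomorphism $K_i^N\ot \NA (N_i^*)\to K_i^N\#\NA (N_i^*)$ shows that $\NA (\Rf _i(N))$, with its $\ndN _0^\theta $-grading, is isomorphic as an object in $\ydH $ to $K_i^N\ot \NA (N_i^*)$ with the grading twisted by $s_i^N$. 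Hence it is enough to exhibit $K_i^N$ as a tensor product of Nichols algebras of \fd{} irreducible $\ndN _0^\theta $-graded objects in $\ydH $. Writing $\NA (N)\simeq \ot _{l\in L}\NA (W_l)$, the homogeneous component of degree $\al _i$ of $\NA (N)$ is $N_i$, which is irreducible; since $\al _i$ is not a sum of two nonzero elements of $\ndN _0^\theta $, exactly one $l_0\in L$ has $\deg W_{l_0}=\al _i$, and then $W_{l_0}\simeq N_i$. The core step is that the remaining factors yield a decomposition $K_i^N\simeq \ot _{l\in L\setminus \{l_0\}}\NA (W_l)$; this is forced by the uniqueness of decompositions \cite[Lemma~4.7]{p-HeckSchn08a} together with the splitting $\NA (N)\simeq K_i^N\ot \NA (N_i)$ (one divides out the factor $\NA (N_i)$), with the Hilbert series identity $\Hilb _{\NA (N)}=\Hilb _{K_i^N}\,\Hilb _{\NA (N_i)}$ for bookkeeping. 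Transporting $\ot _{l\neq l_0}\NA (W_l)$ through $\Omega _i^N$ and adjoining the factor $\NA (N_i^*)=\NA (\Rf _i(N)_i)$ then presents $\NA (\Rf _i(N))$ as such a tensor product, whose factors have degrees in $\ndN _0^\theta $ because $\NA (\Rf _i(N))$ is $\ndN _0^\theta $-graded; so $\NA (\Rf _i(N))$ is decomposable.

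For (ii) I would read the axioms off the decompositions, now available for all $N\in \ffdi(M)$. Axiom (R1) is immediate from the definition of $\rsys {[N]}$ and the homogeneity of the $W_l$. For (R2): the complement $K_i^N$ is generated by $(\ad \,\NA (N_i))(N_j)$ with $j\neq i$, so it has trivial homogeneous component in every nonzero multiple of $\al _i$; comparing Hilbert series in the single variable $t_i$ and using irreducibility of $N_i$ shows that the $W_l$ of degree in $\ndN _0\al _i$ reconstruct exactly $\NA (N_i)$, whence $\rsys {[N]}\cap \ndZ \al _i=\{\al _i,-\al _i\}$. For (R3): step (i) gives $\rsys {[\Rf _i(N)]}_+=s_i^N\big(\rsys {[N]}_+\setminus \{\al _i\}\big)\cup \{\al _i\}$, and since $s_i^N(\al _i)=-\al _i$ this is precisely $s_i^N(\rsys {[N]})=\rsys {r_i([N])}$. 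For (R4): if $m_{i,j}^{[N]}=\#\big(\rsys {[N]}\cap (\ndN _0\al _i+\ndN _0\al _j)\big)$ is finite, restricting to the index set $\{i,j\}$ yields a rank two sub-Cartan-scheme with finite root system; as in the finite-dimensional case one checks, via a length argument in the spirit of Prop.~\ref{pr:posroots}, that iterating $r_i,r_j$ returns to $[N]$ after $m_{i,j}^{[N]}$ double steps, i.e.\ $(r_ir_j)^{m_{i,j}^{[N]}}([N])=[N]$.

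The main obstacle is the decomposition step inside (i): an arbitrary decomposition $\ot _{l\in L}\NA (W_l)$ of $\NA (N)$ need not be compatible with the splitting $\NA (N)\simeq K_i^N\ot \NA (N_i)$, so one genuinely needs the uniqueness statement \cite[Lemma~4.7]{p-HeckSchn08a} (or an induction on the $\ndN _0^\theta $-degree) in order to peel off the factor $\NA (N_i)$ and transport the remainder through $\Omega _i^N$. The rank two reduction required for (R4) is a secondary point.
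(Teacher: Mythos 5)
First, note that this paper does not prove the statement at all: it is imported verbatim from \cite[Thm.\,6.11]{p-HeckSchn08a}, so there is no in-paper proof to compare with, and any argument has to be measured against what that reference actually requires. Your step (i) (propagation of decomposability under one reflection) and your verification of (R1)--(R3) are essentially sound in outline: the splitting $\NA (N)\simeq K_i^N\ot \NA (N_i)$ from Lemma~\ref{le:EcapR}, the degree twist \eqref{eq:Omgrad} of $\Omega _i^N$, and the support argument for why the twisted degrees stay in $\ndN _0^\theta $ are the right ingredients. You correctly identify that peeling the factor $\NA (N_i)$ off a given decomposition is not a formal consequence of uniqueness alone; it needs a cancellation statement (Krull--Schmidt in each finite-dimensional homogeneous component, by induction on degree), which is exactly the role of \cite[Lemma~4.8]{p-HeckSchn08a} as used in the proof of Lemma~\ref{le:Ewdec}. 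So far, this matches the structure of the cited proof.

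The genuine gap is (R4). You dispose of it with ``a length argument in the spirit of Prop.~\ref{pr:posroots}'', but Prop.~\ref{pr:posroots} is a statement \emph{about root systems}, i.e.\ it presupposes (R1)--(R4); invoking it here is circular. More importantly, (R4) is not a rank-two combinatorial statement about roots: it asserts that the \emph{object} $[N]\in \fiso (M)$, i.e.\ the whole $\theta $-tuple of isomorphism classes of Yetter--Drinfeld modules, returns after $(r_ir_j)^{m_{i,j}^{[N]}}$, and each reflection $\Rf _i$ changes \emph{every} component, including $N_k$ with $k\notin \{i,j\}$, via $(\ad \,N_i)^{-a_{ik}}(N_k)$. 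Even if one shows that the composite linear map $(s_is_j)^{m}$ is the identity on $\ndZ ^\theta $ (itself nontrivial, since it requires controlling the Cartan entries $a^{X}_{ik}$ along the whole $\{i,j\}$-orbit), one still has to show that the Yetter--Drinfeld modules produced after $2m$ reflections are isomorphic to the original ones. The way this is done in \cite{p-HeckSchn08a} is to identify, using decomposability and the uniqueness of the factors, the module obtained at each step with the decomposition factor $W_{l(\beta )}$ of $\NA (N)$ sitting in the corresponding degree $\beta $, so that it depends only on $\beta $ and not on the sequence of reflections --- precisely the content that reappears in this paper as Cor.~\ref{co:decomposable2}, which is here a \emph{consequence} of the theorem you are trying to prove and therefore cannot be assumed. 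Without an argument of this kind your proof of (R4), and hence of the root-system claim, does not go through.
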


\begin{corol}\label{co:decomposable2}
Let $M \in \ffdi$ and assume that $M$ admits all reflections and that
$\NA(M)$ is decomposable.
Let $N \in \ffdi(M)$ and  $\lambda \in \rersys {[N]}_+$.
\begin{enumerate}
\item There is exactly one $l(\lambda) \in L$ with $\lambda = \deg W_{l(\lambda)}$ in \eqref{decomposable}.
\item
 Let
 $P \in \ffdi(M)$,
 $w = \id_{[N]} s_{i_1}\cdots s_{i_m} \in \Hom ([P],[N])$
 be a reduced expression and $i\in \Ib $
 such that $w(\al _i) = \lambda$. Let $N_{\lambda } = N_{m+1}(i_1,\dots,i_m,i)
 \subset \NA(N)$. Then $\deg N_{\lambda } = \lambda$, $W_{l(\lambda )}\simeq 
 P_i\simeq N_{\lambda }$ in $\ydH$, and $\fie \langle N_{\lambda} \rangle \cong
 \NA(W_{l(\lambda )})$ as algebras and $\ndN _0^{\theta}$-graded objects in
 $\ydH$.
 \item Let $j\in \Ib $, $\mu =s_j^N(\lambda )$, $Q=\Rf _j(N)$,
   and assume that $\lambda \not=\al _j$. Similarly to
   $N_\lambda $ in (2), define $Q_\mu $ using a
   reduced expression of an element $w'\in \Homsto{[Q]}$. Then $N_\lambda
   \simeq Q_\mu $ in $\ydH $.
\end{enumerate}
\end{corol}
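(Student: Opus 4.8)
\emph{Setup and part (1).} By Theorem~\ref{cite}, $\NA(N)$ is decomposable for every $N\in\ffdi(M)$ and $\Rwg(M)=(\cC(M),(\rsys X)_{X\in\fiso(M)})$ is a root system; fix once and for all a decomposition $\NA(N)\simeq\ot_{l\in L}\NA(W_l)$ as in \eqref{decomposable}, so that $\rsys{[N]}_+=\{\deg W_l\mid l\in L\}$ with the $\deg W_l$ pairwise distinct (see \cite[\S4]{p-HeckSchn08a}). Since the root system axiom (R3) gives $s_i^X(\rsys X)=\rsys{r_i(X)}$ and $\al_i\in\rsys Y$ by (R2), every positive real root of $[N]$ lies in $\rsys{[N]}_+$; hence there is exactly one $l(\lambda)\in L$ with $\lambda=\deg W_{l(\lambda)}$, which is (1). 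The remaining task is to identify $N_\lambda$ with $W_{l(\lambda)}$.

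\emph{Part (2).} Let $w=\id_{[N]}s_{i_1}\cdots s_{i_m}\in\Hom([P],[N])$ be reduced with $w(\al_i)=\lambda$. By Lemma~\ref{admissible2}(3) (here $\cC(M)$ is the Cartan scheme of a root system) the tuple $(i_1,\dots,i_m)$ is $N$-admissible, so $\beta_1,\dots,\beta_m\in\ndN_0^\theta$ by Lemma~\ref{admissible1}(1); as $\beta_{m+1}^{[N]}(i_1,\dots,i_m,i)=w(\al_i)=\lambda\in\rersys{[N]}_+\subseteq\ndN_0^\theta$, the characterization \eqref{eq:admiss} shows that $(i_1,\dots,i_m,i)$ is $N$-admissible as well (hence a reduced expression). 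Applying Lemma~\ref{admissible1} to $(i_1,\dots,i_m,i)$ gives that $N_\lambda=N_{m+1}(i_1,\dots,i_m,i)\subset\NA(N)$ is a finite-dimensional irreducible subobject in $\ydH$ of degree $\lambda$, that $N_\lambda\simeq\Rf_{i_m}\cdots\Rf_{i_1}(N)_i\simeq P_i$ in $\ydH$ (the last isomorphism because $[\Rf_{i_m}\cdots\Rf_{i_1}(N)]=r_{i_m}\cdots r_{i_1}([N])=[P]$), and that $\fie\langle N_\lambda\rangle\cong\NA(N_\lambda)$ as an algebra and an $\ndN_0^\theta$-graded object in $\ydH$. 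Together with the identification $N_\lambda\simeq W_{l(\lambda)}$ below, this proves all of (2).

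\emph{The identification $N_\lambda\simeq W_{l(\lambda)}$.} I would argue by induction on $m=\ell(w)$, showing along the way that $[N_\lambda]$ depends only on $[N]$ and $\lambda$. If $m=0$ then $\lambda=\al_i$ and $N_\lambda=N_i=\NA(N)_{\al_i}$ is irreducible, hence is the unique factor $W_l$ of degree $\al_i$. If $m\ge1$, put $w''=\id_{[\Rf_{i_1}(N)]}s_{i_2}\cdots s_{i_m}\in\Hom([P],[\Rf_{i_1}(N)])$; it is reduced of length $m-1$ and $w''(\al_i)=s_{i_1}(\lambda)=:\mu\in\rersys{[\Rf_{i_1}(N)]}_+$ with $\mu\ne\al_{i_1}$ (else $\lambda=-\al_{i_1}$). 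By the construction of the modules $N_l$ and the maps $T_j$ one has $N_\lambda=T_{i_1}^{\Rf_{i_1}(N)}(\Rf_{i_1}(N)_\mu)$ with $\Rf_{i_1}(N)_\mu$ built from $w''$, and since the restriction of $T_{i_1}^{\Rf_{i_1}(N)}$ to the relevant subalgebra is the inverse of the $\ydH$-algebra isomorphism $\Omega_{i_1}^N$, this yields $N_\lambda\simeq\Rf_{i_1}(N)_\mu$ in $\ydH$; by the induction hypothesis $\Rf_{i_1}(N)_\mu\simeq W'_l$, the factor of degree $\mu$ of $\NA(\Rf_{i_1}(N))$. Finally, the reflection $\sigma_{i_1}^N$ of Theorem~\ref{th:sigmai} identifies the factor of degree $\nu$ of $\NA(N)$ with the factor of degree $s_{i_1}(\nu)$ of $\NA(\Rf_{i_1}(N))$ whenever $\nu,s_{i_1}(\nu)\in\ndN_0^\theta$ --- this reflection-compatibility of the decomposition is what is proved in the course of \cite[Thm.~6.11]{p-HeckSchn08a}; taking $\nu=\lambda$ gives $W'_l\simeq W_{l(\lambda)}$, hence $N_\lambda\simeq W_{l(\lambda)}$. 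I expect this compatibility to be the one substantial ingredient; everything else is bookkeeping with $\Omega_i^M$, $T_i^M$, and $\NA(\varphi_i^M)$ along a reduced expression.

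\emph{Part (3).} Since $\lambda\in\rersys{[N]}_+$ and $\lambda\ne\al_j$, the element $\mu=s_j^N(\lambda)$ lies in $\rersys{[Q]}_+$, $[Q]=r_j([N])$, and $\mu\ne\al_j$ with $s_j^{[Q]}(\mu)=\lambda$. By (2), $N_\lambda\simeq W_{l(\lambda)}$ and $Q_\mu\simeq W^Q_{l(\mu)}$, the factors of degree $\lambda$ and $\mu$ in the decompositions of $\NA(N)$ and $\NA(Q)=\NA(\Rf_j(N))$; by the reflection-compatibility used above, now for $j$, the factor of degree $\mu$ of $\NA(Q)$ is identified with the factor of degree $s_j(\mu)=\lambda$ of $\NA(N)$, i.e.\ $W^Q_{l(\mu)}\simeq W_{l(\lambda)}$. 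Therefore $N_\lambda\simeq Q_\mu$ in $\ydH$.
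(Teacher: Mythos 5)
Your bookkeeping in (2) is the same as the paper's (admissibility of $(i_1,\dots,i_m,i)$ via Lemma~\ref{admissible2}(3) and Eq.~\eqref{eq:admiss}, then Lemma~\ref{admissible1} for $\deg N_\lambda=\lambda$, $N_\lambda\simeq \Rf_{i_m}\cdots\Rf_{i_1}(N)_i\simeq P_i$ and $\fie\langle N_\lambda\rangle\simeq\NA(N_\lambda)$), but the two facts carrying the real weight are exactly the ones the paper imports from \cite[Lemma~7.1]{p-HeckSchn08a}, and your substitutes for them have gaps. First, in (1) you assert that the degrees $\deg W_l$ in \eqref{decomposable} are pairwise distinct, citing only the uniqueness statement \cite[Lemma~4.7]{p-HeckSchn08a}. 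That lemma says the multiset of pairs (isomorphism class, degree) is independent of the chosen decomposition; it does not exclude repetitions, and in the present generality (no finiteness of $\rersys{[M]}$) repetitions do occur: imaginary roots may have multiplicity greater than one, e.g.\ for diagonal braidings of affine Cartan type, which admit all reflections and are decomposable by Kharchenko's PBW theorem. What is true, and what (1) asserts, is multiplicity one at \emph{real} roots --- precisely \cite[Lemma~7.1(1)]{p-HeckSchn08a} --- and your argument does not prove it. Second, your induction establishing $N_\lambda\simeq W_{l(\lambda)}$ hinges on the ``reflection-compatibility'' of the decomposition (the factor of degree $\nu$ of $\NA(N)$ matches the factor of degree $s_{i}(\nu)$ of $\NA(\Rf_{i}(N))$), which you do not prove but attribute to ``the course of'' the proof of \cite[Thm.~6.11]{p-HeckSchn08a}. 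That statement is of essentially the same depth as \cite[Lemma~7.1(2)]{p-HeckSchn08a} (the paper's citation), and it moreover presupposes the multiplicity-one fact above to make ``the factor of degree $\nu$'' well defined. So either you cite Lemma~7.1 --- and then your induction is redundant, since 7.1(2) already gives $W_{l(\lambda)}\simeq P_i$ --- or you owe a proof of the compatibility (via \cite[Lemmas~4.7, 4.8]{p-HeckSchn08a} and the $\ndN_0^\theta$-graded $\ydH$-isomorphism $\Omega_i^N$), which is missing. A smaller slip: in the inductive step you need $\lambda\neq\al_{i_1}$ (not merely $\mu\neq\al_{i_1}$) to know that $\mu=s_{i_1}(\lambda)$ is positive; this does follow from the pairwise distinctness of $\beta_1,\dots,\beta_{m+1}$ in Lemma~\ref{admissible1}(1), but you should say so.

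For (3), note that the paper gets by without any compatibility statement: since (2) shows the isomorphism class of $Q_\mu$ is independent of the chosen reduced expression, one may take $w'=s_jw$, and then (2) gives $Q_\mu\simeq P_i\simeq N_\lambda$ directly. Your route through the decompositions of $\NA(N)$ and $\NA(Q)$ is correct once the compatibility is supplied, but it is strictly more machinery than needed.
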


\begin{proof}
(1) is shown in \cite[Lemma 7.1 (1)]{p-HeckSchn08a}, and in (2),
$W_{l(\lambda)}\simeq P_i$ by \cite[Lemma 7.1 (2)]{p-HeckSchn08a}. Since
$\id_{[N]} s_{i_1}\cdots s_{i_m}$ is a reduced expression, $(i_1,\dots,i_m)$ is
$N$-admissible by Lemma \ref{admissible2} (3). Then
$(i_1,\dots,i_m,i)$ is $N$-admissible: Indeed,
$\lambda =\beta ^{[N]}_{m+1}(i_1,\dots,i_m,i)\in \ndN _0^\theta $, and hence it
differs from all $-\beta ^{[N]}_l(i_1,\dots,i_m,i)$ with $1\le l\le m$ by
Lemma~\ref{admissible1} (1).
The remaining part of (2)
follows from Lemma \ref{admissible1} since
$P\simeq \Rf _{i_m}\cdots \Rf _{i_1}(N)$.

Now we prove (3).
Since $\lambda \not=\al _j$, $\mu \in \rersys {[P]}_+$, and
hence $Q_\mu $ can be defined.
By (2), $Q_\mu $ is independent of the choice of $w'$. Hence we may choose
$w'=s_jw$. Then (2) yields that
$N_\lambda \simeq P_i$ and $Q_\mu \simeq P_i$ in $\ydH $, which proves
the claim.
\end{proof}

In the next lemma, which will be needed for Thm.~\ref{th:bijective},
we follow the notation in Cor.~\ref{co:decomposable2}.
For each $\lambda \in \rersys {[N]}_+$ we choose $w_\lambda 
\in \Homsto {[N]}$,
a reduced expression $\id_{[N]} s_{j_1}\cdots s_{j_n}$ of
$w_\lambda $,
and $i\in \Ib $ such that $w_\lambda (\al _i) = \lambda $. Then we define
\begin{equation}
  N_{\lambda } = N_{n+1}(j_1,\dots,j_n,i) \subset \NA(N).
\end{equation}
By Cor.~\ref{co:decomposable2}, the isomorphism class of $N_\lambda \in \ydH $
does not depend on $w_\lambda $ and $i$.

\begin{lemma}\label{le:Ewdec}
  Let $M\in \ffdi$ and $N\in \ffdi (M)$. Assume that $M$ admits all
  reflections and that $\NA(M)$ is decomposable.
  Let $m\in \ndN _0$, $i_1,\dots ,i_m\in \Ib $.
  Then there exists an isomorphism
  \[ E^N(i_1,\dots,i_m)\simeq
  \mathop{\otimes }_{\lambda \in \Lambda _+^{[N]}(i_1,\dots,i_m)}
  \NA (N_\lambda )
  \]
  of $\ndN _0^\theta $-graded objects in $\ydH $.
  \end{lemma}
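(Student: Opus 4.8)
The plan is to induct on $m$, using the recursion $E^N(i_1,\dots,i_m)=\bar{\sigma}_{i_1}^{\Rf_{i_1}(N)}\bigl(E^{\Rf_{i_1}(N)}(i_2,\dots,i_m)\bigr)$ of Definition~\ref{de:E}, the explicit two-branch description of $\bar{\sigma}_{i_1}^{\Rf_{i_1}(N)}$ in Theorem~\ref{th:sigmai}, the recursion for the sets $\Lambda_+$ in Lemma~\ref{le:Lambda1}, and the reflection compatibility of the modules $N_\lambda$ from Corollary~\ref{co:decomposable2}(3). Since $M$ admits all reflections, every $N\in\ffdi(M)$ is $j$-finite for all $j\in\Ib$; since moreover $\NA(M)$ is decomposable, Theorem~\ref{cite} shows that each $\NA(N)$ is decomposable and that $\cC(M)$ is the Cartan scheme of a root system, so $\Lambda_+^{[N]}$, Lemma~\ref{le:Lambda1}, Theorem~\ref{th:sigmai} and Corollary~\ref{co:decomposable2} are all available. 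The case $m=0$ is immediate since $E^N()=\fie 1$ and $\Lambda_+^{[N]}()=\emptyset$. For the inductive step put $Y=r_{i_1}([N])$; by the inductive hypothesis for $(i_2,\dots,i_m)$ and $\Rf_{i_1}(N)$ there is an isomorphism $E^{\Rf_{i_1}(N)}(i_2,\dots,i_m)\simeq\bigotimes_{\mu\in\Lambda_+^Y(i_2,\dots,i_m)}\NA(\Rf_{i_1}(N)_\mu)$ of $\ndN_0^\theta$-graded objects in $\ydH$, which I would carry through the induction in the sharper form of Lemma~\ref{admissible1}(2)--(4): namely that it is induced by multiplication of the Nichols subalgebras $\fie\langle\Rf_{i_1}(N)_\mu\rangle$ of $\NA(\Rf_{i_1}(N))$, so that the factors are genuine subobjects and one may speak of removing a single factor. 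One then distinguishes whether $\al_{i_1}\in\Lambda_+^Y(i_2,\dots,i_m)$.

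If $\al_{i_1}\notin\Lambda_+^Y(i_2,\dots,i_m)$: a positive root which is a positive integer multiple of the simple root $\al_{i_1}$ must equal $\al_{i_1}$, so the degree-$\al_{i_1}$ component of $E^{\Rf_{i_1}(N)}(i_2,\dots,i_m)$ vanishes, i.e.\ $\Rf_{i_1}(N)_{i_1}\not\subset E^{\Rf_{i_1}(N)}(i_2,\dots,i_m)$, and $\bar{\sigma}_{i_1}^{\Rf_{i_1}(N)}$ acts through its second branch: $E^N(i_1,\dots,i_m)=(\Omega_{i_1}^N)^{-1}\bigl(E^{\Rf_{i_1}(N)}(i_2,\dots,i_m)\bigr)\NA(N_{i_1})$, the first factor lying in $K_{i_1}^N$ by Corollary~\ref{co:MnotinE}. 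On $K_{i_1}^N$ the map $(\Omega_{i_1}^N)^{-1}$ agrees with $T_{i_1}^{\Rf_{i_1}(N)}$ and is an algebra isomorphism in $\ydH$ shifting $\ndN_0^\theta$-degrees by $s_{i_1}^{[N]}$ (Eq.~\eqref{eq:Omgrad}), and right multiplication by $\NA(N_{i_1})$ is an isomorphism of graded objects because $\NA(N)\simeq K_{i_1}^N\ot\NA(N_{i_1})$ by Lemma~\ref{le:EcapR}. Pushing the inductive decomposition through, and using $N_{i_1}\simeq N_{\al_{i_1}}$ together with $T_{i_1}^{\Rf_{i_1}(N)}(\Rf_{i_1}(N)_\mu)\simeq N_{s_{i_1}^{[N]}(\mu)}$ (Corollary~\ref{co:decomposable2}(3), applicable since $\mu\neq\al_{i_1}$), one gets $E^N(i_1,\dots,i_m)\simeq\bigotimes_{\lambda}\NA(N_\lambda)$ with $\lambda$ ranging over $\{\al_{i_1}\}\cup s_{i_1}^Y(\Lambda_+^Y(i_2,\dots,i_m))$, which equals $\Lambda_+^{[N]}(i_1,\dots,i_m)$ by Lemma~\ref{le:Lambda1}.

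If $\al_{i_1}\in\Lambda_+^Y(i_2,\dots,i_m)$: then $\Rf_{i_1}(N)_{i_1}\subset E:=E^{\Rf_{i_1}(N)}(i_2,\dots,i_m)$, so $\bar{\sigma}_{i_1}^{\Rf_{i_1}(N)}$ acts through its first branch: $E^N(i_1,\dots,i_m)=\NA(\varphi_{i_1}^N)^{-1}\bigl(\Omega_{i_1}^{\Rf_{i_1}(N)}(E\cap K_{i_1}^{\Rf_{i_1}(N)})\bigr)$. By Lemma~\ref{le:EcapR} with $B=\NA(\Rf_{i_1}(N)_{i_1})$, the multiplication $(E\cap K_{i_1}^{\Rf_{i_1}(N)})\ot\NA(\Rf_{i_1}(N)_{i_1})\to E$ is an isomorphism, so $E\cap K_{i_1}^{\Rf_{i_1}(N)}$ is $E$ with its $\al_{i_1}$-graded factor removed. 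The remaining bookkeeping runs as before: $\Omega_{i_1}^{\Rf_{i_1}(N)}$ restricted to $K_{i_1}^{\Rf_{i_1}(N)}$ is an algebra isomorphism in $\ydH$ shifting degrees by $s_{i_1}^Y=s_{i_1}^{[N]}$, $\NA(\varphi_{i_1}^N)^{-1}$ preserves the grading, Corollary~\ref{co:decomposable2}(3) identifies the surviving modules with $N_{s_{i_1}^{[N]}(\mu)}$ for $\mu\in\Lambda_+^Y(i_2,\dots,i_m)\setminus\{\al_{i_1}\}$, and Lemma~\ref{le:Lambda1} shows the resulting index set is $s_{i_1}^Y\bigl(\Lambda_+^Y(i_2,\dots,i_m)\setminus\{\al_{i_1}\}\bigr)=\Lambda_+^{[N]}(i_1,\dots,i_m)$.

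The step I expect to be the main obstacle is the second case, where one must realise $E\cap K_{i_1}^{\Rf_{i_1}(N)}$ as an $\ndN_0^\theta$-graded object in $\ydH$, rather than just recovering its Hilbert series from $\Hilb_E=\Hilb_{E\cap K_{i_1}^{\Rf_{i_1}(N)}}\cdot\Hilb_{\NA(\Rf_{i_1}(N)_{i_1})}$. My plan for this is to strengthen the inductive statement so as to propagate not merely the isomorphism type but the full decomposition of each $E^N(\dots)$ in the sense of Definition~\ref{de:decomp}; then $E$ is decomposable, $\NA(\Rf_{i_1}(N)_{i_1})$ is its unique Nichols-algebra factor whose degree is a multiple of $\al_{i_1}$, and the uniqueness of such decompositions \cite[Lemma~4.7]{p-HeckSchn08a}, together with the factorization of Lemma~\ref{le:EcapR}, identifies $E\cap K_{i_1}^{\Rf_{i_1}(N)}$ with the tensor product of the remaining factors. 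A minor recurring point is that the root to which Corollary~\ref{co:decomposable2}(3) is applied must differ from $\al_{i_1}$, which is automatic here as it always lies in a set from which $\al_{i_1}$ has been excluded.
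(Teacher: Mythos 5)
Your proposal follows the paper's own proof almost step for step: induction on $m$ via Definition~\ref{de:E}, the case split according to whether $\al _{i_1}\in \Lambda _+^{[P]}(i_2,\dots,i_m)$ for $P=\Rf _{i_1}(N)$, the second branch of $\bar\sigma _{i_1}^{P}$ together with Cor.~\ref{co:MnotinE}, Lemma~\ref{le:EcapR}, the degree shift \eqref{eq:Omgrad}, Cor.~\ref{co:decomposable2}(3) and Lemma~\ref{le:Lambda1}. Your Case~1 is exactly the paper's Case~1. The one point where you deviate is precisely the point you flag as the main obstacle, and there your argument has a gap. In Case~2 the paper passes from $E\simeq \bigotimes _{\lambda \in \Lambda _+^{[P]}(i_2,\dots,i_m)}\NA (P_\lambda )$ and $E\simeq (E\cap K_{i_1}^P)\ot \NA (P_{i_1})$ (Lemma~\ref{le:EcapR}) to $E\cap K_{i_1}^P\simeq \bigotimes _{\lambda \neq \al _{i_1}}\NA (P_\lambda )$ by invoking the cancellation result \cite[Lemma~4.8]{p-HeckSchn08a}, not the uniqueness statement \cite[Lemma~4.7]{p-HeckSchn08a}. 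Uniqueness of the factors occurring in a decomposition of $E$ does not by itself identify $E\cap K_{i_1}^P$: the factorization $(E\cap K_{i_1}^P)\ot \NA (P_{i_1})$ is not a priori a decomposition in the sense of Def.~\ref{de:decomp}, because at this stage nothing tells you that $E\cap K_{i_1}^P$ is itself a tensor product of Nichols algebras of irreducibles --- that is exactly what you are trying to prove. What is needed is a cancellation statement for $\ndN _0^\theta $-graded objects in $\ydH $, and that is the content of the cited Lemma~4.8.

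Your alternative fix, strengthening the induction so that each $E^N(i_1,\dots,i_m)$ carries a concrete ordered decomposition by multiplication of subalgebras $\fie \langle N_\lambda \rangle $ (as in Lemma~\ref{admissible1}(4)), does not propagate through Case~2 either. In the recursion the factor of degree $\al _j$ is appended on the right only at the step whose outermost letter is $j$; so in Case~2 the factor of degree $\al _{i_1}$ inside $E^P(i_2,\dots,i_m)$ is rightmost only if $i_1=i_2$, and in general it sits strictly inside the ordered product (already for $A_1\times A_1$ and the word $(1,2,1)$ the inner decomposition of $E^P(2,1)$ has its degree-$\al _1$ factor on the left). Removing an interior factor is not a formal operation: the remaining factors need not lie in $K_{i_1}^P$, and the commutation relations that would let you reorder them (Thm.~\ref{th:comm}) are proved later and depend, through Cor.~\ref{details} and Thm.~\ref{th:bijective}, on the present lemma. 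So as written the Case~2 step is not closed; it is repaired either by quoting the cancellation lemma \cite[Lemma~4.8]{p-HeckSchn08a}, as the paper does, or by proving such a cancellation directly (e.g.\ degreewise, using that the graded components are finite-dimensional objects in $\ydH $), but some such argument must be supplied.
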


\begin{proof}
  We proceed by induction on $m$. Since $E^N()=\fie 1$, the claim holds for
  $m=0$.

  Assume now that $m>0$. Let $P=\Rf_{i_1}(N)$, and assume that
  \begin{align}\label{eq:EPdec}
    E:=E^P(i_2,\dots,i_m)\simeq
    \mathop{\otimes }_{\lambda \in \Lambda _+^{[P]}(i_2,\dots,i_m)}
    \NA (P_\lambda ).
  \end{align}
  \textit{Case 1: $\al _{i_1}\notin \Lambda _+^{[P]}(i_2,\dots,i_m)$.}
  Then $P_{i_1}\not \subset E$ by degree reasons, and $(\Omega
  _{i_1}^P)^{-1}(E)\subset K_{i_1}^N$ by Cor.~\ref{co:MnotinE}. Hence
  \begin{align*}
    E^N(i_1,\dots,i_n)=\bar{\sigma }_{i_1}^P(E)
    =&(\Omega _{i_1}^N)^{-1}(E)\NA (N_{i_1}) 
    \simeq  \Big( \mathop{\otimes }_{\lambda \in
    \Lambda _+^{[P]}(i_2,\dots,i_m)}
    \NA (P_\lambda )\Big)\ot \NA (N_{i_1})
  \end{align*}
  in $\ydH $. Here the first two equations follow by definition,
  and the isomorphism is obtained from Lemma~\ref{le:EcapR} and since
  $(\Omega _{i_1}^N)^{-1}$ is a morphism in $\ydH $.
  Now, $\deg \,P_\lambda =s_{i_1}^P(\lambda )$ for all $\lambda \in 
  \Lambda ^{[P]}_+(i_2,\dots,i_m)$ by \eqref{eq:Omgrad}.
  Further, $N_{s_{i_1}^P(\lambda )}\simeq P_\lambda $ in $\ydH $ by
  Cor.~\ref{co:decomposable2} (3).
  Thus the claim follows from Lemma~\ref{le:Lambda1}.

  \textit{Case 2: $\al _{i_1}\in \Lambda _+^{[P]}(i_2,\dots,i_m)$.}
  Then $E\simeq (E\cap K_{i_1}^P)\ot \NA (P_{i_1})$ as $\ndN _0^\theta
  $-graded objects in $\ydH $ by Lemma~\ref{le:EcapR},
  and hence
  \begin{align}\label{eq:EKdec}
    E\cap K_{i_1}^P\simeq
    \mathop{\otimes }_{\lambda \in \Lambda _+^{[P]}(i_2,\dots,i_m)\setminus
    \{\al _{i_1}\}} \NA (P_\lambda )
  \end{align}
  as $\ndN _0^\theta $-graded objects in $\ydH $
  by Eq.~\eqref{eq:EPdec} and \cite[Lemma~4.8]{p-HeckSchn08a}.
  Therefore
  \begin{align*}
    E^N(i_1,\dots,i_n)=&\bar{\sigma }_{i_1}^P(E)
    =\NA (\varphi _{i_1}^N)^{-1}\big(\Omega _{i_1}^P(E\cap K_{i_1}^P)\big)
    \simeq 
    \mathop{\otimes }_{\lambda \in \Lambda _+^{[P]}(i_2,\dots,i_m)\setminus
    \{\al _{i_1}\}} \NA (P_\lambda )
  \end{align*}
  as $\ndN _0^\theta $-graded objects in $\ydH $,
  where $\deg \,P_\lambda =s_{i_1}^P(\lambda )$, see
  \eqref{eq:Omgrad}. Indeed, the first two
  equations hold by definition, and the isomorphism follows from
  Eq.~\eqref{eq:EKdec} and since $\Omega _{i_1}^P$ and $\NA (\varphi
  _{i_1}^N)$ are morphisms in $\ydH $.
  By Cor.~\ref{co:decomposable2} (3) we may replace $P_\lambda $ by
  $N_{s_{i_1}^P(\lambda )}$, and then the claim follows from
  Lemma~\ref{le:Lambda1}.
\end{proof}

\begin{theor}\label{th:bijective}
Let $M\in \ffdi$, and assume that $M$ admits all reflections and that
$\NA(M)$ is decomposable. Let $N \in \ffdi(M)$.
Then for all $w \in \Homsto {[N]}$ the right coideal subalgebra
$$E^N(w) = E^N(i_1,\dots,i_m) \subset \NA (N),$$
where $m\ge 0$ and $1\leq i_1,\dots,i_m \leq \theta $ such that
$w = \id_{[N]} s_{i_1} \cdots s_{i_m}$,
is independent of the choice of
$i_1,\dots,i_m$. The map
$$\varkappa ^N : \Hom(\Wg(M),[N]) \to \cK N, \; w \mapsto E^N(w),$$
is injective, order preserving, and order reflecting, where the set of
morphisms $\Homsto {[N]}$ is ordered by the right Duflo
  order and right coideal subalgebras are ordered with respect to inclusion.
\end{theor}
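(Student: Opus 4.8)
The plan is to prove the statement in three stages: first that $E^N(w)$ is well defined, then that $\varkappa^N$ is simultaneously order preserving and order reflecting, and finally to deduce injectivity formally. For well-definedness, suppose $w=\id_{[N]}s_{i_1}\cdots s_{i_m}=\id_{[N]}s_{i_1'}\cdots s_{i_{m'}'}$ in $\Homsto{[N]}$. By Prop.~\ref{pr:Lambda2} the two index sequences have the same set $\Lambda_+^{[N]}(\cdots)$, and by Cor.~\ref{co:decomposable2} the isomorphism class of $N_\lambda$ depends only on $\lambda$; hence Lemma~\ref{le:Ewdec} gives $\Hilb_{E^N(i_1,\dots,i_m)}=\Hilb_{E^N(i_1',\dots,i_{m'}')}$. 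Since by Lemma~\ref{le:Ewunique} a right coideal subalgebra of the form $E^N(i_1,\dots,i_m)$ is the unique element of $\cK N$ with its Hilbert series, the two subalgebras coincide, so $E^N(w):=E^N(i_1,\dots,i_m)$ is unambiguous; the same holds for $E^{\Rf_i(N)}(s_i^{[N]}w)$ for every $i\in\Ib$.

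The core is to show, by induction on $\ell(w_1)$ and for all $N\in\ffdi(M)$ and all $w_1,w_2\in\Homsto{[N]}$ simultaneously, that $E^N(w_1)\subseteq E^N(w_2)$ if and only if $w_1\le_D w_2$. For $w_1=\id_{[N]}$ both sides hold trivially, as $E^N(\id_{[N]})=\fie 1$ and $\id_{[N]}\le_D w_2$ always. For the inductive step I would fix a reduced expression $w_1=\id_{[N]}s_is_{i_2}\cdots s_{i_k}$ (for the ``$\Leftarrow$''-direction chosen to extend to a reduced expression of $w_2$, which is possible by the characterization of $\le_D$ via reduced words). Put $P=\Rf_i(N)\in\ffdi(M)$, $w_1'=s_i^{[N]}w_1=\id_{[P]}s_{i_2}\cdots s_{i_k}$, which is reduced of length $\ell(w_1)-1$, and $w_2'=s_i^{[N]}w_2$. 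Then $w_1=\id_{[N]}s_i\,w_1'$, $w_2=\id_{[N]}s_i\,w_2'$, and Def.~\ref{de:E} together with well-definedness yields
\[ E^N(w_1)=\bar{\sigma}_i^{P}\!\big(E^P(w_1')\big),\qquad E^N(w_2)=\bar{\sigma}_i^{P}\!\big(E^P(w_2')\big). \]

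Now comes the key point. Because $i$ opens a reduced expression of $w_1$, Prop.~\ref{pr:posroots} gives $\al_i\in\Lambda_+^{[N]}(w_1)$, hence $\al_i\notin\Lambda_+^{[P]}(w_1')$ by Lemma~\ref{le:Lambda1}, hence $E^P(w_1')_{\al_i}=0$ by Lemma~\ref{le:Ewdec} (a positive root $\le\al_i$ must equal $\al_i$), i.e.\ $\Rf_i(N)_i\not\subseteq E^P(w_1')$; by Thm.~\ref{th:sigmai}(1) this is the same as $N_i\subseteq E^N(w_1)$. In the ``$\Rightarrow$''-direction the inclusion hypothesis then forces $N_i\subseteq E^N(w_2)$, hence $\Rf_i(N)_i\not\subseteq E^P(w_2')$ again by Thm.~\ref{th:sigmai}(1); in the ``$\Leftarrow$''-direction the choice of reduced word gives $\al_i\in\Lambda_+^{[N]}(w_2)$ directly, with the same consequence. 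Either way $E^P(w_1'),E^P(w_2')$ lie in $\{E\in\cK P\mid \Rf_i(N)_i\not\subseteq E\}$ and $E^N(w_1),E^N(w_2)$ lie in $\{E\in\cK N\mid N_i\subseteq E\}$, and $\al_i\in\Lambda_+^{[N]}(w_2)$ forces $\ell(w_2')=\ell(w_2)-1$ by Cor.~\ref{co:aliinL}. On these two subsets, Thm.~\ref{th:sigmai} describes $\bar{\sigma}_i^{P}$ and $\sigma_i^N$ by the manifestly inclusion-preserving formulas $E\mapsto(\Omega_i^N)^{-1}(E)\NA(N_i)$ and $E\mapsto\Omega_i^N(E\cap K_i^N)$, and by Thm.~\ref{th:sigmai}(2) and Lemma~\ref{le:EcapR} they are mutually inverse bijections there; hence $E^N(w_1)\subseteq E^N(w_2)$ iff $E^P(w_1')\subseteq E^P(w_2')$. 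By the inductive hypothesis (applicable since $\ell(w_1')<\ell(w_1)$) the latter is equivalent to $w_1'\le_D w_2'$, and a routine length count using $\ell(w_1')=\ell(w_1)-1$, $\ell(w_2')=\ell(w_2)-1$ and $w_j=\id_{[N]}s_i\,w_j'$ shows $w_1'\le_D w_2'\iff w_1\le_D w_2$. This closes the induction, and injectivity follows: $E^N(w_1)=E^N(w_2)$ gives $w_1\le_D w_2$ and $w_2\le_D w_1$, whence $w_1=w_2$ by antisymmetry of the Duflo order (Def.~\ref{de:Duflo}).

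The main obstacle is that $\sigma_i^M$ is genuinely defined by a case distinction and is \emph{not} monotone across the case boundary, so the induction must be organized — by always stripping off a simple reflection that begins a reduced word of $w_1$ — so that every recursive step takes place entirely within the regime ``$N_i$ belongs to the subalgebra'', where the reflection maps collapse to the two elementary inclusion-preserving operations above. The delicate verification is precisely that the hypothesis of each step (an inclusion, respectively a Duflo relation) genuinely propagates the condition $N_i\subseteq E^N(w_2)$ down the recursion; this is where Thm.~\ref{th:sigmai}(1), Lemma~\ref{le:Lambda1} and Cor.~\ref{co:aliinL} are used in tandem.
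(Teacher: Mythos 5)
Your proof is correct and follows essentially the same route as the paper: well-definedness via Prop.~\ref{pr:Lambda2}, Lemma~\ref{le:Ewdec} and Lemma~\ref{le:Ewunique}, and the order statements by the same induction on $\ell (w_1)$, stripping a leading simple reflection from a reduced word of $w_1$ and descending through the explicit formulas of Thm.~\ref{th:sigmai} in the regime $N_i\subseteq E$ (the paper merely routes the combinatorics through Thm.~\ref{th:Dorder}, i.e.\ containment of the sets $\Lambda ^{[N]}_+$, and gets injectivity from uniqueness of the decomposition rather than from antisymmetry of $\le _D$). The one point you leave implicit --- that in the forward direction $N_i\subseteq E^N(w_2)$ forces $\al _i\in \Lambda ^{[N]}_+(w_2)$ and hence $\ell (s_i^Nw_2)=\ell (w_2)-1$ --- follows at once from Lemma~\ref{le:Ewdec} by the same degree argument you already invoke, so there is no real gap.
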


\begin{proof} To prove that $\varkappa ^N$ is a well-defined map, assume that
  $w = \id_{[N]} s_{i_1} \cdots s_{i_m} =\id_{[N]} s_{j_1} \cdots s_{j_n}$ in
  $\Hom(\Wg (M),[N])$, where $1 \leq i_1,\dots,i_m, j_1, \dots j_n \leq \theta
  $, $m,n\geq 0$. By Prop.~\ref{pr:Lambda2}, $\Lambda _+^{[N]}(i_1,\dots,i_m)=
  \Lambda _+^{[N]}(j_1,\dots,j_n)$. Hence by Lemma~\ref{le:Ewdec} the Hilbert
  series of $E^N(i_1,\dots,i_m)$ and of $E^N(j_1,\dots,j_n)$ coincide, and by
  Lemma \ref{le:Ewunique} $E^N(i_1,\dots,i_m) =E^N(j_1,\dots,j_n)$.

Let  $w,w'\in \Homsto {[N]}$ with $E^N(w)=E^N(w')$. Then
$\Lambda ^{[N]}_+(w)=\Lambda ^{[N]}_+(w')$ by Lemma~\ref{le:Ewdec}
  and \cite[Lemma 4.7]{p-HeckSchn08a}. Therefore $w=w'$
  by Prop.~\ref{pr:Lambda2}. Thus $\varkappa ^N$ is injective.

 By Thm.~\ref{th:Dorder} $\varkappa ^N$ is order preserving and order
 reflecting if and only if the following are equivalent for all $w_1,w_2\in
 \Homsto{[N]}$.
\begin{enumerate}
    \item $E^N(w_1)\subset E^N(w_2)$,
    \item $\Lambda ^{[N]}_+(w_1)\subset \Lambda ^{[N]}_+(w_2)$.
    \end{enumerate}
 To prove the equivalence of (1) and (2) we proceed by induction on $\ell
 (w_1)$. If $w_1=\id _{[N]}$, then $E^N(w_1)=\fie 1$,
 $\Lambda ^{[N]}_+(w_1)=\emptyset $ and hence
  (1) and (2) are both true. If $\ell (w_1)=1$, then $w_1=s_i^{r_i([N])}$ for
  some $i\in \Ib $. Then $\Lambda ^{[N]}_+(w_1)=\al _i$ and $E^N(w_1)=\NA (N_i)$.
  Hence (2) is equivalent to (1)
  by Lemma~\ref{le:Ewdec}, since if $N_i\subset E^N(w_2)$,
  then $E^N(w_1)=\NA (N_i)\subset E^N(w_2)$.
  Assume now that $\ell (w_1)>1$. Let $i\in \Ib $
  with $\ell (w_1)=\ell (w)+1$ for $w=s_i^Nw_1$.
  Then
  \begin{align}
    \al _i\in \Lambda ^{[N]}_+(w_1)
    \label{eq:aliinLw1}
  \end{align}
  by Cor.~\ref{co:aliinL}. Therefore
  Lemma~\ref{le:Lambda1} implies that
  (2) holds if and only if
  \begin{align} \label{eq:istep2}
    \al _i\in \Lambda ^{[N]}_+(w_2) \quad \text{and} \quad
    \Lambda ^{r_i([N])}_+(s_i^Nw_1)\subset \Lambda ^{r_i([N])}_+(s_i^Nw_2).
  \end{align}
  Since $\al _i=\Lambda ^{[N]}_+(s_i^{r_i([N])})$,
  the induction hypothesis gives that the relations in \eqref{eq:istep2}
  are equivalent to
  $N_i\subset E^N(w_2)$, $E^{\Rf_i(N)}(s_i^Nw_1)\subset E^{\Rf_i(N)}(s_i^Nw_2)$.
  Since $N_i\subset E^N(w_1)$ by Lemma~\ref{le:Ewdec} and
  by \eqref{eq:aliinLw1}, the latter is equivalent to (1) by
  Thm.~\ref{th:sigmai}.
\end{proof}

\begin{corol}\label{details}
Let $M\in \ffdi $, $N \in \ffdi(M)$, and assume that $M$ \Fin\
and that $\NA(M)$ is decomposable. Let $w_1,w_2 \in \Homsto {[N]}$
with $E^N(w_1) \subset E^N(w_2)$.
Then there are $m,n \in \ndN _0$, $m \leq n$, and
$i_1,\dots,i_n  \in \Ib $  such that
$w_1 = \id_{[N]}s_{i_1} \cdots s_{i_m}$ and $w_2 = \id_{[N]} s_{i_1} \cdots s_{i_n}$
are reduced expressions. Let
$$\beta_l = \beta_l^{[N]}(i_1,\dots,i_n),\;
  N_{\beta_l} = N_l(i_1, \dots ,i_n)$$ for all $1 \leq l \leq n$. Then
  \begin{enumerate}
  \item For all $1 \leq l \leq n$, $N_{\beta_l} \subset \NA(N)$ is an irreducible finite-dimensional subobject in $\ydH$ of degree $\beta_l$, and $\beta_k \neq \beta_l$ for all $k \neq l$.
  \item  For all $1 \leq l \leq n$, the subalgebra $\fie \langle N_{\beta_l} \rangle$ of $\NA (N)$ generated by $N_{\beta_l}$ is isomorphic to $\NA (N_{\beta_l})$ as an algebra and as an $\ndN _0^{\theta}$-graded object in $\ydH$, where $N_{\beta_l}$ has degree $\beta_l$.
 \item  The multiplication maps
 \begin{align*}
 \fie \langle N_{\beta_n} \rangle &\otimes \cdots \otimes \fie \langle N_{\beta_1}  \rangle \to E^N (w_2),\\
 \fie \langle N_{\beta_m} \rangle &\otimes \cdots \otimes \fie \langle N_{\beta_1} \rangle \to E^N(w_1)
 \end{align*}
\end{enumerate}
are bijective. In particular, $E^N(w_2)$ is a free right module over $E^N(w_1)$.
\end{corol}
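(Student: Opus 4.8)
The plan is to obtain the corollary as a combination of Thm.~\ref{th:bijective}, Lemma~\ref{admissible2} and Lemma~\ref{admissible1}; beyond these, the argument is essentially bookkeeping.

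The first step is to translate the inclusion $E^N(w_1)\subset E^N(w_2)$ into a relation for the Duflo order. Under the standing hypotheses Thm.~\ref{th:bijective} applies, so $\varkappa^N$ is order reflecting, and hence $w_1\le_D w_2$ in $\Homsto {[N]}$. By Def.~\ref{de:Duflo} this means $\ell(w_2)=\ell(w_1)+\ell(w_1^{-1}w_2)$. I would then fix a reduced expression $w_1=\id_{[N]}s_{i_1}\cdots s_{i_m}$ with $m=\ell(w_1)$ and a reduced expression $w_1^{-1}w_2=s_{i_{m+1}}\cdots s_{i_n}$ with $n-m=\ell(w_1^{-1}w_2)$; since $w_2=w_1(w_1^{-1}w_2)$, concatenating gives $w_2=\id_{[N]}s_{i_1}\cdots s_{i_n}$ with $\ell(w_2)=n$, a reduced expression extending the chosen one for $w_1$. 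This produces the required $m\le n$ and $i_1,\dots,i_n\in\Ib$; moreover, by the well-definedness part of Thm.~\ref{th:bijective} one has $E^N(w_1)=E^N(i_1,\dots,i_m)$ and $E^N(w_2)=E^N(i_1,\dots,i_n)$.

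The second step is to pass from reduced expressions to admissibility and apply the structural lemma. Since $M$ \Fin\ and $\NA(M)$ is decomposable, $\cC(M)$ is the Cartan scheme of a root system by Thm.~\ref{cite}, so Lemma~\ref{admissible2}(3) is available. As $\id_{[N]}s_{i_1}\cdots s_{i_n}$ is reduced and any prefix of a reduced expression is again reduced, both $(i_1,\dots,i_n)$ and $(i_1,\dots,i_m)$ are $N$-admissible. I would also note that $\beta_l^{[N]}(i_1,\dots,i_m)=\beta_l^{[N]}(i_1,\dots,i_n)=\beta_l$ and $N_l(i_1,\dots,i_m)=N_l(i_1,\dots,i_n)=N_{\beta_l}$ for $1\le l\le m$, since by Def.~\ref{de:admissible} these depend only on $i_1,\dots,i_l$. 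Applying Lemma~\ref{admissible1} to $(i_1,\dots,i_n)$ yields parts (1) and (2) of the corollary (from its parts (1)--(3)) and bijectivity of the multiplication map onto $E^N(i_1,\dots,i_n)=E^N(w_2)$ (from its part (4)); applying Lemma~\ref{admissible1}(4) to the prefix $(i_1,\dots,i_m)$ yields bijectivity of the multiplication map onto $E^N(i_1,\dots,i_m)=E^N(w_1)$.

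The last step is the freeness assertion, which I would deduce by associativity of the product of $\NA(N)$: the PBW isomorphism for $w_2$, restricted to the last $m$ tensor factors, is precisely the PBW isomorphism for $w_1$, so the induced multiplication map $\fie\langle N_{\beta_n}\rangle\otimes\cdots\otimes\fie\langle N_{\beta_{m+1}}\rangle\otimes E^N(w_1)\to E^N(w_2)$ is bijective. As $E^N(w_1)\subset E^N(w_2)$ are subalgebras of $\NA(N)$, this is an isomorphism of right $E^N(w_1)$-modules with $E^N(w_1)$ acting on the last tensor factor, whence $E^N(w_2)$ is $E^N(w_1)$-free, a basis being any $\fie$-basis of $\fie\langle N_{\beta_n}\rangle\otimes\cdots\otimes\fie\langle N_{\beta_{m+1}}\rangle$. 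I do not expect a genuine obstacle here: the substance is in the cited results, and the only points needing care are using "order reflecting" in the correct direction to get compatible reduced expressions, and checking the nested compatibility of the two PBW decompositions so that the freeness conclusion is legitimate.
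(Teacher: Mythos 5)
Your proposal is correct and follows essentially the same route as the paper: Thm.~\ref{th:bijective} gives $w_1\le_D w_2$, the definition of the Duflo order yields a reduced expression of $w_2$ extending one of $w_1$, and then Lemmas~\ref{admissible1} and \ref{admissible2}(3) (via Thm.~\ref{cite}) give the PBW statements; your explicit associativity argument for freeness just spells out what the paper leaves implicit after part (3).
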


\begin{proof}
By Thm.~\ref{th:bijective} $w_1 \leq_D w_2$. Hence by definition of the
Duflo order, any reduced presentation $w_1 = \id_{[N]}s_{i_1} \cdots s_{i_m}$ of
$w_1$ can be extended to a reduced presentation $w_2 = \id_{[N]} s_{i_1} \cdots
s_{i_m} \cdots s_{i_n}$  of $w_2$. Then (1),(2) and (3) follow from Lemma
\ref{admissible1} and Lemma \ref{admissible2} (3).
\end{proof}

The following results generalize properties of commutators and coproducts
of PBW generators of quantized enveloping algebras.

\begin{theor}\label{th:comm}
  Let $M\in \ffdi $, $N \in \ffdi(M)$, and assume that $M$ \Fin\
  and that $\NA (M)$ is decomposable. Let $n\in \ndN _0$,
  $i_1,\dots,i_n\in \Ib $, and $w=\id _{[N]}s_{i_1}\cdots s_{i_n}\in
  \Homsto {[N]}$
  such that $\ell (w)=n$. For all $1\le l\le n$
  let $\beta _l\in \rersys{[N]}$
  and $N_{\beta _l}\subset \NA (N)$ as in Lemma~\ref{admissible1}. Then
  in $E^N(w)$
  \begin{align}
    xy-(x\_{-1}\cdot y)x\_0\in \fie \langle N_{\beta _{l-1}}\rangle
    \fie \langle N_{\beta _{l-2}}\rangle \cdots
    \fie \langle N_{\beta _{k+1}}\rangle
    \label{eq:comm}
  \end{align}
  for all $1\le k<l\le n$, $x\in N_{\beta _k}$, $y\in N_{\beta _l}$, and
  \begin{align}
    \copr _{\NA (N)}(x)-x\ot 1\in
    \fie \langle N_{\beta _{l-1}}\rangle
    \fie \langle N_{\beta _{l-2}}\rangle \cdots
    \fie \langle N_{\beta _1}\rangle \ot \NA (N)
    \label{eq:coprN}
  \end{align}
  for all $1\le l\le n$, $x\in N_{\beta _l}$.
\end{theor}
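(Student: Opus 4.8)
The plan is to prove both \eqref{eq:comm} and \eqref{eq:coprN} by induction on $n$, using the reflection isomorphisms $\Omega_{i_1}^N$ and the structure theory of $E^N(w)$ from Lemma~\ref{admissible1}. By Lemma~\ref{admissible2}~(3) the hypothesis $\ell(w)=n$ means $(i_1,\dots,i_n)$ is $N$-admissible, so Lemma~\ref{admissible1} applies: the $N_{\beta_l}$ are finite-dimensional irreducible subobjects of $\NA(N)$, each $\fie\langle N_{\beta_l}\rangle\cong\NA(N_{\beta_l})$, and multiplication gives an isomorphism $\fie\langle N_{\beta_n}\rangle\otimes\cdots\otimes\fie\langle N_{\beta_1}\rangle\xrightarrow{\sim} E^N(w)$ of $\ndN_0^\theta$-graded objects. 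This triangular decomposition is what makes statements like \eqref{eq:comm} and \eqref{eq:coprN} meaningful, since every element of $E^N(w)$ has a unique expression as a sum of ordered monomials in the $N_{\beta_l}$.

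First I would handle the base case and then set up the inductive step. Write $P=\Rf_{i_1}(N)$; since $\ell(w)=n$ we have $\ell(s_{i_1}^N w)=n-1$ and $(i_2,\dots,i_n)$ is $P$-admissible, with associated roots $\gamma_l=s_{i_1}^P(\beta_{l+1})$ and modules $P_{\gamma_l}$ satisfying $N_{\beta_{l+1}}=T_{i_1}^P(P_{\gamma_l})=(\Omega_{i_1}^N)^{-1}(P_{\gamma_l})$ for $1\le l\le n-1$, exactly as in the proof of Lemma~\ref{admissible1}. By induction, \eqref{eq:comm} and \eqref{eq:coprN} hold inside $E^P(s_{i_1}^N w)\subset\NA(P)$ for the indices $2\le k<l\le n$. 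The algebra map $(\Omega_{i_1}^N)^{-1}$ restricted to $E^P(s_{i_1}^N w)$ is the identification $T_{i_1}^P$ onto $(\Omega_{i_1}^N)^{-1}(E^P(s_{i_1}^N w))\subset K_{i_1}^N$, so transporting the commutator relations \eqref{eq:comm} along this algebra isomorphism gives \eqref{eq:comm} for $2\le k<l\le n$ inside $E^N(w)$. It remains to establish \eqref{eq:comm} for $k=1$, i.e. for $x\in N_{\beta_1}=N_{i_1}$ and $y\in N_{\beta_l}$ with $l\ge 2$, and to establish \eqref{eq:coprN} for all $l$.

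For the $k=1$ case of \eqref{eq:comm}, the key is that $N_{i_1}=\NA(N)_{\al_{i_1}}$ and $E^N(w)=(\Omega_{i_1}^N)^{-1}(E^P(s_{i_1}^N w))\,\NA(N_{i_1})$ with the first factor in $K_{i_1}^N=\NA(N)^{\co\NA(N_{i_1})\#H}$; here I would invoke Theorem~\ref{th:Omega} together with the explicit formula \eqref{eq:adR4} for $\adR{v}{(xy)}$, or equivalently the module-algebra structure, to commute a degree-one element $x\in N_{i_1}$ past an element $y$ of $(\Omega_{i_1}^N)^{-1}(E^P(s_{i_1}^N w))$ and see that $xy-(x\_{-1}\cdot y)x\_0=\adR{\,\cdot\,}{\,\cdot\,}$-type terms lie in the subalgebra generated by $N_{\beta_{l-1}},\dots,N_{\beta_2}$ (the ``$K_{i_1}^N$ part'' has strictly smaller $\al_{i_1}$-degree after removing one copy of $x$); here one uses that $\al_{i_1}=\beta_1$ appears exactly once among $\beta_1,\dots,\beta_n$ by admissibility. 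For \eqref{eq:coprN}: for $l=1$, $x\in N_{i_1}$ is primitive, so the statement is trivial; for $l\ge 2$, write $x=(\Omega_{i_1}^N)^{-1}(\bar x)$ with $\bar x\in P_{\gamma_{l-1}}$, apply the coproduct formula of Theorem~\ref{th:Omega}, namely $\copr_{\NA(N)}\circ(\Omega_{i_1}^N)^{-1}$ on $K_{i_1}^N$ equals $((\Omega_{i_1}^N)^{-1}\otimes(\Omega_{i_1}^N)^{-1})\circ\Rmat\circ\copr_{K_{i_1}^N}$ composed appropriately with $\NA(\varphi_{i_1}^N)$, use the inductive form of \eqref{eq:coprN} for $\bar x$ in $\NA(P)$ (the left tensor leg lies in $\fie\langle P_{\gamma_{l-2}}\rangle\cdots\fie\langle P_{\gamma_1}\rangle$), and transport along $(\Omega_{i_1}^N)^{-1}$; the $\Rmat$-correction terms $\sum_\al\adR{b_\al}{\bar x}\otimes b^\al$ contribute left legs in the $K_{i_1}^N$-part times lower $\al_{i_1}$-degree, which after identification land in $\fie\langle N_{\beta_{l-1}}\rangle\cdots\fie\langle N_{\beta_1}\rangle\otimes\NA(N)$, using again that $N_{\beta_1}=N_{i_1}$ absorbs the extra degree-one factors.

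\textbf{Main obstacle.} The delicate point is bookkeeping the grading shift under $\Omega_{i_1}^N$ (which sends $\ndZ^\theta$-degree $\al$ to $s_{i_1}^N(\al)$, cf.~\eqref{eq:Omgrad}) while simultaneously tracking the triangular factorization: one must verify that when an element of $K_{i_1}^N\subset\NA(N)$ is multiplied by copies of $N_{i_1}$ and then re-decomposed in the ordered basis $N_{\beta_n},\dots,N_{\beta_1}$, the ``error terms'' from commuting $N_{i_1}$ past the $K_{i_1}^N$-part genuinely land in the span of products of $N_{\beta_{l-1}},\dots,N_{\beta_{k+1}}$ and not merely in $E^N(w)$. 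This requires combining the degree argument (removing one factor from $N_{\beta_k}$ strictly lowers degree, and the $N_{\beta_j}$ for $j>l$ cannot appear since $y$ already had top index $l$) with Lemma~\ref{admissible1}~(4); the coproduct case additionally needs the compatibility of $\Rmat$ with the filtration, which is exactly what the formula \eqref{eq:adR4} and Theorem~\ref{th:Ralg} provide. Once the degree/filtration argument is set up cleanly, both \eqref{eq:comm} and \eqref{eq:coprN} follow by routine manipulation with $\Omega_{i_1}^N$, Theorem~\ref{th:Omega}, and the pairing identities \eqref{eq:bb1}--\eqref{eq:bb7}.
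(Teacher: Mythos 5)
Your treatment of \eqref{eq:comm} is essentially viable and close to the paper's: transporting the case $2\le k<l\le n$ along the algebra isomorphism $(\Omega_{i_1}^N)^{-1}$ (a morphism in $\ydH $) is exactly the reduction the paper performs via Cor.~\ref{details}, and for $k=1$ the identity $xy-(x\_{-1}\cdot y)x\_0=(\ad\,x)(y)\in K_{i_1}^N$, together with $K_{i_1}^N\cap E=\fie \langle N_{\beta _l}\rangle \cdots \fie \langle N_{\beta _2}\rangle $ (Lemma~\ref{le:EcapR}, Lemma~\ref{admissible1}(4)) and the degree argument based on $\beta _j\not=\al _{i_1}$ for $j\ge 2$, is the paper's argument. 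One imprecision: your assertion that factors $N_{\beta _j}$ with $j>l$ ``cannot appear since $y$ already had top index $l$'' is not automatic; it becomes true only after you first replace $w$ by $\id _{[N]}s_{i_1}\cdots s_{i_l}$ and work inside $E^N(i_1,\dots,i_l)$, which contains $x$ and $y$ and equals $\fie \langle N_{\beta _l}\rangle \cdots \fie \langle N_{\beta _1}\rangle $ --- this is precisely what Cor.~\ref{details} supplies. Without that reduction you would need a convexity property of the ordering $\beta _1,\dots,\beta _n$ that is nowhere established.

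The genuine gap is in your argument for \eqref{eq:coprN}. Theorem~\ref{th:Omega} computes $\copr _{\NA (\Rf _{i_1}(N))}\Omega _{i_1}^N(x)$ for $x\in K_{i_1}^N$, i.e.\ the coproduct in the \emph{reflected} Nichols algebra; the formula you invoke, ``$\copr _{\NA (N)}\circ (\Omega _{i_1}^N)^{-1}$ on $K_{i_1}^N$ equals $((\Omega _{i_1}^N)^{-1}\ot (\Omega _{i_1}^N)^{-1})\Rmat \copr _{K_{i_1}^N}$'', does not type-check and is not what the theorem says. To make your induction run you would have to reconstruct $\copr _{\NA (N)}|_{K_{i_1}^N}$ from $\copr _{K_{i_1}^N}$ together with the left $\NA (N_{i_1})$-coaction (via the bosonization $\NA (N)\simeq K_{i_1}^N\#\NA (N_{i_1})$), whereas your induction hypothesis concerns $\copr _{\NA (P)}(\bar x)$; moreover the claimed ``absorption'' of the extra $N_{i_1}$-factors into $\fie \langle N_{\beta _1}\rangle $ is itself a reordering statement of the kind being proved and is not justified. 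None of this machinery is needed: by Cor.~\ref{details} one may assume $l=n$ (since $E^N(\id _{[N]}s_{i_1}\cdots s_{i_l})\in \cK N$ has the same $N_{\beta _1},\dots,N_{\beta _l}$), and then, because $E^N(w)$ is an $\ndN _0^\theta $-graded right coideal and $\NA (N)$ is a connected graded coalgebra, every left tensor leg of $\copr _{\NA (N)}(x)-x\ot 1$ has degree componentwise $\le \beta _n$ and $\not=\beta _n$, so by the triangular decomposition it contains no factor from $\fie \langle N_{\beta _n}\rangle $ and lies in $\fie \langle N_{\beta _{n-1}}\rangle \cdots \fie \langle N_{\beta _1}\rangle $. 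This one-step degree argument is the paper's proof; your route through Thm.~\ref{th:Omega} and $\Rmat $ would need substantial repair before it could be called a proof.
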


\begin{proof}
  By Cor.~\ref{details} and the definition of the $N_{\beta _l}$,
  for the proof of Eq.~\eqref{eq:comm} it is enough to consider the case
  $k=1$, $l=n$. In that case
  $$xy-(x\_{-1}\cdot y)x\_0=(\ad x)(y)\in K_{i_1}^N \cap E^N(w)=
    \fie \langle N_{\beta _n}\rangle
    \fie \langle N_{\beta _{n-1}}\rangle \cdots
    \fie \langle N_{\beta _2}\rangle , $$
  since $y\in K_{i_1}^N$ and $x\in N_{i_1}$. Further, $\deg \,(\ad
  \,x)(y)=\beta _1+\beta _n$. But $\beta _m\not=\beta _1=\al _{i_1}$ for all
  $2\le  m\le n$, and hence $(\ad \,x)(y)$ has no summand with a factor in
  $\fie \langle N_{\beta _n}\rangle $.

  Now we prove Eq.~\eqref{eq:coprN}.
  Since $E^N(w')\in \cK N$ for all $w'\in \Homsto{[N]}$, by Cor.~\ref{details}
  it suffices to consider the case $l=n$. Since $\copr _{\NA (N)}$ is $\ndN
  _0^\theta $-graded and $\NA (N)$ is a connected coalgebra, (that is $z\in \NA
  (N)$, $\deg z=0$ implies that $z\in \fie $,) the claim follows by degree
  reasons.
\end{proof}

\begin{theor}\label{th:finite}
Let $M \in \ffdi$. Then the following are equivalent.
\begin{enumerate}
\item $\cK M$ is finite.
\item $M$ admits all reflections and the length of $N$-admissible sequences, where $N \in \ffdi(M)$, is bounded.
\item $M$ admits all reflections and $\rersys {[M]}$ is finite.
\end{enumerate}
Assume the equivalent conditions (1) -- (3). Then $\NA(M)$ is decomposable,
$\Rwg (M)=(\cC (M), (\rersys X)_{X\in \fiso(M)})$
is a finite root system of type $\cC (M)$, and for all $N \in \ffdi(M)$, the
map $$\varkappa ^N : \Hom(\Wg(M),[N]) \to \cK N$$
is bijective.
\end{theor}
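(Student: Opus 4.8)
The plan is to establish the cycle of implications (1)$\Rightarrow$(2), (3)$\Rightarrow$(2) and (2)$\Rightarrow$(1), deducing the remaining claims (decomposability, the finite root system, bijectivity of $\varkappa^N$) from hypothesis (2) along the way. \emph{Proof of (1)$\Rightarrow$(2):} If $\cK M$ is finite then $M$ \Fin{} by Prop.~\ref{criterion}, and iterating Cor.~\ref{co:finK} along a chain of reflections from $M$ to $N$ gives $\#\cK N=\#\cK M$ for every $N\in \ffdi(M)$. Let $(i_1,\dots,i_m)$ be $N$-admissible. By the characterisation \eqref{eq:admiss}, admissibility only restricts the $\beta^{[N]}_l$ with $l\le m$, so every initial segment $(i_1,\dots,i_l)$ is $N$-admissible; hence $E^N(i_1,\dots,i_l)\in \cK N$ by Def.~\ref{de:E}. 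By Lemma~\ref{admissible1}(3),(4) one has $\Hilb_{E^N(i_1,\dots,i_l)}=\prod_{k=1}^l\Hilb_{\NA(N_{\beta_k})}$, where each factor $\Hilb_{\NA(N_{\beta_k})}=1+\dim(N_{\beta_k})t^{\beta_k}+\cdots$ lies in $\ndN_0[[t_1,\dots,t_\theta]]$ and is $\ne 1$. Therefore the series $\Hilb_{E^N(i_1,\dots,i_l)}$ for $0\le l\le m$ are pairwise distinct, so $E^N(),E^N(i_1),\dots,E^N(i_1,\dots,i_m)$ are $m+1$ distinct elements of $\cK N$, and $m\le \#\cK M-1$; thus the lengths of $N$-admissible sequences are bounded uniformly in $N\in \ffdi(M)$.

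\emph{Proof of (3)$\Rightarrow$(2):} Again $M$ \Fin. If $(i_1,\dots,i_m)$ is $N$-admissible for some $N\in \ffdi(M)$, then $\beta^{[N]}_1,\dots,\beta^{[N]}_m$ are pairwise distinct elements of $\ndN_0^\theta$ by Lemma~\ref{admissible1}(1), and they lie in $\rersys{[N]}$ by Def.~\ref{de:admissible}. Choosing $u\in \Hom([N],[M])$ (possible since $[N]\in \fiso(M)$ and $\Wg(M)$ is connected) and viewing $u$ in $\Aut(\ndZ^\theta)$, one has $u(\rersys{[N]})\subset \rersys{[M]}$ with $u$ injective, so $m\le \#\rersys{[N]}\le \#\rersys{[M]}<\infty$.

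\emph{Proof of (2)$\Rightarrow$ the conclusions (hence (3)):} Assume (2); let $B$ bound the lengths of all $N$-admissible sequences ($N\in \ffdi(M)$), and pick $N''\in \ffdi(M)$ with an $N''$-admissible sequence $(i_1,\dots,i_B)$ of length $B$. If some $\al_j$ were not among the $\beta^{[N'']}_l$, then $(j,i_1,\dots,i_B)$ would be $\Rf_j(N'')$-admissible by Lemma~\ref{admissible2}(1), contradicting maximality of $B$; hence $\al_j\in\{\beta^{[N'']}_1,\dots,\beta^{[N'']}_B\}$ for all $j\in \Ib$, and Lemma~\ref{admissible2}(2) yields $E^{N''}(i_1,\dots,i_B)=\NA(N'')$. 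By Lemma~\ref{admissible1}(2),(3),(4) the multiplication map $\bigotimes_{l=1}^B\NA(N''_{\beta_l})\to \NA(N'')$ is then an isomorphism of $\ndN_0^\theta$-graded objects in $\ydH$ with each $N''_{\beta_l}$ finite-dimensional irreducible, so $\NA(N'')$ is decomposable. Since $\fiso(N'')=\fiso(M)$ (as each $r_i$ is an involution), $N''$ \Fin{} and $\cC(N'')=\cC(M)$; hence Theorem~\ref{cite} applied with $N''$ in place of $M$ shows that $\NA(N)$ is decomposable for all $N\in \ffdi(M)$ and that $\Rwg(M)=(\cC(M),(\rsys X)_{X\in \fiso(M)})$ is a root system of type $\cC(M)$. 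The above decomposition exhibits $\rsys{[N'']}_+$ as a finite set, so by Lemma~\ref{le:finrs} (using connectedness of $\Wg(M)$) $\rersys X$ is finite and $\Homsto X$ is finite for every $X$, and $\Wg(M)$ is finite; by Prop.~\ref{pr:posroots}, $\rsys X=\rersys X$ for all $X$, so $\Rwg(M)=(\cC(M),(\rersys X)_{X\in \fiso(M)})$ is a finite root system, which in particular gives (3).

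\emph{Proof of (2)$\Rightarrow$(1):} Since $M$ \Fin{} and $\NA(M)$ is decomposable, Theorem~\ref{th:bijective} shows that $\varkappa^N$ is injective, order preserving and order reflecting for all $N\in \ffdi(M)$. For surjectivity, let $E\in \cK N$. If $E=\fie1$ then $E=E^N()$. Otherwise Lemma~\ref{le:rcshasMi} provides $i\in \Ib$ with $N_i\subset E$; then $E':=\sigma^N_i(E)\in \cK{\Rf_i(N)}$ satisfies $E=\bar\sigma_i^{\Rf_i(N)}(E')$ and $\Rf_i(N)_i\not\subset E'$ by Theorem~\ref{th:sigmai}, while $E\cong (E\cap K_i^N)\otimes \NA(N_i)$ with $E'=\Omega^N_i(E\cap K^N_i)$ by Lemma~\ref{le:EcapR} and Theorem~\ref{th:sigmai}. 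Running this reduction inductively---on $\dim_\fie E$ when $\NA(N)$ is finite-dimensional, and in general noting that the reduction must terminate because the finitely many positive real roots of $[N]$ bound the number of steps (cf.\ Cor.~\ref{details})---gives $E'=E^{\Rf_i(N)}(i_2,\dots,i_m)$, whence $E=\bar\sigma_i^{\Rf_i(N)}(E')=E^N(i,i_2,\dots,i_m)$ is in the image of $\varkappa^N$ by Def.~\ref{de:E}. Thus $\varkappa^N$ is bijective, and since $\Homsto{[N]}$ is finite so is $\cK N$; in particular $\cK M$ is finite, which is (1). I expect the surjectivity of $\varkappa^N$---more precisely, pinning down a well-founded measure that makes the reduction $E\mapsto \sigma^N_i(E)$ terminate in the infinite-dimensional case (e.g.\ when $\NA(M)=U^+$)---to be the main technical obstacle.
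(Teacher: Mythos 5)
Your handling of (1)$\Rightarrow$(2), (3)$\Rightarrow$(2) and of the consequences of (2) follows the paper's own route: boundedness of admissible sequences via the pairwise distinct $E^N(i_1,\dots,i_l)$ (the paper quotes Lemma~\ref{admissible1}(4) directly; your Hilbert-series count is the same argument) resp.\ via the pairwise distinct roots $\beta_l$, and then a maximal admissible sequence producing $E^{N''}(i_1,\dots,i_B)=\NA (N'')$, whence decomposability, Thm.~\ref{cite}, Lemma~\ref{le:finrs} and Prop.~\ref{pr:posroots} give the finite root system and (3). Two small repairs are needed there: take $B$ to be the \emph{attained} maximum of the lengths (a bound need not be realized), and justify the passage from ``$\NA (N)$ decomposable for all $N\in \ffdi (N'')$'' to ``for all $N\in \ffdi (M)$'' --- either by the observation that decomposability depends only on the class $[N]$ and every class of $\fiso (M)$ is represented in $\ffdi (N'')$, or, as the paper does, by citing the transfer lemma \cite[Lemma 6.8]{p-HeckSchn08a}. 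These are minor.

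The genuine gap is exactly the one you flag: termination of the reduction $E\mapsto \sigma _i^N(E)$ in the proof of surjectivity of $\varkappa ^N$. Saying that ``the finitely many positive real roots bound the number of steps'' presupposes that the word $(i_1,i_2,\dots )$ produced by your reduction is admissible (equivalently, that $s_{i_1}s_{i_2}\cdots $ stays reduced), and nothing in your argument establishes this; a priori the process could cycle, and there is no dimension to induct on when $\NA (N)$ is infinite-dimensional. The paper supplies the missing well-founded measure by running the induction on the Weyl groupoid rather than on $E$: for every $N\in \ffdi (M)$ (not just the single $N''$ you treated) one first shows, via a maximal admissible sequence and Lemma~\ref{admissible2}(1),(2), that $E^N(w_0)=\NA (N)$ for the longest element $w_0$; hence every $E\in \cK N$ satisfies $E\subset E^N(w)$ for some $w$ in the finite set $\Homsto{[N]}$, and one inducts on $\ell (w)$ for a \emph{shortest} such $w$. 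Minimality forces $E\not=\fie 1$ when $\ell (w)>0$, Lemma~\ref{le:rcshasMi} gives $N_i\subset E\subset E^N(w)$, the order-reflecting statement (Cor.~\ref{details} with $w_1=s_i^{r_i([N])}$, $w_2=w$) yields $w=s_i^{r_i([N])}w'$ with $\ell (w')=\ell (w)-1$, and Thm.~\ref{th:sigmai} gives $\sigma _i^N(E)\subset E^{\Rf _i(N)}(w')$, so the induction hypothesis applies to $\sigma _i^N(E)$ and $E=\bar\sigma _i^{\Rf _i(N)}(E^{\Rf _i(N)}(w''))=E^N(s_iw'')$. Your proposal, as written, neither proves $E^N(w_0)=\NA (N)$ for all $N$ nor provides this (or any) decreasing invariant, so the surjectivity step --- and with it (2)$\Rightarrow$(1) and the bijectivity claim --- remains unproved.
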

\begin{proof}
Assume (2), and let $t \in \mathbb{N}$ such that $t \geq m$ for all $N \in
\ffdi(M)$ and all $N$-admissible sequences $(i_1,\dots,i_m)$. We prove (1),
(3) and the second half of the theorem.

Suppose an $m$-tuple $(i_1,\dots,i_m)$ of elements in $\Ib$ is $P$-admissible
for some $P \in \ffdi(M)$. If there exists an element $j \in \Ib$ such that
$\al _j\neq \beta_l^{[P]}(i_1,\dots,i_m)$ for all $1 \leq l \leq m$, then
$(j,i_1,\dots,i_m)$ is $\Rf_j(P)$-admissible by definition,
and $t \geq m+1$.

Let $N \in \ffdi(M)$. By the previous paragraph,
there is a largest integer $m \geq 1$ such that there is a
$P$-admissible sequence $(i_m,\dots,i_1)$ with $P =\Rf_{i_m }\cdots
\Rf_{i_1}(N)$. Hence $E^P(i_m,\dots,i_1) = \NA(P)$ by Lemma \ref{admissible2}
(2), and by Lemma \ref{admissible1} there is an isomorphism of $\ndN
_0^{\theta}$-graded objects in $\ydH $
\begin{equation}\label{maximal}
\NA(P_{\gamma_m}) \otimes \cdots \otimes \NA(P_{\gamma_1}) \cong \NA(P),
\end{equation}
where $\gamma_1,\dots,\gamma_m$ are pairwise distinct elements in
$\mathbb{N}_0^{\theta}$.
This means that the Nichols algebra of $P$ is decomposable. Hence $\NA (M)$ is
decomposable by \cite[Lemma 6.8]{p-HeckSchn08a},
and the root system $\Rwg (M)$
exists by Thm.~\ref{cite}. Moreover $\Rwg (M)$ is finite, and  for all
objects $X \in \fiso(M)$, $2m = | \rersys X|$ and $m = | \rersys X_+|$. This
proves (3).

We note that $\id_{[P]} s_{i_m} \cdots s_{i_1}$ is a reduced expression by
Lemma \ref{admissible2} (3). Therefore the inverse $\id_{[N]} s_{i_1} \cdots
s_{i_m}$ is a reduced expression. It  cannot be extended to a reduced
expression $\id_{[\Rf_j(N)]} s_js_{i_1} \cdots s_{i_m}$, $1 \leq j \leq
\theta$, by Lemma \ref{admissible1} (1) since $m = | \rersys {\Rf_j(N)}_+|$.
Thus by Lemma \ref{admissible2} (1),(2),
$E^N(w_0) = \NA(N)$, where $w_0 = \id_{[N]} s_{i_1} \cdots s_{i_m}$.

By Thm.~\ref{th:bijective}, $\varkappa ^N$ is injective.
To prove surjectivity of  $\varkappa ^N$, let $E\in \cK N$. Let $w\in \Homsto
{[N]}$ be a shortest element such that
  $E\subset E^N(w)$. Such a $w$ exists, since $\Rwg (M)$ is finite, hence
  $\Homsto {[N]}$ is finite by Lemma \ref{le:finrs} and
  $E\subset E^N(w_0)=\NA (N)$.
  We prove by induction on $\ell (w)$ that $E=E^N(w)$.

  Assume first that $\ell (w)=0$. Then $\fie 1\subset E\subset E^N(\id _{[N]})
  =\fie 1$
  and hence $E=E^N(\id _{[N]})$.

  Assume now that $\ell (w)>0$. Then $E\not=\fie 1$ by the minimality of $w$.
  By Lemma~\ref{le:rcshasMi} there exists $i\in \Ib $ such that
  $N_i\subset E$. Then $N_i\subset E^N(w)$, and hence
  $w=s_i^{r_i([N])}w'$ by Cor.~\ref{details} with $w_1=s_i^{r_i([N])}$ and
  $w_2=w$, where $w'=s_i^Nw$ and $\ell (w)=\ell (w')+1$. Further,
  $\sigma _i^N(E)\subset
  \sigma _i^N(E^N(w)) = E^{\Rf_i(N)}(w')$ by Thm.~\ref{th:sigmai}.
  Hence
  \[ \sigma _i^N(E)=E^{\Rf_i(N)}(w'') \]
  for some $w''\in \Homsto{r_i([N])}$ by induction hypothesis. Thus
  \[ E=\bar{\sigma }_i^{\Rf_i(N)}(\sigma _i^N(E))=\bar{\sigma }_i
  ^{\Rf_i(N)}(E^{\Rf_i(N)}(w'')) =E^N(s_iw''). \]
  Hence $\varkappa ^N$ is bijective.
In particular, $\varkappa ^M$ is bijective. Therefore (1) holds since
$\Homsto {[M]}$ is finite by Lemma \ref{le:finrs}.

Finally we prove (1) $\Rightarrow$ (2) and (3) $\Rightarrow$ (2). 

Assume (1) and let $t = \#(\cK M)$.
First, $M$ admits all reflections by
Prop.~\ref{criterion}. Hence $\# (\cK M) = \#(\cK N)=t$ for all $N \in
\ffdi(M)$ by Cor.~\ref{co:finK}. It follows from Lemma
\ref{admissible1} (4) that for all $N \in \ffdi(M)$ the length of
$N$-admissible sequences $(i_1,\dots,i_m)$ is bounded by $t$ since
$E^N(i_1,\dots,i_k) \neq E^N(i_1,\dots,i_l)$ for all $k,l$ with
$1 \leq k<l \leq m$.

Assume (3). Let $t = \#(\rersys M)$. Since the Weyl groupoid is connected it
follows that
$\#(\rersys M) = \#(\rersys N) = t$ for all $N \in \ffdi(M)$.
By Lemma \ref{admissible1} the length of admissible sequences is bounded by
$t$.
\end{proof}

\begin{corol} \label{co:PBWfin}
  Let $M\in \ffdi $. Assume that $M$ \Fin\ and that $\rersys{[M]}$ is
  finite. Then $\NA(M)$ is decomposable and
  $\Rwg (M)=(\cC (M), (\rersys X)_{X\in \fiso(M)})$
  is a finite root system of type $\cC (M)$.
  Let $w\in \Homsto{[M]}$ be a longest element. Let $m=\ell (w)$ and
  let $w=\id _{[M]}s_{i_1}\cdots s_{i_m}$ be a reduced decomposition.
  For each $l\in \{1,\dots,m\}$ let $\beta _l\in \rsys{[M]}_+$ and $N_{\beta
  _l}\subset \NA (M)$ as in Lemma~\ref{admissible1}.
  Then for each $l\in \{1,\dots,m\}$ the identity on
  $N_{\beta _l}$ induces an isomorphism
  $\fie \langle N_{\beta _l}\rangle \simeq \NA (N_{\beta _l})$
  of $\ndN _0^\theta $-graded objects in $\ydH $, where $N_{\beta _l}$ has
  degree $\beta _l$. Further, the multiplication map
  \[ \fie \langle N_{\beta _m}\rangle \ot \cdots
  \ot \fie \langle N_{\beta _2}\rangle \ot \fie \langle N_{\beta _1}\rangle 
  \to \NA (M) \]
  is an isomorphism of $\ndN _0^\theta $-graded objects in $\ydH $.
\end{corol}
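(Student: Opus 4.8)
The plan is to derive the corollary from Theorem~\ref{th:finite} together with Lemmas~\ref{admissible1} and~\ref{admissible2}, without any new computation. First I would note that the hypotheses ``$M$ \Fin'' and ``$\rersys{[M]}$ is finite'' are precisely condition~(3) of Theorem~\ref{th:finite}. Hence that theorem applies and yields at once that $\cK M$ is finite, that $\NA(M)$ is decomposable, and that $\Rwg(M)=(\cC(M),(\rersys X)_{X\in\fiso(M)})$ is a finite root system of type $\cC(M)$. This establishes the first sentence of the corollary.

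For the PBW part, fix a longest element $w\in\Homsto{[M]}$ together with a reduced decomposition $w=\id_{[M]}s_{i_1}\cdots s_{i_m}$, so that $m=\ell(w)$ (such a $w$ exists since $\Rwg(M)$ is finite, hence $\Homsto{[M]}$ is finite by Lemma~\ref{le:finrs}). Because $\cC(M)$ is the Cartan scheme of a root system and $\id_{[M]}s_{i_1}\cdots s_{i_m}$ is reduced, Lemma~\ref{admissible2}(3) shows that $(i_1,\dots,i_m)$ is $M$-admissible. Applying Proposition~\ref{pr:posroots} to the longest element $w$ shows that the roots $\beta_l=\beta_l^{[M]}(i_1,\dots,i_m)$, $1\le l\le m$, are positive, pairwise distinct, and satisfy $\{\beta_1,\dots,\beta_m\}=\rsys{[M]}_+$; in particular $\al_j\in\{\beta_1,\dots,\beta_m\}$ for every $j\in\Ib$, so $N_{\beta_l}=N_l(i_1,\dots,i_m)$ is defined for each $l$ as in Lemma~\ref{admissible1}.

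The decisive observation is that $E^M(i_1,\dots,i_m)=\NA(M)$: since $(i_1,\dots,i_m)$ is $M$-admissible and every $\al_j$ occurs among the $\beta_l$, this follows from Lemma~\ref{admissible2}(2). (Equivalently, this is the computation $E^N(w_0)=\NA(N)$ for a longest element $w_0$ carried out inside the proof of Theorem~\ref{th:finite}.) Granting this, Lemma~\ref{admissible1}(2),(3) gives that $N_{\beta_l}\subset\NA(M)$ is a finite-dimensional irreducible subobject in $\ydH$ of degree $\beta_l$ and that the identity on $N_{\beta_l}$ induces an isomorphism $\fie\langle N_{\beta_l}\rangle\simeq\NA(N_{\beta_l})$ of $\ndN_0^\theta$-graded objects in $\ydH$, while Lemma~\ref{admissible1}(4) gives that the multiplication map $\fie\langle N_{\beta_m}\rangle\ot\cdots\ot\fie\langle N_{\beta_1}\rangle\to E^M(i_1,\dots,i_m)=\NA(M)$ is an isomorphism of $\ndN_0^\theta$-graded objects in $\ydH$. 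This is exactly the content of the corollary.

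I do not expect a real obstacle here: the statement is essentially a repackaging of Theorem~\ref{th:finite} and of Lemmas~\ref{admissible1} and~\ref{admissible2}. The only points requiring a little care are bookkeeping ones --- matching the $\beta_l$ and $N_{\beta_l}$ of the corollary with those produced by Lemma~\ref{admissible1} from the chosen reduced word, and observing that ``$w$ longest'' is exactly the hypothesis under which Proposition~\ref{pr:posroots} forces $\{\beta_1,\dots,\beta_m\}=\rsys{[M]}_+$ and hence, via Lemma~\ref{admissible2}(2), $E^M(i_1,\dots,i_m)=\NA(M)$.
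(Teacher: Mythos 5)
Your proposal is correct and follows essentially the same route as the paper: the first claim is exactly Theorem~\ref{th:finite} applied under hypothesis (3), and the PBW statements are read off from Lemma~\ref{admissible1} together with Lemma~\ref{admissible2}(3). The only point you spell out more explicitly than the paper --- that Proposition~\ref{pr:posroots} for the longest element forces every $\al _j$ to occur among the $\beta _l$, so that Lemma~\ref{admissible2}(2) gives $E^{M}(i_1,\dots,i_m)=\NA (M)$ --- is the same observation made inside the proof of Theorem~\ref{th:finite}, so there is no genuine difference.
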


\begin{proof}
  The first claim is proven in Thm.~\ref{th:finite}.
  The rest follows from
  Lemma~\ref{admissible1} and Lemma~\ref{admissible2} (3).
\end{proof}

\begin{corol} \label{co:WKfin}
  Let $M\in \ffdi $. Assume that $M$ \Fin\ and that $\rersys{[M]}$ is
  finite.
  Then there exist order preserving bijections between
  \begin{enumerate}
    \item \label{it:EinBH}
      the set of $\ndN _0 ^\theta $-graded right coideal subalgebras of $\NA
      (M)\#H$ containing $H$,
    \item \label{it:EinB}
      the set of $\ndN _0 ^\theta $-graded right coideal subalgebras of $\NA
      (M)$ in $\ydH $,
    \item \label{it:winHomM}
      $\Homsto{[M]}$,
  \end{enumerate}
  where right coideal subalgebras are ordered with respect to inclusion and
  the set
  $\Homsto{[M]}$ is ordered by the right Duflo
  order.
\end{corol}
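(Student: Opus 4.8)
The plan is to assemble the statement from results already proved in the paper, since under the stated hypotheses every ingredient is in place. First I would dispose of the equivalence between (\ref{it:EinBH}) and (\ref{it:EinB}), which is purely formal. By Proposition~\ref{pr:rcs}(i),(ii) the assignment $E\mapsto E\#H$ is a bijection between right coideal subalgebras of $\NA(M)$ in $\ydH$ and right coideal subalgebras of $\NA(M)\#H$ containing $H$, with inverse $E'\mapsto {E'}^{\,\co H}$. Applying Proposition~\ref{pr:rcs}(iii) with $G=\ndZ^\theta$ and the standard $\ndN_0^\theta$-grading, and using $(\NA(M)\#H)_\al=\NA(M)_\al\#H$, this restricts to a bijection between the $\ndN_0^\theta$-graded objects on the two sides. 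It is order preserving in both directions, since $E_1\subset E_2$ if and only if $E_1\#H\subset E_2\#H$. This yields the order preserving bijection between (\ref{it:EinBH}) and (\ref{it:EinB}).

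Next I would treat the equivalence of (\ref{it:EinB}) and (\ref{it:winHomM}) by invoking Theorem~\ref{th:finite}. By hypothesis $M$ \Fin\ and $\rersys{[M]}$ is finite, which is exactly condition~(3) of that theorem; hence all three conditions hold, $\NA(M)$ is decomposable, $\Rwg(M)$ is a finite root system of type $\cC(M)$, and the map $\varkappa^M:\Homsto{[M]}\to\cK M$ is bijective. Since $\cK M$ is by definition the set in (\ref{it:EinB}), this already identifies the two sets via the bijection $w\mapsto E^M(w)$.

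It then remains to upgrade $\varkappa^M$ to an order isomorphism. Because $M$ \Fin\ and $\NA(M)$ is decomposable, Theorem~\ref{th:bijective} applies with $N=M$ and shows that $\varkappa^M$ is injective, order preserving and order reflecting, where $\Homsto{[M]}$ carries the right Duflo order and $\cK M$ the inclusion order. Combined with the bijectivity from Theorem~\ref{th:finite}, "order preserving and order reflecting" means that both $\varkappa^M$ and its inverse preserve order, i.e.\ $\varkappa^M$ is an isomorphism of partially ordered sets. Composing with the bosonization bijection of the first step produces the desired order preserving bijections among (\ref{it:EinBH}), (\ref{it:EinB}) and (\ref{it:winHomM}).

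There is no genuine obstacle here: the corollary is a packaging of Theorems~\ref{th:finite} and~\ref{th:bijective} together with Proposition~\ref{pr:rcs}. The only points meriting a line of justification are that the bosonization correspondence is compatible with the $\ndN_0^\theta$-grading (covered by Proposition~\ref{pr:rcs}(iii)), and that the phrase "order preserving bijection" in the statement is to be read as "order isomorphism", which is precisely what the conjunction of bijectivity, order preservation and order reflection supplies.
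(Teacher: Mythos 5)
Your proposal is correct and follows essentially the same route as the paper: the paper's proof of this corollary simply cites Prop.~\ref{pr:rcs} for the correspondence between (1) and (2) and Thm.~\ref{th:finite} for the bijection between (2) and (3), with the order compatibility coming from Thm.~\ref{th:bijective} exactly as you spell out. Your write-up just makes explicit the grading compatibility and the order-isomorphism reading that the paper leaves implicit.
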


\begin{proof}
  See Thm.~\ref{th:finite} for the bijection between (2) and (3)
  and Prop.~\ref{pr:rcs} for the bijection between (1) and (2).
\end{proof}

\begin{remar} \label{re:standard}
  Assume that $\Wg (M)$ is standard, that is,
  for each $N\in \ffdi (M)$ we have
  $a^N_{ij}=a^M_{ij}$ for all $i,j\in \Ib $. Then $\Homsto{[M]}$ can be
  identified with the Weyl group $W$ of $\gfrak $, see
  \cite[Thm.\,3.3(1)]{a-CH09a}.
\end{remar}

\section{Right coideal subalgebras of $U^{\ge 0}$}

In this section we are going to establish a close relationship between the maps
$T_j^{\Rf _j(M)}$, see Def.~\ref{de:admissible}, and Lusztig's automorphisms
$T_\alpha $ of quantized enveloping algebras.
Let $\gfrak $ be a finite-dimensional complex semisimple Lie algebra and let
$\Pi $ be a basis of the root system with respect to a fixed Cartan subalgebra.
Let $W$ be the Weyl group of $\gfrak $ and let $(\cdot ,\cdot )$ be
the invariant scalar product on the real vector space generated by $\Pi $
such that $(\al ,\al )=2$ for all short roots in each component. For
each $\al \in \Pi $ let $d_\al =(\al ,\al )/2$.
Let $U=U_q(\gfrak )$ be the quantized enveloping algebra of $\gfrak $
in the sense of \cite[Ch.\,4]{b-Jantzen96}.
More precisely, let
$\fie $ be a field with $\cha (\fie )\not=2$, and if $\gfrak $ has a component
of type $G_2$, then assume additionally that $\cha (\fie )\not=3$. Let $q\in
\fie $ with $q\not=0$ and $q^n\not=1$ for all $n\in \ndN $.
As a unital associative algebra, $U$ is defined over $\fie $
with generators $K_\al , K_\al ^{-1}, E_\al , F_\al $,
where $\al \in \Pi $, and relations given in \cite[4.3]{b-Jantzen96}.
By \cite[Prop.\,4.11]{b-Jantzen96}
there is a unique Hopf algebra structure on $U$ such that
\begin{align}
  \copr (E_\al )=&E_\al \ot 1+K_\al \ot E_\al ,&
  \cou (E_\al )=&0,\\
  \copr (F_\al )=&F_\al \ot K_\al ^{-1}+1 \ot F_\al ,&
  \cou (F_\al )=&0,\\
  \copr (K_\al )=&K_\al \ot K_\al ,& \cou (K_\al )=&1.
  \label{eq:UHopf}
\end{align}
For all $m\in \ndN $, $\al \in \Pi $ let
$q_\al =q^{d_\al }$,
$\qnum m \al =(q_\al ^m-q_\al ^{-m})/(q_\al -q_\al ^{-1})$,
$\qfact m \al =\prod _{i=1}^m
\qnum i \al $ and 
$E_\al \^m=E_\al ^m/\qfact{m}{\al }$,
$F_\al \^m=F_\al ^m/\qfact{m}{\al }$.
By \cite[8.14]{b-Jantzen96} there exist unique algebra automorphisms $T_\al
$, $\al \in \Pi $ of $U$ such that
\begin{align}
  T_\al (K_\al )=&K_\al ^{-1},&
  T_\al (K_\beta )=&K_\beta K_\al ^{-a_{\al \beta }},\\
  T_\al (E_\al )=&-F_\al K_\al ,&
  T_\al (F_\al )=&-K_\al ^{-1}E_\al ,\\
  T_\al (E_\beta )=&
  \ad (E_\al \^{-a_{\al \beta }})E_\beta ,&
  T_\al (F_\beta )=&\sum _{i=0}^{-a_{\al \beta }}(-q_\al )^iF_\al \^i
  F_\beta F_\al \^{-a_{\al \beta }-i},
  \label{eq:Ti}
\end{align}
$\al \not=\beta $,
where $\ad $ denotes the usual left adjoint action of $U$ on itself.

As in \cite[4.6,\,4.22]{b-Jantzen96}, let $U^+$ and $U^{\ge 0}$
be the subalgebras of $U$
generated by the sets $\{E_\al \,|\,\al \in \Pi \}$ and $\{ K_\al ,K_\al
^{-1},E_\al \,|\,\al \in \Pi \}$, respectively.

Recall that $U^+\in \ydU $ via the left action $\ad |_{U^0}$ and left
coaction
$$\coa (E_{\al _1}\cdots E_{\al _k})=K_{\al _1}\cdots K_{\al _k}\ot
E_{\al _1}\cdots E_{\al _k},\quad k\in \ndN _0,\, \al _1,\dots,\al
_k\in \Pi .$$

Identify now $\Pi $ with $\Ib =\{1,\dots,\theta \}$, where $\theta =\# \Pi $
is the rank of $\gfrak $. Let $\al \in \Pi $.
Following the notation in Sect.~\ref{sec:refl} we obtain that
$M=(\fie \,E_\beta )_{\beta \in \Pi }\in \ffdi $ and that $\Rf _\al (M)=(\Rf _\al
(M)_\beta )_{\beta \in \Pi }\in \ffdi $, where
\begin{align}
  \Rf _\al (M)_\beta =
  \begin{cases}
    \fie \,\ad (E_\al \^{-a_{\al \beta }})E_\beta & \text{if $\beta \not=\al
    $,}\\
    (\fie \,E_\al )^* & \text{if $\beta =\al $.}
  \end{cases}
  \label{eq:RfalN}
\end{align}
Let
$\vartheta _\al :M_1\oplus \cdots \oplus M_\theta \to 
\Rf _\al (M)_1\oplus \cdots \oplus \Rf _\al (M)_\theta $,
\begin{gather}
  \vartheta _\al (E_\beta )=
  \begin{cases}
    \ad (E_\al \^{-a_{\al \beta }})E_\beta & \text{if $\beta \not=\al $,}\\
    (q_\al ^{-3}-q_\al ^{-1})^{-1}E_\al ^* & \text{if $\beta =\al $}
  \end{cases}
\end{gather}
for all $\beta \in \Pi $,
where $E_\al ^*\in (\fie E_\al )^*$ such that $E_\al ^*(E_\al )=1$.
Note that
\begin{align}
  \coa (\vartheta _\al (E_\beta ))=K_\beta K_\al ^{-a_{\al \beta }}\ot \vartheta
  _\al
  (E_\beta ) \quad \text{for all $\al ,\beta \in \Pi $.}
  \label{eq:coaRN}
\end{align}
In particular, $[M]\not=[\Rf _\al (M)]$ in $\fiso $.
Nevertheless $\vartheta _\al $ is an isomorphism
of braided vector spaces. Indeed, the braiding $c$ satisfies
\begin{align*}
  c(E_\beta \ot E_\gamma )=
  &\ad (K_\beta )E_\gamma \ot E_\beta =q^{(\beta ,\gamma
  )}E_\gamma \ot E_\beta ,\\
  c(\vartheta _\al (E_\beta )\ot \vartheta _\al (E_\gamma ))=&
  \ad (K_\beta K_\al ^{-a_{\al
  \beta }})\vartheta _\al (E_\gamma )\ot \vartheta _\al (E_\beta )\\
  =&q^{(\beta -a_{\al \beta }\al ,\gamma -a_{\al \gamma }\al )}\vartheta _\al
  (E_\gamma )\ot \vartheta _\al (E_\beta )
  =q^{(\beta ,\gamma )}\vartheta _\al (E_\gamma )\ot \vartheta _\al (E_\beta )
\end{align*}
for all $\beta ,\gamma \in \Pi $ because of the $W$-invariance of $(\cdot
,\cdot )$. Hence $\NA (\vartheta _\al ):\NA (M)\to \NA (\Rf _\al (M))$ is an
isomorphism of $\ndN _0^\theta $-graded algebras and coalgebras.

\begin{propo} \label{pr:JantzenT}
  Let $\al \in \Pi $. Let
  $\iota _\al :K_\al ^M\#\NA (M_\al ^*)\to U$ be the linear map with $\iota _\al
  (x\# (E_\al ^*)^m)=(q_\al ^{-1}-q_\al ^{-3})^m x(F_\al K_\al )^m$
  for all $x\in K_\al ^M$, $m\in \ndN _0$.
  Then $\iota _\al $ is an injective algebra map, and
  the following diagram is commutative.
  \begin{align}
    \begin{CD}
      \NA (M)=U^+ @>T_\al >> U \\
      @V\NA (\vartheta _\al )VV  @AA\iota _\al A\\
      \NA (\Rf _\al (M)) @>>(\Omega _\al ^M)^{-1}> K_\al ^M\# \NA (M_\al ^*)
    \end{CD}
    \label{eq:T}
  \end{align}
\end{propo}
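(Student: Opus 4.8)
The plan is to first show that $\iota _\al $ is a well-defined injective algebra map, and then to obtain the commutativity of \eqref{eq:T} by comparing the two composites on the algebra generators $E_\beta $, $\beta \in \Pi $, of $U^+=\NA (M)$.

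\emph{$\iota _\al $ is a well-defined algebra map.} Since $q^{(\al ,\al )}=q_\al ^2$ is not a root of unity, $\NA (M_\al ^*)=\fie [E_\al ^*]$ is the polynomial algebra on the single (degree $-\al $) generator $E_\al ^*$, so $K_\al ^M\#\NA (M_\al ^*)=\bigoplus _{m\ge 0}K_\al ^M(E_\al ^*)^m$ and $\iota _\al $ is a well-defined linear map. On the subalgebra $K_\al ^M\subset U^+\subset U$ the map $\iota _\al $ is the inclusion, $\NA (M_\al ^*)$ is free on $E_\al ^*$, and $\iota _\al (1\#(E_\al ^*)^m)=(q_\al ^{-1}-q_\al ^{-3})^m(F_\al K_\al )^m$; hence $\iota _\al $ is an algebra homomorphism once it respects the cross relation in Eq.~\eqref{eq:fx}. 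By Eqs.~\eqref{eq:dual1}, \eqref{eq:dual2} one has $S^{-1}(x\_{-1})\lact E_\al ^*=q^{(\gamma ,\al )}E_\al ^*$ for homogeneous $x\in K_\al ^M$ of degree $\gamma $, so respecting \eqref{eq:fx} means precisely that
\[ (q_\al ^{-1}-q_\al ^{-3})\,F_\al K_\al \,x=\partial ^L_{E_\al ^*}(x)+q^{(\gamma ,\al )}(q_\al ^{-1}-q_\al ^{-3})\,x\,F_\al K_\al \]
in $U$. Moving $K_\al $ past $x$ rewrites this as $q^{(\al ,\gamma )}(q_\al ^{-1}-q_\al ^{-3})(F_\al x-xF_\al )K_\al =\partial ^L_{E_\al ^*}(x)$, which follows from the quantum analogue of $E_\al F_\al -F_\al E_\al =(K_\al -K_\al ^{-1})/(q_\al -q_\al ^{-1})$ extended to $U^+$ by the Leibniz rule (cf.~\cite[Ch.~6]{b-Jantzen96}), together with the fact that $F_\al x-xF_\al \in U^+K_\al ^{-1}$ for $x\in K_\al ^M$ --- a consequence of the definition $K_\al ^M=\NA (M)^{\co \NA (M_\al )\#H}$. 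Then $(F_\al x-xF_\al )K_\al \in U^+$ and equals, up to the scalar needed above, the skew-derivation $\partial ^L_{E_\al ^*}(x)$; the constants fit because $(q_\al ^{-1}-q_\al ^{-3})q_\al ^2=q_\al -q_\al ^{-1}$.

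\emph{Commutativity and injectivity.} Now $\iota _\al \circ (\Omega _\al ^M)^{-1}\circ \NA (\vartheta _\al )$ and $T_\al |_{U^+}$ are algebra maps $U^+\to U$, so it suffices to compare them on each $E_\beta $. For $\beta \neq \al $, Eq.~\eqref{eq:RfalN} and the definition of $\vartheta _\al $ give $\vartheta _\al (E_\beta )=\ad (E_\al \^{-a_{\al \beta }})(E_\beta )\in \Rf _\al (M)_\beta \subset K_\al ^M$; since $(\Omega _\al ^M)^{-1}$ maps $\Rf _\al (M)_\beta $ identically back into $K_\al ^M$ and $\iota _\al $ is the inclusion there, the composite sends $E_\beta $ to $\ad (E_\al \^{-a_{\al \beta }})(E_\beta )=T_\al (E_\beta )$ by \cite[8.14]{b-Jantzen96} (cf.~\eqref{eq:Ti}). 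For $\beta =\al $, $\NA (\vartheta _\al )(E_\al )=(q_\al ^{-3}-q_\al ^{-1})^{-1}E_\al ^*\in \Rf _\al (M)_\al $, and $(\Omega _\al ^M)^{-1}$ maps $\Rf _\al (M)_\al $ identically onto $1\#\fie E_\al ^*$, so the composite sends $E_\al $ to $(q_\al ^{-3}-q_\al ^{-1})^{-1}\iota _\al (1\#E_\al ^*)=(q_\al ^{-3}-q_\al ^{-1})^{-1}(q_\al ^{-1}-q_\al ^{-3})F_\al K_\al =-F_\al K_\al =T_\al (E_\al )$. Hence \eqref{eq:T} commutes. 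Finally, the commutativity yields $\iota _\al =T_\al |_{U^+}\circ \NA (\vartheta _\al )^{-1}\circ \Omega _\al ^M$; as $\Omega _\al ^M$ and $\NA (\vartheta _\al )$ are bijective and $T_\al $ is an automorphism of $U$, $\iota _\al $ is injective.

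The hard part is the algebra-map property of $\iota _\al $: one must pass from the braided datum $\partial ^L_{E_\al ^*}$ (and the Yetter-Drinfeld structure of $U^+$) to the classical skew-derivations of $U^+$, identify $K_\al ^M$ with the kernel of the appropriate one, and keep track of the various $q$-powers and of the normalizing constant. Once the displayed identity in $U$ is in place, everything else is immediate from \eqref{eq:Ti}, \eqref{eq:RfalN} and the definition of $\vartheta _\al $.
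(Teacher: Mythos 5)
Your proof is correct, and its overall skeleton matches the paper's: show $\iota_\al$ is an algebra map by checking compatibility with the cross relation \eqref{eq:fx}, then verify the diagram on the generators $E_\beta$ (for $\beta\neq\al$ via \eqref{eq:Ti}, for $\beta=\al$ via the normalization of $\vartheta_\al$), exactly as the paper does. The genuine difference is in how the cross relation is handled. The paper invokes Prop.~\ref{pr:Klocfin}(i) and checks the relation only on the algebra generators $\ad(E_\al^{(n)})E_\beta$ of $K_\al^M$, computing $\partial^L_{E_\al^*}$ of these elements explicitly via \cite[8A.5(2)]{b-Jantzen96} and deducing the needed identities in $U$ from \cite[8.9(2)]{b-Jantzen96}; you instead prove the single uniform identity $q^{(\al,\gamma)}(q_\al^{-1}-q_\al^{-3})(F_\al x-xF_\al)K_\al=\partial^L_{E_\al^*}(x)$ for all homogeneous $x\in K_\al^M$, using the $r_\al,r'_\al$ skew-derivation calculus of \cite[Ch.~6]{b-Jantzen96}: coinvariance kills the $K_\al$-term of the commutator, and the surviving term is $\partial^L_{E_\al^*}(x)$ after commuting $K_\al$ through (which absorbs the degree-dependent factor $q^{(\al,\gamma)}$ and leaves exactly $(q_\al^{-1}-q_\al^{-3})q_\al^2=q_\al-q_\al^{-1}$). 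I checked that this identity is consistent with the paper's generator formulas, so your constants are right; the gain is that you avoid case-by-case formulas for $\ad(E_\al^{(n)})E_\beta$, the cost is that the two ingredients you assert --- that $x\in K_\al^M$ makes the $r_\al$-type derivative vanish, and that the braided left derivative $\partial^L_{E_\al^*}$ coincides on the nose with Jantzen's $r'_\al$ --- need a short unwinding of the bosonization formula \eqref{eq:coal} relating $\copr_{U^{\ge 0}}$ to $\copr_{\NA(M)}$, which you state but do not carry out; you should also note the one-line induction passing from the cross relation with $E_\al^*$ to arbitrary powers $(E_\al^*)^m$. Finally, your injectivity argument (invert the commutative diagram, using bijectivity of $\Omega_\al^M$, $\NA(\vartheta_\al)$ and $T_\al$) replaces the paper's appeal to the triangular decomposition of $U$; both are valid, and yours is not circular since commutativity is established beforehand.
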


\begin{proof}
  We first prove that $\iota _\al $ is an algebra map. By definition, $\iota
  _\al |_{K_\al ^M\#1}$ and $\iota _\al |_{1\#\NA (M_\al ^*)}$ are algebra maps.
  By Prop.~\ref{pr:Klocfin} (i), the algebra $K_\al ^M$
  is generated by the elements $\ad
  (E_\al \^n)E_\beta $, $\beta \in \Pi \setminus \{\al \} $, $0\le n\le -a_{\al
  \beta }$, and the algebra $\NA (M_\al ^*)$ is generated by $E_\al ^*$.
  Further,
  \begin{align*}
    \partial ^L_{E_\al ^*}(\ad (E_\al \^n)E_\beta ) =&
    E_\al ^* \ad (E_\al \^n)E_\beta -(\ad (E_\al \^n)E_\beta )(K_\al
    ^{-n}K_\beta ^{-1}\cdot E_\al ^* )\\
    =& E_\al ^* \ad (E_\al \^n)E_\beta -q^{(n\al +\beta ,\al )}
    (\ad (E_\al \^n)E_\beta )E_\al ^* 
  \end{align*}
  for all $\beta \in \Pi \setminus \{\al \}$ and $0\le n\le -a_{\al \beta }$
  by Eq.~\eqref{eq:fx}, where $\partial ^L_{E_\al ^*}(x)=\langle E_\al
  ^*,x\^1\rangle x\^2$ for all $x\in K_\al ^M$.
  By \cite[8A.5(2)]{b-Jantzen96} we obtain that
  $\partial ^L_{E_\al ^*}(E_\beta ) =0$ and
  \begin{align*}
    \partial ^L_{E_\al ^*}(\ad (E_\al \^n)E_\beta ) =&
    q_\al ^{n-1}(1-q_\al ^{-2(-a_{\al \beta }-n+1)})\ad (E_\al \^{n-1})E_\beta 
  \end{align*}
  for all $\beta \in \Pi \setminus \{ \al \}$ and $1\le n\le -a_{\al \beta }$.
  Since $F_\al K_\al E_\beta =q^{(\al, \beta )}E_\beta F_\al K_\al $ for all
  $\beta \in \Pi \setminus \{\al \}$,
  it suffices to prove that the following relations hold in $U$.
  \begin{align*}
    &q_\al ^{n-1}(1-q_\al ^{-2(-a_{\al \beta }-n+1)})\ad (E_\al \^{n-1})E_\beta
    \\
    &\qquad = (q_\al ^{-1}-q_\al ^{-3})(F_\al K_\al \ad (E_\al \^n)E_\beta
    -q^{(n\al +\beta ,\al )} (\ad (E_\al \^n)E_\beta )F_\al K_\al ),
  \end{align*}
  where
  $\beta \in \Pi \setminus \{\al \} $ and $1\le n\le -a_{\al \beta }$.
  The latter equations follow from \cite[8.9 (2)]{b-Jantzen96}.

  The injectivity of $\iota _\al $ follows immediately from the triangular
  decomposition of $U$. Since all maps in the diagram~\eqref{eq:T} are algebra
  maps, it is enough to check that the diagram commutes on the algebra
  generators $E_\beta $, $\beta \in \Pi $, of $U^+$.
  This follows directly from the definitions of the maps
  involved.
\end{proof}

\begin{remar} \label{re:PBWgen}
  Prop.~\ref{pr:JantzenT} implies that the PBW basis of $U^+$ given in
  \cite[Thm.\,8.24]{b-Jantzen96} coincides with the PBW basis in
  Cor.~\ref{co:PBWfin}. Let us indicate a proof.

  First observe that $\Wg (M)$ is standard, since $M$ is a Yetter-Drinfeld
  module of Cartan type \cite[Rem.\,3.27]{p-AHS08}.
  This means that for each $N\in \ffdi (M)$ we have
  $a^N_{ij}=a^M_{ij}$ for all $i,j\in \Ib $. Hence $\Homsto{[M]}$ can be
  identified with the Weyl group $W$ of $\gfrak $, see
  \cite[Thm.\,3.3(1)]{a-CH09a}.

  Let $\al ,\beta ,\gamma \in \Pi $ such that $\ell (s_\al s_\beta s_\gamma
  )=3$. There exists a commutative diagram
  \begin{align*}
    \begin{CD}
      \NA (M) \\
      @V\NA (\vartheta _\gamma )VV \\
      \NA (\Rf _\gamma (M)) @>T_\gamma ^{\Rf _\gamma (M)}>> \NA (M)\\
      @VVV @V\NA (\vartheta _\beta )VV\\
      \NA (\Rf _\gamma \Rf _\beta (M))
      @>T_\gamma ^{\Rf _\gamma \Rf _\beta (M)}>>
      \NA (\Rf _\beta (M)) @>T_\beta ^{\Rf _\beta (M)}>> \NA (M)\\
      @VVV @VVV @V\NA (\vartheta _\al )VV\\
      \NA (\Rf _\gamma \Rf _\beta \Rf _\al (M))
      @>T_\gamma ^{\Rf _\gamma \Rf _\beta \Rf _\al (M)}>>
      \NA (\Rf _\beta \Rf _\al (M))
      @>T_\beta ^{\Rf _\beta \Rf _\al (M)}>>
      \NA (\Rf _\al (M)) @>T_\al ^{\Rf _\al (M)}>> \NA (M)\\
    \end{CD}
  \end{align*}
  such that the unlabelled vertical arrows are isomorphisms of $\ndN
  _0^\theta $-graded algebras and coalgebras. The existence of such maps can
  be concluded by considering $\ffdi $ as a category, where morphisms between
  $M,N\in \ffdi $ are bijective maps
  $f:M_1\oplus \cdots \oplus M_\theta \to N_1\oplus
  \cdots \oplus N_\theta $ preserving the braiding and satisfying
  $f(M_i)\subset N_i$ for each $i\in \Ib $. Then $\Rf _i:\ffdi \to \ffdi $
  becomes a functor, and for example the vertical arrow left to
  $\NA (\vartheta _\beta )$ is just
  $\NA (\Rf _\gamma (\vartheta _\beta ))$.
  The PBW generators of $\NA (M)$ constructed in Cor.~\ref{co:PBWfin}
  arise as images (at the lower right corner)
  of appropriate generators of the Nichols algebras in the
  lower line.
  Similarly, the PBW generators of $\NA (M)$ arise by
  applying the maps $T_\al $, $T_\beta $, \dots appropriately
  to the algebras $\NA (M)$ at the diagonal.
  Then Prop.~\ref{pr:JantzenT} gives that the images obtained this way
  coincide.
\end{remar}

Let $w$ be an element of the Weyl group $W$,
let $m=\ell (w)$, and let $s_{\al _1}\cdots s_{\al _m}$ be a reduced
decomposition of $w$. Recall from \cite[8.24]{b-Jantzen96}
that $U^+[w]\subset U^+$ is the linear span of the products
\begin{align} \label{eq:PBWgenJ}
  E_{\beta _m}^{a_m}\cdots E_{\beta _2}^{a_2}E_{\beta _1}^{a_1},\quad
  \text{$a_1,\dots,a_m\in \ndN _0$,}
\end{align} 
where $E_{\beta _l}=T_{\al _1}\cdots T_{\al _{l-1}}(E_{\al _l})$ for all $1\le
l\le m$.

\begin{theor} \label{th:rcsU}
  The map $\varkappa $ from $W$ to the set of right coideal subalgebras of
  $U^{\ge 0}$ containing $U^0$, given by $\varkappa (w)=U^+[w]U^0$,
  is an order preserving bijection.
\end{theor}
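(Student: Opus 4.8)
The plan is to realize $U^{\ge 0}$ as the bosonization $U^+\#U^0$ of the Yetter-Drinfeld module $M=(\fie \,E_\beta )_{\beta \in \Pi }\in \ffdi $ over $H=U^0$, to apply Corollary~\ref{co:WKfin}, and then to identify the resulting bijection with $\varkappa $ with the help of Proposition~\ref{pr:JantzenT}. First I would record the standard facts that $M$ is of Cartan type, hence admits all reflections and $\Wg (M)$ is standard (see \cite{p-AHS08} and Remark~\ref{re:PBWgen}), that $\NA (M)=U^+$, and that $\rersys {[M]}$ is the finite root system of $\gfrak $. Then Corollary~\ref{co:WKfin}, together with Remark~\ref{re:standard}, supplies order preserving bijections between the Weyl group $W\cong \Homsto {[M]}$ with the right Duflo order, the set of $\ndN _0^\theta $-graded right coideal subalgebras of $U^+$ in $\ydU $, and the set of $\ndN _0^\theta $-graded right coideal subalgebras of $U^+\#U^0$ containing $U^0$, the latter two ordered by inclusion; under these bijections $w$ is sent to $E^M(w)$, and then to $E^M(w)\#U^0$.

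Next I would use that the multiplication map is a Hopf algebra isomorphism $U^+\#U^0\xrightarrow{\ \sim\ }U^{\ge 0}$, compatible with the comultiplication in \eqref{eq:UHopf}; it carries the $\ndN _0^\theta $-grading to the grading of $U^{\ge 0}$ with $\deg E_\al =\al $, and it maps $E^M(w)\#U^0$ onto the subalgebra $E^M(w)\,U^0$ generated by $E^M(w)$ and $U^0$ --- here one uses that $E^M(w)$, being a subobject of $\NA (M)$ in $\ydU $, is stable under $\ad (U^0)$, so that $U^0E^M(w)\subset E^M(w)\,U^0$. Then I would check that the grading hypothesis is automatic: any right coideal subalgebra $E$ of $U^{\ge 0}$ containing $U^0$ is stable under conjugation by every $K_\al $, and conjugation by $K_\al $ acts on the component of $U^{\ge 0}$ of degree $\mu \in \ndN _0^\theta $ by the scalar $q^{(\al ,\mu )}$; since $q$ is not a root of unity and $(\cdot ,\cdot )$ is nondegenerate on the real vector space spanned by $\Pi $, the tuples $(q^{(\al ,\mu )})_{\al \in \Pi }$ are pairwise distinct for distinct $\mu $, whence $E=\bigoplus _\mu (E\cap U^{\ge 0}_\mu )$ is $\ndN _0^\theta $-graded. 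Thus the bijections above are in fact between $W$ and the set of \emph{all} right coideal subalgebras of $U^{\ge 0}$ containing $U^0$, and they respect the partial orders.

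Finally I would identify $E^M(w)\,U^0$ with $U^+[w]\,U^0$, for which it suffices to show $E^M(w)=U^+[w]$ as subspaces of $U^+$. Fix a reduced expression $w=s_{i_1}\cdots s_{i_m}$. By Corollary~\ref{co:PBWfin} the subspace $E^M(w)\subset U^+$ is the linear span of the ordered products of powers of generators of the one-dimensional subspaces $N_{\beta _l}=N_l(i_1,\dots ,i_m)$, $1\le l\le m$ (one-dimensional since $M$ is of diagonal type), which are built in Definition~\ref{de:admissible} from the maps $T_{i_k}^{\Rf _{i_k}(\cdots )}$. By Proposition~\ref{pr:JantzenT} and the commutative diagram in Remark~\ref{re:PBWgen}, these maps are intertwined with Lusztig's automorphisms $T_\al $ through the isomorphisms $\NA (\vartheta _\al )$, so that $N_{\beta _l}$ is spanned by a nonzero scalar multiple of the corresponding Lusztig root vector $E_{\beta _l}$ appearing in \eqref{eq:PBWgenJ}. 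Consequently $E^M(w)$ is precisely the span of the PBW monomials \eqref{eq:PBWgenJ}, i.e.\ $E^M(w)=U^+[w]$, and hence $\varkappa (w)=E^M(w)\,U^0=U^+[w]\,U^0$; thus $\varkappa $ is the composite of the order preserving bijections constructed above, which proves the theorem. The genuinely delicate point is this last identification, namely pinning down the nonzero scalars relating the $N_{\beta _l}$ to Jantzen's root vectors $E_{\beta _l}$; but this is exactly what Proposition~\ref{pr:JantzenT} and Remark~\ref{re:PBWgen} are designed to deliver, so no new difficulty arises.
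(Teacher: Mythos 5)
Your proposal is correct and follows essentially the same route as the paper: the paper's own (very terse) proof consists precisely of the observation that right coideal subalgebras containing $U^0$ are automatically $\ndN _0^\theta $-graded (your $K_\al $-conjugation argument is what the phrase ``by the non-degeneracy of $(\cdot ,\cdot )$ and since $q$ is not a root of $1$'' abbreviates), followed by an appeal to Cor.~\ref{co:WKfin} with Rem.~\ref{re:standard} for the identification of $\Homsto{[M]}$ with $W$ and Rem.~\ref{re:PBWgen} (via Prop.~\ref{pr:JantzenT}) for the equality of the PBW generators $N_{\beta _l}$ with Lusztig--Jantzen root vectors, i.e.\ $E^M(w)=U^+[w]$. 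You have simply spelled out the details that the paper leaves implicit.
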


\begin{proof}
  Subalgebras of $U^{\ge 0}$ containing $U^0$ are $\ndN _0^\theta $-graded
  by the non-degeneracy of $(\cdot ,\cdot )$ and since $q$ is not a root of
  $1$.
  Thus the claim is a special case of Cor.~\ref{co:WKfin}, see also
  Rem.~\ref{re:standard} for the interpretation of $W$ and
  Rem.~\ref{re:PBWgen} for the equality of the PBW generators in
  \eqref{eq:PBWgenJ} and in Cor.~\ref{co:PBWfin}.
\end{proof}

\begin{remar}\label{re:small}
  In view of Cor.~\ref{co:WKfin}, the claim of Thm.~\ref{th:rcsU}
  holds also for multiparameter deformations of $\gfrak $ if $q_{\al }$ is not
  a root of $1$ for all $\al \in \Pi $.
  Similarly, if $q_\al $ is a root of $1$ for all $\al \in \Pi $, then 
  the claim of Thm.~\ref{th:rcsU} holds for the (multiparameter version of)
  small quantum groups, if we restrict ourselves to $\ndN _0^\theta $-graded
  right coideal subalgebras.
\end{remar}


\providecommand{\bysame}{\leavevmode\hbox to3em{\hrulefill}\thinspace}
\providecommand{\MR}{\relax\ifhmode\unskip\space\fi MR }
\providecommand{\MRhref}[2]{%
  \href{http://www.ams.org/mathscinet-getitem?mr=#1}{#2}
}
\providecommand{\href}[2]{#2}

\end{document}